\def\sqr#1#2{{\vcenter{\vbox{\hrule height.#2pt
				\hbox{\vrule width.#2pt height#1pt \kern#1pt \vrule width.#2pt}
				\hrule height.#2pt}}}}
\def\3n{\negthinspace \negthinspace \negthinspace }
\def\2n{\negthinspace \negthinspace }
\def\1n{\negthinspace }
\def\dbA{\mathbb{A}}
\def\dbB{\mathbb{B}}
\def\dbE{\mathbb{E}}
\def\dbF{\mathbb{F}}
\def\dbH{\mathbb{H}}
\def\dbN{\mathbb{N}}
\def\dbP{\mathbb{P}}
\def\dbQ{\mathbb{Q}}
\def\dbR{\mathbb{R}}
\def\dbS{\mathbb{S}}
\def\dbV{\mathbb{V}}
\def\dbW{\mathbb{W}}
\def\dbX{\mathbb{X}}
\def\dbY{\mathbb{Y}}
\def\dbZ{\mathbb{Z}}
\def\sA{\mathscr{A}}
\def\sD{\mathscr{D}}
\def\sF{\mathscr{F}}
\def\sS{\mathscr{S}}
\def\sU{\mathscr{U}}
\def\sV{\mathscr{V}}
\def\sY{\mathscr{Y}}
\def\sZ{\mathscr{Z}}
\def\={\buildrel \triangle \over =}
\def\ds{\displaystyle}
\def\ns{\noalign{\ss}}
\def\a{\alpha}
\def\b{\beta}
\def\g{\gamma}
\def\d{\delta}
\def\e{\varepsilon}
\def\k{\kappa}
\def\n{\nu}
\def\si{\sigma}
\def\t{\tau}
\def\f{\varphi}
\def\o{\omega}
\def\i{\infty}
\def\D{\Delta}
\def\F{\Phi}
\def\O{\Omega}
\def\cA{{\cal A}}
\def\cB{{\cal B}}
\def\cC{{\cal C}}
\def\cF{{\cal F}}
\def\cL{{\cal L}}
\def\cM{{\cal M}}
\def\cN{{\cal N}}
\def\cP{{\cal P}}
\def\cS{{\cal S}}
\def\cU{{\cal U}}
\def\cV{{\cal V}}
\def\cX{{\cal X}}
\def\cY{{\cal Y}}
\def\cZ{{\cal Z}}
\def\no{\noindent}
\def\ss{\smallskip}
\def\ms{\medskip}
\def\q{\quad}
\def\qq{\qquad}
\def\limsup{\mathop{\overline{\rm lim}}}
\def\lan{\mathop{\langle}}
\def\ran{\mathop{\rangle}}
\def\argmin{\mathop{\rm argmin}}%%%%%%%%%%%%%%%%%%%%%%%%% defined by Zhiyong Yu %%%%%%%%%%%%%%%%%%%%%
\def\essinf{\mathop{\rm essinf}}
\def\wt{\widetilde}
\def\cd{\cdot}
\def\cds{\cdots}
\def\tr{\hbox{\rm tr$\,$}}
\def\les{\leqslant}
\def\ges{\geqslant}
\def\({\Big (}
\def\){\Big )}
\def\[{\Big[}
\def\]{\Big]}
\def\bde{\begin{definition}}
	\def\ede{\end{definition}}
	\def\bt{\begin{theorem}}
		\def\et{\end{theorem}}
	\def\bc{\begin{corollary}}
		\def\ec{\end{corollary}}
	\def\bl{\begin{lemma}}
		\def\el{\end{lemma}}
	\def\bp{\begin{proposition}}
		\def\ep{\end{proposition}}
	\def\bas{\begin{assumption}}
		\def\eas{\end{assumption}}
	\def\br{\begin{remark}}
		\def\er{\end{remark}}
	\def\ba{\begin{array}}
		\def\ea{\end{array}}
	\def\ed{\end{document}}
\def\square#1{\vbox{\hrule\hbox{\vrule height#1%
			\kern#1\vrule}\hrule}}
\def\rectangle#1#2{\vbox{\hrule\hbox{\vrule height#1%
			\kern#2\vrule}\hrule}}
\font\tenbb=msbm10 \font\sevenbb=msbm7 \font\fivebb=msbm5
\newtheorem{lemma}{Lemma}[section]
\newtheorem{remark}{Remark}[section]
\newtheorem{theorem}{Theorem}[section]
\newtheorem{corollary}{Corollary}[section]
\newtheorem{definition}{Definition}[section]
\newtheorem{proposition}{Proposition}[section]
\newtheorem{assumption}{Assumption}[section]
\begin{document}
	\title{\bf  Reflected stochastic recursive control problems  with jumps: dynamic programming  and stochastic verification theorems\footnote{This work is supported  by NSF of Jilin Province for Outstanding Young Talents (No. 20230101365JC),  the National Key R\&D Program of China (No. 2023YFA1009002), the NSF of
		P. R. China (No.  12371443), and the Changbai Talent Program of Jilin
		Province}}

\author{ Lu Liu \footnote{School of Mathematics and Statistics, Northeast Normal University, Changchun 130024, China; email: {\tt liulu@nenu.edu.cn} }\and   Qingmeng Wei\footnote{Corresponding author. School of Mathematics and Statistics, Northeast Normal University, Changchun 130024, China; email: {\tt weiqm100@nenu.edu.cn}}
}

\maketitle

\begin{abstract}
	This paper mainly investigates reflected stochastic recursive control problems governed by jump-diffusion dynamics. The system's state evolution is described  by a stochastic differential equation driven by both Brownian motion and Poisson random measures, while the recursive cost functional is formulated via  the solution process 
	$Y$ of a reflected backward stochastic differential equation  driven by the same dual stochastic sources.
	By establishing the  dynamic programming principle,  
	we provide the probabilistic interpretation of an obstacle problem for partial integro-differential equations of Hamilton-Jacobi-Bellman type in the viscosity solution sense  through our control problem's value function. 
	Furthermore,  the value function is proved to inherit  the semi-concavity and joint Lipschitz continuity in state and time coordinates,   which play key roles in deriving stochastic verification theorems of control problem within the  framework of viscosity solutions.
	We remark that some restrictions  in previous study are eliminated, such as  
%	the conventional monotonicity assumption on the driver of RBSDEs with jumps, 
	the  frozen of the reflected processes  in time and state, and   the independence of the driver   from diffusion variables.
\end{abstract}
\bf Keywords. \rm  RBSDE with jumps; dynamic programming principle; obstacle problem; partial integro-differential equations; viscosity solution; probabilistic interpretation;   stochastic verification theorem. 
\ms

\bf AMS Mathematics subject classification. \rm 93E20; 35D40; 49K45

\section{Introduction}\label{Sec_In}

Reflected backward stochastic differential equations (RBSDEs), as a pivotal extension of classical backward stochastic differential equations (BSDEs), were first systematically studied by El Karoui et al.  \cite{El-KPPQ}.
Distinguished from standard BSDEs, RBSDE  incorporates an additional non-decreasing process that enforces the solution path $Y$ to maintain above (or below) a prescribed barrier in a kind of minimal way. 
This  structure innovation has promoted RBSDEs as a powerful mathematical tool in various fields, such as  options pricing (\cite{KPQ-1997,H-2006}), mixed stochastic  game problems (\cite{H-2006,KH-2002}),  obstacle problems for partial differential equations (PDEs)  (\cite{KPQ-1997,BL-2009,WY-2008,BL-2011}) and so on.

In response to the inherent discontinuities observed in financial markets (such as abrupt price jumps, liquidity shocks), the theoretical framework of RBSDEs has been extended to incorporate both Brownian motion and Poisson random measures, termed as RBSDEs with jumps. Such an advancement  is particularly critical     not only in mathematical theory but also in applications. 
The work by Hamad\`{e}ne,  Ouknine \cite{HO-2003} established the existence and uniqueness of solutions for such systems    when the obstacle process has only inaccessible jumps. 
Essaky \cite{E-2008} further generalized these results to the case when the obstacle process has predictable jumps    through a penalization method.
In the aspect of application,   some researches has been done to link  these stochastic systems to obstacle problems for partial integro-differential equations (PIDEs). 
Matoussi et al. \cite{M-S-Z-2015} presented a probabilistic interpretation for   weak Sobolev solutions  for semilinear parabolic PIDEs with obstacles. 
In the viscosity solution sense,  Sylla \cite{Sylla-2019} gave a probabilistic interpretation for  PIDEs via the solution of RBSDEs with jumps. 
Zhang, Liu \cite{Zhang-Liu-2019}  employed  the solutions to reflected forward-backward stochastic differential equations (FBSDEs) to establish   a nonlinear Feynman-Kac representation for mild supersolutions of PIDEs. 
However, despite these contributions, the study of obstacle problems  specifically for  PIDEs of HJB type  rising from stochastic control problems   remains largely unexplored.

Motivated by this, we investigate  a class of stochastic recursive optimal control problems with jump-diffusion dynamics.
Precisely, the system's state evolution is governed by a stochastic differential equation driven by both Brownian motion and Poisson random measures (SDE with jumps), while the recursive cost functional is defined  through the solution process  $Y$ of a reflected backward stochastic differential equation  driven by the same dual stochastic sources (RBSDE with jumps).  
The approach of dynamic programming principle is employed here  to link  the value function with  the unique viscosity solution to an obstacle problem for PIDE   of HJB type.
Throughout the study,  a sequence of penalized control problems is introduced to approximate the  original control problem.
Subsequently, more properties of the value function are explored. We  demonstrate the value function  inherits  the semi-concavity and joint Lipschitz continuity in state and time coordinates under some additional conditions.
These properties are indispensable and key, which is revealed in   the research of   stochastic verification theorems of our control problem, especially in the framework of viscosity solutions.    
The following is about  the main contributions and novelties of this work.

%The first is to get rid of the  monotonicity condition on the driver of backward equation  in  the jump variable.
%
%As we know,  comparison theorems of BSDEs  often serve as essential and indispensable  tools for connecting PDE systems through dynamic programming principles. 
%Previous approaches to comparison theorems for BSDEs with jumps (see \cite{KP-1996,KPQ-1997,P-1997}) universally relied on Girsanov transformation techniques, which imposed a monotonicity condition on the driver's jump variable.
%Inspired by the estimation methods for RBSDEs, we   bypass  this critical yet restrictive monotonicity assumption and establish a novel comparison theorem free from it.
%We think this  advancement  can unlock some potential applications of BSDEs with jumps  in stochastic control problems.

%Another highlight
The first is the study of the regular properties (including the semi-concavity and joint Lipschitz continuity in state and time coordinates) of the value function.
The study of the semi-concavity    is very complex and  needs the meticulous deduction.
Seeing more clearly  from  the study of semi-concavity, we are  involved in the comparison and estimates of three different time-state configurations of the value function, 
while being encountered with  the    difficulties  caused by jump diffusion.
The conventional transformation technique from \cite{BHL-2012, YZ-1999}, effective in pure Brownian motion  settings, is insufficient when we confront with   the jump diffusion. 
Our resolution lies in an ingenious adaptation of  a kind of time-stretching  transformations of the jump noise introduced by \cite{K-2001}, which we  call as  the Kulik's transformation, refer to \cite{K-2006, J-2013}, etc.
However, this powerful tool  comes with a price, which  necessitates a technical assumption $\n(E)<\infty $ in Section 4.
We point out such condition is not necessary    in other parts.

Furthermore, this work makes a significant  breakthrough in relaxing the critical constraints imposed by the existing studies for reflected BSDEs.  While the previous studies like \cite{BHL-2012} required  either the reflected process to be frozen in time and state  or the driver coefficient to be independent from diffusion variables (see Conditions (H6), (H7) therein).
By recasting the problem into a sequence of penalized control problems, we focus on the study of the semi-concavity  of the penalized value functions, which allows  the  residual terms in the inequality characterization of the semi-concavity.  
Crucially, these residual  terms   exhibit asymptotically  decay as the penalized system converges to the original control problem, thereby preserving essential regularity properties without  relying on restrictive assumptions.
The appearance and rigorous characterization of the residual terms within the semi-concavity inequalities   are the key innovations in the research.

The paper is organized as follows. Section 2 introduces the Wiener-Poisson space and presents the  preliminaries about   RBSDEs with jumps. Section 3 formulates  stochastic control problems involving reflected  FBSDEs  with jumps. Moreover, by employing a dynamic programming approach,  the value function  is connected   with the corresponding obstacle problem of  PIDE of HJB  type. Section 4 investigates more regularity properties of the value functions, focusing on the semi-concavity and joint Lipschitz continuity in $(t,x)$. The concluding Section 5 develops the stochastic verification theorems in  both classical solution and viscosity solution frameworks.

\section{Preliminaries}\label{Sec-Pre}
 
	Let the triple  $(\Omega,\mathcal{F},\dbP)$ be  the completed product of the two probability spaces $(\O_1,\cF_1,\dbP_1)$ and $(\O_2,\cF_2,\dbP_2)$, i.e., $\Omega :=\Omega_1\times\Omega_2 $, $\mathcal{F} :=\mathcal{F}_1\otimes\mathcal{F}_2 $, $\dbP :=\dbP_1\otimes \dbP_2 $ with $\mathcal{F}$ being completed with respect to $\dbP$. The detailed information of spaces  $(\O_1,\cF_1,\dbP_1)$ and $(\O_2,\cF_2,\dbP_2)$ are as follows.

 \begin{itemize}
   \item $(\Omega_1, \mathcal{F}_1, \dbP_1)$ is a classical Wiener space, i.e., 
	$\Omega_1 = C_0(\mathbb{R};\mathbb{R}^d)$, $\mathcal{F}_1$ is the completed Borel $\sigma$-field on $\Omega_1$, 
	$\dbP_1$ is the Wiener measure.  Under $\dbP_1$, the canonical processes $B_s(\omega)=\omega_1   ( s)$ and $B_{-s}(\omega)=\omega_1 ( -s)$, $s\in\mathbb{R}_{+}$, $\omega_1 \in \Omega_1$  are two independent $d$-dimensional Brownian motions.  
	Denote $\mathbb{F}^B=\{ \mathcal{F}_s^B \}_{s \geqslant 0}$  is 
the natural  filtration generated by the  Brownian motion $B(\cd)$. i.e., 
	$\mathcal{F}_s^B :=\sigma \big\{ B_r, r\les s \big\}\vee\mathcal{N}_{\dbP_1},$ 
	with $\mathcal{N}_{\dbP_1}$ being the collection of $\dbP_1$-null sets. 

   \item $(\Omega_2,\mathcal{F}_2, \dbP_2)$ is a Poisson space. Precisely, 
 $\Omega_2$ is  the set of all point functions $  p : D_{  p}\rightarrow E$, with $D_{ p}$ being a countable subset of $\mathbb{R}$ and $E :=\mathbb{R}^l \backslash \{ 0 \}$ being  equipped with its Borel $\sigma$-field $\mathcal{B}(E)$.  We identify the point function $p$ with $N( p,\cdot)$, where   $N $ is the counting measure defined on $\mathbb R \times E$, i.e.,
	$$
	N(p, (s,t] \times A) := \sharp\big\{r \in D_p \cap (s,t] \mid  p(r) \in A  \big\},\ A\in \mathcal{B}(E),\  s,t\in\mathbb{R},\  s<t.
		  $$
	Here $\sharp$ represents    the cardinal number of the set.
  $\mathcal{F}_2$ denotes the smallest $\sigma$-field on $\Omega_2$ such that the coordinate mapping $  p \rightarrow N \big(   p,(s,t] \times A \big)$, $A \in \mathcal{B}(E)$, $s,t\in\mathbb{R}$, $s<t$ is measurable.
 $\dbP_2$ is the probability  on  $(\Omega_2,\mathcal{F}_2)$ such that the coordinate measure $N( p,\mathrm  dt\mathrm de)$ is a Poisson random measure with the compensator $\hat N(\mathrm dt\mathrm de) :=\mathrm dt\nu(\mathrm de)$, where $\n$  is supposed to be a  $\si$-finite measure on $(E,\mathcal{B}(E))$ satisfying  $\displaystyle \int_E(1\wedge|e|^2) \nu(\mathrm de)<\infty$.
  Then, for any $A\in\mathcal{B}(E)$ with $\nu(A)<\infty$,  the process $\big\{ \tilde N  \big((s,t]\times A \big) \big\}_{t\ges s} :=\big\{ (N-\hat N ) \big( (s,t]\times A \big) \big\}_{t\ges s}$  is  a martingale.
By setting  
$$ \dot{\mathcal{F}}_t^N  :=\sigma \Big\{N \big( (s,r]\times A \big), -\infty <s\leqslant r\leqslant t, A \in\mathcal{B}(E) \Big\},\ t\geqslant 0, $$
we get the filtration $\dbF^N :=( \mathcal{F}_t^N  )_{t\geqslant0}$ with $\displaystyle\mathcal{F}_t^N  :=\( \bigcap_{r>t} \dot{\mathcal{F}}_r^N \)\vee \mathcal{N}_{\dbP_2}$. 

 \end{itemize}
  Based on the above, the filtration on $(\O,\cF,\dbP)$ is introduced as  $\mathbb{F}:=\{\mathcal{F}_t\}_{t\geqslant0}$ with $\mathcal{F}_t :=\mathcal{F}_t^B  \otimes \mathcal{F}_t^N $ augmented by all $\dbP$-null sets.

	For any $t \in [0,T]$, $p\ges 1$ and  Euclidean space $ \dbR^k$ ($k \geqslant 1$), we introduce the following  spaces,
	$$
	\begin{array}{ll}
		\ns\ds L^2_{\mathcal F_t}( \Omega ; \mathbb R^k) := \Big\{ \xi:  \Omega \rightarrow \mathbb R^k  \mid \xi \mbox{ is}\ \mathcal F_t\mbox{-measurable},\  \mathbb E |\xi|^2 <  \infty \Big\}; \\
		\ns\ds \mathcal{S}^2_{ \mathbb F }( t,T; \mathbb R ^k ) :=\Big\{ \phi: \Omega \times [t,T] \rightarrow \mathbb R^k  \mid  \phi(\cdot)\  \mbox{is }  \mathbb F\mbox{-adapted},\  {\rm c\grave{a}dl\grave{a}g},
		\mbox{ and}\ \mathbb{E} \Big[ \sup_{s \in[t,T]} | \phi_s |^2 \Big] < \infty \Big\};\\
		\ns\ds \mathcal{M}_\mathbb F ^2 (t,T;\mathbb R ^k) :=\Big\{ \phi:\Omega\times[t,T]\rightarrow \mathbb {R}^k\ \mid
		\phi(\cdot)\   \mbox{is }  \mathbb F \mbox{-progressively}\   \mbox{measurable, and }   \mathbb{E}\Big [ \int_t^T |\phi(s)|^2 \mathrm ds \Big] <  \infty  \Big\};\qq\qq\qq\qq \\
	\end{array}
	$$
		$$
		\begin{array}{ll}
		\ns\ds  \mathcal K _ \mathbb F ^2 (t,T;\mathbb  R  )  := \Big\{  K:\Omega\times[t,T]\times E \rightarrow \mathbb  R  \mid  K(\cdot)\ \mbox{is}\ \mathcal{P}_{t,T} \footnotemark
%		\footnote{$\mathcal{P}_{t,T}$ denotes the $\sigma$-field of $\dbF$-predictable subsets of $\O\times [t,T]$.}
		\otimes \mathcal{B}(E) \mbox{-measurable},\\
		\ns\ds \hskip 7.5 cm
		\mbox { and}\ 
		\Vert K(\cd)\Vert_{\mathcal{K}_\dbF^2(t,T;\mathbb{R})}^2=\mathbb{E}\Big[\int_t^T\int_{E}|K_s(e)|^2\nu(\mathrm de)\mathrm  ds\Big]< \infty \Big\};\\
		\ns\ds  \mathcal{L}_\n ^p (E;\mathbb{R})  := \Big\{ K:E\rightarrow \mathbb  R   \mid  K(\cdot)\ \mbox{is}\	\mathcal{B}(E)\mbox{-measurable, and }  \Vert K(\cd) \Vert_{\nu,p}^p = \int_{E} |K(e)|^p\nu(\mathrm de)  < \infty \Big\};\\
%		\ns\ds  \mathcal{L}_\n ^2(E;\mathbb{R})  := \Big\{ K:E\rightarrow \mathbb  R   \mid  K(\cdot)\ \mbox{is}\	\mathcal{B}(E)\mbox{-measurable, and }  \Vert K \Vert_{ \nu }^2 = \int_{E} |K(e)|^2\nu(\mathrm de)  < \infty \Big\};\\
		%
		\ns\ds\mathcal A_\mathbb F ^2(t,T;\mathbb R) := \Big\{ \phi:\Omega\times
		[t,T]\rightarrow\mathbb R\mid \phi (\cdot) \mbox{ is } \mathbb
		F\mbox{-adapted, }{\rm c\grave{a}dl\grave{a}g}  \mbox{ and increasing, }\phi(t)=0,\ \mathbb E\big[|\phi(T)|^2 \big]< \infty  \Big\}.\\
		%
		%	  \ns\ds C^{1,2}([t,T]\times\mathbb R^n) :=  \Big\{ w: [t,T]\times\mathbb
		%	  R^n\rightarrow\mathbb R\mid   w(\cdot,\cdot) \mbox{ is
			%	  	continuous, } w_r(\cdot,\cdot), w_x(\cdot,\cdot), w_{xx}(\cdot,\cdot) \mbox{ exist and  }\\
		%	  %
		%	  \ns\ds \hskip 13.01cm
		%	  \mbox{    are also continuous}\Big\}.
	\end{array}
	$$
	\footnotetext{$\mathcal{P}_{t,T}$ denotes the $\sigma$-field of $\dbF$-predictable subsets of $\O\times [t,T]$.}
		For any $t\in[0,T]$, we put
	$$
	\sS_ \mathbb F ^2[t,T] :=          \mathcal{S}_{ \mathbb F }^2(t,T;\mathbb{R})          \times         \mathcal{M}_{ \mathbb F }^2(t,T;\mathbb{R}^d)  
	\times\mathcal{K}_\mathbb F ^2(t,T;\mathbb{R})\times \mathcal A_\mathbb F ^2(t,T;\mathbb R).  
	$$

	\subsection{Reflected backward stochastic differential equations with jumps}
 In this part, we recall some known results about 	RBSDEs with jumps. Consider
	\begin{equation}\label{rbsdep}\left\{
		\begin{array}{ll}
			\ns\ds\!\!\!\! 			{\rm(\romannumeral1)} \  	(Y,Z,V,A)\in \sS_ \mathbb F ^2[0,T];\\
			\ns\ds \!\!\!\! 		{\rm(\romannumeral2)} \ 
			Y_s = \xi + \int_s^T  \!\!  f\big(  r, Y_r,  Z_r, \int_E l(e)V_r(e)\nu(\mathrm de)  \big) \mathrm dr
			- (A_{T} - A_{s})  \\
			\ns\ds \!\!\!\! \hskip 1.42 cm  - \int_s^T   Z_r   \mathrm dB_r - \int_s^T \!\int_E   V_r (e)   \tilde N(\mathrm dr,\mathrm de), \q s\in[0,T];\\
			\ns\ds\!\!\!\! 			{\rm(\romannumeral3)} \  Y_s \leqslant S_s,\q   \mbox{a.e.}\ s \in [0,T];\\
			\ns\ds\!\!\!\! 			{\rm(\romannumeral4)} \  \int_0^T(S_{s-}-Y_{s-})  \mathrm d A_s=0.
		\end{array}\right.
	\end{equation}
In the above,  $f$, $\xi$ and $S$ is the driver coefficient, the terminal condition and the obstacle term  of \eqref{rbsdep}, resp. We sometimes use the triple  $(f,\xi,S)$ to represent \eqref{rbsdep}. The   condition on $(f,\xi,S)$ is given as follows.

 \begin{description}   \item[(A$_1$)]     (\romannumeral1)   $f:\Omega \times [0, T] \times \mathbb{R} \times \mathbb{R}^{d} \times \dbR  \rightarrow \mathbb{R}$ is $\cP_{0,T}$-measurable for every fixed $(y,z,v)\in \dbR\times\dbR^d\times \dbR $, and    $f(\cdot, 0,0,0) \in \mathcal{M}_\mathbb F ^2 (0,T;\mathbb R) $; 
  
	 (\romannumeral2)    $f$ is  Lipschitz continuous  in $(y,z,v )\in\dbR \times\dbR^d \times  \dbR  $,  uniformly with respect to $(\o,s)\in\O\times[0,T]$, $\dbP$-a.s.;
	 
% (\romannumeral3)   There exist two constants $-1<C_1\les0$ and $C_2\ges 0$ such that for any $s\in[0,T]$, $(y,z)\in \dbR \times\dbR^{ d}$ and $v,v'\in \cL_\n^2(E;\dbR )$,
%$$g(s,y,z,v)-g(s,y,z,v')\les \int_{E}\ell^{y,z,v,v'} (s,e)(v(e)-v'(e)\n(\mathrm de),$$
%where $\ell^{y,z,v,v'}:\O\times[0,T]\times E\to\dbR$ is $\cP_{0,T}\times \cE$-measurable  and satisfies $C_1(1\wedge|e|)\les \ell^{y,z,v,v'}(s,e)\les C_2(1\wedge|e|),$ $(s,e)\in[0,T]\times E$.

 (\romannumeral3)   $\xi\in L^2_{\mathcal F_T}( \Omega ; \mathbb R)$,   $S $ is a real-valued   $\mathbb F$-progressively measurable, ${\rm c\grave{a}dl\grave{a}g}$ stochastic process satisfying
	$ 
	 \mathbb{E} \Big[ \sup\limits_{s \in[0,T]}  |S_s|  ^2 \Big] < \infty,
 $ 	and
   $\xi \leqslant S_T,$ $\dbP $-a.s.;

	(\romannumeral4)  there exists some constant $\k>0$ such that  $0 \les l(  e) \les \k(1\land |e|)$;
	
	\item[(A$_2$)] $v\to f(t,y,z,v)$ is non-decreasing, for all $(t,y,z)\in[0,T]\times \mathbb{R} \times \mathbb{R}^{d}$.
 \end{description}

Now we present some results about the wellposedness of RBSDE with jumps, which can be refer  to \cite{E-2008}. Notice that  the obstacle  in \eqref{rbsdep} is   the upper one, so   we   need to adapt the lower   obstacle case studied in \cite{E-2008} to our case.  Such a transformation between the two cases is natural and easy, so that we shall not repeat it here and  present the results directly. 
 
First, for each $n\in\dbN$, we introduce the following BSDE with jumps,
%   penalized equation  of RBSDE with jumps \eqref{rbsdep}   as follows,
	%
	\begin{equation}\label{BSDEP}
	\ba{ll}
\ns\ds 	Y_s^n = \xi  +  \int_s^T   f \big(  r,  Y_r^n,  Z_r^n,  \int_E    l(e) V_r^n(e)  \nu(\mathrm de)     \big )  
	  \mathrm dr -(A_T^n-A_s^n)  -  \int_s^T    Z_r^n   \mathrm dB_r   \\
	  \ns\ds\hskip0.95cm -  \int_s^T    \int_E    V_r^n (e)   \tilde N(\mathrm dr,\mathrm de) ,\q s \in [0,T],
	\ea\end{equation} 
where $\ds A_s^n:=\int_0^s   n\big(  S_r - Y_r^n \big)^-   \mathrm dr $.
%  In fact, \eqref{BSDEP} is a classical BSDE with jumps.
	From \cite{E-2008}, we know  \eqref{BSDEP} is the penalized equation  of RBSDE with jumps \eqref{rbsdep}.
  	Clearly, for each $n\in\dbN$, the condition {\bf (A$_1$)} guarantees the uniquely existence of $(Y ^n,  Z^n,   V ^n) \in \mathcal{S}^2_{ \mathbb F }( t,T; \mathbb R  ) \times  \mathcal{M}^2_{ \mathbb F }( t,T; \mathbb R ^d ) \times \mathcal K _\dbF^2 (t,T;\mathbb  R  ) $ satisfying \eqref{BSDEP}, refer to   \cite{Tang-1994, LP-2009}.
	With the help of  Theorems 4.2 and  5.1 in \cite{E-2008}, we have the following approximation  result from  \eqref{BSDEP} to \eqref{rbsdep},  and thereby the wellposedness of \eqref{rbsdep}.   
	\begin{lemma}\label{well}\sl
		Assume {\rm \textbf{(A$_1$)}} and  {\rm \textbf{(A$_2$)}} hold. Then the following two results hold true.

{\rm(i)}
 The sequence $\{(Y^n, Z^n, V^n, A^n)\}_{n\ges 0}$ has a limit $(Y,Z,V,A)$ such that  $Y^n $  converges  decreasingly  to $ Y \in \mathcal{S}^2_{ \mathbb F }( 0,T; \mathbb R  ) $, and $(Z,V,A)$ is the weak limit  of  $(Z^n,V^n,A^n)$ in $\mathcal{M}^2_{ \mathbb F }( 0,T; \mathbb R ^d ) \times   \mathcal K _\dbF^2 (0,T;\mathbb  R  )\times \cA^2_\dbF(0,T;\dbR) $.

{\rm(ii)} The limit  $(Y,Z,V,A)\in\sS_\dbF^2[0,T]$ is the unique solution of  RBSDE  with jumps \eqref{rbsdep}.

\end{lemma}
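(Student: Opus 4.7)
The plan is to reduce the statement to the lower-obstacle convergence and wellposedness results of Essaky (Theorems 4.2 and 5.1 in \cite{E-2008}) through a sign-reversal, exactly as hinted in the paragraph preceding the lemma.

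First, I would perform the reflection transformation. Define $\tilde Y_s := -Y_s$, $\tilde Z_s := -Z_s$, $\tilde V_s(e) := -V_s(e)$, $\tilde \xi := -\xi$, $\tilde S_s := -S_s$, and
\begin{equation*}
\tilde f(s, y, z, v) := -f(s, -y, -z, -v),
\end{equation*}
while keeping $A$ as it is. Substituting $Y = -\tilde Y$, $Z = -\tilde Z$, $V = -\tilde V$ into \eqref{rbsdep} and using $\int_E l(e) V_r(e) \nu(\mathrm de) = -\int_E l(e) \tilde V_r(e) \nu(\mathrm de)$, then multiplying by $-1$, yields the lower-obstacle RBSDE with jumps for the data $(\tilde f, \tilde \xi, \tilde S)$:
\begin{equation*}
\tilde Y_s = \tilde \xi + \int_s^T \tilde f\Bigl(r, \tilde Y_r, \tilde Z_r, \int_E l(e) \tilde V_r(e) \nu(\mathrm de)\Bigr) \mathrm dr + (A_T - A_s) - \int_s^T \tilde Z_r \mathrm dB_r - \int_s^T\!\int_E \tilde V_r(e) \tilde N(\mathrm dr,\mathrm de),
\end{equation*}
together with $\tilde Y_s \ges \tilde S_s$ and the Skorohod condition $\int_0^T (\tilde Y_{s-} - \tilde S_{s-}) \mathrm dA_s = 0$. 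The penalized equation \eqref{BSDEP} is transformed in the same way into a standard Brownian--Poisson BSDE for $(\tilde Y^n, \tilde Z^n, \tilde V^n)$ with terminal $\tilde \xi$, driver $\tilde f$, and the canonical ``lower-obstacle'' penalization term $n(\tilde Y^n_r - \tilde S_r)^+$.

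Second, I would verify that \textbf{(A$_1$)} and \textbf{(A$_2$)} transfer to the transformed data. Lipschitz continuity, the integrability of $\tilde f(\cdot,0,0,0)$, the admissibility of $\tilde \xi$ and $\tilde S$, together with the terminal inequality $\tilde \xi \ges \tilde S_T$, follow immediately from the sign flip. The crucial point is the monotonicity: $\tilde f$ remains non-decreasing in its last argument, since $\partial_v \tilde f(s,y,z,v) = \partial_v f(s,-y,-z,-v) \ges 0$. Hence $(\tilde f, \tilde \xi, \tilde S)$ sits in the framework of \cite{E-2008}.

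Third, I would invoke Essaky's Theorem 4.2 to obtain monotone increasing convergence $\tilde Y^n \ua \tilde Y$ in $\mathcal{S}_\dbF^2(0,T;\dbR)$ along with weak convergence of $(\tilde Z^n, \tilde V^n, A^n)$ to some $(\tilde Z, \tilde V, A)$ in $\mathcal{M}_\dbF^2(0,T;\dbR^d) \times \mathcal{K}_\dbF^2(0,T;\dbR) \times \mathcal{A}_\dbF^2(0,T;\dbR)$, and then apply Theorem 5.1 of \cite{E-2008} to identify $(\tilde Y, \tilde Z, \tilde V, A) \in \sS_\dbF^2[0,T]$ as the unique solution of the transformed reflected equation, including the minimality Skorohod condition. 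Undoing the sign flip, i.e., setting $(Y, Z, V, A) := (-\tilde Y, -\tilde Z, -\tilde V, A)$, produces the monotone \emph{decreasing} convergence $Y^n \da Y$ in $\mathcal{S}_\dbF^2(0,T;\dbR)$, the weak convergence of $(Z^n, V^n, A^n)$, and the unique solvability of \eqref{rbsdep}, proving parts (i) and (ii).

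The main delicacy I expect is keeping the monotonicity in the driver's fourth argument correctly oriented under the sign reversal, because Essaky's passage to the limit exploits this monotonicity twice: once to get a comparison principle for the penalized solutions (yielding the monotone convergence of $\tilde Y^n$) and once to handle the nonlinear term $\int_E l(e) \tilde V^n_r(e) \nu(\mathrm de)$ inside $\tilde f$ when only weak convergence of $\tilde V^n$ is available. The computation above shows the monotonicity of $\tilde f$ in $v$ is preserved, but this step must be executed rather than merely cited. Once it is secured, identification of the weak limit of the penalization term and verification of the Skorohod complementarity condition (iv) in \eqref{rbsdep} proceeds by the standard argument in \cite{E-2008}.
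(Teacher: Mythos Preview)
Your approach is correct and matches the paper's exactly: the paper does not give a proof at all but simply refers to Theorems~4.2 and~5.1 of \cite{E-2008} after remarking that the upper-obstacle problem \eqref{rbsdep} reduces to Essaky's lower-obstacle setting by a ``natural and easy'' sign flip, which is precisely the reduction you carry out in detail. One small slip: the transformed penalization term should read $n(\tilde S_r-\tilde Y^n_r)^+$ rather than $n(\tilde Y^n_r-\tilde S_r)^+$, since $(S_r-Y^n_r)^-=(Y^n_r-S_r)^+=(\tilde S_r-\tilde Y^n_r)^+$; with this correction your argument goes through verbatim.
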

	%

%	The following is about the  estimate and  the continuous dependence of the solutions of RBSDEs with jumps. 
	Next, we present the  estimate and  the continuous dependence of the solutions of RBSDEs with jumps. 

	\begin{lemma}
%		\label{est-coupled}
		\sl Assume {\rm \textbf{(A$_1$)}}  holds.
	
{\rm(i)}	Let $ (Y , Z ,V , K ) \in\sS_\dbF^2[0,T]$ be the solution of RBSDE with jumps \eqref{rbsdep}, then there exists a constant $C>0$ such that
$$
			\begin{array}{ll}
				\ns\ds\!\!\! \displaystyle \mathbb{E}^{\cF_t}\Big[
				\underset{s\in  [t,T]}{\sup} |Y_s|^2  +\int_t^T( |Z_s|^2+\|V_s(\cd)\|_{\n,2}^2) \mathrm ds
				    + \underset{s\in  [t,T]}{\sup}|A_s|^2    \Big]  \\ 
				\ns\ds\!\!\! \leqslant C \mathbb{E}^{\cF_t}\Big[ |\xi|^2   +     \(\int_t^T  |f(s, 0, 0, 0)|^{2}  \mathrm ds\) 
				+\underset{s\in  [t,T]}{\sup}  |S_s|^2  \Big],\quad \dbP\mbox{-a.s.} 
			\end{array}
$$

{\rm(ii)}  For any given   data   $( g',\xi',  S')$  satisfying  {\rm \textbf{(A$_1$)}}, denote  $(Y', Z',V', A')$  by  the solution of \eqref{rbsdep} with $(g', \xi', S')$.
Then there exists some constant $C>0$ such that
$$
			\begin{array}{ll}
				\ns\ds\!\!\!    \mathbb	{E}^{\cF_t}\big[ \underset{s\in  [t,T]}{\sup} 
				|\widehat Y_{s}|^{2}     +\int_t^T(|\widehat Z_{s}|^{2}  +\|\widehat  V_s(\cd)\|_{\n,2}^2)\mathrm ds   + |\widehat A_{T}-\widehat A_{t}|^{2}  \big] \\
				\ns\ds\!\!\!     \leqslant  C \mathbb	{E}^{\cF_t}\Big[|\widehat  \xi|^{2}+   \Big(\int_t^T \big|\widehat f \big(s, Y_{s}, Z_{s}, \int_E l(e)V_s(e)\nu(\mathrm de) \big) \big|^2  \mathrm ds\Big)  \Big]
				+C\Big(	\mathbb	{E}^{\cF_t}\big[\underset{s\in  [t,T]}{\sup} |\widehat S_{s}|^{2} \big]\Big)^{\frac{1}{2}} \Psi_{t, T}^{\frac{1}{2}},\quad \dbP\mbox{-a.s.},\\
			\end{array}
$$
		where  $\widehat \xi:=\xi-\xi'$, $(\widehat Y, \widehat Z,\widehat V,\widehat A):=(Y-Y',Z-Z',V-V',A-A')$, $\widehat S:=S-S'$, $\widehat f:=f-f'$ and
		\begin{equation*}
				\Psi_{t, T}:= \dbE^{\cF_t}\Big[|\xi|^{2}+|\xi'|^{2}  
				+ \int_t^T |f(s, 0,0,0)|^2 \mathrm ds     +  \int_t^T |f'(s, 0,0,0)|^2 \mathrm ds  
				 +\underset{s\in  [t,T]}{\sup} |S_{s}|^{2} 
				 + \underset{s\in  [t,T]}{\sup}|S_{s}'|^{2}  \Big].
		\end{equation*}
	\end{lemma}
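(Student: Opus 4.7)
For part (i) my plan is the classical Itô--Skorohod argument adapted to the jump setting. I would apply Itô's formula to $|Y_s|^2$ on $[s,T]$, yielding an identity of the shape
\begin{equation*}
|Y_s|^2 + \int_s^T |Z_r|^2\,\mathrm dr + \int_s^T\!\int_E |V_r(e)|^2\,\nu(\mathrm de)\mathrm dr = |\xi|^2 + 2\!\int_s^T Y_{r-}\widetilde f_r\,\mathrm dr - 2\!\int_s^T Y_{r-}\mathrm dA_r - M_s^T,
\end{equation*}
where $\widetilde f_r=f(r,Y_r,Z_r,\int_E l(e)V_r(e)\nu(\mathrm de))$ and $M_s^T$ collects the local martingale pieces. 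The Skorohod minimality condition \eqref{rbsdep}(iv) together with $Y_{r-}\les S_{r-}$ reduces $\int_s^T Y_{r-}\mathrm dA_r$ to $\int_s^T S_{r-}\mathrm dA_r$, which is dominated by $\sup_{r\in[t,T]}|S_r|\cdot(A_T-A_s)$ and then Young-split into $\eps |A_T-A_s|^2+C_\eps \sup_{r\in[t,T]}|S_r|^2$. The Lipschitz bound on $f$ handles $2|Y_{r-}\widetilde f_r|$ via another Young plus $|f(\cd,0,0,0)|^2$ split. Taking $\mathbb E^{\cF_t}[\cd]$ removes $M_s^T$ (after a standard localisation), and Gronwall yields a bound on $(Y,Z,V)$ in terms of the data plus the residual $\mathbb E^{\cF_t}[|A_T-A_t|^2]$. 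Isolating $A_T-A_t$ directly from \eqref{rbsdep}(ii) and squaring then bounds this residual by quantities already controlled, closing the loop. Finally, BDG applied to the martingale integrals upgrades the pointwise bound to the $\sup_{s\in[t,T]}$ statement.

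For part (ii) the same Itô strategy is used on $|\widehat Y_s|^2$ with the difference dynamics
\begin{equation*}
\mathrm d\widehat Y_s = -\widehat G_s\,\mathrm ds + \mathrm d\widehat A_s + \widehat Z_s\,\mathrm dB_s + \int_E \widehat V_s(e)\,\tilde N(\mathrm ds,\mathrm de),
\end{equation*}
where $\widehat G_s$ is split into a Lipschitz remainder in $(\widehat Y,\widehat Z,\widehat V)$ plus the driver gap $\widehat f(s,Y_s,Z_s,\int_E l V_s\nu)$ evaluated at the fixed solution of \eqref{rbsdep}. The only genuinely new term is $-2\int_t^T\widehat Y_{r-}\mathrm d\widehat A_r$, since $\widehat Y$ need not respect either barrier. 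I would use the algebraic decomposition
\begin{equation*}
\int_t^T\widehat Y_{r-}\mathrm d\widehat A_r = \int_t^T\widehat S_{r-}\mathrm d\widehat A_r + \int_t^T(S_{r-}-Y_{r-})\mathrm dA'_r + \int_t^T(S'_{r-}-Y'_{r-})\mathrm dA_r,
\end{equation*}
in which the Skorohod conditions have already killed the diagonal pieces, and the last two integrals are nonnegative by $Y\les S$, $Y'\les S'$ and the monotonicity of $A,A'$. Hence $-2\int_t^T\widehat Y_{r-}\mathrm d\widehat A_r\les 2\sup_{r\in[t,T]}|\widehat S_r|(A_T+A'_T)$, and conditional Cauchy--Schwarz yields exactly the $(\mathbb E^{\cF_t}[\sup|\widehat S|^2])^{1/2}\Psi_{t,T}^{1/2}$ contribution once part (i) is invoked for both $(f,\xi,S)$ and $(f',\xi',S')$ to deliver $\mathbb E^{\cF_t}[(A_T+A'_T)^2]\les C\Psi_{t,T}$. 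The remaining Lipschitz and driver-gap terms are absorbed exactly as in part (i) by Young, Gronwall and BDG.

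The main obstacle is precisely this cross-barrier interaction in $\int\widehat Y\,\mathrm d\widehat A$: it cannot be reduced to a linear bound in $\widehat S$ and forces the square-root dependence on $\Psi_{t,T}$, which in turn dictates the asymmetric form of the stated estimate. A secondary technicality is that the reflection processes $A$, $A'$ may inherit predictable jumps from the barriers $S$, $S'$, so Itô's formula has to be applied in the general càdlàg semimartingale form and the predictable-integrand convention $Y_{r-}$, $\widehat Y_{r-}$ respected throughout; once this is done, all quadratic-variation terms reassemble correctly into $\int|Z|^2\mathrm dr$ and $\int\|V\|_{\nu,2}^2\mathrm dr$ in conditional expectation.
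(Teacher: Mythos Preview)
Your proposal is correct and follows the standard route: the paper itself gives no proof, referring part (i) to Proposition~2.2 of Essaky \cite{E-2008} and stating that (ii) is similar, and the Essaky argument is precisely the It\^o--Skorohod--BDG scheme you outline (itself the jump adaptation of El~Karoui et~al.\ \cite{El-KPPQ}). Your handling of the cross term $-2\int_t^T\widehat Y_{r-}\,\mathrm d\widehat A_r$ via the algebraic decomposition, the sign observation on the two off-diagonal integrals, and the Cauchy--Schwarz splitting that produces the $\big(\mathbb E^{\cF_t}[\sup|\widehat S|^2]\big)^{1/2}\Psi_{t,T}^{1/2}$ factor is exactly the mechanism behind the stated asymmetric bound; the remark on c\`adl\`ag It\^o and $Y_{r-}$ integrands is the right technical caveat for this setting.
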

  Note that, (i) can be referred to Proposition 2.2  in \cite{E-2008}. Moreover, the detail of (ii) is similar to (i), so we omit here.
For the later study, we also need the  comparison theorem of \eqref{rbsdep}. 
%By using the method of Theorem 5.2 in \cite{E-2008}, we generalize the comparison theorem to the case with different barriers.
%For this, we add the following condition.
%  \begin{description}   \item[(A$_2$)]     
%  \end{description}

%补充  比较定理单调性条件    
 \begin{lemma}\label{com}\sl

    	(Comparison Theorem)
     	Let  $( f_i,\xi_i,S^i)$, $ i=1,2 $  satisfy {\rm \textbf{(A$_1$)}} and {\rm \textbf{(A$_2$)}}, and   $(Y^i,Z^i,V^i, A^i),\ i=1,2$ be the unique solution  of     RBSDE  with jumps \eqref{rbsdep}  associated  with $(f_i,\xi_i,S^i)$,   $i=1,2$, resp.
     	 Then $Y^1_s \leqslant Y^2_s,$ $\dbP$-a.s.,    $s\in [0,T]$, whenever
    	$\xi_1 \leqslant \xi_2$,
    	$f_1(s, y,z,v) \leqslant f_2(s,y,z,v) $,  $S^1_s \les S^2_s$,
    	 $s \in [0,T]$,  $(y,z,v) \in  \mathbb{R} \times \mathbb{R}^{d} \times \dbR $,   
    	 $ \dbP$-a.s.  
    	
    \end{lemma}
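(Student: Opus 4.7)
The plan is to establish the comparison by penalizing both RBSDEs and then invoking the comparison principle for (non-reflected) BSDEs with jumps. For each $n \in \dbN$ and $i = 1, 2$, let $(Y^{i,n}, Z^{i,n}, V^{i,n})$ denote the unique solution of the penalized BSDE with jumps \eqref{BSDEP} associated with the data $(f_i, \xi_i, S^i)$. Its effective driver is
$$
\widetilde f^{i,n}(s, y, z, v) \;=\; f_i(s, y, z, v) - n\bigl(S^i_s - y\bigr)^{-}.
$$
By Lemma \ref{well}(i), $Y^{i,n} \searrow Y^i$ $\dbP$-a.s.\ as $n \to \infty$ for each $i$. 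Hence it suffices to prove $Y^{1,n}_s \leq Y^{2,n}_s$ $\dbP$-a.s.\ for every $n \in \dbN$ and every $s \in [0,T]$; the inequality $Y^1 \leq Y^2$ will then follow by monotone passage to the limit.

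The first step is to verify the pointwise driver inequality $\widetilde f^{1,n}(s, y, z, v) \leq \widetilde f^{2,n}(s, y, z, v)$. Writing $(S - y)^{-} = (y - S)^{+}$ and noting that $x \mapsto (y - x)^{+}$ is non-increasing, the hypothesis $S^1_s \leq S^2_s$ gives $(S^1_s - y)^{-} \geq (S^2_s - y)^{-}$, and consequently $-n(S^1_s - y)^{-} \leq -n(S^2_s - y)^{-}$. Combined with $f_1 \leq f_2$, this yields the required ordering. Together with $\xi_1 \leq \xi_2$, the data of the penalized equation for index $i = 1$ is pointwise dominated by that for $i = 2$.

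The second step is to apply a comparison theorem for BSDEs with jumps to \eqref{BSDEP}. For each $n$, the penalized driver $\widetilde f^{i,n}$ inherits Lipschitz continuity in $(y, z, v)$ from $f_i$ (with an $n$-dependent constant), the monotonicity $v \mapsto \widetilde f^{i,n}$ non-decreasing from {\rm \textbf{(A$_2$)}}, and the kernel $l$ is non-negative with $l(e) \leq \kappa(1 \wedge |e|)$ by {\rm \textbf{(A$_1$)}}(iv). These are precisely the assumptions of the standard comparison theorem for BSDEs with jumps of this form (e.g.\ Royer, or Barles--Buckdahn--Pardoux), which then delivers $Y^{1,n}_s \leq Y^{2,n}_s$ $\dbP$-a.s. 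Sending $n \to \infty$ concludes the proof.

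The main delicate point I expect is the comparison at the BSDE level in the present form, where the driver sees $V$ only through the scalar integral $\int_E l(e) V_r(e)\, \nu(\mathrm de)$. The standard argument linearizes the difference of drivers along the segment joining the two solutions and interprets it via a jump-type Girsanov change of measure; here the sign condition $l \geq 0$ together with the monotonicity {\rm \textbf{(A$_2$)}} is precisely what guarantees that the associated stochastic exponential is a non-negative true martingale, making the linearized comparison sign-preserving. Once this ingredient is secured, the remainder of the argument is bookkeeping.
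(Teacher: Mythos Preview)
Your proposal is correct and follows essentially the same approach as the paper: penalize both RBSDEs, observe the key pointwise inequality $f_1(s,y,z,v)-n(S^1_s-y)^- \le f_2(s,y,z,v)-n(S^2_s-y)^-$ for the penalized drivers, apply the comparison theorem for BSDEs with jumps, and pass to the limit. The paper records only the driver inequality and refers to \cite{E-2008}, whereas you spell out the monotonicity of $y\mapsto (y-S)^+$ and the role of \textbf{(A$_2$)} and $l\ge 0$ in the jump comparison; this extra detail is welcome but not a different method.
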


     Note that the comparison theorem in   \cite{E-2008} (Theorem 5.2) is with the same obstacle process. Our Lemma \ref{com}  generalizes it to the case with different barriers by observing
     $$   f_1(s,y,z,v)-n( S_s^1-y)^-\les f_2(s,y,z,v)-n( S_s^2-y)^- ,\  (s,y,z,v) \in [0,T]\times \mathbb{R} \times \mathbb{R}^{d} \times \dbR,\ \dbP\mbox{-a.s.}  
     $$

\section{Reflected  stochastic recursive control problems   with jumps}\label{RSRCP}

\subsection{Formulation of the control problem}

Let $U\subset\dbR^m$ be compact. For $t\in[0,T],$   the admissible control $u(\cd)$ on $[t,T]$ is introduced   as the $U$-valued, $\dbF$-progressively measurable stochastic process. We denote by $\cU_{t,T}$  all the admissible controls on $[t,T]$. 
 
The state process    is described by the following controlled  SDE  with jumps,
%for $(t,x_t )\in[0,T]\times \cL_{\mathcal{F}_t}^2(\O;\dbR^n)$ and $u(\cd)\in \cU_{t,T}$,  
	\begin{equation} \label{state}
		\left\{ 
		\begin{array}{ll}
			\ns\ds\!\!\!\!  \mathrm dX_s=b( s, X_s,u_s )\mathrm  ds + \sigma (s, X_s,u_s)  \mathrm dB_s
			%				\ns\ds\!\!\!  \hskip 4cm     
			+\int_{E} \gamma (s,    X_{s-}, u_s,e) \tilde N (\mathrm ds,\mathrm de),\q s\in[t,T],\\
			\ns\ds\!\!\!\!  X_t=x_t,

		\end{array}\right.
	\end{equation}
where $(t,x_t )\in[0,T]\times L^2_{\mathcal F_t}( \Omega ; \mathbb R^n)$ is the initial pair, $u(\cd)\in \cU_{t,T}$, 
and the above involved coefficients  
	$ b:[0,T] \times \mathbb R^n \times U  \rightarrow  \mathbb R^n,$ $
		\sigma: [0,T] \times \mathbb R^n \times U  \rightarrow  \mathbb R^{n\times d}, $ $
		\gamma : [0,T] \times \mathbb R^n \times U\times E   \rightarrow \mathbb R^n 
	 $
are assumed to satisfy the following condition.

 \begin{description}
   \item[(H$_1$)] (\romannumeral1)   For all $ x  \in \dbR^n,\ e\in E $, $b(\cd,x,\cd)$, $\si(\cd,x,\cd)$ and $\gamma(\cd,x,\cd,e)$  are continuous in $(r,u)\in[0,T]\times U$;
	
  (\romannumeral2)  $b,\sigma$ are  Lipschitz continuous  in  $x\in\dbR^n$, uniformly with respect to $(r,u)\in [0,T]\times U$. And 
	 there exists a map  
%	 $\ell:E\rightarrow \mathbb R^+$ satisfying $\displaystyle \int_E \ell^2(e)\nu(\mathrm de) <+\infty$
$\ell(\cd) \in \cL_\nu^2(E;\dbR)$
	   such that for all
	$(r,u)\in [0,T]\times U$,   $x_0,x_1\in\dbR^n$, $e\in E$,
	$$  
	|\gamma(r,x_0,u,e)-\gamma(r,x_1,u,e)| \les \ell(e)|x_0-x_1|.
	$$
  
 \end{description}

From the classical theory of SDE with jumps, we get  the following results (refer to \cite{Barles-1997,BHL-2011,LP-2009}). 

\bl\label{Le-SDE}
\sl Under  {\bf (H$_1$)}, for any $(t,x_t )\in[0,T]\times L^2_{\mathcal F_t}( \Omega ; \mathbb R^n)$, $u(\cd)\in \cU_{t,T}$, the SDE with jumps \eqref{state}  admits the unique  solution  $X \equiv X^{t,x_t;u} \in \cS^2_{\mathbb F}(t,T;\mathbb R^n)$.  
Moreover, there exists  some constant  $C>0$ such that, for   $t\in[0,T]$, $x_t  ,x_t^\prime\in   L^2_{\mathcal F_t}( \Omega ; \mathbb R^n) $ and  $u(\cd)\in \cU_{t,T}$,  the following estimates hold, $\dbP$-a.s.,
$$\ba{ll} 
		\ns\ds\!\!\!  	{\rm (i)} \ \dbE^{\cF_t} \[  \sup \limits_{r\in[t,T]}|X^{t,x_t;u}_r|^2 \] \les C (1+ |x_t|^2 ),\\
 		\ns\ds\!\!\!		{\rm (ii)} \ \dbE ^{\cF_t} \[  |X^{t,x_t;u}_s  -x_t|^2 \] \les C(s-t)(1+|x_t|^2),\\
%	%
		\ns\ds\!\!\!  	{\rm (iii)} \ \dbE ^{\cF_t}\[\sup\limits_{r\in[t,T]}|X^{t,x_t ;u}_r -X^{t,x_t^\prime;u}_r |^2\]\les C  |x_t-x_t^\prime|^2.
%		\ns\ds\!\!\!  \dbE \[\sup\limits_{r\in[t,T]}|X^{t,x_1;u}_r -X^{t,x_0;u}_r |^k\]\les C  |x_1-x_0|^k.
	\ea
$$
\el

  Next,  with the state $X^{t,x_t;u} $ from \eqref{state}, we consider the following  RBSDE  with jumps,
	\begin{equation}\label{RBSDEP}
		\left\{\begin{array}{ll}
			\ns\ds \!\!\!\! 		{\rm(\romannumeral1)} \   (Y ,Z,V,A) \in  \sS_\mathbb F ^2[t,T];\\
			\ns\ds\!\!\!\! 			{\rm(\romannumeral2)} \     Y_s\! =   \Phi   ( X_T ^{t,x_t;u})  
			+   \int_s^T   f \big(  r, X_r^{t,x_t;u}, Y_r, Z_r, \int_El( e) V_r(e)\n(\mathrm  de) , u_r \big) \mathrm dr- ( A_T  - A_s ) 
			\\
			\ns\ds\!\!\!\! \hskip 1.38cm  -   \int_s^T Z_r \mathrm dB_r- \int_s^T   \int_E V_r(e) \tilde N(\mathrm dr,\mathrm de) ,\q   s\in [t,T];\\
			\ns\ds\!\!\!\! 			{\rm(\romannumeral3)} \   Y_s \leqslant   h  ( s,X_s^{t,x_t;u} ),\q   \mbox{a.e. } s \in  [t,T];\\
			\ns\ds\!\!\!\! 			{\rm(\romannumeral4)} \   \int_t^T \big(  h (s, X_{s-}^{t,x_t;u} )-Y_{s-} \big) \mathrm dA_s =0, 
		\end{array}\right.
	\end{equation} 
	 where  the driver $f    :    [0,T] \times \dbR^n \times \dbR \times \dbR^d  \times \dbR \times U \rightarrow \dbR$, the terminal   $\F  :    \dbR^n \to \dbR$ and the obstacle   $h:[0,T] \times \dbR^n \rightarrow \dbR$ are assumed to satisfy

 \begin{description}
   \item[(H$_2$)]  (\romannumeral1) for all $(x,y,z,v)\in  \dbR^n \times \dbR \times \dbR^d  \times \dbR$, $f(\cd,x,y,z,v,\cd)$ is continuous in $(r,u)\in[0,T]\times U$, $h(\cd,x)$ is continuous in $r\in [0,T]$;

  (\romannumeral2)  $f,h,\Phi$ is Lipschitz continuous in $ (x,y,z,v ) \in\dbR^n \times \dbR \times \dbR^d \times \dbR $, uniformly with respect to $(r,u)\in[0,T]\times U$;

	(\romannumeral3)  there exists some constant $\k>0$ such that  $0 \les l(  e) \les \k(1\land |e|)$;

%  (\romannumeral4) There exists some constant $l_f$ satisfying $\k l_f >-1$, such that for any $s\in[0,T]$, $(y,z)\in \dbR \times\dbR^{d}$ and $v,v'\in  \dbR  $,
%$$ f(s,x,y,z,v,u)-f(s,x,y,z,v',u)\les l_f (v-v')  ;$$
 
	 %

  (\romannumeral4)  for all $x\in\dbR^n$, $ \F(x)\les h(T,x)$.
  
 \item[(C)] $v\to f(t,x,y,z,v)$ is non-decreasing, for all $(t,x,y,z)\in[0,T]\times\dbR^n\times \mathbb{R} \times \mathbb{R}^{d}$.  
   %\item[(H$_3$)] For all $(s,x,y,z,v)\in [t, T] \times \mathbb{R}^n \times \mathbb{R} \times \mathbb{R}^{d} \times \mathbb{R}$, the map $ f(s,x,y,z,v,u)$ is  non-decreasing in $v\in \dbR$.
 \end{description}

%\br\label{}\sl
%We shall  apply Lemmas \ref{well} and \ref{est-coupled} to RBSDE with jumps \eqref{RBSDEP}. So we need to check that  {\bf{(A$_1$)}} is satisfied. 
%Let us focus on 
%{\bf{(A$_1$)}}-(iii) firstly.
%In fact, for $(s,y,z,v)\in[t,T]\times \dbR\times\dbR^d\times \cL_\n^2(E;\dbR)$, $\ds g(s,y,z,v(\cd))= f(s,X_s^{t,x_t;u},y,z,\int_E l( e)v(e) \n(\mathrm de),u)$. Then, for any $v(\cd), v'(\cd)\in \cL_\n^2(E;\dbR)$, we have 
%%
%%
%%
%$$
%\ba{ll}
%\ns\ds g(s,y,z,v)-g(s,y,z,v')\\
%%
%\ns\ds= f(s,X_s^{t,x_t;u},y,z,\int_El( e)v(e) \n(\mathrm de),u)-f(s,X_s^{t,x_t;u},y,z,\int_El( e)v'(e) \n(\mathrm de),u)\\
%%
%\ns\ds \les  \int_E l_f l( e)(v(e)-v'(e))\n(\mathrm de) .
% %
%\ea
%$$
%%
%%
%%
%If $l_f<0$, then $ -(1\wedge|e|)<\k l_f(1\wedge|e|) \les l_f l( e)\les 0 $, $\dbP$-a.s.;
%%
%if $l_f\ges0$, then $ 0 \les l_f l( e)\les \k l_f(1\wedge|e|) $, $\dbP$-a.s..
%Therefore, {\bf{(A$_1$)}}-(iii) holds true. 
%%
%The others can be checked easily.
% 
%
%
%\er
% {\color{red} Note  that, the obstacle process of \eqref{RBSDEP} is $h  ( \cd,X_\cd^{t,x_t;u} )$, which is easily proven to satisfy {\bf(A$_1$)}-(iii) under {\bf (H$_1$)} and {\bf (H$_2$)}.}
We remark that the obstacle process in \eqref{RBSDEP} is $h  ( \cd,X_\cd^{t,x_t;u} )$, which obviously satisfies {\bf(A$_1$)}-(iii). Hence, under {\bf (H$_1$)} and {\bf (H$_2$)}, for any $(t,x_t )\in[0,T]\times L^2_{\mathcal F_t}( \Omega ; \mathbb R^n)$, $u(\cd)\in \cU_{t,T}$, Lemma \ref{well} can be applied here to guarantee
\eqref{RBSDEP} to admit  the unique solution $(Y,Z, V, A)\equiv(Y^{t,x_t;u},Z^{t,x_t;u},V^{t,x_t;u},A^{t,x_t;u}) \in  \sS_\mathbb F ^2[t,T] $. 
Moreover, under {\bf (H$_1$)}, {\bf (H$_2$)} and {\bf (C)}, by adopting the technique of Proposition 6.1 in \cite{BL-2011} and using Lemma  \ref{Le-SDE},   the following   estimates hold, for $t\in[0,T]$,  $x_t,x_t^\prime \in   L^2_{\mathcal F_t}( \Omega ; \mathbb R^n) $ and $u(\cd)\in \cU_{t,T}$, $\dbP$-a.s., 
	 \begin{equation}\label{esti-rbsde}
	 	\begin{array}{ll}
	 		\ns\ds\!\!\!   {\rm (\romannumeral1)}\ \dbE ^{\mathcal F_t}\[   \sup\limits_{r\in[t,T]}  |Y^{t,x_t;u}_r |^2 + \int_t^T \big(   |Z^{t,x_t;u}_r |^2 +  \|V^{t,x_t;u}_{r}(\cd) \|_{\n,2 } ^2      \big)\mathrm dr 
	 		+\sup\limits_{r\in[t,T]}   |A_r^{t,x_t;u}|^2    \]
	 		\les C (1+|x_t|^2);\\
	 		%用到了RBSDEP的比较定理
	 		\ns\ds\!\!\! 	{\rm (\romannumeral2)}\  \dbE  ^{\mathcal F_t} \[ \sup\limits_{r\in[t,T]} |Y^{t,x_t;u}_r  -Y^{t,x_t^\prime;u}_r |^2  \] \les C  |x_t-x_t^\prime|^2.\\
	 	\end{array}
	 	\end{equation}
 
With the above preparation,    for any initial pair $(t,x)\in[0,T]\times\dbR^n$ and the admissible control $u(\cd)\in\cU_{t,T}$, we  define the cost functional of the control problem as follows, 
	 \begin{equation*}
%	 	\label{cost}
	 J(t,x;u(\cd)) := Y^{t,x;u}_t,
	 \end{equation*}
	 which is of recursive form.

\no Notice that it is classical in the theory of BSDEs that, for $(t,x_t)\in[0,T]\times   L_{\cF_t}^2(\O;\dbR^n)$ and $u(\cd)   \in\cU_{t,T}$,  
	 $ 
	 J(t,x_t;u(\cd) )=J(t,x;u (\cd)) |_{x=x_t} =Y^{t,x_t;u}_t  $ holds true, $ \dbP\mbox{-a.s.}$, for which we   refer to   \cite{BL-2009, BL-2011, WY-2008}, etc.
	%

% {\color{red}  
%\br\sl 
%%\label{dif-lit}
%Note that, the barrier in \eqref{RBSDEP} is  $ h(\cd,X_\cd^{t,x_t;u})$.  Generally, $ h(\cd,X_\cd^{t,x_t;u})$ does not satisfy the conditions in \cite{CM-2008} for the barrier, such as the jumping times of the barrier being inaccessible stopping times in \cite{HO-2003}, the barrier being quasi-left-continuous  in  \cite{E-2008}.
%Actually these conditions may be satisfied if  we strengthen the smoothness property of   $ h(\cd,\cd)$.
%To emphasize the research of the control problem,  we shall not discuss this point here. 
%\er}

\ms

Now, we formulate the following  reflected stochastic recursive  control problem with jumps which is parameterized by the initial pair $(t,x)\in [0,T] \times \dbR^n$.

	  {\bf Problem (C)$_{t,x}$}  {\sl For any $(t,x)\in[0,T]\times\dbR^n$, find $\bar u(\cd) \in\cU_{t,T}$ such that
	 	\begin{equation}\label{VF}
	 	J(t,x;\bar u (\cd))=\essinf_{u(\cd) \in \cU_{t,T}}J(t,x;u (\cd)):= W(t,x).
	   \end{equation}}
	  The control $\bar u(\cd) $ satisfying \eqref{VF} is called  as an optimal control of Problem (C)$_{t,x}$,  the corresponding $\bar X =X^{t,x;\bar u} $ is the  corresponding optimal state process,   and $W:[0,T]\times\dbR^n\to\dbR$ is said to be  the value function of   Problem (C)$_{t,x}$. 
 
% \ss
 
% In the following parts, we carry out  the research of the properties  and the characterization of the value function $W(\cd,\cd)$.
	 
%	\subsection{The properties of the value function}
%  	\ms

	 	 Note that, as the essential infimum of the $\mathcal{F}_t$-measurable cost functional $J(t,x;u(\cd) )$ over a family of control processes, for all $(t,x)\in [0,T]\times \mathbb R^n$, $W(t,x)$ is an $\mathcal{F}_t$-measurable random variable. 
However,  we can   prove it   to be deterministic under our conditions, similar to Proposition 3.3 in \cite{BL-2008}, and Proposition 3.1 in \cite{BL-2011}, etc. 
\vspace{-0.03cm}
	 	\begin{proposition}\sl
%	 		\label{deterministic}
	 		For all $(t,x ) \in [0,T] \times \mathbb{R}^n  $, the value function $W(t,x)$ is a deterministic function, i.e., $\mathbb{E}[W(t,x)]=W(t,x)$, $\dbP$-a.s. 
	 	\end{proposition}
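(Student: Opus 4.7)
The plan is to adapt the argument of Buckdahn--Li (see Proposition 3.3 in \cite{BL-2008} and Proposition 3.1 in \cite{BL-2011}) to the jump-diffusion setting at hand. The structural fact that drives the proof is the following: although an admissible control $u(\cd) \in \cU_{t,T}$ is $\dbF$-adapted on $[t,T]$ and may depend on the pre-$t$ history through $\cF_t$, the driving noise on $[t,T]$---that is, the Brownian increments $\{B_r - B_t\}_{r \in [t,T]}$ and the Poisson random measure $N$ restricted to $(t,T] \times E$---is independent of $\cF_t$. Hence, conditional on $\cF_t$, the laws of $X^{t,x;u}$ and of $(Y,Z,V,A)^{t,x;u}$ are determined by the post-$t$ noise together with the restriction of $u$ to $[t,T]$.

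First, for each $\omega_0 \in \Omega$ I would construct a measure-preserving bijection $\tau^{\omega_0}: \Omega \to \Omega$ which replaces the pre-$t$ history by that of $\omega_0$ while leaving the post-$t$ increments of $B$ and the post-$t$ restriction of $N$ unchanged. Using the product structure $\Omega = \Omega_1 \times \Omega_2$ together with the independence under $\dbP_1$ of $B|_{[0,t]}$ and $\{B_r - B_t\}_{r \ges t}$, and the analogous independence under $\dbP_2$ of $N|_{(-\infty,t]\times E}$ and $N|_{(t,\infty)\times E}$, each such $\tau^{\omega_0}$ preserves $\dbP$. For any $u \in \cU_{t,T}$, the shifted control $u^{\omega_0}(s,\omega):= u(s, \tau^{\omega_0}\omega)$ is again $U$-valued and $\dbF$-progressively measurable on $[t,T]$; and pathwise uniqueness for \eqref{state} and \eqref{RBSDEP} together with the $\cF_t$-measurability of $Y^{t,x;u}_t$ yield
$$ J(t,x;u)(\tau^{\omega_0}\omega) \;=\; J(t,x;u^{\omega_0})(\omega), \qquad \dbP\mbox{-a.s.} $$

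Taking the essential infimum over $u \in \cU_{t,T}$ on both sides, and using that the map $u \mapsto u^{\omega_0}$ is a bijection on $\cU_{t,T}$, one obtains $W(t,x)(\tau^{\omega_0}\omega) = W(t,x)(\omega)$ for $\dbP$-a.e.\ $\omega$, and this equality holds simultaneously for $\omega_0$ in a set of full $\dbP$-measure. Since $W(t,x)$ is $\cF_t$-measurable and the family $\{\tau^{\omega_0}\}$ acts transitively (up to $\dbP$-null sets) on the pre-$t$ configurations, a Fubini argument forces $W(t,x)$ to be $\dbP$-a.s.\ constant, hence equal to $\dbE[W(t,x)]$.

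The main obstacle will be the rigorous treatment of the Poisson component: one must verify that the pushforward of $N$ under the swap of pre-$t$ point configurations is still a Poisson random measure with compensator $\mathrm dt\,\n(\mathrm de)$, and that $u^{\omega_0}$ remains genuinely $\dbF$-progressively measurable. This is carried out by decomposing $\Omega_2 \simeq \Omega_2^{\le t} \times \Omega_2^{>t}$, where the splitting is measurable because $N|_{(-\infty,t]\times A}$ and $N|_{(t,\infty)\times A}$ are independent for every $A \in \cB(E)$ with $\n(A) < \infty$; then $\tau^{\omega_0}$ acts as the identity on the second factor. The Brownian side is handled analogously through the Gaussian independence of $B|_{[0,t]}$ and the increments $\{B_r - B_t\}_{r \in [t,T]}$.
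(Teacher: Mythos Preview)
Your plan follows the route the paper itself points to (the argument of \cite{BL-2008,BL-2011}), and the structural insight---that the forward--backward system on $[t,T]$ is driven only by the post-$t$ increments of $B$ and the post-$t$ restriction of $N$---is exactly right. There is, however, a concrete slip in the construction of $\tau^{\omega_0}$.

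A map that ``replaces the pre-$t$ history by that of $\omega_0$'' sends every $\omega$ (decomposed as $(\omega',\omega'')$ into pre-$t$ and post-$t$ parts) to $(\omega'_0,\omega'')$; this is a projection onto the single fibre over $\omega'_0$, hence neither a bijection nor measure-preserving (its pushforward of $\dbP$ is the product of a Dirac mass at $\omega'_0$ with the post-$t$ marginal). For the same reason $u\mapsto u^{\omega_0}$ is not a bijection on $\cU_{t,T}$: its range is only the strict subclass of controls adapted to the post-$t$ filtration. Since $\tau^{\omega_0}$ does not preserve null sets, one cannot ``take the essential infimum on both sides'' and commute it with composition by $\tau^{\omega_0}$; so the step producing $W(t,x)(\tau^{\omega_0}\omega)=W(t,x)(\omega)$ does not go through as written.

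Two standard repairs are available. The approach actually used in \cite{BL-2008} employs \emph{genuine} bijections that are only quasi-measure-preserving: Cameron--Martin translations $\tau_h$ (with $\dot h\in L^2$ and $h$ constant on $[t,T]$) on the Wiener coordinate, for which $\dbP_1\circ\tau_h^{-1}\sim\dbP_1$ by Girsanov, together with analogous transformations on the Poisson side; invariance of $W(t,x)$ under this rich family then forces it to be $\dbP$-a.s.\ constant. Alternatively, you can drop the bijection claim and run your ``freeze the past'' idea directly: for $\dbP$-a.e.\ $\omega_0$ the frozen control $u^{\omega_0}$ belongs to the subclass $\cU^0_{t,T}\subset\cU_{t,T}$ of controls adapted to the post-$t$ filtration, and for each $v\in\cU^0_{t,T}$ the cost $J(t,x;v)$ is deterministic (being simultaneously $\cF_t$-measurable and measurable with respect to the independent post-$t$ $\sigma$-field). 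Hence $J(t,x;u)(\omega_0)=J(t,x;u^{\omega_0})\ge W^0(t,x):=\inf_{v\in\cU^0_{t,T}}J(t,x;v)$ for a.e.\ $\omega_0$, giving $W(t,x)\ge W^0(t,x)$ a.s., while $W(t,x)\le W^0(t,x)$ is immediate from $\cU^0_{t,T}\subset\cU_{t,T}$. Either route closes the gap.
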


 \vspace{-0.05cm}
	From the definition of the value function and the estimates in \eqref{esti-rbsde}, we get the following result directly.
	 	\begin{proposition}\sl
	 		\label{val-lip}
	 		 Under the conditions {\rm\textbf{(H$_1$)}}, {\rm \textbf{(H$_2$)}} and {\rm \textbf{(C)}}, $W(\cd,\cd)$ is of linear  growth and Lipschitz continuous in $x\in\dbR^n$, i.e., for all $t\in[0,T]$, $x,x^\prime\in\dbR^n$, 
	$$  	|W(t,x)|\leqslant C(1+|x|),  \qq   |W(t,x)-W(t,x')|\leqslant C|x-x'|.	$$
	 	\end{proposition}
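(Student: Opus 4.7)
The plan is to prove both assertions directly from the definition $W(t,x)=\essinf_{u(\cdot)\in\cU_{t,T}} Y^{t,x;u}_t$ together with the two estimates in \eqref{esti-rbsde}. The key point is that estimate \eqref{esti-rbsde}-(i) provides an $L^2$-bound of $Y^{t,x;u}_r$ that is \emph{uniform in the control} $u(\cdot)$, and \eqref{esti-rbsde}-(ii) provides a stability estimate that is likewise uniform in $u(\cdot)$.

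For the linear growth, I would fix any constant control $u_0(\cdot)\in\cU_{t,T}$ (such a control exists since $U$ is nonempty) and apply \eqref{esti-rbsde}-(i) with $x_t=x$ deterministic. Evaluating the supremum at $r=t$ yields
\[
|Y^{t,x;u_0}_t|^2\leqslant \dbE^{\cF_t}\Bigl[\sup_{r\in[t,T]}|Y^{t,x;u_0}_r|^2\Bigr]\leqslant C(1+|x|^2),\qquad \dbP\text{-a.s.},
\]
so $W(t,x)\leqslant J(t,x;u_0)\leqslant C(1+|x|)$. Conversely, exactly the same estimate applied to an arbitrary $u(\cdot)\in\cU_{t,T}$ gives $J(t,x;u(\cdot))\geqslant -C(1+|x|)$ uniformly in $u$, whence $W(t,x)=\essinf_{u(\cdot)} J(t,x;u(\cdot))\geqslant -C(1+|x|)$. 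Combined with the previous proposition (which ensures $W$ is deterministic), this yields $|W(t,x)|\leqslant C(1+|x|)$.

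For the Lipschitz continuity in $x$, I would use the elementary inequality
\[
\bigl|\essinf_{u(\cdot)}\alpha(u(\cdot))-\essinf_{u(\cdot)}\beta(u(\cdot))\bigr|\leqslant \esssup_{u(\cdot)}|\alpha(u(\cdot))-\beta(u(\cdot))|,
\]
applied with $\alpha(u(\cdot))=Y^{t,x;u}_t$ and $\beta(u(\cdot))=Y^{t,x';u}_t$. Estimate \eqref{esti-rbsde}-(ii), again evaluated at $r=t$, yields
\[
|Y^{t,x;u}_t-Y^{t,x';u}_t|^2\leqslant \dbE^{\cF_t}\Bigl[\sup_{r\in[t,T]}|Y^{t,x;u}_r-Y^{t,x';u}_r|^2\Bigr]\leqslant C|x-x'|^2,\qquad\dbP\text{-a.s.,}
\]
with $C$ independent of $u(\cdot)$. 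Taking the essential supremum over $u(\cdot)$ and using the inequality above gives $|W(t,x)-W(t,x')|\leqslant C|x-x'|$.

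There is no real obstacle here beyond bookkeeping: the only subtle point is the passage from the $\mathcal{F}_t$-conditional estimates to a deterministic bound on $W$, which is justified by the previous proposition showing that $W(t,x)$ is deterministic; alternatively, one may take expectations throughout since the right-hand sides $C(1+|x|)$ and $C|x-x'|$ are deterministic. I would note explicitly that Lipschitz continuity in $t$ is \emph{not} claimed at this stage, as it would require additional regularity tools developed in Section 4.
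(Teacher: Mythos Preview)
Your proposal is correct and follows exactly the approach the paper indicates: the paper states just before the proposition that the result follows ``directly'' from the definition of the value function and the estimates in \eqref{esti-rbsde}, and gives no further proof. Your write-up is precisely the intended argument, with the uniform-in-$u$ constants in \eqref{esti-rbsde} doing all the work.
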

 
\ms 	 
 
To study 
 the dynamic programming principle (DPP, for short) of   Problem (C)$_{t,x}$, we need to  generalize the   notation of the stochastic backward semigroup introduced initially  by Peng \cite{P-1997} to our framework.

\begin{definition}\sl
 Given the initial data $(t,x)\in [0,T]\times \mathbb R^n,$ $\d\in[0,T-t]$,   $u(\cd)  \in \mathcal{U}_{t, t+\d}$ and a real-valued random variable $\eta \in   L^{2}_ {\mathcal{F}_{t+\delta}}(\Omega; \mathbb{R}^n)$, we define the backward stochastic semigroup  $G_{s, t+\delta}^{t, x ; u}[\cd]$ as
	$$
	G_{s, t+\delta}^{t, x ; u}[\eta]:= \tilde{Y}_{s}^{t, x ; u} , \quad s \in[t, t+\delta],
	$$
	where $(\tilde{Y} ^{t, x ; u}, \tilde{Z} ^{t, x ; u},\tilde{V} ^{t, x ; u}, \tilde{A} ^{t, x ; u})   $ is the solution of the following RBSDE with jumps on $[t,t+\delta]$:
	\begin{equation*}\left\{
		\begin{array}{ll}
			\ns\ds\!\!\! {\rm(\romannumeral1)} \ (\tilde{Y} ^{t, x ; u}, \tilde{Z} ^{t, x ; u},\tilde{V} ^{t, x ; u}, \tilde{A} ^{t, x ; u})\in\sS^2_ \mathbb F[t,t+\d];\\
			\ns\ds\!\!\!\! 			{\rm(\romannumeral2)} \     \tilde{Y}_{s}^{t, x ; u}\! =   \eta  
			+   \int_s^{t+\delta}   f \big(  r, {X}_{r}^{t, x ; u}, \tilde{Y}_{r}^{t, x ; u}, \tilde{Z}_{r}^{t, x ; u},   \int_El( e) \tilde{V}_{r}^{t, x ; u}(e)\n(\mathrm  de) , u_r \big) \mathrm dr 
			-  ( \tilde{A}_{t+\delta}^{t, x ; u} - \tilde{A}_{s}^{t, x ; u} )
			\\
			\ns\ds\!\!\!\! \hskip 1.9cm -   \int_s^{t+\delta} \tilde{Z}_{r}^{t, x ; u} \mathrm dB_r
			- \int_s^{t+\delta}   \int_E \tilde{V}_{r}^{t, x ; u}(e) \tilde N(\mathrm dr,\mathrm de) ,\quad  s\in [t,t+\d];\\
			\ns\ds\!\!\!\! 			{\rm(\romannumeral3)} \    \tilde{Y}_{s}^{t, x ; u} \leqslant   h  ( s,{X}_{s}^{t, x ; u} ),\q  a.e. \ s \in  [t,t+\d];\\
			\ns\ds\!\!\!\! 			{\rm(\romannumeral4)}  \int_t^{t+\delta} \big(  h (r, {X}_{r-}^{t, x ; u} )-\tilde{Y}_{r-}^{t, x ; u} \big) \mathrm d\tilde{K}_{r}^{t, x ; u}=0 .
		\end{array}
	\right.\end{equation*}

\end{definition}

Note that for RBSDE with jumps \eqref{RBSDEP}, we have the following
 $$J(t, x ; u(\cd))  =Y_{t}^{t, x ; u}=G_{t, T}^{t, x ; u}\big[\Phi(X_{T}^{t, x ; u})\big]=G_{t, t+\delta}^{t, x ; u}[Y_{t+\delta}^{t, x ; u}] =G_{t, t+\delta}^{t, x ; u}\big[J\big(t+\delta, X_{t+\delta}^{t, x ; u} ; u(\cd) \big)\big].$$

With the help of   backward stochastic semigroup, we obtain the dynamic programming principle of Problem (C)$_{t,x}$   as follows.  
	\begin{proposition}\label{Th-DPP}\sl
		(DPP) Under {\rm\textbf{(H$_1$)}},  {\rm\textbf{(H$_2$)}} and {\rm\textbf{(C)}},  for any $0\leqslant t<t+\delta \leqslant T,\ x \in \mathbb{R}^{n}$,
		\begin{equation}\label{DPP}
			\begin{array}{ll}
				W(t, x)=\underset{u \in \mathcal{U}_{t, t+\delta}}{\operatorname{essinf}} G_{t, t+\delta}^{t, x ; u}\big[W(t+\delta, X_{t+\delta}^{t, x ; u})\big].
			\end{array}
		\end{equation}		
	\end{proposition}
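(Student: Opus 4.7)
The approach is Peng's backward-semigroup scheme, adapted here to the jump-reflected setting. Denote the right-hand side of \eqref{DPP} by $\bar W(t,x)$; I will establish $W\geq\bar W$ and $W\leq\bar W$ separately. The key fact used throughout is the \emph{monotonicity} of $G^{t,x;u}_{t,t+\delta}[\cdot]$ in its terminal argument, which is an immediate consequence of Lemma \ref{com} applied to two RBSDEs sharing the same driver $f$, the same obstacle process $h(\cdot,X^{t,x;u}_\cdot)$, and ordered terminal conditions.

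For $W(t,x)\geq\bar W(t,x)$, pick any $u(\cdot)\in\cU_{t,T}$. By the flow property of the state SDE \eqref{state} and the uniqueness of the solution to the RBSDE with jumps \eqref{RBSDEP}, the semigroup identity already displayed after the definition of $G$ gives $J(t,x;u(\cdot))=G^{t,x;u}_{t,t+\delta}\bigl[J(t+\delta,X^{t,x;u}_{t+\delta};u(\cdot))\bigr]$. Since $J(t+\delta,X^{t,x;u}_{t+\delta};u(\cdot))\geq W(t+\delta,X^{t,x;u}_{t+\delta})$ by the definition of the value function, monotonicity yields $J(t,x;u(\cdot))\geq G^{t,x;u}_{t,t+\delta}\bigl[W(t+\delta,X^{t,x;u}_{t+\delta})\bigr]\geq\bar W(t,x)$, and taking essinf over $u(\cdot)\in\cU_{t,T}$ closes this direction.

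For the reverse inequality $W(t,x)\leq\bar W(t,x)$, fix $\varepsilon>0$ and $u(\cdot)\in\cU_{t,t+\delta}$. Partition $\mathbb R^n$ into Borel cells $\{O_i\}_{i\geq 1}$ of diameter less than $\varepsilon$, pick representatives $x_i\in O_i$, and select $u^i(\cdot)\in\cU_{t+\delta,T}$ that is $\varepsilon$-optimal for Problem (C)$_{t+\delta,x_i}$; this relies crucially on the determinism of $W$ asserted in the proposition just above. Define the concatenation
$$ \bar u(s):= u(s)\,\mathbf 1_{[t,t+\delta)}(s)+\sum_{i\geq 1}u^i(s)\,\mathbf 1_{\{X^{t,x;u}_{t+\delta}\in O_i\}}\,\mathbf 1_{[t+\delta,T]}(s),$$
which, after a routine truncation-plus-limit argument, belongs to $\cU_{t,T}$. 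Combining the semigroup identity, the Lipschitz continuity of $W(t+\delta,\cdot)$ from Proposition \ref{val-lip}, the continuous-dependence estimate for RBSDEs with jumps recalled in Section \ref{Sec-Pre}, and the $L^2$ bound on $X^{t,x;u}_{t+\delta}$ from Lemma \ref{Le-SDE}, I expect to obtain
$$ W(t,x)\leq J(t,x;\bar u(\cdot))\leq G^{t,x;u}_{t,t+\delta}\bigl[W(t+\delta,X^{t,x;u}_{t+\delta})\bigr]+C\varepsilon.$$
Taking essinf over $u(\cdot)\in\cU_{t,t+\delta}$ and letting $\varepsilon\downarrow 0$ yields the inequality.

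The main obstacle is the measurable pasting of $\{u^i(\cdot)\}_i$ at the $\mathcal F_{t+\delta}$-measurable endpoint $X^{t,x;u}_{t+\delta}$: one must verify that $\bar u(\cdot)$ is genuinely $\mathbb F$-progressively measurable and $U$-valued, and that substituting the random vector $X^{t,x;u}_{t+\delta}$ into the deterministic map $W(t+\delta,\cdot)$ is compatible with the essinf defining $W$ on each cell $\{X^{t,x;u}_{t+\delta}\in O_i\}$. The determinism of $W$ and its Lipschitz continuity in $x$ are decisive here, since they permit absorbing $|W(t+\delta,x_i)-W(t+\delta,X^{t,x;u}_{t+\delta})|\leq C\varepsilon$ into the remainder. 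The jump integral and the reflecting process introduce no genuinely new obstruction: the obstacle $h(\cdot,X^{t,x;u}_\cdot)$ is identical across every pair of RBSDEs being compared, so the comparison and continuous-dependence tools of Section \ref{Sec-Pre} transfer unchanged.
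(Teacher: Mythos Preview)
Your proposal is correct and matches the approach the paper adopts: the paper defers the proof to Buckdahn--Li \cite{BL-2008,BL-2009}, and your scheme (determinism of $W$, partition of the state space, $\varepsilon$-optimal controls at representatives, concatenation via $\cF_{t+\delta}$-measurable indicators, then comparison and continuous dependence for the RBSDE with jumps) is exactly that method. One terminological remark: although you label this ``Peng's backward-semigroup scheme,'' the paper explicitly notes that Peng's original argument in \cite{P-1997} does not apply here and that the Buckdahn--Li refinement is needed; in substance, however, your partition-and-pasting argument is the Buckdahn--Li version, so there is no actual discrepancy.
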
  
	 
Due to the lack of the continuity of the coefficients in control variable,  the method in \cite{P-1997} can not be used here.  However, we can  adopt the approach introduced in \cite{BL-2008, BL-2009} to prove \eqref{DPP}.  The detail is similar to Theorem 3.1 in  \cite{BL-2009}, so that we skip it here.

Thanks to the DPP, 	  we can also get the  continuity of $W(\cd,\cd)$ in $t\in[0,T]$ as follows.
\bp\label{}\sl
		Under {\rm\textbf{(H$_1$)}}, {\rm \textbf{(H$_2$)}} and {\rm \textbf{(C)}},    the value function $W(\cd,\cd)$ is continuous in $t\in[0,T]$. 
	\ep
 For the proof, we may refer to Theorem 3.2 in \cite{BL-2011}.
	\subsection{The Obstacle Problems of partial integral-differential equations of HJB type}\label{VIS}
	
In this subsection, 
%by virtue of DPP, 
we aim to associate Problem (C)$_{t,x}$ with a kind of partial differential equations. To simplified the notations, we first put 
%$(t,x,u)\in [0.T]\times \dbR^n\times U$ and $\Psi\in C^{1,2}([0,T]\times\dbR^n;\dbR)$, we set
	$$
	\begin{array}{ll}
\ns\ds\!\!\! \cL^{u} \Psi(t,x) :=   \Psi_x(t, x).  b(t, x, u)  + \frac{1}{2} \tr \big(\sigma \sigma^\top(t, x, u)  \Psi_{xx}(t, x)\big),\\
		\ns\ds\!\!\! \cB^{u} \Psi(t, x) :=\int_{E}\big[\Psi \big(t, x+\gamma(t, x, u,e)\big)-\Psi(t, x)- \Psi_x(t, x). \gamma(t, x, u, e)\big]  \nu(\mathrm de),	\\
		\ns\ds\!\!\!	\cC^{u} \Psi(t, x) :=\int_{E}  l( e) \big[ \Psi\big(t, x+\gamma(t, x, u, e)\big)-\Psi(t, x)\big]  \nu(\mathrm d e),\\
 	\ns\ds\!\!\!   \mathbb H(t,x, (\Psi , \Psi_x , \Psi_{xx})(t,x), u)	:= \cL^{u} \Psi(t,x) + \cB^{u} \Psi(t,x)	 \\
\ns\ds\hskip 4.6cm		+f\big(t,x,\Psi(t,x),  \Psi_x(t,x). \sigma(t,x,u)   , \cC^{u} \Psi(t,x),  u \big),
	\end{array}
	$$
where $(t,x,u)\in [0,T]\times \dbR^n\times U$ and $\Psi\in C^{1,2}([0,T]\times\dbR^n;\dbR)$.
Then we consider  the following   obstacle problem   for PIDE   of HJB type,
	\begin{equation}\label{HJB}
		\left\{
		\ba{ll}
		\ns\ds\!\!\!\! \max\Big\{ \! W(t,x)-h(t,x),- \frac{\partial}{\partial t}W(t,x) \!-\!\inf_{u\in U}  \mathbb H \big(
		t,x,(W,W_x, W_{xx})(t,x),u \big)  \Big\}=0,\ 
		(t,x)\in[0,T]\times\dbR^n,\\
		\ns\ds\!\!\!\! W(T,x) = \Phi(x),\quad x\in \mathbb R^n.
		\ea
		\right.
	\end{equation}
%	where the Hamiltonian is introduced as 
% %
%	\begin{equation}\label{Hami}
%		\begin{array}{ll}
%			%
%			\ns\ds\  \mathbb H(t,x, (\Psi , \Psi_x , \Psi_{xx})(t,x), u)\\
%\ns\ds			:= \cL^{u} \Psi(t,x) + \cB^{u} \Psi(t,x)	 +f\big(t,x,\Psi(t,x),   \sigma(t,x,u)   \Psi_x(t,x), \cC^{u} \Psi(t,x),  u \big),
%			%
%			%		  
%		\end{array}
%	\end{equation} 
% $ (r,x,y,p,P,u)\in [0,T]\times \dbR^n\times\dbR\times \dbR^n\times \dbS^n\times U $.
% 
% 
%
%In fact,   \eqref{HJB} is  of HJB type.
For convenience, we call  \eqref{HJB} as PIDE. The aim is to associate the value function $W(\cd,\cd)$  (in \eqref{VF}) with \eqref{HJB}. 
%As we see in the previous study,
 Under our condition, 
$W(\cd,\cd)$ is not necessarily smooth, which pushes us to resort to a kind of weak solution, i.e., viscosity solution, 
introduced firstly by  Crandall, Lions \cite{CL}; we also refer to Crandall, Ishii, and Lions \cite{CIL}.
 We first  generalize  the notion of the viscosity solution  to adapt  to   PIDE \eqref{HJB}.

	\begin{definition}\label{vis-def}\sl
	Let  $W\in C\big([0, T] \times \mathbb{R}^{n};\dbR\big)$.   
	 
		{\rm(\romannumeral1)}  
	We call $W(\cd,\cd)$ as a viscosity subsolution of    \eqref{HJB} if $W(T,x)\leqslant\Phi(x)$, for all $x\in \mathbb{R}^{n}$, 
	 and if for all
		functions $\varphi \in C_{l, b}^{3}\big([0, T] \times \mathbb{R}^{n}\big)$ and any sufficiently small $\delta>0$,
%		and $(t, x) \in[0, T) \times \mathbb{R}^{n}$ such that $W-\varphi$ attains a local	maximum at $(t,x)$,
		%
		\begin{equation*}
			\begin{array}{ll}
				\ns\ds\!\!\!\! 	
				\max \Big\{W(t, x)-h(t, x),-\frac{\partial \varphi}{\partial t}(t, x)-\underset{u \in U}{\inf}\big\{ \cL^{u} \varphi(t,x)
				
				+\cB^{\delta, u}(W, \varphi)(t, x)\\
				\ns\ds\!\!\!\! 	\hskip 3.6cm
				+	f \big (t, x, W(t, x),   \varphi_x(t, x).   \sigma(t, x, u ), \cC^{\delta, u}(W, \varphi)(t, x), u \big)\big\}\Big\} \leqslant 0,
			\end{array}
		\end{equation*}
		holds at any local maximum point $(t,x)$ of $W-\varphi$, where
%		for any sufficiently small $\delta>0$, 
		\begin{equation*}
			\begin{array}{ll}
				%
%				\ns\ds\!\!\!\! 	
%				\cA^{u} \varphi(t, x):=\frac{1}{2} \operatorname{tr}\big(\sigma \sigma^{T}(t, x, u)  \varphi_{xx}(t, x)\big)+  \varphi_x(t, x)   b(t, x, u),\\
				%
				\ns\ds\!\!\!\! 	
				\cB^{\delta, u}(W, \varphi)(t, x):=  \int_{E_{\delta}}\big[ \varphi(t, x+\gamma(t, x, u, e))-\varphi(t, x) -\varphi_x(t, x) .  \gamma(t, x, u, e)\big] \nu(\mathrm d e)
				\\
				\ns\ds\!\!\! 
				\hskip 2.825cm
				+\int_{E_{\delta}^{c}}  \big[  W(t, x+\gamma(t, x, u, e))-W(t, x)- \varphi_x(t, x).   \gamma(t, x, u, e)\big]\nu(\mathrm d e),\\
				\ns\ds\!\!\!\! 	
				\cC^{\delta, u}(W, \varphi)(t, x):= \int_{E_{\delta}}l( e)\big[ \varphi(t, x+\gamma(t, x, u,   e))-\varphi(t, x)\big]  \nu(\mathrm d e) \\
				\ns\ds\!\!\! 
				\hskip 2.825cm
				+\int_{E_{\delta}^{c}}l(  e) \big[W(t, x+\gamma(t, x, u,  e))-W(t, x)\big] \nu(\mathrm d e),
				
			\end{array}
		\end{equation*}
		with $E_{\delta}:=\{e \in E   \mid |e|  <\delta\} $.

		{\rm(\romannumeral2)} 
	We call $W(\cd,\cd)$ as a  viscosity supersolution of    equation \eqref{HJB} if $W(T,x)\geqslant\Phi(x)$, for all $x\in \mathbb{R}^{n}$, and if for all functions $\varphi \in C_{l, b}^{3}\big([0, T] \times \mathbb{R}^{n}\big)$, sufficiently small $\d>0$ and the local minimizer $(t,x) \in[0, T) \times \mathbb{R}^{n}$ of $W-\varphi$,  
%	and $(t, x) \in[0, T) \times \mathbb{R}^{n}$ such that $W-\varphi$ attains a local minimum at $(t,x)$,
		%
			\begin{equation}\label{VIS-sup}
			\begin{array}{ll}
				\ns\ds\!\!\!\! 	
				\max \Big\{W(t, x)-h(t, x),-\frac{\partial \varphi}{\partial t}(t, x)-\underset{u \in U}{\inf}\big\{ \cL^{u} \varphi(t,x)
				
				+\cB^{\delta, u}(W, \varphi)(t, x)\\
				\ns\ds\!\!\!\! 	\hskip 3.6cm
				+	f \big (t, x, W(t, x),   \varphi_x(t, x).   \sigma(t, x, u), \cC^{\delta, u}(W, \varphi)(t, x), u \big)\big\}\Big\} \ges 0.
			\end{array}
		\end{equation}
%			\begin{equation}\label{VIS-sup}
%			\begin{array}{ll}
%				%
%				\ns\ds\!\!\!\! 	
%				\max \Big\{W(t, x)-h(t, x),-\frac{\partial \varphi}{\partial t}(t, x)-\underset{u \in U}{\inf}\big\{ \frac{1}{2} \tr \big(\sigma \sigma^\top(t, x, u)  \varphi_{xx}(t, x)\big)+  \varphi_x(t, x)   b(t, x, u)
%				+\cB^{\delta, u}(W, \varphi)(t, x)\\
%				%
%				\ns\ds\!\!\!\! 	\hskip 0.85cm
%				+	f(t, x, W(t, x),  \varphi_x(t, x)   \sigma(t, x, u ), \cC^{\delta, u}(W, \varphi)(t, x), u)\big\}\Big\} \geqslant 0,
%				%
%			\end{array}
%		\end{equation}

		{\rm(\romannumeral3)} 
	If $W(\cd,\cd)$	is both   a viscosity subsolution and a viscosity supersolution of   \eqref{HJB}, it is called  as a viscosity solution of  PIDE \eqref{HJB}.
	\end{definition}

	\begin{remark}\label{Re-1}\sl  We point out that, due to the linear growth of the value function $W(\cd,\cd)$ in Proposition \ref{val-lip},    the local maximum (resp. minimum) of $W-\varphi$ in {\rm Definition \ref{vis-def}-(i)} (resp. \rm {Definition \ref{vis-def}-(ii)}) can be replaced by a global one in the proof of Theorem \ref{vis-solution}.
Moreover, 
		$ 
		\cB^{\delta, u}(W, \varphi)(t, x)$  and $\cC^{\delta, u}(W, \varphi)(t, x)$  in Definition \ref{vis-def}   can be replaced by $ \cB^{u} \varphi(t, x)$ and  $ \cC^{u} \varphi(t, x)$, resp. 
		Such an observation can be referred to \cite{Barles-1997,BHL-2011}, etc.
%		where
%		%
%		$$
%		\begin{array}{ll}
%		   \ns\ds\!\!\! \cB^{u} \varphi(t, x):=\int_{E}\Big(\varphi\big(t, x+\gamma(t, x, u,e)\big)-\varphi(t, x)- \varphi_x(t, x)   \gamma(t, x, u, e)\Big) \nu(d e),	\\
%		   %
%		   \ns\ds\!\!\!	\cC^{u} \varphi(t, x):=\int_{E}\Big(\varphi\big(t, x+\gamma(t, x, u, e)\big)-\varphi(t, x)\Big) l( r,e) \nu(d e).\\
%		   %
%		\end{array}
%		$$
	\end{remark}

	%
%	To prove the value function $W(\cd,\cd)$ in \eqref{VF} to be a viscosity solution of   \eqref{HJB}, we introduce the penalized equations  of     RBSDE with jumps \eqref{RBSDEP}. Precisely, for each $n\in \dbN$,  consider   the following BSDE with jumps, 
	%
	  To prove the value function $W(\cd,\cd)$ in \eqref{VF} to be a viscosity solution of   \eqref{HJB}, we introduce the penalized equations  of     RBSDE with jumps \eqref{RBSDEP}.  
	 Precisely, for each $n\in \dbN$,  
%	 introduce  the following BSDE with jumps, 
 %
	\begin{equation}\label{BSDEP-pen}
		\begin{array}{ll}
			\ns\ds  Y_s^{n,t,x;u} = \Phi ( X_T^{t,x;u}) + \int_s^T   f\big(  r, X_r^{t,x;u}, Y_r^{n,t,x;u},  Z_r^{n,t,x;u}, \int_El( e) V_r^{n,t,x;u} ( e)\n(\mathrm de),u_r  \big)  \mathrm dr\\
			\ns\ds \hskip 1.6cm -( A_T^{n,t,x;u}- A_s^{n,t,x;u}) - \int_s^T   Z_r^{n,t,x;u}   \mathrm dB_r  - \int_s^T \int_E   V_r^{n,t,x;u} (e)   \tilde N(\mathrm dr,\mathrm de) ,\quad s \in [t,T],
		\end{array}
	\end{equation} 
where $\ds A_s^{n,t,x;u}:=\int_t^sn \big (h( r,X_r^{t,x;u}  )- Y_r^{n,t,x;u}  \big)^- \mathrm dr $ and   $X^{t,x ;u}$ satisfies  \eqref{state} with $x_t=x \in \dbR^n$.
It is obvious that, for each $n\in\dbN$,  \eqref{BSDEP-pen} admits the unique solution $(Y^{n,t,x;u},Z^{n,t,x;u},$ $V^{n,t,x;u} )\in\mathcal{S}^2_{ \mathbb F }( t,T; \mathbb R  ) \times  \mathcal{M}^2_{ \mathbb F }( t,T; \mathbb R ^d ) \times \mathcal K _\nu^2 (t,T;\mathbb  R  ) $.
Moreover, we have  the following approximation result from  Lemma \ref{well}.

	\begin{lemma}\label{well-C}\sl
	Assume {\rm\textbf{(H$_1$)}}, {\rm \textbf{(H$_2$)}} and {\rm \textbf{(C)}} hold.
 The sequence $\{(Y^{n,t,x;u},Z^{n,t,x;u},V^{n,t,x;u},A^{n,t,x;u})\}_{n\in\dbN}$ has a limit  $(Y^{t,x;u},Z^{t,x;u},V^{t,x;u},A^{t,x;u})$, which is indeed the solution of RBSDE with jumps \eqref{RBSDEP}. Moreover, 
 the convergence of  $Y^{n,t,x;u} $ to $Y^{t,x;u}$ in $\mathcal{S}^2_{ \mathbb F }( t,T; \mathbb R  )$ is  decreasingly.

\end{lemma}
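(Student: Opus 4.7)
The plan is to reduce Lemma~\ref{well-C} directly to Lemma~\ref{well} by viewing \eqref{RBSDEP} as a generic RBSDE with jumps of the form \eqref{rbsdep} with a carefully chosen data triple, and then verifying that this triple satisfies the standing assumptions \textbf{(A$_1$)} and \textbf{(A$_2$)} thanks to \textbf{(H$_1$)}, \textbf{(H$_2$)} and \textbf{(C)}.

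First I would fix $(t,x)\in[0,T]\times\dbR^n$ and $u(\cdot)\in\cU_{t,T}$, and freeze the controlled state along the trajectory by defining
$\widetilde f(r,y,z,v):=f\bigl(r,X_r^{t,x;u},y,z,v,u_r\bigr)$, $\widetilde\xi:=\Phi(X_T^{t,x;u})$, and $\widetilde S_r:=h(r,X_r^{t,x;u})$.
Then I verify \textbf{(A$_1$)}: the Lipschitz continuity of $\widetilde f$ in $(y,z,v)$ uniformly in $(\omega,r)$ is inherited from \textbf{(H$_2$)}-(ii); the integrability $\dbE\int_t^T|\widetilde f(r,0,0,0)|^2\mathrm dr<\infty$ follows from the Lipschitz (hence linear-growth) dependence of $f$ on $x$ combined with the moment bound Lemma~\ref{Le-SDE}-(i); similarly $\widetilde\xi\in L^2_{\cF_T}(\O;\dbR)$ follows from the linear growth of $\Phi$ and the same moment bound; the process $\widetilde S$ is $\dbF$-progressively measurable and right-continuous with left limits since $X^{t,x;u}$ has this regularity, $h(\cdot,x)$ is continuous in $r$, and $h(r,\cdot)$ is Lipschitz in $x$; and $\dbE\bigl[\sup_{r\in[t,T]}|\widetilde S_r|^2\bigr]<\infty$ again follows from the linear growth of $h$ and Lemma~\ref{Le-SDE}-(i); finally, the compatibility $\widetilde\xi=\Phi(X_T^{t,x;u})\leqslant h(T,X_T^{t,x;u})=\widetilde S_T$ is exactly \textbf{(H$_2$)}-(iv). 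The structural condition $0\leqslant l(e)\leqslant\k(1\wedge|e|)$ in \textbf{(A$_1$)}-(iv) is inherited verbatim from \textbf{(H$_2$)}-(iii). Assumption \textbf{(A$_2$)} for $\widetilde f$ is nothing but condition \textbf{(C)} evaluated along the state $X_r^{t,x;u}$.

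Once \textbf{(A$_1$)} and \textbf{(A$_2$)} are checked, I observe that the penalized equation \eqref{BSDEP-pen} is, tautologically, the penalized BSDE \eqref{BSDEP} associated with the data $(\widetilde f,\widetilde\xi,\widetilde S)$, since the penalization term $n(h(r,X_r^{t,x;u})-Y_r^{n,t,x;u})^-=n(\widetilde S_r-Y_r^{n,t,x;u})^-$ coincides. Hence Lemma~\ref{well}-(i) produces the monotone decrease $Y^{n,t,x;u}\downarrow Y^{t,x;u}$ in $\cS^2_{\dbF}(t,T;\dbR)$ together with the weak convergence of $(Z^{n,t,x;u},V^{n,t,x;u},A^{n,t,x;u})$ in $\cM^2_{\dbF}(t,T;\dbR^d)\times\cK^2_{\dbF}(t,T;\dbR)\times\cA^2_{\dbF}(t,T;\dbR)$, and Lemma~\ref{well}-(ii) identifies the limit $(Y^{t,x;u},Z^{t,x;u},V^{t,x;u},A^{t,x;u})\in\sS^2_{\dbF}[t,T]$ as the unique solution of \eqref{RBSDEP}.

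There is no substantial obstacle here: the claim is essentially a translation of Lemma~\ref{well} into the controlled setting, and the only care required is to check that the parametrized data $(\widetilde f,\widetilde\xi,\widetilde S)$ inherits the integrability and compatibility required by \textbf{(A$_1$)}, which reduces to a clean application of the forward moment bound Lemma~\ref{Le-SDE}-(i) for $X^{t,x;u}$ together with the Lipschitz/linear-growth structure of the coefficients $f$, $\Phi$, $h$ in $x$.
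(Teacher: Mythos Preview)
Your proposal is correct and follows exactly the approach the paper takes: the paper does not give a formal proof of Lemma~\ref{well-C} but simply states it as ``the following approximation result from Lemma~\ref{well}'', having already noted a few lines earlier that the data $(f(\cdot,X^{t,x;u}_\cdot,\cdot,\cdot,\cdot,u_\cdot),\Phi(X^{t,x;u}_T),h(\cdot,X^{t,x;u}_\cdot))$ satisfies \textbf{(A$_1$)} (and \textbf{(C)} gives \textbf{(A$_2$)}). Your write-up just makes this reduction explicit by spelling out the verification of \textbf{(A$_1$)}-(i)--(iv) via the linear growth of $f,\Phi,h$ in $x$ together with the moment bound of Lemma~\ref{Le-SDE}-(i), which is precisely the intended argument.
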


With  \eqref{state}
 and \eqref{BSDEP-pen}, we  formulate a sequence of control problems as follows.
For  each $n\in\dbN$, we introduce the cost functional
	\begin{equation*}
			J_{n}(t,x;u(\cd)):=Y^{n,t,x;u}_{t},\q  (t,x) \in [0,T]\times \dbR^n, \  u(\cd)\in\mathcal{U}_{t,T},
	\end{equation*} 
	and the value function
	\begin{equation}\label{W-n}
			W^{n}(t,x)=\underset{u(\cd)\in \mathcal{U}_{t, T}}{\operatorname{essinf}}J_{n}(t,x;u(\cd)),\q (t,x) \in [0,T]\times \dbR^n.
	\end{equation}
According to Theorems 4.1 and 5.1 in \cite{BHL-2011}, for each $n\in\dbN$, $W^n(\cd,\cd)$ is the unique viscosity solution of the following HJB equation,
	\begin{equation}\label{HJB*}
		\left\{
		\ba{ll}
		\ns\ds\!\!\!\! - \frac{\partial}{\partial t}W^{n}(t,x) \!-\!\inf_{u\in U}  \mathbb H^{n} \big(
		t,x,(W^{n} , W^{n}_x ,  W^{n}_{xx})(t,x),u \big)  =0,\quad (t,x)\in[0,T]\times\dbR^n,\\
		\ns\ds\!\!\!\! W^{n}(T,x) = \Phi(x) ,\quad x\in \mathbb R^n,
		\ea
		\right.
	\end{equation}
	where
	$$
			\mathbb H^{n}(t, x, (W^{n} , W^{n}_x ,  W^{n}_{xx})(t,x), u)=  
			\mathbb H (t, x, (W^{n} , W^{n}_x ,  W^{n}_{xx})(t,x), u)-n\big (h(t,x)-W^{n}(t,x)\big)^{-}.
$$
  Notice that the uniqueness  holds in the following space:
  \begin{equation*}
  	\begin{array}{ll}
  		\ns\ds\!\!\! 
  		\Theta=  \Big\{\varphi\in C\big([0, T] \times \mathbb{R}^{n};\mathbb{R} \big)\mid \exists \widetilde{A}>0 \text { such that } \\
  		\ns\ds\!\!\!  \hskip 1.325cm 
  		\underset{|x| \rightarrow \infty}{\lim} \varphi(t, x) \exp \Big\{-\widetilde{A}\big[\log ((|x|^{2}+1)^{\frac{1}{2}})\big]^{2}\Big\}=0, \text { uniformly in } t \in[0, T]\Big\} .
  	\end{array}
  \end{equation*}
  
	 From Lemma \ref{well-C} and the definition of $W^n(\cd,\cd)$, we give  the following result without the proof. 
%	 whose  proof   is similar to 
	 The readers may refer to Lemma 4.3  and Remark 4.4 in \cite{WY-2008}.  
	\begin{lemma}\label{Le-Vn}\sl
		For all $(t,x)\in[0,T]\times\dbR^n,$ $ n\in\dbN$, 
		$$W^{1}(t, x) \geqslant \cdots \geqslant W^{n}(t, x) \geqslant W^{n+1}(t, x) \geqslant \cdots \geqslant W(t, x).$$
Moreover, for all $(t,x)\in[0,T]\times\dbR^n$, $\lim\limits_{n\to\i}W^n(t,x)=W(t,x)$, which is also uniform on compacts sets.
	\end{lemma}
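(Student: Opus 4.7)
The plan is to establish the chain of inequalities and the uniform convergence in three stages: the monotone decrease $W^n\geqslant W^{n+1}$ together with the lower bound $W^n\geqslant W$, then pointwise convergence, and finally uniform convergence on compact sets.

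First, I would rewrite \eqref{BSDEP-pen} as a standard (non-reflected) BSDE with jumps whose driver is $\tilde f_n^{u}(r,y,z,v):=f\big(r,X_r^{t,x;u},y,z,\int_E l(e)v(e)\nu(\mathrm de),u_r\big)-n\big(h(r,X_r^{t,x;u})-y\big)^-$. Since $\tilde f_n^{u}\geqslant \tilde f_{n+1}^{u}$ pointwise in $(r,y,z,v)$, the classical comparison theorem for BSDEs with jumps (\cite{Tang-1994, LP-2009}) yields $Y^{n,t,x;u}_s\geqslant Y^{n+1,t,x;u}_s$ on $[t,T]$, $\dbP$-a.s., for every admissible $u(\cd)$, whence $J_n(t,x;u(\cd))\geqslant J_{n+1}(t,x;u(\cd))$. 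Taking essential infimum over $u(\cd)\in\cU_{t,T}$ gives $W^n(t,x)\geqslant W^{n+1}(t,x)$. The same monotone chain combined with Lemma \ref{well-C} yields $Y^{n,t,x;u}_t\downarrow Y^{t,x;u}_t$, so $J_n(t,x;u(\cd))\geqslant J(t,x;u(\cd))$ and thus $W^n(t,x)\geqslant W(t,x)$.

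Next, for the pointwise convergence, I set $\bar W(t,x):=\lim_{n\to\infty} W^n(t,x)$, which exists and satisfies $\bar W\geqslant W$ by the previous step. For the reverse inequality, given $\varepsilon>0$, choose an $\varepsilon$-optimal control $u^\varepsilon(\cd)\in\cU_{t,T}$ for Problem (C)$_{t,x}$ so that $W(t,x)+\varepsilon\geqslant J(t,x;u^\varepsilon(\cd))$. Then $W^n(t,x)\leqslant J_n(t,x;u^\varepsilon(\cd))\to J(t,x;u^\varepsilon(\cd))$ as $n\to\infty$ by Lemma \ref{well-C}. Sending $n\to\infty$ and then $\varepsilon\to 0$ yields $\bar W(t,x)\leqslant W(t,x)$, so $\bar W=W$.

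Finally, for uniform convergence on compact subsets of $[0,T]\times\dbR^n$, I would invoke Dini's theorem. Each $W^n$ is continuous on $[0,T]\times\dbR^n$, being the unique viscosity solution of the HJB equation \eqref{HJB*} from \cite{BHL-2011} and known to be Lipschitz in $x$ uniformly in $t$ and continuous in $t$ uniformly in $x$. The limit $W$ is continuous by Proposition \ref{val-lip} together with the preceding continuity-in-$t$ proposition. Dini's theorem applied to the monotone decreasing sequence $\{W^n\}$ of continuous functions converging pointwise to the continuous limit $W$ on a compact set then delivers the uniform convergence. The main subtlety I anticipate is the interchange between $\essinf$ and $\lim_{n\to\infty}$ in the second step, which is why I would route the argument through $\varepsilon$-optimal controls for the original problem rather than attempting a direct interchange at the level of value functions.
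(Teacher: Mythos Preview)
Your proposal is correct and follows precisely the standard route that the paper invokes by reference to Lemma 4.3 and Remark 4.4 of \cite{WY-2008}: monotonicity via the comparison theorem for penalized BSDEs with jumps, pointwise convergence through $\varepsilon$-optimal controls combined with Lemma \ref{well-C}, and uniform convergence on compacts via Dini. The only cosmetic point is that the continuity of each $W^n$ is not a consequence of its being a viscosity solution but is established independently in \cite{BHL-2011} from BSDE estimates (Lipschitz in $x$, continuity in $t$), which you do cite correctly in the end.
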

	 
	 Going further, the following result is true, referring to Lemma 4.6 in \cite{WY-2008}.
 	\begin{lemma}\label{Le-inf-lim}\sl
		For all $(t,x)\in[0,T]\times\dbR^n$ with $(t_n,x_n)\to(t,x)$ as $n\to\i$,  and $\f\in C_{l, b}^{3}  ( [0, T] \times \mathbb R ^n  )$, we have 
		$$	\lim\limits_{n\to\i}\inf_{u\in U}\mathbb H (t_n, x_n, (W^n , \f _x , \f _{xx})(t_n,x_n), u)=\inf_{u\in U} \mathbb H (t , x , (W , \f _x , \f _{xx})(t ,x ), u).$$
	\end{lemma}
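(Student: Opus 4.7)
The plan is to unpack $\mathbb H$ as a sum of three pieces---a local part $\cL^u\f$, a nonlocal part $\cB^u\f$, and a driver term containing $f$---and show that each piece converges as $(t_n,x_n)\to(t,x)$ uniformly in $u\in U$; the uniform limit then commutes with $\inf_{u\in U}$, which yields the claim.

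In view of Remark \ref{Re-1}, the quantity $\mathbb H(t_n,x_n,(W^n,\f_x,\f_{xx})(t_n,x_n),u)$ is to be read as
\begin{equation*}
\cL^u\f(t_n,x_n)+\cB^u\f(t_n,x_n)+f\bigl(t_n,x_n,W^n(t_n,x_n),\f_x(t_n,x_n).\sigma(t_n,x_n,u),\cC^u\f(t_n,x_n),u\bigr),
\end{equation*}
so that the only $n$-dependence lives in the running point $(t_n,x_n)$ together with the value $W^n(t_n,x_n)$ plugged into the third slot of $f$.

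For the local piece, the continuity of $b,\sigma$ in $(r,x,u)$ combined with compactness of $U$ and continuity of $\f_x,\f_{xx}$ produce, via a standard uniform-continuity argument on the compact neighborhood of $(t,x)$ containing the tail $\{(t_n,x_n)\}$, a joint modulus of continuity independent of $u$; hence $\cL^u\f(t_n,x_n)\to\cL^u\f(t,x)$ uniformly in $u\in U$. For the nonlocal pieces $\cB^u\f$ and $\cC^u\f$, a second-order Taylor expansion of $\f$ near $(t_n,x_n)$ for small $|e|$, together with the bound $|\gamma(r,x,u,e)|\les\ell(e)|x|+|\gamma(r,0,u,e)|$ from {\bf (H$_1$)}-(ii), yields a $\nu$-integrable dominating function independent of $n$ and $u$ (using $\ell\in\cL_\nu^2(E;\dbR)$ and $l(e)\les\k(1\wedge|e|)$); dominated convergence combined with the joint continuity of $\gamma,\f,\f_x$ then delivers convergence for each fixed $u$, and a subsequence/contradiction argument using compactness of $U$ upgrades this to uniform convergence in $u$.

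The driver $f$ is Lipschitz in $(x,y,z,v)$ uniformly in $(r,u)$ and continuous in $(r,u)$, so its limit is controlled by the limits of its arguments; the most delicate input is $W^n(t_n,x_n)\to W(t,x)$. This follows by combining the uniform-on-compacts convergence $W^n\da W$ from Lemma \ref{Le-Vn} (applied to the compact set $\{(t_k,x_k):k\in\dbN\}\cup\{(t,x)\}$) with the joint continuity of the limit $W$ in $(t,x)$ (Lipschitz in $x$ by Proposition \ref{val-lip} and continuous in $t$ by the continuity proposition established just after the DPP). Putting the three uniform-in-$u$ convergences together, $\mathbb H$ converges uniformly on $U$ and the infima converge. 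The main obstacle will be keeping the convergence of the jump integrals $\cB^u\f,\cC^u\f$ uniform in $u$ despite their nonlocal character; this is exactly what the $u$-independent $\nu$-integrable majorant furnished by {\bf (H$_1$)}-(ii) buys us.
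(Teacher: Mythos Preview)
Your proposal is correct and follows the expected route. The paper itself does not give a self-contained proof of this lemma but defers to Lemma~4.6 in \cite{WY-2008} (a Brownian-only setting); your argument---decomposing $\mathbb H$, establishing uniform-in-$u$ convergence of each piece via joint continuity plus compactness of $U$ for the local term and dominated convergence (with the $\nu$-integrable majorant furnished by {\bf (H$_1$)}-(ii) and {\bf (H$_2$)}-(iii)) for the nonlocal integrals $\cB^u\f,\cC^u\f$, and invoking $W^n\!\downarrow\! W$ uniformly on compacts (Lemma~\ref{Le-Vn}) together with the continuity of $W$ to handle the $y$-slot of $f$---is precisely the natural adaptation of that reference to the present jump setting.
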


With the above auxiliary results, we get the first main result of this work.
	\begin{theorem}\label{vis-solution}\sl
		Under  {\rm \textbf{(H$_1$)}}, {\rm\textbf{(H$_2$)}} and {\rm \textbf{(C)}}, the value function $W(\cd,\cd)$ introduced in \eqref{VF} is the unique  viscosity solution of   \eqref{HJB} in $\Theta$.
%		 Moreover, the uniqueness holds in the following space:
%		\begin{equation*}
%			\begin{array}{ll}
%				%
%				\ns\ds\!\!\! 
%				\Theta=  \Big\{\varphi\in C\big([0, T] \times \mathbb{R}^{n};\mathbb{R} \big)\mid \exists \widetilde{A}>0 \text { such that } \\
%				%
%				\ns\ds\!\!\!  \hskip 1.325cm 
%				\underset{|x| \rightarrow \infty}{\lim} \varphi(t, x) \exp \Big\{-\widetilde{A}\big[\log ((|x|^{2}+1)^{\frac{1}{2}})\big]^{2}\Big\}=0, \text { uniformly in } t \in[0, T]\Big\} .
%			\end{array}
%		\end{equation*}

	\end{theorem}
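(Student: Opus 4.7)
The plan is to identify $W$ as the viscosity solution of the obstacle PIDE \eqref{HJB} by passing to the monotone limit in the penalized HJB equations \eqref{HJB*} satisfied by $W^n$, using the convergence $W^n\downarrow W$ from Lemma \ref{Le-Vn} and the Hamiltonian convergence from Lemma \ref{Le-inf-lim}. Since $W^n$ and $W$ are continuous with $W^n$ monotonically decreasing, Dini's theorem promotes pointwise convergence to locally uniform convergence on compacts, which furnishes the standard viscosity-solution stability machinery. The terminal identification $W(T,x)=\Phi(x)$ is inherited in the limit from $W^n(T,x)=\Phi(x)$; the obstacle constraint $W(t,x)\leqslant h(t,x)$ follows directly from the reflection $Y_s^{t,x;u}\leqslant h(s,X_s^{t,x;u})$ evaluated at $s=t$; and $W$ belongs to $\Theta$ in view of the linear growth established in Proposition \ref{val-lip}.

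For the subsolution property, fix $\varphi\in C_{l,b}^{3}([0,T]\times\mathbb{R}^n)$ and, by Remark \ref{Re-1} together with a standard quadratic perturbation, a strict global maximum point $(t_0,x_0)$ of $W-\varphi$. Locally uniform convergence produces points $(t_n,x_n)\to(t_0,x_0)$ realizing local maxima of $W^n-\varphi$. Since $\mathbb{H}^n=\mathbb{H}-n(h-W^n)^{-}$ and the penalty is independent of $u$, the subsolution inequality for $W^n$ at $(t_n,x_n)$ gives
\begin{equation*}
-\varphi_t(t_n,x_n)-\inf_{u\in U}\mathbb{H}\bigl(t_n,x_n,(W^n,\varphi_x,\varphi_{xx})(t_n,x_n),u\bigr)+n\bigl(h-W^n\bigr)^{-}(t_n,x_n)\leqslant 0.
\end{equation*}
Dropping the nonnegative penalty and applying Lemma \ref{Le-inf-lim} yields the subsolution inequality at $(t_0,x_0)$, which combined with $W(t_0,x_0)\leqslant h(t_0,x_0)$ settles the max inequality in \eqref{HJB}.

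For the supersolution property, fix $\varphi$ and a strict global minimum point $(t_0,x_0)$ of $W-\varphi$. If $W(t_0,x_0)=h(t_0,x_0)$, then the first entry of the max in \eqref{HJB} vanishes and the supersolution inequality is trivial. Otherwise $W(t_0,x_0)<h(t_0,x_0)$; by continuity of $h$ and locally uniform convergence $W^n\to W$, the associated local minima $(t_n,x_n)\to(t_0,x_0)$ of $W^n-\varphi$ satisfy $W^n(t_n,x_n)<h(t_n,x_n)$ for large $n$, so the penalty $n(h-W^n)^{-}$ vanishes there identically. The supersolution inequality for $W^n$ then reduces to the unpenalized one, and Lemma \ref{Le-inf-lim} delivers $-\varphi_t(t_0,x_0)-\inf_{u}\mathbb{H}(t_0,x_0,(W,\varphi_x,\varphi_{xx})(t_0,x_0),u)\geqslant 0$.

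Uniqueness in the class $\Theta$ is the principal technical obstacle and is handled via the comparison principle for viscosity solutions of obstacle HJB--PIDEs. The argument follows the doubling-of-variables scheme, augmented by the exponential auxiliary function tailored to the doubly-logarithmic growth of $\Theta$ (as in \cite{BHL-2011}), and exploits the $\delta$-splitting of the nonlocal terms $\mathcal{B}^{\delta,u},\mathcal{C}^{\delta,u}$ in Definition \ref{vis-def} to absorb the integro-differential contribution into the local parabolic estimate. The hardest step is closing the comparison estimate in the presence of the obstacle $W-h$, which forces one to distinguish between contact and non-contact regions and to exploit the Lipschitz hypotheses in \textbf{(H$_1$)}-(ii), \textbf{(H$_2$)}-(ii), after which $W$ is identified as the unique viscosity solution in $\Theta$.
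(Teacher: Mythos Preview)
Your proposal is correct and follows essentially the same approach as the paper: pass to the limit in the penalized HJB equations \eqref{HJB*} using the monotone convergence $W^n\downarrow W$ (promoted to locally uniform by Dini), invoke the standard viscosity stability argument to transfer test points, apply Lemma \ref{Le-inf-lim} for the Hamiltonian, and conclude uniqueness from the comparison principle (Theorem \ref{1geq2}).

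The only noteworthy deviation is in how you obtain the obstacle inequality $W(t,x)\leqslant h(t,x)$ needed for the subsolution property. You read it off directly from the reflected constraint $Y^{t,x;u}_s\leqslant h(s,X^{t,x;u}_s)$ at $s=t$ (which is legitimate by right-continuity of both sides, since the inequality holds a.e.\ on $[t,T]$). The paper instead extracts it from the penalized subsolution inequality itself: at the approximating maxima $(t_n,x_n)$ one has $n\bigl(h(t_n,x_n)-W^n(t_n,x_n)\bigr)^{-}\leqslant \varphi_t(t_n,x_n)+\inf_u\mathbb{H}(\cdots)$, and since the right-hand side converges finitely, the negative part must vanish in the limit. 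Your route is more direct and exploits the probabilistic structure; the paper's route is purely PDE-internal and would survive even without access to the RBSDE representation. Both are valid.
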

 \begin{proof}
First, we show $W(\cd,\cd)$ is a viscosity supersolution of    \eqref{HJB}. 
	For this,   consider any   $ \varphi \in C_{l, b}^{3}  ( [0, T] \times \mathbb R ^n  )$ such that  $W-\varphi$ attains the 
		minimum at $(t,x)$. Without lose of generality, we assume  that $W(t, x) = \varphi(t, x)$.
 Notice that \eqref{VIS-sup} is obvious if $W(t,x) \ges h(t,x)$. Therefore, we just consider the case when $W(t,x) < h(t,x)$.

	According to the continuity of  $W$ and Lemma \ref{Le-Vn}, there exists some sequence $\{(t_n, x_n)\}_{n \in\dbN}$  such that (at least along a subsequence),

	{\rm(a)} 
	$(t_n, x_n) \to  (t, x)$, as $n \to \i$; 
 
	{\rm(b)} 
	$(W^n -\varphi)(s,y) \ges ( W^n   -  \varphi)(t_n, x_n)$  in a neighborhood of $(t_n, x_n )$, for all $n \ges 1$; 
	 
	{\rm(c)} 
	$W^n (t_n, x_n) \to  W(t, x)$, as $n \to \i$.

  We claim  that, for  any sufficiently large  $n$,
	$W^n (t_n, x_n) \les  h(t_n, x_n)$ and hence $ \big( h(t_n, x_n)- W^n (t_n, x_n)  \big)^-=0$. 
	%N与(t,x)相关
	In fact, from $W(t,x)<h(t,x)$ and Lemma \ref{Le-Vn}, for any $\epsilon>0$, there exists some integer $N_1$ such that   for all $n \ges N_1$ ,
%	$$
%	|W^n(t_n,x_n) - W(t,x)| \les \epsilon, 
%	$$
%	for any $\epsilon>0$, there exists a constant $N$, if $n \ges N$, 
%	$$
%	|h(t_n,x_n) - W(t,x)| \les \epsilon, 
%	$$	
	$$
	\begin{array}{ll}
		\ns\ds\!\!\!  W^n(t_n,x_n) - h(t_n,x_n)  <  W^n(t_n,x_n) - W(t,x)   + h(t,x) - h(t_n,x_n)  \les 2\epsilon. \\
		%
%		\ns\ds\!\!\!  <  W^n(t_n,x_n) - W(t,x) + h(t,x) - h(t_n,x_n) \les 2\epsilon.
	\end{array}
	$$
Due to the arbitrariness of $ \epsilon$, we obtain $W^n(t_n,x_n) \les  h(t_n,x_n),\ \forall n\ges N_1$.

%	 Indeed, if $W^n (t_n, x_n) > h(t_n, x_n)$, from (c) and the continuity of  $h$, let  $n \to \i$, we have $W  (t,x) \ges  h(t,x)$, which is in contradiction with  $W(t,x) < h(t,x)$.

On the other hand,  $ W^n (\cd,\cd ) $ being  a supersolution of HJB equation \eqref{HJB*} implies us that, for all $n \ges  1$,
	\begin{equation*}\label{ineq2}
			\begin{array}{ll}
					\ns\ds\!\!\!
					  \varphi_t  (t_n, x_n)
					+  \inf_{u \in U}  \mathbb H^n  (t_n, x_n, (W^n , \f _x , \f _{xx})(t_n,x_n), u)  \les 0.\\
				\end{array}
	\end{equation*}

	\no For all $n \ges  N_1$, combined with   $\big( h(t_n, x_n) - W^n (t_n, x_n) \big)^- =0$, we get 
	\begin{equation*}\label{ineq3}
		\begin{array}{ll}
			\ns\ds\!\!\!
			  \varphi_t  (t_n, x_n)
			+  \inf_{u \in U} \mathbb H  (t_n, x_n, (W^n , \f _x , \f _{xx})(t_n,x_n), u)\les 0.\\
		\end{array}
	\end{equation*}

	\no Then, letting $n \to \i$ and  using Lemma \ref{Le-inf-lim}, we obtain
	\begin{equation*}   
			 \varphi_t  (t ,x)
			+  \inf_{u \in U}  \mathbb H  (t , x , (W , \f _x , \f _{xx})(t,x), u)\les 0.  
	\end{equation*}
Combined with Remark \ref{Re-1},    $W(\cd,\cd)$ is a viscosity supersolution of   \eqref{HJB}. 
	
 \ms
	Next, we  prove $W(\cd,\cd)$ to be a viscosity subsolution of   \eqref{HJB}. 
	Let  $ \f  \in C_{l, b}^{3}  ( [0, T] \times \mathbb R ^n  )$ and  $W-\f $ attain a  
		maximum  at $(t,x)$.
Then, there also exists some sequence $\{(t_n, x_n)\}_{n \in  \dbN}$  
	such that
 
 {\rm(a)} 
	$(t_n, x_n) \rightarrow(t, x)$, as $n \to  \i$;

	{\rm(b)} 
	$W^n(s,y)-\f (s,y) \les W^n(t_n, x_n)-\f (t_n, x_n)$  in a neighborhood of $(t_n, x_n)$, for all $n \ges 1$; 
 
	{\rm(c)} 
	$W^n(t_n, x_n) \rightarrow W(t, x)$, as $n \to  \i.$ 
%	Although the notation is the same, it is not the same as the sequence mentioned above.

 Using that $W^n(\cd,\cd)$ is    a subsolution of HJB equation \eqref{HJB*}, for all $n\ges  1$, we get
	\begin{equation}\label{ine1}  
			 \f _t  (t_n, x_n)
			+  \inf_{u \in U}  \mathbb H^n  (t_n, x_n, (W^n , \f _x , \f _{xx})(t_n,x_n), u)  \ges 0.\\
	\end{equation}

	\no Due to  $n\big(  h(t_n, x_n)-W^n (t_n, x_n)  \big)^- \ges 0$, the following holds,
	\begin{equation*} 
			  \f _t  (t_n, x_n)
			+  \inf_{u \in U}   \mathbb H   (t_n, x_n, (W^n , \f _x , \f _{xx})(t_n,x_n), u)  \ges 0.\\
	\end{equation*}
	
 Letting  $n \to \i$ and  using Lemma \ref{Le-inf-lim} again, we have
	\begin{equation}\label{ine2}
		\begin{array}{ll}
			\ns\ds\!\!\!
			  \f _t  (t ,x)
			+  \inf_{u \in U}  \mathbb H  (t , x , (\f , \f _x , \f _{xx})(t,x), u) \ges 0.\\
		\end{array}
	\end{equation}
Further,  we need to  prove that $W(t, x) \les h(t, x)$.
	From  \eqref{ine1} and the definition of $f^n$,we get
	\begin{equation*}  	 n\big(h(t_n, x_n) - W^n(t_n, x_n)   \big)^- 
			\les   \f _t (t_n, x_n) 
			+\inf_{u \in U}   \mathbb H   (t_n, x_n, (W^n , \f _x , \f _{xx})(t_n,x_n), u) . 
	\end{equation*}
 We remark that  the limit of the right-hand side of the above inequality exists. It means   $	n\big(h(t_n, x_n) -W^n (t_n, x_n)  \big)^-$ can't tend to infinity, as  $n \to \i$. 
Therefore, $\lim\limits_{n\to\i}\big( h(t_n, x_n)-W^n (t_n, x_n)   \big)^-=0     $ which leads to   $W(t, x) \les h(t, x)$.
Combined with \eqref{ine2}, we get 
	\begin{equation*}  	
				\max \Big\{W(t, x)-h(t, x),- \varphi_t(t, x)-\underset{u \in U}{\inf}\mathbb H  (t , x , (W , \f _x , \f _{xx})(t,x), u) \Big\}\leqslant 0.
		\end{equation*}
Hence, $W$ is a viscosity subsolution of   \eqref{HJB}. Based on the above, we complete the proof of existence.  Furthermore, by the subsequent result (Theorem \ref{1geq2}), the uniqueness is also true in $\Theta$.
\end{proof}

In the following, we focus on the study of the comparison theorem of PIDE \eqref{HJB}. 
	\begin{theorem}\label{1geq2}\sl
		Assume that {\rm\textbf{(H$_1$)}} and {\rm \textbf{(H$_2$)}} hold. Let the continuous functions $W_1(\cd,\cd) \in \Theta$ and $W_2(\cd,\cd)$  $ \in \Theta$ be a viscosity supersolution and a viscosity subsolution of PIDE \eqref{HJB}, respectively, then 
		$$
		W_1(t, x) \geqslant W_2(t, x),\quad \text { for all }(t, x) \in[0, T] \times \mathbb{R}^{n}.
		$$
	\end{theorem}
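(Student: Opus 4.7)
The plan is to argue by contradiction via the Crandall--Ishii doubling-of-variables technique, adapted to the obstacle form and to the non-local character of \eqref{HJB}. Suppose $M_0:=\sup_{[0,T]\times\dbR^n}(W_2-W_1)>0$. To handle the growth class $\Theta$, I first introduce a weight of the shape $\chi(x):=[\log((|x|^2+1)^{1/2})]^2+1$ together with an exponential time factor, and work with $\widetilde W_i(t,x):=W_i(t,x)\,e^{-\lambda\chi(x)-\mu t}$ for $\lambda,\mu$ sufficiently large. Since $W_i\in\Theta$, the supremum of $\widetilde W_2-\widetilde W_1$ is finite, strictly positive, and attained; moreover, the weight produces favorable strictly positive zero-th order terms in the transformed sub/supersolution inequalities, which is the mechanism that will deliver the contradiction at the end.

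Second, I double the spatial variable. For small $\varepsilon>0$ and $\alpha>0$, set
\[
\Psi_\varepsilon(t,x,y):=\widetilde W_2(t,x)-\widetilde W_1(t,y)-\frac{|x-y|^2}{\varepsilon}-\alpha t,
\]
and let $(t_\varepsilon,x_\varepsilon,y_\varepsilon)$ be a global maximizer. Standard arguments give $|x_\varepsilon-y_\varepsilon|^2/\varepsilon\to 0$ and, along a subsequence, $(t_\varepsilon,x_\varepsilon,y_\varepsilon)\to(\bar t,\bar x,\bar x)$ with $(\widetilde W_2-\widetilde W_1)(\bar t,\bar x)$ realizing the supremum. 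Applying the Jensen--Ishii maximum principle for integro-PDEs (see \cite{BHL-2011,Barles-1997}), I obtain, for each $\delta>0$, symmetric matrices $X_\varepsilon\le Y_\varepsilon+o_\varepsilon(1)$ and the two viscosity inequalities using the $\delta$-split operators $\cB^{\delta,u}$, $\cC^{\delta,u}$ evaluated at the respective base points, with a common first-order slope $p_\varepsilon=2(x_\varepsilon-y_\varepsilon)/\varepsilon$.

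Because each viscosity inequality is of $\max$ form, two cases arise. If the supersolution side chooses the obstacle, i.e.\ $\widetilde W_1(t_\varepsilon,y_\varepsilon)\ge\widetilde h(t_\varepsilon,y_\varepsilon)$ (with $\widetilde h$ the weighted obstacle), then combined with the subsolution inequality $\widetilde W_2(t_\varepsilon,x_\varepsilon)\le\widetilde h(t_\varepsilon,x_\varepsilon)$ and continuity of $h$, passing to the limit $\varepsilon\to 0$ yields $\widetilde W_2(\bar t,\bar x)\le\widetilde W_1(\bar t,\bar x)$, contradicting $M_0>0$. Otherwise, both HJB inequalities hold simultaneously; subtracting them at their respective base points with the same $u\in U$, and exploiting the Lipschitz continuity in $x$ of $b,\sigma,\gamma,f,h,\Phi$, the monotonicity of $f$ in $v$ (assumption \textbf{(C)}) together with $l(e)\ge 0$, the bound $|\gamma(\cdot,x,\cdot,e)-\gamma(\cdot,y,\cdot,e)|\le\ell(e)|x-y|$ with $\ell\in\cL_\nu^2(E;\dbR)$, and the matrix inequality on $X_\varepsilon,Y_\varepsilon$, one produces an estimate of the form
\[
\mu+\alpha\;\le\;C\,\frac{|x_\varepsilon-y_\varepsilon|^2}{\varepsilon}+\omega(\delta)+o_\varepsilon(1),
\]
and letting first $\varepsilon\to 0$ and then $\delta\to 0$ contradicts $\mu,\alpha>0$. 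The main obstacle is the precise treatment of the non-local terms: the $\delta$-splitting is essential because the small-jump part $\cB^{\delta,u}$, $\cC^{\delta,u}$ evaluates the smooth quadratic test function $|x-y|^2/\varepsilon$ and contributes $O(\delta)$ by Taylor expansion together with $\int_E(1\wedge|e|^2)\nu(\mathrm de)<\infty$, whereas the large-jump part evaluates $W_i$ directly and is controlled using the maximum-point inequality for $\Psi_\varepsilon$ applied to the shifted arguments $x_\varepsilon+\gamma$ and $y_\varepsilon+\gamma$, combined with the monotonicity of $f$ in its last argument, which converts the one-sided inequality on the large-jump contributions into the sign required to close the estimate.
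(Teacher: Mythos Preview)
Your route is different from the paper's. Rather than doubling variables directly on $W_1,W_2$, the paper first proves (Lemma~\ref{omega}) that $\widehat W:=W_1-W_2$ is itself a viscosity supersolution of an auxiliary obstacle problem in which the nonlinearity $f$ has been replaced by its Lipschitz envelope,
\[
\max\Big\{\widehat W,\ -\partial_t\widehat W-\inf_{u\in U}\big[\cL^u\widehat W+\cB^u\widehat W-K|\widehat W|-K|D\widehat W\cdot\sigma|-K(\cC^u\widehat W)^-\big]\Big\}=0,
\]
and then (Lemma~\ref{chi}) exhibits an explicit smooth function $\chi(t,x)=-\exp\big[(C_1(T-t)+\widetilde A)\psi(x)\big]$ satisfying the strict subsolution inequality for that auxiliary equation. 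The comparison $\widehat W\ge\alpha\chi$ is then elementary because $\chi$ is smooth, and letting $\alpha\to0$ gives $\widehat W\ge0$. The advantage of this two-step scheme is that the doubling in Lemma~\ref{omega} is carried out \emph{before} any growth weight is introduced, so the nonlinear dependence of $f$ on $(y,z,v)$ is absorbed once and for all into Lipschitz bounds, and the weight $\chi$ only ever interacts with the resulting linear operator.

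There is one genuine technical concern with your plan: the multiplicative transformation $\widetilde W_i=W_i\,e^{-\lambda\chi(x)-\mu t}$ does not produce a clean transformed PIDE here, because $f$ is nonlinear in $(y,z,v)$ and the non-local operators $\cB^u,\cC^u$ act on $W_i$ in a way that does not factor through a multiplicative weight; your assertion that ``the weight produces favorable strictly positive zero-th order terms in the transformed sub/supersolution inequalities'' therefore needs substantial additional justification. This is precisely why the paper linearizes first and then uses $\chi$ \emph{additively} as a comparison function. By contrast, your appeal to \textbf{(C)} to close the one-sided estimate on the large-jump part of $\cC^{\delta,u}$ is in line with what the argument actually requires: it is the monotonicity of $f$ in $v$ that converts the Lipschitz bound into the one-sided residual $-K(\cC^u\widehat W)^-$ appearing in Lemma~\ref{omega}, and the references the paper invokes for that lemma (Lemma~3.7 in \cite{Barles-1997}, Lemma~5.1 in \cite{LW-2015}) rely on the same monotonicity.
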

	In order to prove Theorem \ref{1geq2}, we just need to generalize the proof in \cite{Barles-1997} by considering the treatment of the obstacle term, which can be refer to Theorem 4.9 in \cite{WY-2008}. We can go such a way smoothly once the following two auxiliary results hold true.
	\begin{lemma}\label{omega}\sl
		Let $W_{1}(\cd,\cd) \in \Theta$ be a viscosity supersolution and $W_2(\cd,\cd) \in \Theta$ be a viscosity subsolution of \eqref{HJB}.  Then the function $\widehat W:= W_1-W_2$ is a viscosity supersolution of the following equation
		\begin{equation*}
			\left\{
			\ba{ll}
			\ns\ds\!\!\!\! 
			\max\Big\{ \! \widehat W(t,x),- \frac{\partial}{\partial t}\widehat W(t,x) \!-\!\inf_{u\in U}  \Big[\cL^{u} \widehat W(t,x)	+ \cB^{u} \widehat W(t,x)-K| \widehat W(t,x)|-K|D\widehat W(t,x)  \cd  \sigma(t, x, u)|\\
			\ns\ds\!\!\! 
			\hskip 7.5cm
			-K\big(\cC^{u} \widehat W(t,x)\big)^{-}\Big] \Big\}=0,\q
			(t,x)\in[0,T]\times\dbR^n,\\
			\ns\ds\!\!\!\! 
			\widehat W(T,x) = 0,\quad x\in \mathbb R^n,
			\ea
			\right.
		\end{equation*}
	where $ K$ is a Lipschitz constant of $f(t, x, \cdot, \cdot)$  in $ ( y,z,v ) \in  \dbR \times \dbR^d \times \dbR $.
	\end{lemma}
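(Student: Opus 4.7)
The plan is the standard doubling-of-variables technique for viscosity solutions of PIDEs, cf.~\cite{Barles-1997} and Theorem 4.9 of \cite{WY-2008}, adapted to the obstacle structure here. The terminal condition $\widehat W(T, x)\ges 0$ is immediate from $W_1(T,\cd)\ges \Phi(\cd)\ges W_2(T,\cd)$. For the interior inequality, fix $\varphi \in C_{l,b}^{3}([0, T] \times \mathbb R^n)$ and (after a standard perturbation) a strict global minimum point $(t_0, x_0)\in[0,T)\times\mathbb R^n$ of $\widehat W - \varphi$ with $\widehat W(t_0, x_0) = \varphi(t_0, x_0)$. The case $\widehat W(t_0, x_0)\ges 0$ makes the $\max$ trivially non-negative, so assume $\widehat W(t_0, x_0)<0$. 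Since $W_2$ is a subsolution we have $W_2(t_0, x_0)\les h(t_0, x_0)$, hence $W_1(t_0, x_0) < h(t_0, x_0)$; by continuity this strict inequality persists in a neighborhood, so the supersolution inequality for $W_1$ is in its PDE-active form there.

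Next, for $\epsilon>0$ small, let $(t_\epsilon, x_\epsilon, s_\epsilon, y_\epsilon)$ minimize
\[
\Psi_\epsilon(t, x, s, y) := W_1(t, x) - W_2(s, y) - \varphi(t, x) + \epsilon^{-2}\big(|t - s|^2 + |x - y|^2\big)
\]
(with a growth-localizer of the kind used in \cite{Barles-1997} to handle the noncompact $\Theta$-setting). Standard arguments give $(t_\epsilon, x_\epsilon), (s_\epsilon, y_\epsilon)\to(t_0, x_0)$ and $\epsilon^{-2}(|t_\epsilon - s_\epsilon|^2 + |x_\epsilon - y_\epsilon|^2)\to 0$. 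With $\phi_1^\epsilon(t, x) := \varphi(t, x) - \epsilon^{-2}(|t-s_\epsilon|^2 + |x-y_\epsilon|^2)$ and $\phi_2^\epsilon(s, y) := \epsilon^{-2}(|t_\epsilon - s|^2 + |x_\epsilon - y|^2)$, $W_1-\phi_1^\epsilon$ has a local minimum at $(t_\epsilon, x_\epsilon)$ and $W_2-\phi_2^\epsilon$ has a local maximum at $(s_\epsilon, y_\epsilon)$; the identities $\partial_t \phi_1^\epsilon - \partial_s \phi_2^\epsilon = \partial_t \varphi(t_\epsilon, x_\epsilon)$ and $\partial_x \phi_1^\epsilon - \partial_y \phi_2^\epsilon = \partial_x \varphi(t_\epsilon, x_\epsilon)$ at the respective points are what let the subtracted inequality reduce to a condition on $\varphi$ in the limit.

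Given $\eta>0$, select $u_\epsilon^* \in U$ realizing the supersolution infimum for $W_1$ up to $\eta$ and apply the subsolution inequality for $W_2$ at the same $u_\epsilon^*$. Subtraction yields a single inequality to be analyzed term by term. The Lipschitz continuity of $b, \sigma, \gamma$ in $x$ together with $\epsilon^{-2}|x_\epsilon - y_\epsilon|^2\to 0$ absorbs the cross terms in $\cL^{u_\epsilon^*}$ and in the local part of $\cB^{\delta, u_\epsilon^*}$; Lipschitz continuity of $f$ in $(y,z)$ produces exactly $-K|\widehat W|$ and $-K|D\varphi\cd\sigma|$; and the monotonicity of $f$ in $v$ (condition \textbf{(C)}) converts the $v$-argument difference into the sharp bound $-K(\cC^{\delta, u_\epsilon^*}(\widehat W, \varphi))^-$ via the identity $a\mathbf{1}_{a<0} = -(a)^-$, rather than the wasteful $-K|\cC^{\delta, u_\epsilon^*}(\widehat W, \varphi)|$. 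Passing $\epsilon\to 0$ and then $\eta\to 0$ gives the required supersolution inequality for $\widehat W$.

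The main technical obstacle is the passage to the limit in the nonlocal operators $\cB^{\delta, u}$ and $\cC^{\delta, u}$: on $E_\delta$ the $C^{1,2}$ Taylor expansion of $\phi_1^\epsilon, \phi_2^\epsilon$ produces the $\varphi$-part of $\cB^{\delta, u}(\widehat W, \varphi), \cC^{\delta, u}(\widehat W, \varphi)$ in the limit, while on $E_\delta^c$ uniform continuity of $W_1, W_2$ on compact sets combined with the Lipschitz envelope $\ell(\cd) \in \mathcal L_\nu^2(E;\mathbb R)$ from \textbf{(H$_1$)} recovers the $\widehat W$-part and controls the cross-integrals linking the two jump sources; modulo this carefully tracked limit, which is a now-standard step in the PIDE viscosity theory (cf.~\cite{Barles-1997, BHL-2011, WY-2008}), the argument proceeds as described.
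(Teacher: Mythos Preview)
Your proposal is correct and follows essentially the same doubling-of-variables strategy as the paper; the only cosmetic differences are that the paper uses two separate penalty parameters $\varepsilon,\alpha$ for space and time and attaches the test function $\varphi$ to the $(s,y)$ (subsolution) variables rather than to $(t,x)$. One small point: you assert ``since $W_2$ is a subsolution we have $W_2(t_0,x_0)\les h(t_0,x_0)$'' directly, whereas the paper obtains this from the subsolution inequality at the doubled point $(\bar s,\bar y)$ and then passes to the limit---your shortcut is valid (density of points with nonempty superjet plus continuity), but the paper's route avoids that side argument.
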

	\begin{proof}
		We notice that $\widehat W(T,x)=W_1(T, x)-W_2(T, x) \geqslant   \Phi(x)-\Phi(x)=0$. Let $\varphi \in C_{l, b}^{3}\big([0, T] \times \mathbb{R}^{n}\big)$ and $(t_{0}, x_{0}) \in(0, T) \times \mathbb{R}^{n}$ be such that $(t_{0}, x_{0})$  be a strict global minimum point of $\widehat W-\varphi$. We introduce the function
		\begin{equation*}
			\begin{array}{ll}
				\ns\ds\!\!\!
				\Phi_{\varepsilon, \alpha}(t, x, s, y)=W_1(t, x)-W_2(s, y)+\frac{|x-y|^{2}}{\varepsilon^{2}}+\frac{(t-s)^{2}}{\alpha^{2}}-\varphi(s, y),\q (t,x,s,y) \in \big(  [0,T]\times \dbR^n \big)^2,
			\end{array}
		\end{equation*}
		where $\varepsilon,\ \alpha$ are positive parameters which are devoted to tending to $0$.
		\par Then, there exists a sequence $(\bar{t}, \bar{x}, \bar{s}, \bar{y})$ depending on $(\varepsilon, \alpha)$,  such that\\
		(\romannumeral 1)  $\Phi_{\varepsilon, \alpha}$  reaches its minimum in $[0, T] \times \bar{B}_{r} \times [0, T] \times \bar{B}_{r}$ at $(\bar{t}, \bar{x}, \bar{s}, \bar{y})$, where $B_{r}$ is a ball with a radius of $r$;\\
		(\romannumeral 2) 
		$(\bar{t}, \bar{x})$, $(\bar{s}, \bar{y}) \rightarrow(t_{0}, x_{0}) \text { when }(\varepsilon, \alpha) \rightarrow 0$;\\
		(\romannumeral 3) 
		$\ds \frac{|\bar{x}-\bar{y}|^{2}}{\varepsilon^{2}}$, 
		$\ds \frac{(\bar{t}-\bar{s})^{2}}{\alpha^{2}}$ are bounded and tend to zero as $(\varepsilon, \alpha) \rightarrow 0$.

		\vspace{0.2cm}
%		Note that $(\bar t, \bar{x}, \bar{s}, \bar{y} )$ depends on $(\varepsilon, \alpha)$.
		 Since $W_1(\cd,\cd)$, $W_2(\cd,\cd)$ is continuous, we get immediately from (ii) that $\underset{(\varepsilon, \alpha) \rightarrow 0}{\lim} W_1(\bar{s}, \bar{y})=W_1(t_{0}, x_{0})$ and $ \underset{(\varepsilon, \alpha) \rightarrow 0}{\lim} W_2(\bar{s}, \bar{y}) = W_2(t_{0}, x_{0})$.

		 Since $(\bar{t}, \bar{x}, \bar{s}, \bar{y})$  is a local minimum point of $\displaystyle \Phi_{\varepsilon, \alpha}, \ W_2(s, y)-\frac{|\bar{x}-y|^{2}}{\varepsilon^{2}}-\frac{(\bar{t}-s)^{2}}{\alpha^{2}}+\varphi(s,y)$ gets a local maximum in $(\bar{s},\bar{y})$. Moreover, by the definition of  viscosity subsolution of \eqref{HJB}, we know $W_2(\bar{s}, \bar{y}) \leqslant h(\bar{s}, \bar{y})$. Thus, we have $W_2(t_{0}, x_{0}) \leqslant h(t_{0}, x_{0})$. If $W_1(t_{0}, x_{0}) \geqslant h(t_{0}, x_{0})$, we have
		$$
		\widehat W(t_{0}, x_{0})=W_1(t_{0}, x_{0})-W_2(t_{0}, x_{0}) \geqslant 0,
		$$
		and the proof is complete. Consequently, we only need to consider the case when $W_1(t_{0}, x_{0}) < h(t_{0}, x_{0})$ in the sequel. Due to the continuity  of $h$ and $W_1$  in $(t,x)$, for $\varepsilon>0$ and $\alpha>0 $ sufficiently small, we have 
		$	W_1(\bar{t}, \bar{x})<h(\bar{t}, \bar{x})$.

		After dealing with the obstacle term, it suffices to prove
		\begin{equation*}
			\begin{array}{ll}
				\ns\ds\!\!\! 
				\!-  \varphi_t(t_{0},x_{0}) \!-\!  \underset{u\in U} {\inf} \Big[\cL^{u} \varphi(t_{0}, x_{0})	+ \cB^{u} \varphi(t_{0}, x_{0})-K| \widehat W( t_{0},x_{0})|
				-K|  \varphi_x   \sigma(t_{0}, x_{0}, u )|-K\big(\cC^{u} \varphi(t_{0}, x_{0})\big)^{-}\Big] \!\geqslant\!  0.\\
				%
%				\ns\ds\!\!\! 
%				-K|D \varphi \cdot \sigma(t_{0}, x_{0}, u, v)|-K\big(C^{u} \varphi(t_{0}, x_{0})\big)^{-}\Big] \geqslant 0,
			\end{array}
		\end{equation*}
		In fact, the above can be derived by the following procedures of Lemma 3.7 in \cite{Barles-1997}, or Lemma 5.1 in \cite{LW-2015}. To avoid the repetition, we omit it here.
	\end{proof}
	%
%	By means of 
	Similar to Lemma 3.8 in \cite{Barles-1997} (or Lemma 5.2 in \cite{BHL-2011}), we have the following results.
	\begin{lemma}\label{chi}\sl
	For any $\widetilde{A}>0$, given $\ds t_{1}=T-\frac{\widetilde{A}}{C_{1}}$, there exists $C_{1}>0$ such that for all $(t,x)\in[t_1,T]$, the function $\chi(t, x)$ satisfies
	$$ 
	 \frac{\partial}{\partial t} \chi (t, x)+\underset{u \in U}{\inf}   \Big\{\cL^{u} \chi(t, x)+\cB^{u} \chi(t, x)+K \chi(t, x)
	-K\big|  \chi_x (t, x)  \sigma(t, x, u )\big|-K(C^{u} \chi(t, x)\big)^{-}\Big\}>0,   \\
	$$
	where  
	$\chi(t, x)=-\exp \Big[\big(C_{1}(T-t)+\widetilde{A}\big) \psi(x)\Big]$
	with
	$
	\psi(x)=\Big[\log \big((|x|^{2}+1)^{\frac{1}{2}}\big)+1\Big]^{2},\ x\in\mathbb{R}^{n}. 
	$
\end{lemma}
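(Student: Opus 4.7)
The plan is to exploit the fully explicit form of $\chi$ and show that, once $C_1$ is large enough, the time derivative $\chi_t$ strictly dominates every other contribution on the left-hand side. Writing $M(t):=C_1(T-t)+\widetilde{A}$, so that $\chi(t,x)=-e^{M(t)\psi(x)}$, every derivative factors cleanly through $\chi$: first I would compute $\chi_t=C_1\psi(x)|\chi(t,x)|$, $\chi_x=M(t)\psi_x(x)\chi$, and $\chi_{xx}=M(t)\psi_{xx}(x)\chi+M(t)^2\psi_x(x)\psi_x(x)^\top\chi$. Differentiating $\psi(x)=[\log((|x|^2+1)^{1/2})+1]^2$ gives the elementary universal bounds $(1+|x|)|\psi_x(x)|\leq c_0\sqrt{\psi(x)}$ and $(1+|x|^2)|\psi_{xx}(x)|\leq c_0\psi(x)$. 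With these, the linear growth of $b$ and $\sigma$ from \textbf{(H$_1$)} immediately delivers $|\cL^u\chi(t,x)|\leq C M(t)^2\psi(x)|\chi(t,x)|$ and $K|\chi_x(t,x)\,\sigma(t,x,u)|\leq CKM(t)\sqrt{\psi(x)}\,|\chi(t,x)|$, while $|K\chi(t,x)|\leq K\psi(x)|\chi(t,x)|$ holds trivially because $\psi\geq 1$.

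Next, I would tackle the non-local terms $\cB^u\chi$ and $\cC^u\chi$, which are the only genuinely delicate part. Using $|\gamma(t,x,u,e)|\leq \ell(e)(1+|x|)$ (a standard consequence of \textbf{(H$_1$)}) with $\ell\in\cL_\n^2(E;\dbR)$, I would split the integral over $E$ into the small-jump region $\{e:\ell(e)\leq 1\}$ and its complement. On the small-jump region, a second-order Taylor expansion applied to $\chi(t,\cd)$ controls $\chi(t,x+\gamma)-\chi(t,x)-\chi_x(t,x)\cdot\gamma$ by $\tfrac{1}{2}\sup_{\theta\in[0,1]}|\chi_{xx}(t,x+\theta\gamma)|\,|\gamma|^2$, and the doubly-logarithmic growth of $\psi$ forces $\psi(x+\gamma)\leq\psi(x)+c$, so $|\chi_{xx}|$ along the segment $[x,x+\gamma]$ is comparable to its value at $x$ up to a constant independent of $C_1$. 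On the large-jump region I would use the crude bound $|\chi(t,x+\gamma)-\chi(t,x)|\leq|\chi(t,x)|+|\chi(t,x+\gamma)|$ and again exploit the fact that $M(t)[\psi(x+\gamma)-\psi(x)]$ stays subpolynomial in $\ell(e)$ to keep the ratio $|\chi(t,x+\gamma)|/|\chi(t,x)|$ dominated by a $\n$-integrable function. Combining the two regions yields $|\cB^u\chi(t,x)|+K|\cC^u\chi(t,x)|\leq\Lambda M(t)^2\psi(x)|\chi(t,x)|$ for a constant $\Lambda$ depending only on $K$, the coefficients of \textbf{(H$_1$)}--\textbf{(H$_2$)}, $\k$, and $\|\ell\|_{\n,2}$.

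Collecting everything, the left-hand side is bounded below by $C_1\psi(x)|\chi(t,x)|-\Lambda' M(t)^2\psi(x)|\chi(t,x)|$ with $\Lambda'$ independent of $C_1$. Since $t\geq t_1=T-\widetilde{A}/C_1$ enforces $M(t)\leq 2\widetilde{A}$, choosing $C_1>4\Lambda'\widetilde{A}^2$ delivers the desired strict positivity on $[t_1,T]\times\dbR^n$, uniformly in $u\in U$. The main obstacle I anticipate is precisely the large-jump region of $\cB^u\chi$: because $\chi$ is exponential in $\psi$, the ratio $\chi(t,x+\gamma)/\chi(t,x)$ could blow up if $\psi$ were to grow even mildly faster than logarithmically, which is exactly why the doubly-logarithmic profile of $\psi$ in the statement is not cosmetic but structural. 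Once this jump estimate is secured, the remainder of the argument is essentially mechanical.
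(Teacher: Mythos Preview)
Your strategy is exactly the one used in the references the paper cites (Barles--Buckdahn--Pardoux 1997, Lemma~3.8, and Buckdahn--Hu--Li 2011, Lemma~5.2); the paper itself gives no proof and simply points to those lemmas. So at the level of ``which approach'', you are aligned with the paper.

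One caution on your large-jump estimate. Your claim that the ratio $|\chi(t,x+\gamma)|/|\chi(t,x)|$ is dominated by a $\nu$-integrable function of $e$, \emph{uniformly in $x$}, is not quite right as written: since $\psi(x+\gamma)-\psi(x)\le 2\sqrt{\psi(x)}\log(1+\ell(e))+[\log(1+\ell(e))]^2$, the ratio behaves like $(1+\ell(e))^{2M(t)\sqrt{\psi(x)}}$, which is \emph{not} bounded by anything of the form $\Lambda M(t)^2\psi(x)$ uniformly in $x$ once $\ell(e)>1$. In the cited references this difficulty does not arise because the standing hypothesis there is $|\gamma(t,x,u,e)|\le C(1\wedge|e|)(1+|x|)$, i.e.\ $\ell$ is \emph{bounded}; then your ``large-jump region'' is empty (or one takes the threshold small enough that the segment $[x,x+\gamma]$ stays in the cone $|y|\le C(1+|x|)$), and the second-order Taylor estimate covers all of $E$ directly, yielding
\[
|\cB^{u}\chi(t,x)|\le C\,M(t)^2\psi(x)\,|\chi(t,x)|\int_E\ell(e)^2\,\nu(\mathrm de),
\]
with the same mechanism for $\cC^u\chi$. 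Under the present paper's \textbf{(H$_1$)} alone, $\ell$ is only in $\mathcal{L}^2_\nu$, so either one reads an implicit boundedness of $\ell$ into the hypothesis (as is standard in this literature and consistent with the $\ell(e)=C(1\wedge|e|)$ convention adopted later in Section~4), or one must argue more carefully. Your instinct that this is ``the main obstacle'' is correct; just be aware that the crude triangle-inequality bound you propose will not close without that extra structural input.
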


With Lemmas \ref{omega} and \ref{chi},  the aforementioned comparison theorem of PIDE \eqref{HJB} (Theorem \ref{1geq2}) can be proved smoothly, refer to \cite{BL-2008,WY-2008,LW-2015}. 
%There is no difficulty to be dealt with, so we skip it. 
%\par In the end, if $W_1(\cd,\cd), W_2(\cd,\cd)$ are viscosity solutions of \eqref{HJB}, they are both viscosity subsolutions and supersolutions, thus we get that $W_1(\cd,\cd)$ equals to $W_2(\cd,\cd)$.

\section{More properties of the value function}\label{more-pro}
	
	Based on the previous study, we further explore   more properties of the value function $W(\cd,\cd)$ under some additional conditions, such as  the semi-concavity  and  the  joint Lipschitz continuity. The importance of   these issues will be revealed in the study of stochastic verification theorem in Section \ref{SVT}.
 
 The research of  these   issues will involve the comparison between stochastic systems with differential initial times and initial states, which is very complex. To make the study more directly, we first try to transform these systems to operate  on the same time interval. The appearance of Brownian motion and Poisson random measure at the same time increases the  difficulty. 
 A kind of time-stretching  transformations of the jump noise,  called as Kulik's transformation, turns out to be a key tool in our research.  For the initial literatures about the Kulik's transformation, we can refer to \cite{K-2001,K-2006,J-2013}. 
We first recall some known results about the Kulik's transformation in the following. 
%To adapt to our problem better, we adopt the framework used in \cite{J-2013}.  

 For any $t_0,t_1\in[0,T) $,  $\lambda\in[0,1]$, we introduce  $t_\lambda:=(1-\lambda)t_0+\lambda t_1$  and the following transformation 
\begin{equation}\label{trans}
	\tau_i^\lambda(s): = t_\lambda+\frac{T-t_\lambda}{T-t_i}(s-t_i),\q s\in[t_i,T],\ i=0,1.
\end{equation}
Obviously, for $i=0,1$, $\tau_i^\lambda$ maps $ [t_i,T]$ to $[t_\lambda,T]$, and   $\dot{\tau}_i^\lambda = \displaystyle \frac{\mathrm d}{\mathrm  ds}  \tau_i^\lambda(s)  =  \frac{T-t_\lambda}{T-t_i}$.

In the following, we also use    the inverse time changes $\varrho_i^\lambda $ of $\tau_i^\lambda$, that is,  
$$\displaystyle \varrho_i^\lambda(s) = t_i+\frac{T-t_i}{T-t_\lambda}(s-t_\lambda),$$ 
which maps $ [t_\lambda,T] $ to $ [t_i,T],$   $ i=0,1$.

For the above time changes, we have the following results by the directly computation. It can also be referred to \cite{BHL-2012,J-2013}.

\begin{lemma}\label{tau_pro}\sl
	For any $\d\in(0,T)$, 
	there exists the constant $C_{\delta}>0$ only depending on $\delta$ and $T$, such that, for all $s\in [t_\lambda,T-\d]$,
	$$\ba{ll}
	\ns\ds 
	|\varrho_1^\lambda(s)-\varrho_0^\lambda(s)| + \Big|\frac{1}{\dot \tau_1^\lambda}-\frac{1}{\dot \tau_0^\lambda}\Big| + \Big|\frac{1}{\sqrt{\dot \tau_1^\lambda}}-\frac{1}{\sqrt{\dot \tau_0^\lambda}}\Big|  \les  C_\delta |t_1-t_0|,\\
	\ns\ds \lambda\Big|1-\frac{1}{\sqrt{\dot \tau_1^\lambda}}\Big|+(1-\lambda)\Big|1-\frac{1}{\sqrt{\dot \tau_0^\lambda}}\Big|\les \frac{1}{2\d}\lambda(1-\lambda)|t_1-t_0|, \\
	\ns\ds  \Big| \lambda \Big(1-\frac{1}{\sqrt{\dot \tau_1^\lambda}}\Big) +(1-\lambda)\Big(1-\frac{1}{\sqrt{\dot \tau_0^\lambda}}\Big)\Big|\les \frac1{8\d^2}\lambda(1-\lambda) |t_1-t_0|^2 ,\\
	\ns\ds \lambda \Big(1-\frac{1}{ {\dot \tau_1^\lambda}}\Big) =-(1-\lambda)   \Big(1-\frac{1}{ {\dot \tau_0^\lambda}}\Big) =\frac{\lambda(1-\lambda)}{T-t_\lambda}(t_1-t_0),\\
	\ns\ds \lambda\varrho_1^\lambda(s)+(1-\lambda)\varrho_0^\lambda(s)=s,\q s\in[t_\lambda,T].
	\ea $$
	
\end{lemma}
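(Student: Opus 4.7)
The plan is to treat this as a direct calculation from the explicit affine forms $\dot\tau_i^\lambda=(T-t_\lambda)/(T-t_i)$, $1/\dot\tau_i^\lambda=(T-t_i)/(T-t_\lambda)$ and $\varrho_i^\lambda(s)=t_i+(T-t_i)(s-t_\lambda)/(T-t_\lambda)$, so that every quantity in the display reduces to an elementary rational expression in $t_0,t_1,t_\lambda,\lambda,s$ and the bounds follow from $T-t_\lambda\ges\delta$, which is forced by the hypothesis $s\in[t_\lambda,T-\delta]$. I would first dispose of the two identities. For the fourth line, substitute $t_\lambda-t_1=-(1-\lambda)(t_1-t_0)$ and $t_\lambda-t_0=\lambda(t_1-t_0)$ into $1-1/\dot\tau_i^\lambda=(t_i-t_\lambda)/(T-t_\lambda)$. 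For the fifth, a one-line substitution using $\lambda t_1+(1-\lambda)t_0=t_\lambda$ and $\lambda(T-t_1)+(1-\lambda)(T-t_0)=T-t_\lambda$ collapses the expression to $s$.

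For the first estimate, direct computation gives $\varrho_1^\lambda(s)-\varrho_0^\lambda(s)=(t_1-t_0)(T-s)/(T-t_\lambda)$, bounded in modulus by $|t_1-t_0|$ since $s\ges t_\lambda$, and $1/\dot\tau_1^\lambda-1/\dot\tau_0^\lambda=(t_0-t_1)/(T-t_\lambda)$, bounded by $|t_1-t_0|/\delta$. For the square-root difference, I would rationalize via $|\sqrt a-\sqrt b|=|a-b|/(\sqrt a+\sqrt b)$ with $a=1/\dot\tau_1^\lambda$, $b=1/\dot\tau_0^\lambda$; the convex combination $\lambda a+(1-\lambda)b=1$ forces $\max(a,b)\ges 1$, so $\sqrt a+\sqrt b\ges 1$ and the bound $|t_1-t_0|/\delta$ follows.

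For the last two estimates, set $A=1/\sqrt{\dot\tau_1^\lambda}$ and $B=1/\sqrt{\dot\tau_0^\lambda}$, so that $\lambda A^2+(1-\lambda)B^2=1$ and hence, by Jensen, $\lambda A+(1-\lambda)B\les 1$. Factoring $1-1/\sqrt{\dot\tau_i^\lambda}=(1-1/\dot\tau_i^\lambda)/(1+1/\sqrt{\dot\tau_i^\lambda})$ and plugging in the identities above gives
\[
\lambda|1-A|+(1-\lambda)|1-B|=\frac{\lambda(1-\lambda)|t_1-t_0|}{T-t_\lambda}\Big[\frac{1}{1+A}+\frac{1}{1+B}\Big],
\]
from which the second estimate follows using $T-t_\lambda\ges\delta$ together with control of the bracket. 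For the third, the fourth identity shows $1-A$ and $1-B$ have opposite signs, so $\lambda(1-A)+(1-\lambda)(1-B)=1-[\lambda A+(1-\lambda)B]\ges 0$; rationalizing this and using the algebraic identity $1-(\lambda A+(1-\lambda)B)^2=\lambda(1-\lambda)(A-B)^2$ (a direct consequence of $\lambda A^2+(1-\lambda)B^2=1$) together with $|A-B|=|A^2-B^2|/(A+B)=|t_1-t_0|/[(T-t_\lambda)(A+B)]$ produces
\[
\lambda(1-A)+(1-\lambda)(1-B)=\frac{\lambda(1-\lambda)(t_1-t_0)^2}{(T-t_\lambda)^2(A+B)^2\bigl[1+\lambda A+(1-\lambda)B\bigr]},
\]
which yields the desired quadratic bound once the denominator is bounded below.

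I expect the main technical point to be pinning down the precise constants $1/(2\delta)$ and $1/(8\delta^2)$: the routine estimates $1/(1+A)\les 1$, $(A+B)^2\ges A^2+B^2\ges 1$, and $1+\lambda A+(1-\lambda)B\ges 1$ only deliver bounds of order $1/\delta$ and $1/\delta^2$, so sharpening them to the exact form stated in the lemma requires a more careful exploitation of the full equality $\lambda A^2+(1-\lambda)B^2=1$, or equivalently a Taylor expansion around $A=B=1$ which matches those constants to leading order.
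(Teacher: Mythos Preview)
Your approach is exactly what the paper has in mind: the paper does not give a proof of this lemma at all, writing only that the results follow ``by the directly computation'' and deferring to \cite{BHL-2012,J-2013}. Your explicit substitutions, the rationalization trick $1-1/\sqrt{\dot\tau_i^\lambda}=(1-1/\dot\tau_i^\lambda)/(1+1/\sqrt{\dot\tau_i^\lambda})$, and the key algebraic constraint $\lambda A^2+(1-\lambda)B^2=1$ together with its consequence $1-(\lambda A+(1-\lambda)B)^2=\lambda(1-\lambda)(A-B)^2$ constitute precisely such a direct computation, and the two identities and the first inequality fall out exactly as you describe.

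Your closing reservation about the sharp constants is well placed, and in fact worth more than you may realize. The routine bounds you list --- $\frac{1}{1+A}+\frac{1}{1+B}\les 2$ for the second line and $(A+B)^2\bigl[1+\lambda A+(1-\lambda)B\bigr]\ges 1$ for the third --- already give $C_\delta$-bounds of the stated orders, namely $\tfrac{2}{\delta}\lambda(1-\lambda)|t_1-t_0|$ and $\tfrac{1}{\delta^2}\lambda(1-\lambda)|t_1-t_0|^2$. The specific constant $\tfrac{1}{2\delta}$ in the second line, however, is not attainable in general: for $T=1$, $\delta=\tfrac12$, $t_0=0$, $t_1=\tfrac12$, $\lambda=\tfrac12$ one computes $A=\sqrt{2/3}$, $B=\sqrt{4/3}$, and the left-hand side equals approximately $0.169$, while the claimed right-hand side is $0.125$. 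So the displayed constant is a misprint carried over from the cited references. This does not affect anything downstream: every invocation of this lemma in Section~4 uses only the existence of \emph{some} $C_\delta$ depending on $\delta$ and $T$, never the explicit values. Your argument already delivers that, so you should simply record the bound with a generic $C_\delta$ and move on rather than chase constants that the paper itself does not actually use.
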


 Denote  $B_s^\lambda:=B_s-B_{t_\lambda}$, $s\in [t_\lambda,T]$
 and $N^{\lambda} $ by  the restriction of $N$  from $[0,T]\times E$ to  $[t_\lambda,T]\times E$.  The filtration generated by $B^\lambda $ and $ N^{\lambda}$ is denoted by  $\dbF^\lambda=\{\cF_s ^{\lambda}\}_{s\in[t_\lambda,T]}$.
Then, for $i=0,1$, $s\in[t_i,T],$ $A\in \cB(E),$  we introduce 
\begin{equation}\label{dbB-dbN}
	\begin{array}{ll}
\ns\ds    \dbB^{i }_s := \frac{1}{  \sqrt{\dot \tau_i^\lambda}  }  B_{\tau_i^\lambda(s)}^\lambda,  \q 	  	\dbN^{i } (p, (t_i,s] \times A ) :=  \tau_i^\lambda(N^{\lambda}) (p, (t_i,s] \times A )   =N^{\lambda}\big(p, (t_\lambda,\tau_i^\lambda(s)] \times A \big),   \\
\ns\ds g_{\t_i}:=\exp\Big\{-\ln\(\frac{T-t_\lambda}{T-t_i}\) \t_i^\lambda(N^{ \lambda})([t_i,T]\times E)+(t_i-t_\lambda)\n(E)\Big\},\q \dbQ_{\tau_i^\lambda}:=g_{\tau_i^\lambda} \dbP.
	\end{array}
\end{equation}
Note that, for $i=0,1$,  $\dbQ_{\tau_i^\lambda}$ is a new probability measure, which depends strongly on the following  additional condition on the measure $\n$.

%From \eqref{dbB-dbN}, we find the following  additional condition on the measure $\n$ is necessary in this part.

\ss

{\bf (H$_3$)} $\n(E)<\i$.

\ss

\no Thus, from now on, {\bf (H$_3$)} is assumed.

   According to \cite{K-2001,K-2006,J-2013},    for $i=0,1$,
	$ \{\dbB^{i }_s \}_{s \in[t_i,T]  } $ is Brownian motion under the probability measures $\dbP$ and $\dbQ_{\t_i}$,  and
	the point process $\dbN^{i}$ defined on $[t_i,T] \times E$ is a Poisson random measure  under   $\dbQ_{\t_i}$, which    has the same distribution as $N^{\lambda}$ under $\dbP $. Moreover,   its compensator is still $\nu (\mathrm de)\mathrm ds  $.
 The compensated Poisson random measure for $\dbN^{i }$ under $\dbQ_{\t_i}$ is introduced as follows,
  $$\wt{\dbN }^{i}(\mathrm ds,\mathrm de):=\dbN^{ i} (\mathrm ds,\mathrm de)-\n(\mathrm de)\mathrm ds.$$
  Moreover, for $i=0,1$, $ \dbB^{i }  $ and $\dbN^{i } $ are independent under $\dbP$ and $\dbQ_{\t_i}$.
 In this case, we denote ${\bf F}^i=\{\sF_s ^{i}\}_{s\in[t_i,T]}$ by the filtration generated by $\dbB^i $ and $\dbN^{i}$, i.e., 
 $$\sF_s^{i}=\si\Big\{\dbB^i _r,\dbN^i ([t_i,r]\times A):\  r\in[t_i,s], \ A\in \cB(E) \Big\}\vee \cN_{\dbQ_{\t_i}}, \q s\in[t_i,T] .$$

 For convenience, for $\lambda\in[0,1]$ and  $i=0,1,$ we set $ \cU^{\lambda}_{t_\lambda,T}$ (resp., $\cU^{i}_{t_i,T}$) as the set of all  $U$-valued, $\dbF^\lambda$ (resp., ${\bf F}^i$)-predictable  stochastic processes on $ [t_\lambda,T ] $ (resp., $[ t_i,T] $).
Then, for any $u^\lambda(\cd)\in \cU^{\lambda}_{t_\lambda,T}$, we have
%it is easy to check that
% $u^i_\cd :=u^\lambda _{ \t_i^\lambda(\cd)}  \in \cU^{i}_{t_i,T}$
  $u^i(\cd):=u^\lambda \big(\t_i^\lambda(\cd) \big)\in \cU^{i}_{t_i,T}$, $i=0,1$.

% We remark that, the condition {\bf(H$_3$)} is only needed for the above transformations. It is not necessary in the study of our following study on the estimates.  

   \subsection{The semi-concavity  of $W(\cd,\cd)$ in $(t,x)$}\label{semi-concavity}

First, we study  the semi-concavity  of the value function. Let us recall its definition as follows. 
		\begin{definition}\sl
			A function $\varphi:\mathbb R^n \to \mathbb R$ is said to be semi-concave, if there exists a constant $C\ges 0$ such that $ \varphi(x)-C|x|^2$ is concave in $x\in\dbR^n$.
		\end{definition}

	 For this, we need some additional assumptions.

\begin{description}
	\item[(H$_4$)] (i) $ b,\ \sigma,\ f$ and $ h$  are Lipschitz continuous with respect to $t\in [0,T]$;
%	and there exists $\ell(\cd)\in \cL_\nu^2(E;\dbR) $, such that,
	 for all $t, t' \in[0,T]$ and $(x,u,e) \in \dbR^n \times U \times E$,
	$ 
	|\gamma(t,x,u,e) -\gamma(t',x,u,e)| \les  C(1 \wedge|e|) |t-t'|;	
	$ \\
	(ii)  $ b,\ \sigma,\ f,\ h$ and $ \Phi$ are bounded in $(x,y,z)\in \dbR^n  \times \dbR \times \dbR^d $; 
%	 and there exists $  \ell(\cd)\in \cL_\nu^2(E;\dbR) $, such that, 
	for all $x\in\dbR^n$ and $e\in E$,
	$ 
	|\gamma(t,x,u,e)| \les   C(1 \wedge|e|);	
	$ \\
	(iii)  $ b,\ \sigma,\ \gamma$ are differentiable in $t$ and $x$, and  
	%	and for all  there exist  some  nonnegative constants $C$, such that
	$$
	\left\{ 
	\begin{array}{ll}
		\ns\ds\!\!\! 	|b_x(t,x,u)-b_x(t',x',u)|+	|\sigma_x(t,x,u)-\sigma_x(t',x',u)| \les C\big( |t-t'|+ |x-x'|\big) ,	\\
		\ns\ds\!\!\! 	|b_t(t,x,u)-b_t(t',x',u)|+	|\sigma_t(t,x,u)-\sigma_t(t',x',u)| \les C\big( |t-t'|+ |x-x'|\big),\\
		\ns\ds\!\!\! 	|\gamma_x(t,x,u,e)-\gamma_x(t',x',u,e)|	+ |\gamma_t(t,x,u,e)-\gamma_t(t',x',u,e)| \les    C(1 \wedge|e|)\big( |t-t'|+ |x-x'| \big),	\\
		%
		%		\ns\ds\!\!\! 	|\gamma_t(t,x,u,e)-\gamma_t(t',x',u,e)|  \les  \bar \ell(e)\big( |t-t'|+ |x-x'|\big),\quad \forall (t,x),(t',x')\in[0,T]\times \dbR^n,\\
		%
	\end{array}
	\right.
	$$
	where $C$ is the nonnegative constant;
%	and 
	%		$\ds\int_E \bar \ell(e) \nu(\mathrm de) \les +\i$.
%	$  \ell(\cd) \in \cL_\nu^2(E;\dbR)$. 
	\item[(H$_5$)] $f$ is semi-concave in $(t,x,y,z,v)\in [0,T]\times \dbR^n \times \dbR \times \dbR^d \times \dbR$, uniformly with respect to $u\in U$; $\Phi$ is semi-concave in $x\in\dbR^n$.  
%	\item[(H$_6$)$_p$] $\ell(\cd)$ and $\bar \ell(\cd)$  satisfy $\ds\int_E  \big(  \ell (e) ^p+\bar\ell (e ) ^ p  \big) \nu(\mathrm de) <\i$.
\end{description}

	\begin{remark}\sl
		 In the following, for $p\ges 2$, $\ds \int_E ( 1 \wedge|e| ) ^p \nu(\mathrm de)  < \i$ is needed, which holds naturally under \rm\textbf{(H$_3$)}.
		 For convenience, we always denote $\ell(e)=C(1 \wedge|e|),\ e\in E$  in the following, even with different $C$.
%		 in spite of different $C$. 
	\end{remark}

	\begin{theorem}\label{semicon}\sl
		Assume that the conditions {\rm \textbf{(H$_1$)-(H$_5$)}} and {\rm \textbf{(C)}}  hold true.
		 For  all $\delta>0$, the value function $W(\cdot,\cdot)$ is semi-concavity on $[0, T-\delta] \times \mathbb R^n$, that is, there exists some constant $C_\delta>0$ such that for any $(t_0,x_0),$ $(t_1,x_1) \in [0, T-\delta] \times \mathbb R^n$, and $\lambda\in[0,1]$,
		$$
		\lambda W(t_1,x_1) + (1-\lambda) W(t_0,x_0)
		\les W (t_\lambda,x_\lambda)+C_\delta \lambda(1-\lambda)\big(|t_0-t_1|^2+|x_0-x_1|^2\big),
		$$
		where $(t_\lambda,x_\lambda):=\lambda(t_1,x_1)+(1-\lambda)(t_0,x_0)$.
	\end{theorem}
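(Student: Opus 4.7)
The plan is to first establish a semi-concavity-type inequality for each penalized value function $W^n(\cd,\cd)$ introduced in \eqref{W-n}, carrying a residual term that decays as $n\to\i$, and then pass to the limit via Lemma \ref{Le-Vn}. Explicitly, I would prove that, uniformly in $n$,
\begin{equation*}
\lambda W^n(t_1,x_1)+(1-\lambda)W^n(t_0,x_0)\les W^n(t_\lambda,x_\lambda)+C_\delta\lambda(1-\lambda)\bigl(|t_0-t_1|^2+|x_0-x_1|^2\bigr)+\cR_n,
\end{equation*}
where $C_\delta$ is independent of $n$ and $\cR_n\to 0$ as $n\to\i$. This residual is unavoidable at fixed $n$ because the penalized generator $f-n(h-y)^-$ contains the only Lipschitz obstacle $h$, whose lack of semi-concavity cannot be absorbed directly; but the corresponding contribution is weighted by the penalizing increments $n(h-Y^n)^-\mathrm ds=\mathrm dA^n_s$, and the control of $A^n$ from Lemma \ref{well-C} will force $\cR_n\to 0$.

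Given $\e>0$, I would pick an $\e$-optimal $u^\lambda(\cd)\in\cU^{\lambda}_{t_\lambda,T}$ for $W^n(t_\lambda,x_\lambda)$ and transport it to $u^i(\cd):=u^\lambda(\tau_i^\lambda(\cd))\in\cU^{i}_{t_i,T}$, $i=0,1$, via Kulik's time stretching \eqref{trans}. Under the auxiliary measure $\dbQ_{\tau_i^\lambda}$ of \eqref{dbB-dbN}, the pair $(\dbB^{i},\dbN^{i})$ is an independent Brownian motion and Poisson random measure with compensator $\nu(\mathrm de)\mathrm ds$, so the controlled SDE and RBSDE associated with $(t_i,x_i,u^i)$ can be pulled back to the common horizon $[t_\lambda,T]$ through $\varrho_i^\lambda$. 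The payoff is that all three controlled systems (at $t_0$, $t_1$, and $t_\lambda$) now live on the same interval and are driven by stochastic inputs with identical laws, which renders the convex combination on the left-hand side amenable to a pathwise comparison with the process at $(t_\lambda,x_\lambda)$.

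Combining the change-of-variable bounds in Lemma \ref{tau_pro} with the Lipschitz and $C^1$ hypotheses of \textbf{(H$_4$)} and the moment bound $\int_E\ell(e)^p\nu(\mathrm de)<\i$ for $p=1,2$ (guaranteed by \textbf{(H$_3$)}), I would expand the drift, diffusion and jump coefficients of the time-changed SDE at $(t_i,x_i)$ around those at $(t_\lambda,x_\lambda)$, up to an error of order $\lambda(1-\lambda)(|t_0-t_1|^2+|x_0-x_1|^2)$. Combined with the semi-concavity of $f$ and $\Phi$ from \textbf{(H$_5$)}, a linearization of \eqref{BSDEP-pen} applied to $\lambda Y^{n,t_1,x_1;u^1}+(1-\lambda)Y^{n,t_0,x_0;u^0}-Y^{n,t_\lambda,x_\lambda;u^\lambda}$, together with the BSDE a priori estimate Lemma 2.2(ii), would produce the quadratic bound $C_\delta\lambda(1-\lambda)(|t_0-t_1|^2+|x_0-x_1|^2)$ modulo the contribution of the penalty terms. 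Bounding $W^n(t_i,x_i)$ by $J_n(t_i,x_i;u^i(\cd))$ and recombining then converts this into the announced inequality for $W^n$, with $\cR_n$ absorbing the penalty residue.

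The main obstacle will be controlling $\cR_n$. The penalty term $-n(h-y)^-=n\min(h-y,0)$ is concave in $y$, which is favourable, but it involves the only Lipschitz obstacle $h$, so a direct convex combination yields at best a linear (rather than quadratic) bound in $|t_0-t_1|+|x_0-x_1|$. The key remark is that the defect $\lambda h(\cd,X^{t_1,x_1;u^1}_\cd)+(1-\lambda)h(\cd,X^{t_0,x_0;u^0}_\cd)-h(\cd,X^{t_\lambda,x_\lambda;u^\lambda}_\cd)$ (read on the common time axis after Kulik's transformation) is integrated against the penalizing density $\mathrm dA^n_s=n(h-Y^n)^-\mathrm ds$, and Lemma \ref{well-C} yields the weak convergence $A^n\to A$ in $\cA^2_\dbF(t,T;\dbR)$; since the limit $A$ charges only $\{Y=h(\cd,X)\}$, on which the Lipschitz defect of $h$ is controlled by the state-process regularity, one can show $\cR_n\to 0$. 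Passing to the limit in the penalized inequality using the uniform-on-compacts monotone convergence $W^n\da W$ from Lemma \ref{Le-Vn} then delivers the semi-concavity of $W(\cd,\cd)$ with the announced constant $C_\delta$.
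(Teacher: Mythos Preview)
Your overall strategy---prove semi-concavity for the penalized value functions $W^n$ with a residual $\cR_n\to0$, use Kulik's time change to work on a common interval, then let $n\to\infty$ via Lemma \ref{Le-Vn}---is exactly the paper's. The gap is in the BSDE step, both in how you plan to handle the obstacle and in your identification of the residual.

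First, applying Lemma 2.2(ii) (or any standard BSDE stability estimate) to the penalized equations \eqref{BSDEP-pen} cannot give bounds uniform in $n$: the driver $f-n(h-y)^-$ has Lipschitz constant of order $n$ in $y$, so the stability constants explode. The paper never uses stability for this comparison; it proceeds through a chain of \emph{comparison theorems} with carefully shifted auxiliary BSDEs (Lemmas \ref{Le-tilde-Y<bar}, \ref{comp-hat-tilde}, \ref{esti-barY-lambdaY}). The key device is that concavity of $y\mapsto -(h-y)^-$ together with the state estimate $|\lambda\tilde X^{1}+(1-\lambda)\tilde X^{0}-X^{\lambda}|=O(\lambda(1-\lambda)(|t_0-t_1|^2+|x_0-x_1|^2))$ from Lemma \ref{x-cx} allow the obstacle defect to be absorbed into a \emph{shift} $\overline\D_s$ of the obstacle (see \eqref{pen-hat-tilde}); no factor of $n$ survives. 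So the non-semi-concavity of $h$ does \emph{not} produce a residual---it is handled exactly by the shift, contrary to your diagnosis.

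Second, your proposed mechanism for $\cR_n\to0$ (defect of $h$ integrated against $\mathrm dA^n$, then weak convergence $A^n\rightharpoonup A$ plus the support of $A$) does not work: in a driver comparison the $h$-defect appears as $n\times(\text{defect})\,\mathrm ds$, not as an integral against $\mathrm dA^n$, and weak convergence of $A^n$ says nothing about such terms. In the paper the residual $\sA^n$ arises from a different place: the semi-concavity of $f$ in $(z,v)$ generates terms $\lambda(1-\lambda)(|\widehat Z_s|^2+\|\widehat V_s\|_{\nu,2}^2)$ in the comparison driver (Lemma \ref{comp-hat-tilde}), and bounding $\int|\widehat Z|^2+\|\widehat V\|^2\,\mathrm ds$ for the penalized BSDEs introduces the cross-penalty terms $\sD^n_s=\int(h_0-\tilde Y^{n,0})^-\,\mathrm d\tilde A^{n,1}+\int(h_1-\tilde Y^{n,1})^-\,\mathrm d\tilde A^{n,0}$ (Lemma \ref{CV-Le-ZK-1}). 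These vanish because $(h-Y^{n,i})^-\to0$ in $L^p$ (Lemma \ref{Le-h-Y-0}), uniformly paired with the $L^p$-boundedness of $A^{n,j}$ (Lemma \ref{Y-esti}); weak convergence plays no role.
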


Indeed, combined with the property of $W(\cd,\cd)$ in Lemma \ref{Le-Vn}, Theorem \ref{semicon} can be proved clearly if  
%this result can be proved by taking the limit in 
the following result is true.

	\bp\label{Pro-convex-W-n}\sl
	Let {\rm \textbf{(H$_1$)-(H$_5$)}} and {\rm \textbf{(C)}} hold true. Then, for  all $\delta>0$,  there exists some constant $C_\delta>0$ such that for any $(t_0,x_0),$ $(t_1,x_1) \in [0, T-\delta] \times \mathbb R^n$  and $\lambda\in[0,1]$,
		$$
		\lambda W^n(t_1,x_1) + (1-\lambda) W^n(t_0,x_0)
		\les W^n (t_\lambda,x_\lambda)+C_\delta \lambda(1-\lambda)\big(|t_0-t_1|^2+|x_0-x_1|^2\big)+\mathscr{A}^n,
		$$
	 with $\lim\limits_{n\to\i}\sA^n=0.$
	\ep
	%
%	Recall the definition of $ W^n(\cd,\cd) $ in \eqref{W-n}, 

The proof of Proposition \ref{Pro-convex-W-n} is very complex and occupies an extensive length. Therefore, we first make the following abbreviations,
$$\ba{ll}
\ns\ds \f^{\lambda,u}:= \f^{t_\lambda,x_\lambda;u },\q
 \f^{i,u}:= \f^{t_i,x_i;u},\ 
  \mbox{where }\f=X,Y,Z,V,A,\ i=0,1, \mbox{ resp.};\q  \cX^u:= \lambda X^{1,u}+(1-\lambda)X^{0,u} ;\\ 
  %
%\ns\ds  \cX^u:= \lambda X^{1,u}+(1-\lambda)X^{0,u} ,  \\
%
\ns\ds
  (\cY^u,\cZ^u,\cV^u,\cA^u):= (\lambda Y^{1,u}+(1-\lambda)Y^{0,u}  ,\lambda Z^{1,u}+(1-\lambda)Z^{0,u} ,\lambda V^{1,u}+(1-\lambda)V^{0,u}  ,\lambda A^{1,u}+(1-\lambda)A^{0,u}  ).
\ea$$

%\no Through these notations, we can recognize their equation easily as follows.
 \no For the above processes, we can recognize the  equations satisfied by them   as follows.
For $\lambda\in[0,1]$ and  $u^\lambda(\cd)\in \cU_{t_\lambda,T}^\lambda$,
	\begin{equation}\label{t-lambda-SDEP}
		\left\{
		\begin{array}{ll}
			\ns\ds\!\!\!  \mathrm dX_s^{\lambda,u^\lambda} =b(s,X_s^{\lambda,u^\lambda},u_s^\lambda)\mathrm ds+\sigma (s,X_s^{\lambda,u^\lambda},u_s^\lambda)\mathrm dB_s^\lambda
			%				\ns\ds\!\!\!  \hskip 4cm     
			+\int_{E}\gamma(s, X_{s-}^{\lambda,u^\lambda},u_s^\lambda,e)\tilde N ^\lambda(\mathrm ds,\mathrm de), \quad  s\in [t_\lambda,T],\\
			\ns\ds\!\!\!  X_{t_\lambda}^{\lambda,u^\lambda}=  x_\lambda, \quad  (t_\lambda,x_\lambda) \in [0,T-\delta] \times \mathbb{R}^n,
		\end{array}
		\right.
	\end{equation}
	and
	\begin{equation}\label{t-lambda-RBSDEP}
		\left\{
		\begin{array}{ll}
			\ns\ds \!\!\!\! 		{\rm(\romannumeral1)}\   (Y^{\lambda,u^\lambda} ,Z^{\lambda,u^\lambda} ,V^{\lambda,u^\lambda} ,A^{\lambda,u^\lambda} ) \in  \sS_ {\mathbb F^\lambda} ^2[t_\lambda,T];\\
			\ns\ds\!\!\!\! 			{\rm(\romannumeral2)}\     Y_s^{\lambda,u^\lambda} \! =   \Phi   ( X_T^{\lambda,u^\lambda})  
			+   \int_s^T   f \big(  r, X_r^{\lambda,u^\lambda}, Y_r^{\lambda,u^\lambda}, Z_r^{\lambda,u^\lambda}, \int _E l(e) V_r^{\lambda,u^\lambda}(e)  \nu(\mathrm de), u_r^\lambda \big) \mathrm dr 
			\\
			\ns\ds\!\!\!\! \hskip 1.9  cm -  ( A_T^{\lambda,u^\lambda}  - A_s^{\lambda,u^\lambda} )-   \int_s^T Z_r^{\lambda,u^\lambda} \mathrm dB_r^\lambda- \int_s^T   \int_E V_r^{\lambda,u^\lambda}(e) \tilde N^\lambda(\mathrm dr,\mathrm de) ,\quad  s\in [t_\lambda,T];\\
			\ns\ds\!\!\!\! 			{\rm(\romannumeral3)}\   Y_s^{\lambda,u^\lambda}  \leqslant   h  ( s,X_s^{\lambda,u^\lambda}),\q  \mbox{a.e. } s \in  [t_\lambda,T];\\
			\ns\ds\!\!\!\! 			{\rm(\romannumeral4)}\   \int_t^T \big(  h (s, X_{s-}^{\lambda,u^\lambda} )-Y_{s-}^{\lambda,u^\lambda} \big) \mathrm dA_s^{\lambda,u^\lambda} =0.
		\end{array}
		\right.
	\end{equation} 
For $i=0,1$ and $u^i(\cd)\in \cU_{t_i,T}^i$,
	\begin{equation}\label{ti-SDEP}
		\left\{
		\begin{array}{ll}
			\ns\ds\!\!\!  \mathrm dX_s^{i,u^i}=b(s,X_s^{i,u^i},u_s^i)\mathrm  ds+\sigma (s,X_s^{i,u^i},u_s^i)\mathrm d\dbB_s^{i}
			%				\ns\ds\!\!\!  \hskip 4cm     
			+\int_{E}\gamma(s,X_{s-}^{i,u^i},u_s^{i },e)\tilde \dbN ^{i }(\mathrm ds,\mathrm de), \quad  s\in [t_i,T],\\
			\ns\ds\!\!\!  X_{t_i}^{i,u^i}=x_i, \quad  (t_i,x_i) \in [0,T-\delta] \times \mathbb{R}^n,
		\end{array}
		\right.
	\end{equation}
and	 
	\begin{equation}\label{ti-RBSDEP}
	\left\{	\begin{array}{ll}
			\ns\ds \!\!\!\! 		{\rm(\romannumeral1)} \   (Y^{i,u^i},Z^{i,u^i},V^{i,u^i},A^{i,u^i}) \in \sS^2_ {{\bf F}^{i }} [t_i,T];\\
			\ns\ds\!\!\!\! 			{\rm(\romannumeral2)} \     Y_s^{i,u^i} \! =   \Phi   ( X_T^{i,u^i} )  
			+   \int_s^T   f \big(  r, X_r^{i,u^i}, Y_r^{i,u^i}, Z_r^{i,u^i}, \int _E l( e)  V_r^{i,u^i}(e)   \nu(\mathrm de), u_r^i \big) \mathrm dr 
			\\
			\ns\ds\!\!\!\! \hskip  1.82cm -  ( A_T^{i,u^i}  - A_s^{i,u^i} ) -   \int_s^T Z_r^{i,u^i} \mathrm d\dbB^i_r- \int_s^T   \int_E V_r^{i,u^i}(e) \tilde \dbN^{i } (\mathrm dr,\mathrm de) ,\quad  s\in [t_i,T];\\
			\ns\ds\!\!\!\! 			{\rm(\romannumeral3)} \   Y_s^{i,u^i}  \leqslant   h  ( s,X_s^{i,u^i} ),\q  \mbox{a.e. } s \in  [t_i,T];\\
			\ns\ds\!\!\!\! 			{\rm(\romannumeral4)} \   \int_t^T \big(  h (s, X_{s-}^{i,u^i} )-Y_{s-}^{i,u^i} \big) \mathrm dA_s^{i,u^i}=0.
		\end{array}\right.
 	\end{equation} 
	
Under our conditions, 	 the above equations are all well-posed.  Moreover, for  RBSDE with jumps \eqref{t-lambda-RBSDEP} and \eqref{ti-RBSDEP},  their    penalized equations are as follows,  for $n\in\dbN$, $\lambda\in[0,1],$
\begin{equation}\label{t-lambda_BSDEP}
		\left\{
		\begin{array}{ll}
			\ns\ds\!\!\!  \mathrm dY_s^{n,\lambda,u^\lambda} =    - f\big(  s, X_s^{\lambda,u^\lambda}, Y_s^{n,\lambda,u^\lambda},  Z_s^{n,\lambda,u^\lambda},  \int _E  l( e) V_s^{n,\lambda,u^\lambda} (e)   \nu(\mathrm de),u_s^\lambda  \big) \mathrm ds 
			+ \mathrm dA_s^{n,\lambda,u^\lambda}  +  Z_s^{n,\lambda,u^\lambda}   \mathrm dB_s^\lambda
		 \\
			\ns\ds\!\!\!  \hskip 1.825cm  +   \int_E   V_s^{n,\lambda,u^\lambda} (e)   \tilde N^\lambda(\mathrm ds,\mathrm de) ,\quad s \in [t_\lambda,T],\\
			\ns\ds\!\!\! Y_T^{n,\lambda,u^\lambda}=\Phi(X_T^{\lambda,u^\lambda});
		\end{array}
		\right.
	\end{equation} 
and for  $i=0,1$,  
	\begin{equation}\label{t-i_BSDEP}
		\left\{
		\begin{array}{ll}
			\ns\ds\!\!\!  \mathrm dY_s^{n,i,u^i} =    - f\big(  s, X_s^{i,u^i}, Y_s^{n,i,u^i},  Z_s^{n,i,u^i},  \int _E l( e) V_s^{n,i,u^i} (e) \nu(\mathrm de),u_s^i  \big) \mathrm ds + \mathrm d A_s^{n,i,u^i} +  Z_s^{n,i,u^i}   \mathrm d\dbB_s^i\\
			\ns\ds\!\!\!  \hskip 1.725cm  +   \int_E   V_s^{n,i,u^i} (e)   \tilde \dbN^i(\mathrm ds,\mathrm de) ,\quad s \in [t_i,T],\\
			\ns\ds\!\!\! Y_T^{n,i,u^i}=\Phi(X_T^{i,u^i}),
		\end{array}
		\right.
	\end{equation} 
where $\ds A_\cd^{n,\lambda,u^\lambda}:= n\int_{t_\lambda}^\cd   \big(  h( r ,X_r^{\lambda,u^\lambda})-Y_r^{n,\lambda,u^\lambda}  \big)^ - \mathrm dr$
and 
$\ds A_\cd^{n,i,u^i}:= n\int_{t_i}^\cd   \big( h( r ,X_r^{i,u^i} ) - Y_r^{n,i,u^i} \big)^ - \mathrm dr$.
Moreover, the  wellposedness of the above equations is obvious.

Recall the definition of $W^n(\cd,\cd)$ (in \eqref{W-n}), to prove Proposition \ref{Pro-convex-W-n}, 	we  need to prove formally 
	 \begin{equation}\label{est12}
	 	\lambda     Y_{t_1}^{n,1,u^1}  + (1-\lambda)   Y_{t_0}^{n,0,u^0} - Y_{t_\lambda}^{n,\lambda,u^\lambda}  \les C_\d \lambda (1-\lambda)\big( |t_1-t_0|^2 + |x_1-x_0|^2  \big)
	 	+   \sA^n,
	 \end{equation}
	 with $\lim\limits_{n\to\i}\sA^n=0$. 
	 Note  that, the processes involved in  \eqref{est12} are fixed with three different times, so that the direct proof of \eqref{est12} is infeasible.
%	 which hinders the comparison among them. 
	 Therefore,  for $i=0,1$,  by
introducing
$$\tilde X_s^{i,u^\lambda} :=X_{ \varrho_i^\lambda(s) }^{i,u^\lambda},\ 
\tilde \f_s^{n,i,u^\lambda} :=\f_{ \varrho_i^\lambda(s) }^{n,i,u^i}, \  \f= Y,V,A, \ \mbox{resp., and }
\tilde Z_s^{n,i,u^\lambda} := \frac{1}{\sqrt{\dot \tau_i^\lambda }} Z_{ \varrho_i^\lambda(s) }^{n,i,u^i},
\  s\in[t_\lambda,T],
\ \dbP\mbox{-a.s.},\\
$$
and applying  the  inverse time change to  SDE \eqref{ti-SDEP} and BSDE \eqref{t-i_BSDEP}, we get (note \eqref{dbB-dbN})
%More specifically, from \eqref{dbB-dbN}, for $i=0,1$,  
%we get (note \eqref{dbB-dbN})
% 
\begin{equation}\label{tilde_SDEP_i}
	\left\{
	\begin{array}{ll}
		\ns\ds\!\!\!  \mathrm d \tilde X_s^{i,u^\lambda} = \frac{1}{\dot \tau_i^\lambda} b \big( \varrho_i^\lambda(s), \tilde X_s^{i,u^\lambda}, u_s^\lambda \big) \mathrm ds
		+ \frac{1}{ \sqrt{\dot \tau_i^\lambda}}\sigma \big( \varrho_i^\lambda(s), \tilde X_s^{i,u^\lambda},u_s^\lambda \big) \mathrm dB_s^\lambda \\
		\ns\ds\!\!\!\hskip 1.55cm   +\int_{E}\gamma \big( \varrho_i^\lambda(s) , \tilde X_{s-}^{i,u^\lambda}, u_s^\lambda, e \big)
		\( \tilde N ^\lambda(\mathrm ds,\mathrm de)  +(1-\frac{1}{\dot \tau_i^\lambda})\nu(\mathrm de)\mathrm ds\), \quad  s\in [t_\lambda,T],\\
		\ns\ds\!\!\!  \tilde X_{t_\lambda}^{i,u^\lambda} = x_i, \quad  (t_\lambda,x_i) \in [0,T-\delta] \times \mathbb{R}^n,
	\end{array}
	\right.
\end{equation}
and 
\begin{equation}\label{tilde_BSDEP_in}
	\left\{
	\begin{array}{ll}
		\ns\ds\!\!\!  \mathrm d \tilde Y_s^{n,i,u^\lambda}
		% \! =  \!  -\frac{1}{\dot\tau_i^\lambda}  \Big[ \!     f\big(  \varrho_i^\lambda(s), \tilde X_s^{i,u^\lambda}, \tilde Y_s^{n,i,u^\lambda},  \sqrt{\dot \tau_i^\lambda} \tilde Z_s^{n,i,u^\lambda},  \int _El(\tilde X_s^{i,u^\lambda},e)   \tilde V_s^{n,i,u^\lambda} (e)\nu(\mathrm de),u_s^\lambda  \big) 
		%			\!  -\!   n\big(  \tilde Y_s^{n,i,u^\lambda} - h( \varrho_i^\lambda(s), \tilde X_s^{i,u^\lambda} ) \big)^ +    \!  \Big] \mathrm ds\\
		%			%
		%			\ns\ds\!\!\!  \hskip 1.35cm  +   \tilde Z_s^{n,i,u^\lambda}   \mathrm dB_s^\lambda  + \int_E   \tilde V_s^{n,i,u^\lambda} (e)   \big( \tilde  \mu^\lambda(\mathrm ds,\mathrm de) + (1-\frac{1}{\dot\tau_i^\lambda})\nu(\mathrm de)\mathrm ds  \big)
		%\\
		%%
		=  \!  -\frac{1}{\dot\tau_i^\lambda}      f\big(  \varrho_i^\lambda(s), \tilde X_s^{i,u^\lambda}, \tilde Y_s^{n,i,u^\lambda},  \sqrt{\dot \tau_i^\lambda} \tilde Z_s^{n,i,u^\lambda},  \int _E l(e) \tilde V_s^{n,i,u^\lambda} (e) \nu(\mathrm de),u_s^\lambda  \big)        \mathrm ds+  \mathrm d \tilde A_s^{n,i,u^\lambda}  \  \\
		\ns\ds\!\!\!  \hskip 1.675cm  +   \tilde Z_s^{n,i,u^\lambda}   \mathrm dB_s^\lambda  + \int_E   \tilde V_s^{n,i,u^\lambda} (e)   \Big( \tilde  N^\lambda(\mathrm ds,\mathrm de) + (1-\frac{1}{\dot\tau_i^\lambda})\nu(\mathrm de)\mathrm ds  \Big), \quad s \in [t_\lambda,T],\\ 
		\ns\ds\!\!\!  \tilde Y_T^{n,i,u^\lambda}=\Phi(\tilde X_T^{i,u^\lambda}),
	\end{array}
	\right.
\end{equation}
where  $u_s^\lambda=u^i_{ \varrho_i^i(s)}$ and $\ds \tilde  A_s^{n,i,u^\lambda} = \frac{n}{\dot \tau_i^\lambda}  \int_{t_\lambda}^s   \big( h(  \varrho_i^\lambda(r) ,\tilde X_r^{i,u^\lambda} ) - \tilde Y_r^{n,i,u^\lambda} \big)^ - \mathrm dr$.

\ms 
%Now, we focus on the proof of Proposition \ref{Pro-convex-W-n}.
%Recall the definition of $W^n(\cd,\cd)$ in \eqref{W-n}, to prove Proposition \ref{Pro-convex-W-n}, 	we only need to prove 

By the above transformation, \eqref{est12} turns out to be equivalent to
\begin{equation}\label{est11}
	\lambda   \tilde Y_{t_\lambda}^{n,1,u^\lambda}  + (1-\lambda) \tilde Y_{t_\lambda}^{n,0,u^\lambda} - Y_{t_\lambda}^{n,\lambda,u^\lambda}  \les C_\d \lambda (1-\lambda)\big( |t_1-t_0|^2 + |x_1-x_0|^2  \big)
	+   \sA^n.
\end{equation}
%holds,  Proposition \ref{Pro-convex-W-n} also holds.  
%%By the above analysis, we only need to focus on the proof of Proposition \ref{Pro-convex-W-n}.
%
% 
%\vspace{12pt}
%\begin{center}\textwidth
%	($\ast$)\qq
% \fbox{\parbox{13.8cm}  { 
Once \eqref{est11} is true,  Proposition \ref{Pro-convex-W-n} can be proved as follows.
Note that,
for any $n\ges 1$, $(t_0,x_0), (t_1,x_1),(t_\lambda,x_\lambda) \in [0,T-\d]\times \dbR^n$,  
\begin{equation}\label{est1111}
	\begin{array}{ll}
		\ns\ds\!\!\!  W^n(t_i,x_i )
		= \underset{u^i(\cd)\in \mathcal{U}_{t_i, T}  ^i }   \essinf   J_n(t_i,x_i;u^i(\cd) )  
		%			= \underset{u^0(\cd)\in \mathcal{U}_{t_0, T}  ^0 }   \essinf    Y_{t_0}^{n,0,u^0 }
		=\underset{u^\lambda(\cd)\in \mathcal{U}_{t_\lambda, T}  ^\lambda }   \essinf    \tilde Y_{t_\lambda}^{n,i,u^\lambda },\q i=0,1,\\
		\ns\ds\!\!\!  W^n(t_\lambda,x_\lambda )
		=\underset{u^\lambda(\cd)\in \mathcal{U}_{t_\lambda, T}  ^\lambda }   \essinf    J_n(t_\lambda,x_\lambda;u^\lambda(\cd) )  
		=\underset{u^\lambda(\cd)\in \mathcal{U}_{t_\lambda, T}  ^\lambda }   \essinf      Y_{t_\lambda}^{n,\lambda,u^\lambda }. \\
	\end{array} 
\end{equation}
Then, for any $\varepsilon >0$, there exists some $ u^{\lambda,\varepsilon}(\cd)\in \mathcal{U}_{t_\lambda, T}  ^\lambda $ such that
\begin{equation}\label{est2222}
	\begin{array}{ll}
		\ns\ds\!\!\! \lambda  W^n(t_1,x_1 )+ (1-\lambda) W^n(t_0,x_0 )  \les   \lambda  \tilde Y_{t_\lambda}^{n,1,u^{\lambda,\varepsilon}}  +(1-\lambda )  \tilde Y_{t_\lambda}^{n,0,u^{\lambda,\varepsilon}  }
		\\
		\ns\ds\!\!\! \les Y_{t_\lambda}^{n,\lambda,u^{\lambda,\varepsilon} }  + C_\d \lambda (1-\lambda)\big( |t_1-t_0|^2 + |x_1-x_0|^2  \big)
		+   \sA^n  \\
		\ns\ds\!\!\! \les \varepsilon+ W^n(t_\lambda,x_\lambda )  + C_\d \lambda (1-\lambda)\big( |t_1-t_0|^2 + |x_1-x_0|^2  \big)
		+  \sA^n.  \\
	\end{array}  
\end{equation}
Due to the arbitrariness  of $\e>0$, we complete the proof of Proposition \ref{Pro-convex-W-n}.
%$$
%\begin{array}{ll}
%	\ns\ds\!\!\! \lambda  W^n(t_1,x_1 )+ (1-\lambda) W^n(t_0,x_0 )  \les   \lambda  \tilde Y_{t_\lambda}^{n,1,u^{\lambda,\varepsilon}}  +(1-\lambda )  \tilde Y_{t_\lambda}^{n,0,u^{\lambda,\varepsilon}  }
%	\\
%	%
%	\ns\ds\!\!\! \les Y_{t_\lambda}^{n,\lambda,u^{\lambda,\varepsilon} }  + C_\d \lambda (1-\lambda)\big( |t_1-t_0|^2 + |x_1-x_0|^2  \big)
%	+   \sA^n  \\
%	%
%	\ns\ds\!\!\! \les \varepsilon+ W^n(t_\lambda,x_\lambda )  + C_\d \lambda (1-\lambda)\big( |t_1-t_0|^2 + |x_1-x_0|^2  \big)
%	+  \sA^n.  \\
%\end{array}  
%$$
%Once \eqref{est11} is true, the above inequality holds, i.e., Proposition \ref{Pro-convex-W-n} is proved.
%Finally, as $\varepsilon \to 0$, we obtain the desired result. 
%}}\label{pro}
%\tag{*} 
%\end{center} 
 
%\end{center}
%
%\vspace{12pt} 
% Therefore, from  now on, we make effort to prove \eqref{est11}.
\newpage

From  now on, we focus on the study of \eqref{est11}. The proof of \eqref{est11} is complicated and needs some techniques. To make it clear, we make a flow chart to present the logical relationship of the auxiliary Lemmas as follows.
%
%
%% To better clarify the structure of the proof of Proposition \ref{Pro-convex-W-n}, we present a figure as follows.
% 
\begin{figure}[h] % h(ere), t(op), b(ottom), p(age)
	\centering %
	\includegraphics[width=0.925\linewidth]{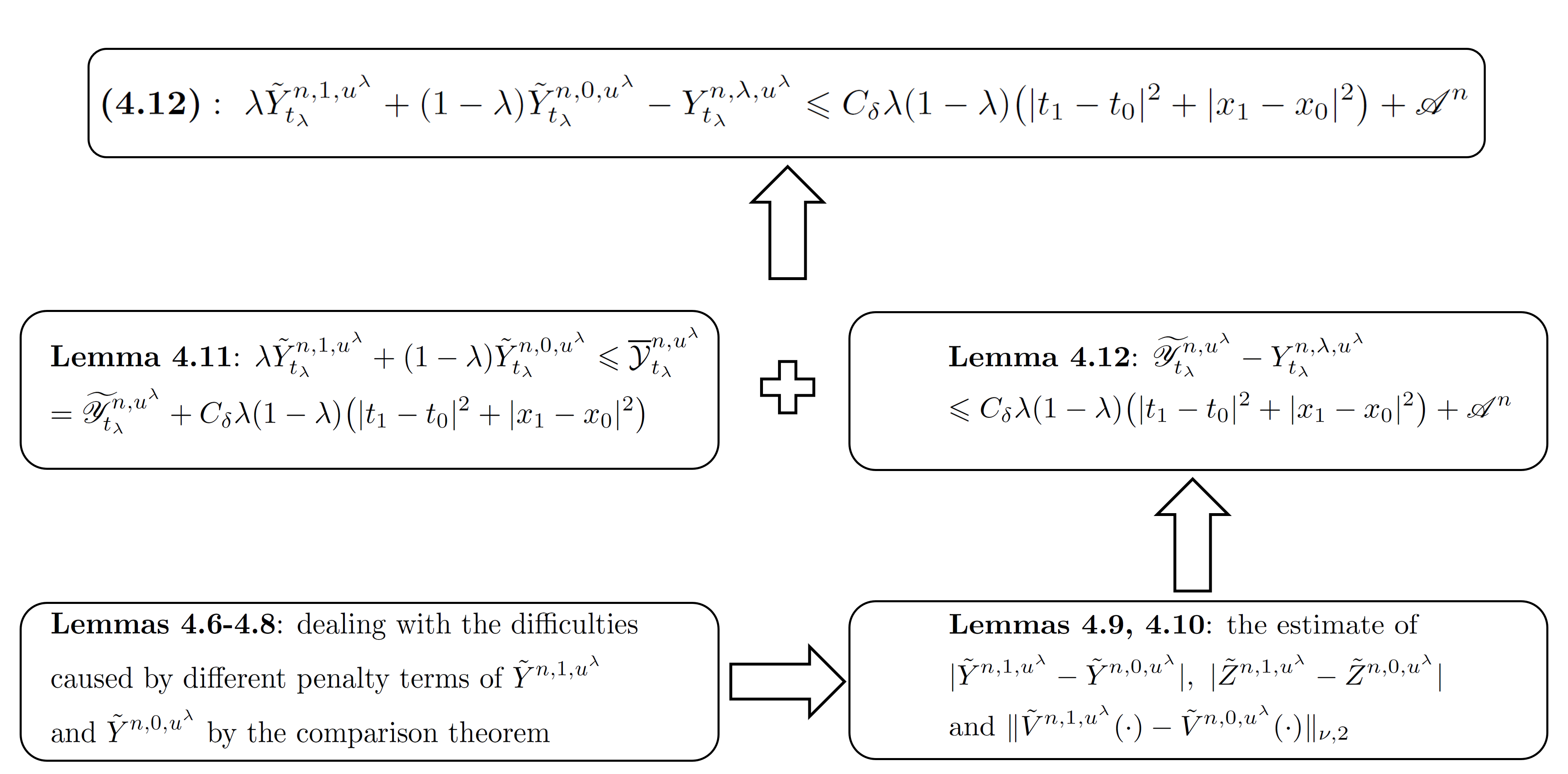} % 插入图片，并设置宽度
	\renewcommand{\figurename}{Figure}
	\caption{Relations among the auxiliary Lemmas} % 添加标题
	\label{fig_lc} % 添加标签，用于交叉引用
\end{figure}

Besides the above,
we point out that the study of $|\tilde X^{1,u^\lambda}-\tilde X^{0,u^\lambda}|$, $|\lambda\tilde X^{1,u^\lambda}+(1-\lambda)\tilde X^{0,u^\lambda} - X^{\lambda,u^\lambda} |$ in Lemmas \ref{CV-Le-X-1}, \ref{x-cx}  and the penalized equations \eqref{t-i_BSDEP} in Lemmas \ref{Y-esti}, \ref{Le-h-Y-0}  are all the basis of the above inferences.

\ms

Now 
 we study some properties of the  penalized equations \eqref{t-i_BSDEP}.

\begin{lemma}\label{Y-esti}\sl
		Under the conditions {\rm \textbf{(H$_1$)}, \textbf{(H$_2$)}}  and {\rm \textbf{(H$_4$)}-(ii)},  for all $n \geqslant 1$ and $i=1,2$,
		$$
			|Y_s^{n,i,u^i}|\leqslant C,\quad \forall s\in[t_i,T],\ \dbP\mbox{-a.s.} 
		$$
		Furthermore, for any $p \geqslant 1$, there exists some constant $C_p>0$  such that  
		$$
		\mathbb E ^{  \mathcal F^i_s}\Big[   \( \int_{s}^T |Z_r^{n,i,u^i}|^2 \mathrm dr \)^p	+ \( \int_s^T \|V_r^{n,i,u^i}(\cd)\|^2_{\n ,2 }  \mathrm dr \)^p  +\(  A_T^{n,i,u^i}-  A_s^{n,i,u^i}\)^{2p} 
   \Big]  \leqslant C_p, \quad  \forall s\in[t_i,T],\ \dbP\mbox{-a.s.},
		$$
		where the constant $C_p$ depends on $p$ as well as the bounds of $f$, $h$ and $\Phi$.
	\end{lemma}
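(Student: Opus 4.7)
The plan is to establish the two claims sequentially, with the uniform $L^\i$ bound on $Y^{n,i,u^i}$ as the crucial first step that feeds into everything else. For the pointwise bound I invoke the BSDE comparison theorem (the unreflected analogue of Lemma \ref{com}, whose validity in the jump setting is guaranteed by the monotonicity condition {\bf (C)}). The upper bound follows by comparing $Y^{n,i,u^i}$ against the deterministic affine supersolution $\bar Y_s:=\|\Phi\|_\i+\|f\|_\i(T-s)$: we have $\bar Y_T\ges\Phi(X_T^{i,u^i})$, and the driver $\|f\|_\i$ of $\bar Y$ dominates $f(\cd,\bar Y,0,0,\cd)-n(h-\bar Y)^-$ because $-n(h-\bar Y)^-\les 0$ and $|f|\les\|f\|_\i$ under {\bf (H$_4$)}-(ii). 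The lower bound is symmetric, comparing with $\underline Y_s:=-\|\Phi\|_\i-\|h\|_\i-\|f\|_\i(T-s)$, which is chosen small enough so that $\underline Y\les -\|h\|_\i\les h(\cd,X^{i,u^i})$, making the penalty term vanish identically along $\underline Y$.

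For the moment estimates, apply It\^o's formula to $|Y^{n,i,u^i}_s|^2$. The boundary contribution $\int Y^n\,\mathrm dA^n$ has a favourable sign: since $\mathrm dA^n>0$ forces $Y^n>h$, one has $-Y^n\,\mathrm dA^n\les\|h\|_\i\,\mathrm dA^n$, so the penalty integral can be absorbed into a multiple of $\dbE^{\sF^i_s}[A_T^{n,i,u^i}-A_s^{n,i,u^i}]$. Using $l\in\cL_\n^2(E;\dbR)$ (finite thanks to {\bf (H$_3$)}), the $v$-Lipschitz part of $f$ yields $|f|\les M+L\|V^n\|_{\n,2}$, and Young's inequality absorbs the resulting $\|V^n\|_{\n,2}^2$ contributions, giving the $p=1$ estimate on
$\dbE^{\sF^i_s}\big[\int_s^T|Z^n|^2\,\mathrm dr+\int_s^T\|V^n\|_{\n,2}^2\,\mathrm dr\big].$
Higher $p$ follows by iterating with $|Y^n|^{2p}$ and applying the Burkholder--Davis--Gundy inequality, exploiting that the martingale part of $Y^n$ is BMO thanks to the uniform $L^\i$ bound from the first step. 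For the $(A_T^{n,i,u^i}-A_s^{n,i,u^i})^{2p}$ estimate, rearrange the penalised equation \eqref{t-i_BSDEP} to express the increment of $A^n$ as $\Phi(X_T^{i,u^i})-Y_s^{n,i,u^i}$ plus the driver integral minus the two martingale terms, raise to the $2p$-th power, and apply BDG and H\"older together with the bounds already obtained on $\int|Z^n|^2$ and $\int\|V^n\|_{\n,2}^2$.

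The main obstacle is guaranteeing that every constant remains genuinely uniform in the penalisation parameter $n$. Naively $\mathrm dA^n=n(h-Y^n)^-\mathrm dr$ scales linearly with $n$, but the sign identity $-Y^n\,\mathrm dA^n\les\|h\|_\i\,\mathrm dA^n$, combined with the rearrangement bound on $\dbE^{\sF^i_s}[A_T^{n,i,u^i}-A_s^{n,i,u^i}]$, produces a Gr\"onwall-type self-bootstrapping: the estimate on $A^n$ is expressed through $\|Z^n\|_2$ and $\|V^n\|_{\n,2}$, which in turn are controlled via the It\^o identity by $A^n$ and the boundedness of $Y^n$; choosing the absorption constants small enough closes the two inequalities uniformly in $n$. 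Once this uniform closure is in place, extending from $p=1$ to arbitrary $p\ges 1$ is essentially mechanical via BDG and H\"older iteration.
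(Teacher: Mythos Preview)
Your approach is sound but diverges from the paper's in two places. For the pointwise bound $|Y^{n,i,u^i}_s| \leq C$ you invoke comparison against explicit affine barriers $\bar Y, \underline Y$; the paper instead cites the standard conditional $L^2$ a priori estimate for penalised BSDEs (the argument of Theorem 4.2 in \cite{E-2008}), which yields $\dbE^{\sF^i_s}[\sup_r |Y^n_r|^2] \leq C$ uniformly in $n$ and specialises at $r=s$. Your route is more constructive, but note the hypothesis mismatch: the jump-BSDE comparison theorem you invoke needs the monotonicity condition {\bf (C)}, whereas the lemma is stated under {\bf (H$_1$)}, {\bf (H$_2$)}, {\bf (H$_4$)}-(ii) only --- the paper's estimate-based argument does not need {\bf (C)}. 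For the higher moments, the paper does not invoke BMO: it applies It\^o once to $|Y^n|^2$, raises the resulting identity to the $p$-th power, couples it with the equation-rearrangement bound on $A_T^n - A_s^n$ (your final step), and obtains a recursion of the form ``moment at level $2p$ $\leq C_p\,+$ moment at level $p$,'' which is iterated from the base case over dyadic $p=2^k$ and then extended to all $p\ges 1$; a final application of Lemma 3.1 in \cite{LW-2014-lp} converts bounds on $\int\!\int |V^n|^2\,N^i(\mathrm dr,\mathrm de)$ to the compensated form $\int \|V^n\|^2_{\nu,2}\,\mathrm dr$, a distinction your sketch does not address. Two minor points: $l \in \cL^2_\nu$ already follows from {\bf (H$_2$)}-(iii) and the standing hypothesis $\int_E(1\wedge|e|^2)\nu(\mathrm de)<\infty$, not from {\bf (H$_3$)}; and the paper treats $f$ as uniformly bounded under {\bf (H$_4$)}-(ii), so the decomposition $|f| \leq M + L\|V^n\|_{\nu,2}$ is not needed in its argument.
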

	\begin{proof}
		%条件：正倒向方程解存在唯一（f线性增长）
Firstly, similar to  the proof of Theorem 4.2 in \cite{E-2008}, we know the existence of some constant $C>0$ independent of $n$ such that
\begin{equation}\label{p=1}
	\mathbb E^{ \mathcal F^i_s} \Big[\sup\limits_{s\in [t_i,T]}|Y_s^{n,i,u^i}|^2+   \int_{s}^T \Big(|Z_r^{n,i,u^i}|^2 + \|V_r^{n,i,u^i}(\cd)\|^2_{\n ,2 } \Big) \mathrm dr   +  \( A_T^{n,i,u^i}-  A_s^{n,i,u^i}  \)^2 
	      \Big]  \leqslant C, \q  \dbP\mbox{-a.s.} 
		\end{equation}
Especially, we get, for all $n \geqslant 1$, $i=1,2$ and $s\in[t_i,T]$,
		$$
			|Y_s^{n,i,u^i}|\leqslant C,\quad   \dbP\mbox{-a.s.} 
		$$

Next,   according to \eqref{p=1} and the boundedness of $f$, for \eqref{t-i_BSDEP}, we obtain
%条件 Phi，f有界
	$$
\ba{ll}
\ns\ds   A_T^{n,i,u^i}-  A_s^{n,i,u^i} = \Phi (X_T^{i,u^i}) - Y_s^{n,i,u^i}    + \int_s^T   f\big(  r, X_r^{i,u^i}, Y_r^{n,i,u^i},  Z_r^{n,i,u^i}, \int_E  l( e) V_r^{n,i,u^i}(e)  \n(\mathrm de) ,u_r^{i }  \big)  \mathrm dr\\
			\ns\ds \hskip 2.9cm - \int_s^T Z_r^{n,i,u^i}   \mathrm d\dbB_r^i  - \int_s^T \int_E   V_r^{n,i,u^i} (e)   \tilde \dbN^i (\mathrm dr,\mathrm de) \\
\ns\ds \les C + \Big|\int_s^T Z_r^{n,i,u^i}   \mathrm d\dbB_r^i \Big|  + \Big|\int_s^T \int_E   V_r^{n,i,u^i} (e)   \tilde \dbN^i (\mathrm dr,\mathrm de) \Big|. \\
%,\quad s \in [t_0,T],\ n\in\dbN ,
\ea$$
Thereby,  for all $p\ges 1$,	from the   Burkholder-Davis-Gundy inequality,
		\begin{equation}\label{pen1}
			\begin{array}{ll}
				\ns\ds\!\!\!	\mathbb E^{\mathcal F_s^i} \Big[ \(   A_T^{n,i,u^i}-  A_s^{n,i,u^i}  \)^{2p}   \Big]\\
				\ns\ds\!\!\!  
				\les	 C_p +C_p  \mathbb E^{\mathcal F_s^i}  \Big[ \(  \int_s^T |  Z_r^{n,i,u^i}|^2   \mathrm dr \)^{p }  + \(  \int_s^T \int_E  |  V_r^{n,i,u^i}(e)|^2 \dbN^i (\mathrm dr,\mathrm de)   \)^{p} \Big]
				,\q \dbP\mbox{-a.s.} 
			\end{array}
		\end{equation}

%		Let us start by considering the special case when  $p = 1$. 
		%		 We will estimate from the following two cases.
%		On the one hand,
		Apply the It\^{o}'s formula to $|Y_s^{n,i,u^i}|^2$, we have
		\begin{equation}\label{est5}
			\begin{array}{ll}
				\ns\ds\!\!\!  |Y_s^{n,i,u^i}|^2 +   \int_s^T |Z_r^{n,i,u^i}|^2 \mathrm dr  + \int_s^T  \int_E |V_r^{n,i,u^i}(e)|^2  \dbN^i  (\mathrm dr,\mathrm de)    \\
				\ns\ds\!\!\!  =     |\Phi ( X_T^{i,u^i}) |^2   + 2 \!  \int_s^T \!   Y_r^{n,i,u^i}     f\big(  r, X_r^{i,u^i}, Y_r^{n,i,u^i},  Z_r^{n,i,u^i},  \int _E \! l(  e) V_r^{n,i,u^i} (e) \nu(\mathrm de),u_r^i  \big)   \mathrm dr
				  \\
				\ns\ds\!\!\!  \hskip0.425cm -2  \!  \int_s^T  \!  Y_r^{n,i,u^i}  \mathrm d A_r^{n,i,u^i}     -2  \int_s^T   Y_r^{n,i,u^i}  Z_r^{n,i,u^i} \mathrm d\dbB_r^i -2  \int_s^T  \! \int_E   Y_r^{n,i,u^i}  V_r^{n,i,u^i}(e) \tilde \dbN^i (\mathrm dr,\mathrm de), \q \dbP\mbox{-a.s.} \\
			\end{array}
		\end{equation}
 Then,  from  the inequalities \eqref{p=1} and \eqref{est5}, for all $p\ges 1$,
 	\begin{equation*} 
			\begin{array}{ll}
				\ns\ds\!\!\!    \mathbb E^{\mathcal F_s^i} \Big[ \Big( \int_s^T |Z_r^{n,i,u^i}|^2 \mathrm dr \Big) ^{2p}  +   \Big( \int_s^T  \int_E |V_r^{n,i,u^i}(e)|^2  \dbN^i (\mathrm dr,\mathrm de)  \Big) ^{2p}  \Big] \\
				\ns\ds\!\!\!  \les \mathbb E^{\mathcal F_s^i} \Big[ \Big( \int_s^T |Z_r^{n,i,u^i}|^2 \mathrm dr + \int_s^T  \int_E |V_r^{n,i,u^i}(e)|^2  \dbN^i (\mathrm dr,\mathrm de)  \Big) ^{2p} \Big]\\
				\ns\ds\!\!\! \les    C_p   
				+C_p   \mathbb E ^{\mathcal F_s^i}\Big[  \Big( A_T^{n,i,u^i}-A_s^{n,i,u^i}   \Big)^{2p} + \Big |   \int_s^T   Y_r^{n,i,u^i}  Z_r^{n,i,u^i} \mathrm d\dbB_r^i \Big|^{2p}+ \Big|   \int_s^T  \! \int_E   Y_r^{n,i,u^i}  V_r^{n,i,u^i}(e) \tilde \dbN^i (\mathrm dr,\mathrm de)\Big |^{2p} \Big]     \\
				\ns\ds\!\!\! \les C_p 
				 +C_p   \mathbb E^{\mathcal F_s^i} \Big[  \Big(  A_T^{n,i,u^i}-A_s^{n,i,u^i}      \Big)^{2p} +    \Big( \int_s^T     |Z_r^{n,i,u^i}|^2 \mathrm dr \Big) ^{  p}+   \Big( \int_s^T    \int_E     |V_r^{n,i,u^i}(e)|^2   \dbN^i (\mathrm dr,\mathrm de) \Big) ^{ p} \Big], \q \dbP\mbox{-a.s.}	\\
				%
%				\ns\ds\!\!\! 	\\
%				%
%				\ns\ds\!\!\!  \les 
%				,\ P\mbox{-a.s.}
			\end{array}
		\end{equation*}

Further, combined with \eqref{pen1}, we have 
		\begin{equation*}
			\begin{array}{ll}
				\ns\ds\!\!\! 
				\mathbb E ^{\mathcal F_s^i}\Big[ \Big( A_T^{n,i,u^i}-A_s^{n,i,u^i} \Big)^{4p} + \Big( \int_s^T |Z_r^{n,i,u^i}|^2 \mathrm dr \Big) ^{2p}  
				+   \Big( \int_s^T  \int_E |V_r^{n,i,u^i}(e)|^2  \dbN^i (\mathrm dr,\mathrm de)  \Big) ^{2p}      \Big]\\
				\ns\ds\!\!\!  \les C_p+  C_p \mathbb E^{\mathcal F_s^i} \Big[ \Big( \int_s^T |Z_r^{n,i,u^i}|^2 \mathrm dr \Big) ^{2p}  +   \Big( \int_s^T  \int_E |V_r^{n,i,u^i}(e)|^2  \dbN^i (\mathrm dr,\mathrm de)  \Big) ^{2p}  \Big]\\
				\ns\ds\!\!\! 
				\les   C_p +C_p   \mathbb E ^{\mathcal F_s^i}\Big[  \Big( A_T^{n,i,u^i}-A_s^{n,i,u^i}  \Big)^{2p} + \Big(  \int_s^T |  Z_r^{n,i,u^i}|^2   \mathrm dr \Big)^{p} + \Big(  \int_s^T \int_E  |  V_r^{n,i,u^i}(e)|^2 \dbN^i (\mathrm dr,\mathrm de)   \Big)^{p}  \Big]
				,\q \dbP\mbox{-a.s.}
			\end{array}
		\end{equation*}

%		As mentioned earlier, 
		\no Consequently, due to \eqref{p=1}, for all integers of the form $p=2^k$ with $ k=0,1,2,\cds$,  
		$$
		\mathbb E ^{\mathcal F_s^i} \Big[ \Big( \int_s^T |Z_r^{n,i,u^i}|^2 \mathrm dr \Big) ^{p}  
		+   \Big( \int_s^T  \int_E |V_r^{n,i,u^i}(e)|^2  \dbN^i (\mathrm dr,\mathrm de)  \Big) ^p   +\Big(  A_T^{n,i,u^i}-A_s^{n,i,u^i}    \Big)^{2p}   \Big]
		\les C_p,\q \dbP\mbox{-a.s.}
		$$
		Hence, for any $p\ges 1$, the above conclusion  is valid.

		Finally, using Lemma 3.1 in \cite{LW-2014-lp}, for all $p\ges 1$,
		\begin{equation*}
			 \begin{array}{ll}
			 	\ns\ds\!\!\!  \mathbb E  ^{\mathcal F_s^i}\Big[   \( \int_{s}^T |Z_r^{n,i,u^i}|^2 \mathrm dr \)^p 
			 	+ \( \int_s^T \Vert V_r^{n,i,u^i}(\cd)\Vert ^2_{\n ,2 }     \mathrm dr \)^p  +\big(  A_T^{n,i,u^i}-A_s^{n,i,u^i}   \big)^{2p}     \Big] \\
			 	\ns\ds\!\!\!  \les  C \mathbb E ^{\mathcal F_s^i} \Big[ \( \int_s^T |Z_r^{n,i,u^i}|^2 \mathrm dr \) ^{p}   
			 	+   \( \int_s^T  \int_E |V_r^{n,i,u^i}(e)|^2  \dbN^i (\mathrm dr,\mathrm de)  \) ^p    +\Big(  A_T^{n,i,u^i}-A_s^{n,i,u^i}     \Big)^{2p}   \Big]   \les C_p,\q \dbP\mbox{-a.s.} \\
			 	%
%			 	\ns\ds\!\!\!   \\
			 \end{array}
		\end{equation*}
		\end{proof}

\ms
Similar to Lemma 6.1 in \cite{El-KPPQ},  the following result holds. For the readers'   convenience, we sketch the proof.
%\bl\label{Le-h-Y-0}\sl   Under {\rm \textbf{(H$_1$)}, \textbf{(H$_2$)}}  and {\rm \textbf{(H$_4$)}-(ii)}, for $i,j=0,1$, $s\in[t_i,T]$, $\dbP$-a.s.,
%$$\ba{ll} 
%\ns\ds {\rm (i)} \ \lim\limits_{n\to\i}  \dbE^{\mathcal F_s^i} \[\sup\limits_{r\in [s,T]} |(  h( r ,X_r^{i,u^i} ) -Y_r^{n,i,u^i} )^-|^2\]=0 ,\\
%%
%\ns\ds  {\rm (ii)} \ \lim\limits_{n\to\i} \dbE^{\mathcal F_s^i}\[\int_{s}^T (  h( r ,X_r^{i,u^i} ) -Y_r^{n,i,u^i} )^-\mathrm d A_r^{n,j ,u^j} \]=0 .
%\ea$$
% 
%\el
\bl\label{Le-h-Y-0}\sl   
Under {\rm \textbf{(H$_1$)}, \textbf{(H$_2$)}}, {\rm \textbf{(H$_4$)}-(ii)} and {\rm \textbf{(C)}},   for any $p\ges 2$,  $\dbP$-a.s.,
$$\ba{ll} 
\ns\ds {\rm (i)} \ \lim\limits_{n\to\i}  \dbE^{\mathcal F_s^i} \[\sup\limits_{r\in [s,T]} |(  h( r ,X_r^{i,u^i} ) -Y_r^{n,i,u^i} )^-|^p\]=0 ,\q s\in[t_i,T],\\
\ns\ds   {\rm (ii)} \ \lim\limits_{n\to\i} \dbE^{\mathcal F_s^\lambda}\[  \Big| \int_{s}^T (  h( \rho_i^\lambda(r) ,\tilde X_r^{i,u^\lambda} ) -\tilde Y_r^{n,i,u^\lambda} )^-\mathrm d \tilde A_r^{n, j ,u^\lambda}\Big|^\frac p2 \]=0,\q i,j=0,1,\ s\in[t_\lambda,T]. 
\ea$$

\el
\begin{proof}
It is obvious that, for all $ s\in[t_i,T]$, $\{Y_s^{n,i,u^i}\} $ is decreasing in $n$, thereby $Y_s^{n,i,u^i}\les Y_s^{0,i,u^i},\ \dbP$-a.s. Next we shall  prove $Y_s^{n,i,u^i}\les h( s ,X_s^{i,u^i} ), \ s\in [t_i,T],\ \dbP$-a.s. By the comparison theorem of BSDE with jumps, we know $Y_s^{n,i,u^i}\les \dbY_s^{n,i,u^i}$, where  
$$
		\left\{
		\begin{array}{ll}
			\ns\ds\!\!\!  \mathrm d\dbY_s^{n,i,u^i} =    - \[  f\big(  s, X_s^{i,u^i}, \dbY_s^{n,i,u^i},  \dbZ_s^{n,i,u^i},  \int _E  l( e)\dbV_s^{n,i,u^i} (e)   \nu(\mathrm de),u_s^i  \big)  
			-n\big(\dbY_s^{n,i,u^i}-h( s ,X_s^{i,u^i} )\big) \] \mathrm ds \\
			\ns\ds\!\!\!  \hskip 1.65cm    +  \dbZ_s^{n,i,u^i}   \mathrm d\dbB_s^i+   \int_E   \dbV_s^{n,i,u^i} (e)   \tilde \dbN^i(\mathrm ds,\mathrm de) ,\quad s \in [t_i,T],\\
			\ns\ds\!\!\! \dbY_T^{n,i,u^i}=\Phi(X_T^{i,u^i}).
		\end{array}
		\right.
$$

 Let $\vartheta$ be a stopping time such that $s\les \vartheta\les T$, then 
$$
		\begin{array}{ll}
			\ns\ds\!\!\!  \dbY_\vartheta^{n,i,u^i} = \dbE  ^{\mathcal F_\vartheta^i}\[ e^{-n(T-\vartheta)}\Phi(X_T^{i,u^i})+\int_\vartheta^T  e^{-n(r-\vartheta)}f\big(  r, X_r^{i,u^i}, \dbY_r^{n,i,u^i},  \dbZ_r^{n,i,u^i},  \int _E  l( e) \dbV_r^{n,i,u^i} (e)   \nu(\mathrm de),u_r^i  \big) \mathrm dr \\
\ns\ds\qq\qq \qq
 +\int_\vartheta^T  n  e^{-n(r-\vartheta)}  h( r ,X_r^{i,u^i} ) \mathrm dr \] .
		\end{array}
$$
As $n\to\i$, it is easy to check that,
$$
		\begin{array}{ll}
			\ns\ds\!\!\!     e^{-n(T-\vartheta)}\Phi(X_T^{i,u^i}) 
 +\int_\vartheta^T  n  e^{-n(r-\vartheta)}  h( r ,X_r^{i,u^i} ) \mathrm dr  \to \Phi(X_T^{i,u^i}) {\bf I}_{\{\vartheta=T\}}
 + h( \vartheta ,X_\vartheta^{i,u^i} )  {\bf I}_{\{\vartheta<T\}} ,
		\end{array}
$$
$\dbP$-a.s. and   in $L^2$ sense. Furthermore, due to $f$ is bounded, we also have 
$$ \ba{ll}
\ns\ds \dbE  ^{\mathcal F_\vartheta^i}\[ \int_\vartheta^T  e^{-n(r-\vartheta)}f\big(  r, X_r^{i,u^i}, \dbY_r^{n,i,u^i},  \dbZ_r^{n,i,u^i},  \int _E  l( e) \dbV_r^{n,i,u^i} (e)  \nu(\mathrm de),u_r^i  \big) \mathrm dr  \] \to 0, 
\ea
$$
  in $L^2$ sense. Consequently, we get $\dbY_\vartheta^{n,i,u^i} \to  \Phi(X_T^{i,u^i}) {\bf I}_{\{\vartheta=T\}}
 + h( \vartheta ,X_\vartheta^{i,u^i} )  {\bf I}_{\{\vartheta<T\}} $ in mean square and $Y_\cd^{n,i,u^i}\les h(   \cd ,X_ \cd^{i,u^i} ).$ Combined with the section theorem in Dellacherie and Meyer \cite{DM-1975}, we have  
  $Y_\cd^{n,i,u^i}\les h( \cd ,X_ \cd^{i,u^i} ) $,  $s\in [t_i,T],\ \dbP$-a.s.
 Therefore, $ ( h(  s ,X_ s^{i,u^i} )- Y_s^{n,i,u^i})^- \downarrow 0$, $s\in [t_i,T]$, $\dbP$-a.s., as $n\to\i$.
     Moreover, based on the continuity and monotonicity of power functions,  for any $p\ges2$, we have  $| ( h(  s ,X_ s^{i,u^i} )- Y_s^{n,i,u^i})^-|^p \downarrow 0$,   $s\in [t_i,T]$, $\dbP$-a.s., as $n\to\i$. 
  From the Dini Theorem, we know such a convergence is uniform in $t$. Further,  using the dominated convergence theorem, we get (i) holds true.
% 
% (ii) can be proved by using (i) and the estimate 
% 		$
%		\mathbb E  [  ( A_T^{n,i,u^i})^{2p} ]  \leqslant C_p, \  i=0,1
%		$    
%		obtained in Lemma \ref{Y-esti}.

  Now, let's prove the second result. For any $p\ges 2$ and $i,j=0,1$,
 \begin{equation*}
 	\begin{array}{ll}
 		\ns\ds\!\!\!  	\dbE^{\mathcal F_s^\lambda}\[  \Big| \int_{s}^T (  h( \rho_i^\lambda(r) ,\tilde X_r^{i,u^\lambda} ) -\tilde Y_r^{n,i,u^\lambda} )^-\mathrm d \tilde A_r^{n, j ,u^\lambda}\Big|^\frac p2 \]  \\
 		\ns\ds\!\!\! 	\les \dbE^{\mathcal F_s^\lambda}\[    \sup_{r \in[s,T]} \Big|  (  h( \rho_i^\lambda(r) ,\tilde X_r^{i,u^\lambda} ) -\tilde Y_r^{n,i,u^\lambda} )^- \Big|^\frac p2  \cd \Big|   \tilde A_T^{n, j ,u^\lambda}-\tilde A_s^{n, j ,u^\lambda} \Big|^\frac p2 \] 	 \\
 		\ns\ds\!\!\! 	 \les  \Big(  \dbE^{\mathcal F_s^\lambda}\[    \sup_{r \in[s,T]} \Big|  (  h( \rho_i^\lambda(r) ,\tilde X_r^{i,u^\lambda} ) -\tilde Y_r^{n,i,u^\lambda} )^- \Big|^p  \]  \Big)^\frac 12
 		 \Big(  \dbE^{\mathcal F_s^\lambda}\[     \tilde A_T^{n, j ,u^\lambda}-\tilde A_s^{n, j ,u^\lambda} \Big|^p \] \Big)^\frac 12 \\
 		\ns\ds\!\!\! 	 \to 0,\q \mbox{as } n\to \i,
 		  \\
 	\end{array}
 \end{equation*}
 where  the last inequality have used  (i) and Lemma \ref{Y-esti}.
\end{proof}

Now we estimate the difference of  $\tilde X_\cd^{0,u^\lambda} $ and $ \tilde  X_\cd ^{1,u^\lambda} $.

\begin{lemma}\label{CV-Le-X-1}\sl
 	Suppose {\rm \textbf{(H$_1$)}}-{\rm \textbf{(H$_3$)}} and {\rm \textbf{(H$_4$)}-(i), (ii)}   hold. 
		 Then,   for all $p\ges 1$ and $s\in[t_\lambda,T]$, we have
		$$ 
		\begin{array}{ll}
			\ns\ds\!\!\!  	%{\rm(\romannumeral1)}.\
			\mathbb E^{\mathcal F_{s }^\lambda  } \Big[ \underset{ \t \in [s   ,T] }{ \sup }  | \tilde X_\t^{1,u^\lambda} -   \tilde X_\t^{0,u^\lambda}  |^p +\int_s^T| \tilde X_r^{1,u^\lambda} -   \tilde X_r^{0,u^\lambda}  |^p\mathrm dr  \Big] \leqslant C_{T,p,\d}  \(|t_1-t_0|^p+ | \tilde X_s^{1,u^\lambda} -   \tilde X_s^{0,u^\lambda}  |^p \), \quad \dbP\mbox{-a.s.}
			%
			%\ns\ds\!\!\!  	{\rm(\romannumeral2)}.\  \\
		\end{array}
		$$
%		Moreover, let $p\ges  2$ and {\rm \textbf{(H$_6$)$_p$}} also holds, then the estimate is likewise valid.
%		$$ 
%			\mathbb E^{\mathcal F_{s }^\lambda  } \Big[ \underset{ t \in [s   ,T] }{ \sup }  | \tilde X_t^{1,u^\lambda} -   \tilde X_t^{0,u^\lambda}  |^p +\int_s^T| \tilde X_r^{1,u^\lambda} -   \tilde X_r^{0,u^\lambda}  |^p\mathrm dr  \Big] \leqslant C_p  \(|t_1-t_0|^p+ | \tilde X_s^{1,u^\lambda} -   \tilde X_s^{0,u^\lambda}  |^p \), \quad \dbP\mbox{-a.s.}
%		$$
 
		%
	\end{lemma}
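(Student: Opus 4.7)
\medskip

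\noindent\textbf{Proof proposal for Lemma \ref{CV-Le-X-1}.}

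The plan is a direct stability/Gronwall estimate on the difference process $\xi_\tau := \tilde X_\tau^{1,u^\lambda} - \tilde X_\tau^{0,u^\lambda}$, $\tau \in [s,T]$, using the SDE \eqref{tilde_SDEP_i} satisfied by each $\tilde X^{i,u^\lambda}$ together with the elementary estimates gathered in Lemma \ref{tau_pro}. Writing the SDE for $\xi_\tau$ on $[s,T]$, I would split each of the four integrand differences into a \emph{pointwise} part and a \emph{time-change} part, namely
\begin{align*}
\tfrac{1}{\dot\tau_1^\lambda}b(\varrho_1^\lambda,\tilde X^1,u^\lambda) - \tfrac{1}{\dot\tau_0^\lambda}b(\varrho_0^\lambda,\tilde X^0,u^\lambda)
&= \tfrac{1}{\dot\tau_1^\lambda}\bigl[b(\varrho_1^\lambda,\tilde X^1,u^\lambda)-b(\varrho_0^\lambda,\tilde X^0,u^\lambda)\bigr]
+ \bigl(\tfrac{1}{\dot\tau_1^\lambda}-\tfrac{1}{\dot\tau_0^\lambda}\bigr)b(\varrho_0^\lambda,\tilde X^0,u^\lambda),
\end{align*}
and analogously for $\sigma$ (with $\sqrt{\dot\tau_i^\lambda}$), $\gamma$, and the compensator correction $(1-1/\dot\tau_i^\lambda)\gamma\,\nu(\mathrm de)$. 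The Lipschitz hypotheses in \textbf{(H$_1$)}--(ii) and \textbf{(H$_4$)}--(i) control the pointwise differences by $C(|\varrho_1^\lambda-\varrho_0^\lambda|+|\xi_{r-}|)$ for $b,\sigma$ and by $C(1\wedge|e|)|\varrho_1^\lambda-\varrho_0^\lambda|+\ell(e)|\xi_{r-}|$ for $\gamma$, while \textbf{(H$_4$)}--(ii) plus Lemma \ref{tau_pro} turn the time-change prefactors into quantities bounded by $C_\delta|t_1-t_0|$ (the first three inequalities in Lemma \ref{tau_pro} and the identity $1-1/\dot\tau_i^\lambda = \frac{(1-\lambda)(t_1-t_0)}{T-t_\lambda}$ or its $\lambda$-counterpart, which is where the restriction $t_\lambda\le T-\delta$ is used crucially).

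Next I would treat the case $p\ge 2$ first. Apply It\^o's formula to $|\xi_\tau|^p$ on $[s,T]$; the drift and compensator-correction terms contribute $\int_s^\tau \bigl(C_\delta|t_1-t_0|+C|\xi_r|\bigr)|\xi_r|^{p-1}\mathrm dr$, bounded by $C_{T,p,\delta}\int_s^\tau(|t_1-t_0|^p+|\xi_r|^p)\mathrm dr$ by Young's inequality. The Brownian stochastic integral is a local martingale whose supremum is controlled via Burkholder--Davis--Gundy in terms of $\int_s^\tau \bigl(|t_1-t_0|^2+|\xi_r|^2\bigr)|\xi_r|^{2p-2}\mathrm dr$, again absorbed by Young. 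For the jump stochastic integral I use the BDG inequality for compensated Poisson integrals together with $\int_E\ell(e)^p\nu(\mathrm de)<\infty$ and $\int_E(1\wedge|e|)^p\nu(\mathrm de)<\infty$, both of which follow from \textbf{(H$_3$)} (as noted in the remark after \textbf{(H$_5$)}); the resulting quadratic-variation-type bound is again of the form $C_{T,p,\delta}\int_s^\tau(|t_1-t_0|^p+|\xi_r|^p)\mathrm dr$. Taking $\mathbb E^{\mathcal F_s^\lambda}[\sup_{\tau\in[s,\cdot]}\cdot]$ and applying the (stochastic) Gronwall lemma yields the estimate for $p\ge 2$.

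For $1\le p<2$, I would deduce the claim from the case $p=2$ by the conditional Jensen inequality, noting that $|\xi_s|^p=\mathbb E^{\mathcal F_s^\lambda}[|\xi_s|^p]$ since $\xi_s$ is $\mathcal F_s^\lambda$-measurable, and using $(a+b)^{p/2}\le a^{p/2}+b^{p/2}$. The $\int_s^T|\xi_r|^p\mathrm dr$ part of the statement is then immediate by integrating the supremum bound over $[s,T]$. The main obstacle I anticipate is bookkeeping in the jump piece: one must simultaneously absorb the $(1\wedge|e|)|t_1-t_0|$ contribution from the $t$-Lipschitz bound on $\gamma$, the $\ell(e)|\xi|$ contribution from the $x$-Lipschitz bound, and the $(1-1/\dot\tau_i^\lambda)\nu(\mathrm de)$-drift coming from changing the compensator, while keeping the $\delta$-dependence sharp; the cleanest route is to reduce each of these to the two templates $|t_1-t_0|^p$ and $|\xi_r|^p$ before invoking Gronwall.
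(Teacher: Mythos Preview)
Your proposal is correct and follows essentially the same route as the paper: the same coefficient decomposition into a Lipschitz part and a time-change part, the same appeal to Lemma \ref{tau_pro} and to the boundedness in \textbf{(H$_4$)}-(ii), and the same two-step scheme ($p\ge 2$ first via BDG plus Gronwall, then $1\le p<2$ deduced from the $p=2$ case). The only mechanical difference is that the paper bounds each of the four integrals in the SDE for $\xi$ separately (via BDG on the stochastic ones) and then combines, rather than applying It\^o's formula to $|\xi|^p$; this is a cosmetic variation of the same argument.
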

	
\begin{proof} 
	From \eqref{tilde_SDEP_i}, we know
  \begin{equation*}\label{ }
		\left\{
		\begin{array}{ll}
			\ns\ds\!\!\!  \mathrm d (\tilde X_s^{1,u^\lambda}-\tilde X_s^{0,u^\lambda}) = \(\frac{1}{\dot \tau_1^\lambda} b \big( \varrho_1^\lambda(s), \tilde X_s^{1,u^\lambda}, u_s^\lambda \big)- \frac{1}{\dot \tau_0^\lambda} b \big( \varrho_0^\lambda(s), \tilde X_s^{0,u^\lambda}, u_s^\lambda \big)\)\mathrm ds\\
\ns\ds\!\!\!  \q
			+\( \frac{1}{ \sqrt{\dot \tau_1^\lambda}}\sigma \big( \varrho_1^\lambda(s), \tilde X_s^{1,u^\lambda},u_s^\lambda \big)-\frac{1}{ \sqrt{\dot \tau_0^\lambda}}\sigma \big( \varrho_0^\lambda(s), \tilde X_s^{0,u^\lambda},u_s^\lambda \big) \)\mathrm dB_s^\lambda\\
			\ns\ds\!\!\!  \q  +\int_{E}\(\gamma \big( \varrho_1^\lambda(s) , \tilde X_{s-}^{1,u^\lambda}, u_s^\lambda, e \big)-\gamma \big( \varrho_0^\lambda(s) , \tilde X_{s-}^{0,u^\lambda}, u_s^\lambda, e \big)\)
		  \tilde N ^\lambda(\mathrm ds,\mathrm de)  \\
\ns\ds\!\!\!  \q   +\int_{E}\(\gamma \big( \varrho_1^\lambda(s) , \tilde X_{s-}^{1,u^\lambda}, u_s^\lambda, e \big)
			 (1-\frac{1}{\dot \tau_1^\lambda})-\gamma \big( \varrho_0^\lambda(s) , \tilde X_{s-}^{0,u^\lambda}, u_s^\lambda, e \big)
			 (1-\frac{1}{\dot \tau_0^\lambda})\)\nu(\mathrm de)\mathrm ds , \q  s\in [t_\lambda,T],\\
			\ns\ds\!\!\!  \tilde X_{t_\lambda}^{1,u^\lambda}- \tilde X_{t_\lambda}^{0,u^\lambda} = x_1-x_0. 
		\end{array}
		\right.
	\end{equation*} 

\no From Lemma \ref{tau_pro}, the boundedness and Lipschitz continuity of $b,\si,\g$, for any  $p\ges 1$, we have 
%条件 b,sigma有界，关于tx lipschitz连续
  \begin{equation}\label{b-1}
		\begin{array}{ll}
			\ns\ds\!\!\!   \dbE^{\cF_s^\lambda}  \[\sup\limits_{\t\in[s,T]}  \Big|\int_s^\t \(\frac{1}{\dot \tau_1^\lambda} b \big( \varrho_1^\lambda(r), \tilde X_{r}^{1,u^\lambda}, u_r^\lambda \big)- \frac{1}{\dot \tau_0^\lambda} b \big( \varrho_0^\lambda(r), \tilde X_{r}^{0,u^\lambda}, u_r^\lambda \big)\) \mathrm dr\Big|^p\]\\
%
%\ns\ds  \les C_T\dbE^{\cF_s^\lambda}  \[ \int_s^T \Big| \frac{1}{\dot \tau_1^\lambda} b \big( \varrho_1^\lambda(r), \tilde X_{r}^{1,u^\lambda}, u_r^\lambda \big)- \frac{1}{\dot \tau_0^\lambda} b \big( \varrho_0^\lambda(r), \tilde X_{r}^{0,u^\lambda}, u_r^\lambda \big)\Big|^p \mathrm dr\]\\
%
\ns\ds\!\!\!    \les C_{T,p}\dbE^{\cF_s^\lambda}  \[ \! \int_s^T \Big| \(\! \frac{1}{\dot \tau_1^\lambda} \! - \! \frac{1}{\dot \tau_0^\lambda}\! \) b \big( \varrho_1^\lambda(r), \tilde X_{r}^{1,u^\lambda}, u_r^\lambda \big)
\!+\! \frac{1}{\dot \tau_0^\lambda}\( \!  b \big( \varrho_1^\lambda(r), \tilde X_{r}^{1,u^\lambda}, u_r^\lambda \big)\! -\! b \big( \varrho_0^\lambda(r), \tilde X_{r}^{0,u^\lambda}, u_r^\lambda \big) \! \)\Big|^p \mathrm dr  \! \] \\
%
%\ns\ds\hskip 2.65cm   + \frac{1}{\dot \tau_0^\lambda}\( b \big( \varrho_1^\lambda(r), \tilde X_{r}^{1,u^\lambda}, u_r^\lambda \big)-b \big( \varrho_0^\lambda(r), \tilde X_{r}^{0,u^\lambda}, u_r^\lambda \big)\)\Big|^p \mathrm dr\] \\
%
%\ns\ds \!\!\!   \les C_{T,p}\dbE^{\cF_s^\lambda}  \[ \int_s^T\( \Big|  \frac{1}{\dot \tau_1^\lambda}- \frac{1}{\dot \tau_0^\lambda}\Big|^p +\Big| \frac{1}{\dot \tau_0^\lambda}\Big|^p\big(| \varrho_1^\lambda(r)- \varrho_0^\lambda(r)|^p +| \tilde X_{r}^{1,u^\lambda}- \tilde X_{r}^{0,u^\lambda}|^p\big)  \) \mathrm dr\]\\
%
\ns\ds \!\!\!   \les C_{T,p,\d}\dbE^{\cF_s^\lambda}  \[ \int_s^T\( |t_1-t_0|^p  +| \tilde X_{r}^{1,u^\lambda}- \tilde X_{r}^{0,u^\lambda}|^p   \) \mathrm dr\].
		\end{array}
	\end{equation} 
Using the  Burkholder-Davis-Gundy inequality, we get
\begin{equation*} 
		\begin{array}{ll}
			\ns\ds \dbE^{\cF_s^\lambda}\[ \sup\limits_{\t\in[s,T]} \Big|\int_s^\t\( \frac{1}{ \sqrt{\dot \tau_1^\lambda}}\sigma \big( \varrho_1^\lambda(r), \tilde X_{r}^{1,u^\lambda},u_r^\lambda \big)-\frac{1}{ \sqrt{\dot \tau_0^\lambda}}\sigma \big( \varrho_0^\lambda(r), \tilde X_{r}^{0,u^\lambda},u_r^\lambda \big) \)\mathrm dB_r^\lambda\Big|^p\]\\
%
%\ns\ds\les C_{T,p}\dbE^{\cF_t^\lambda}\[ \(\int_s^T\Big| \frac{1}{ \sqrt{\dot \tau_1^\lambda}}\sigma \big( \varrho_1^\lambda(r), \tilde X_{r}^{1,u^\lambda},u_r^\lambda \big)-\frac{1}{ \sqrt{\dot \tau_0^\lambda}}\sigma \big( \varrho_0^\lambda(r), \tilde X_{r}^{0,u^\lambda},u_r^\lambda \big) \Big|^2\mathrm d  r \)^\frac p2\]\\
		%
  \ns\ds\les \left\{ \ba{ll} \ns\ds \!\!\! C_{T,p,\d}\dbE^{\cF_t^\lambda}\[ \(\int_s^T\big( |t_1-t_0|^2 +| \tilde X_{r}^{1,u^\lambda}- \tilde X_{r}^{0,u^\lambda}|^2   \big)\mathrm d  r \)^\frac p2\],\q 1\les p< 2  ,\\
\ns\ds\!\!\!  C_{T,p,\d}  \dbE^{\cF_{t}}\[\int_{s}^T\big( |t_1-t_0|^p +| \tilde X_{r}^{1,u^\lambda}- \tilde X_{r}^{0,u^\lambda}|^p   \big)\mathrm d r\], \qq\q \  p\ges  2  , \ea\right. 
		\end{array}
	\end{equation*} 
and 
%条件gamma  关于tx lip  系数le: p in [1,2] 二阶可积； p大于2时 p阶可积
  \begin{equation*}\label{ }
		\begin{array}{ll}

			\ns\ds \dbE^{\cF_s^\lambda}\[ \sup\limits_{t\in[s,T]} \Big|\int_s^t\int_{E}\(\gamma \big( \varrho_1^\lambda(r) , \tilde X_{r-}^{1,u^\lambda}, u_r^\lambda, e \big)-\gamma \big( \varrho_0^\lambda(r) , \tilde X_{r-}^{0,u^\lambda}, u_r^\lambda, e \big)\)
		  \tilde N ^\lambda(\mathrm dr,\mathrm de) \Big|^p \]\\

\ns\ds \les\left\{ \ba{ll} \ns\ds \!\!\!  C_{T,p}\dbE^{\cF_s^\lambda}\[ \( \int_{s}^T\int_{E}|\gamma \big( \varrho_1^\lambda(r) , \tilde X_{r-}^{1,u^\lambda}, u_r^\lambda, e \big)-\gamma \big( \varrho_0^\lambda(r) , \tilde X_{r-}^{0,u^\lambda}, u_r^\lambda, e \big)|^2 \n(\mathrm de)\mathrm dr\)^\frac p2\],\  1\les p< 2  ,\\
 \ns\ds\!\!\!  \ba{ll} \ns\ds\!\!\!C_{T,p} \dbE^{\cF_s^\lambda}\[ \( \int_{s}^T\int_{E}|\gamma \big( \varrho_1^\lambda(r) , \tilde X_{r-}^{1,u^\lambda}, u_r^\lambda, e \big)-\gamma \big( \varrho_0^\lambda(r) , \tilde X_{r-}^{0,u^\lambda}, u_r^\lambda, e \big)|^2 \n(\mathrm de)\mathrm dr\)^\frac p2\\
 \ns\ds\!\!\!\hskip 1.45cm +  \int_{s}^T\int_{E}|\gamma \big( \varrho_1^\lambda(r) , \tilde X_{r-}^{1,u^\lambda}, u_r^\lambda, e \big)-\gamma \big( \varrho_0^\lambda(r) , \tilde X_{r-}^{0,u^\lambda}, u_r^\lambda, e \big)|^p \n(\mathrm de)\mathrm dr  \],\ea\q p\ges  2  ,\ea\right.  \\
 %
% \ns\ds \les\left\{ \ba{ll} \ns\ds \!\!\!  C_{T,p,\d}\( \dbE^{\cF_s^\lambda}\[ \int_{s}^T\|  \ell (\cd)\|^2_{\n ,2 }  \(|t_1-t_0|^2 +| \tilde X_{r}^{1,u^\lambda}- \tilde X_{r}^{0,u^\lambda}|^2\)  \mathrm dr\]\)^\frac p2,\q 1\les p< 2  ,\\
%%
% \ns\ds\!\!\!  \ba{ll} \ns\ds\!\!\! C_{T,p,\d} \dbE^{\cF_s^\lambda}\[\| \ell (\cd)\|_{\n ,2 }^p   \( \int_{s}^T  \(|t_1-t_0|^2 +| \tilde X_{r}^{1,u^\lambda}- \tilde X_{r}^{0,u^\lambda}|^2\)  \mathrm dr\)^\frac p2\\
% %
% \ns\ds\!\!\!\hskip 1.65cm + \| \ell (\cd)\|_{\n,p}^p \int_{s}^T \(|t_1-t_0|^p +| \tilde X_{r}^{1,u^\lambda}- \tilde X_{r}^{0,u^\lambda}|^p\)   \mathrm dr  \],\ea\hskip 0.55cm  p\ges  2  ,\ea\right.\\
 %
  \ns\ds \les\left\{ \ba{ll} \ns\ds \!\!\!  C_{T,p,\d}\( \dbE^{\cF_s^\lambda}\[ \int_{s}^T    \(|t_1-t_0|^2 +| \tilde X_{r}^{1,u^\lambda}- \tilde X_{r}^{0,u^\lambda}|^2\)  \mathrm dr\]\)^\frac p2,\q 1\les p< 2  ,\\
 \ns\ds\!\!\!  \ba{ll} \ns\ds\!\!\! C_{T,p,\d} \dbE^{\cF_s^\lambda}\[  \int_{s}^T \(|t_1-t_0|^p +| \tilde X_{r}^{1,u^\lambda}- \tilde X_{r}^{0,u^\lambda}|^p\)   \mathrm dr  \],\ea\hskip0.95cm  p\ges  2  ,\ea\right.  
		\end{array}
	\end{equation*} 
and 
%\vspace{-0.5cm}
  \begin{equation}\label{gamma-2}
%条件 gamma有界bar le 关于tx lip le
% l(e) 一阶可积  + 系数le， bar le: p in [1,2] 二阶可积； p大于2时 p阶可积
%或者 nu E <无穷 + 系数le， bar le: p in [1,2] 二阶可积； p大于2时 p阶可积
		\begin{array}{ll}	 
\ns\ds\!\!\!  \dbE^{\cF_s^\lambda}\[\sup\limits_{\t\in[s,T]}  \Big|\int_s^\t\int_{E}\(\gamma \big( \varrho_1^\lambda(r) , \tilde X_r^{1,u^\lambda}, u_r^\lambda, e \big)
			 (1-\frac{1}{\dot \tau_1^\lambda})-\gamma \big( \varrho_0^\lambda(r) , \tilde X_r^{0,u^\lambda}, u_r^\lambda, e \big)
			 (1-\frac{1}{\dot \tau_0^\lambda})\)\nu(\mathrm de)\mathrm dr \Big|^p\]\\

\ns\ds\!\!\! \les  \dbE^{\cF_s^\lambda}\[\Big( \int_{s}^T\int_{E} \Big| \(\frac{1}{\dot \tau_0^\lambda}-\frac{1}{\dot \tau_1^\lambda}\) \gamma \big( \varrho_1^\lambda(r) , \tilde X_r^{1,u^\lambda}, u_r^\lambda, e \big)\\
\ns\ds\!\!\! \qq\qq\qq\qq 
		+(1-\frac{1}{\dot \tau_0^\lambda})\(\gamma \big( \varrho_1^\lambda(r) , \tilde X_r^{1,u^\lambda}, u_r^\lambda, e \big)
			 -\gamma \big( \varrho_0^\lambda(r) , \tilde X_r^{0,u^\lambda}, u_r^\lambda, e \big)	 \)\Big|\nu(\mathrm de)\mathrm dr\Big)^p\]\\
%
%\ns\ds\!\!\!   \les  C_{T,\d}\dbE^{\cF_s^\lambda}\[\Big( \int_{s}^T\int_{E}  \ell(e)  (| t_1 -t_0  | + | \tilde X_r^{1,u^\lambda}- \tilde X_r^{0,u^\lambda}|) \nu(\mathrm de)\mathrm dr\Big)^p\] \\
%
%\ns\ds\!\!\!  \les   C_{T,p,\d}\dbE^{\cF_s^\lambda}\[\int_{s}^T  \( | t_1 -t_0  |^p+ | \tilde X_{r}^{1,u^\lambda}- \tilde X_{r}^{0,u^\lambda}|^p \)\mathrm dr \]
%		\Big(\int_{E}	| \ell(e) |\nu(\mathrm de)\Big)^p.
\ns\ds\!\!\!   \les   C_{T,p,\d}\dbE^{\cF_s^\lambda}\[\int_{s}^T  \( | t_1 -t_0  |^p+ | \tilde X_{r}^{1,u^\lambda}- \tilde X_{r}^{0,u^\lambda}|^p \)\mathrm dr \]. 
		\end{array}
	\end{equation} 

Then, for all $  \t\in[s, T]$ and $p\ges 1$, we have 
  \begin{equation*} 
		\begin{array}{ll}
			\ns\ds\!\!\! \dbE^{\cF_s^\lambda}\[   |\tilde X_\t^{1,u^\lambda}-\tilde X_\t^{0,u^\lambda}|^p\]\\
			\ns\ds\!\!\! 
			\les  C_{T,p,\d} \(|\tilde X_s^{1,u^\lambda}-\tilde X_s^{0,u^\lambda}|^p +  | t_1 -t_0  |^p  +\dbE^{\cF_s^\lambda}\[\int_{s}^T     | \tilde X_{r}^{1,u^\lambda}- \tilde X_{r}^{0,u^\lambda}|^p\mathrm d  r \] \) 
			 \\
%
% \ns\ds\!\!\!   \les  C_{T,p,\d} \(|\tilde X_s^{1,u^\lambda}-\tilde X_s^{0,u^\lambda}|^p +  | t_1 -t_0  |^p  +\dbE^{\cF_s^\lambda}\[\int_{s}^T     | \tilde X_{r}^{1,u^\lambda}- \tilde X_{r}^{0,u^\lambda}|^p\mathrm d  r \]\\
 %
 \ns\ds\!\!\!\q
 +  C_{T,p,\d} \dbE^{\cF_s^\lambda}\[ \(\int_s^T | \tilde X_{r}^{1,u^\lambda}- \tilde X_{r}^{0,u^\lambda}|^2  \mathrm d  r \)^\frac p2{\bf I}_{\{1\les p< 2 \}}\] \\
 \ns\ds\!\!\!   \les  C_{T,p,\d} \(|\tilde X_s^{1,u^\lambda}-\tilde X_s^{0,u^\lambda}|^p +  | t_1 -t_0  |^p  +\dbE^{\cF_s^\lambda}\[\int_{s}^T     | \tilde X_{r}^{1,u^\lambda}- \tilde X_{r}^{0,u^\lambda}|^p\mathrm d  r \] \)   \\
 \ns\ds\!\!\! 
 \q +  C_{T,p,\d} \(\dbE^{\cF_s^\lambda}\[\int_s^T | \tilde X_{r}^{1,u^\lambda}- \tilde X_{r}^{0,u^\lambda}|^2  \mathrm d  r \]\)^\frac p2{\bf I}_{\{1\les p< 2 \}}   . 
	\ea\end{equation*} 
 Therefore, for $ p\ges 2$,   using Gronwall's inequality, we get
  \begin{equation}\label{X-p>2}
		\begin{array}{ll}
			\ns\ds\!\!\!\sup\limits_{\t\in[s,T] } \dbE^{\cF_s^\lambda}\[   |\tilde X_\t^{1,u^\lambda}-\tilde X_\t^{0,u^\lambda}|^p\]\les  C_{T,p,\d} \(|\tilde X_s^{1,u^\lambda}-\tilde X_s^{0,u^\lambda}|^p +  | t_1 -t_0  |^p \). 
	\ea\end{equation} 
For $1\les p<2$, $\t\in[s,T]$ using \eqref{X-p>2} with $p=2$,   we have  
  $$
		\begin{array}{ll}
			\ns\ds\!\!\! \dbE^{\cF_s^\lambda}\[   |\tilde X_\t^{1,u^\lambda}-\tilde X_\t^{0,u^\lambda}|^p\]  
			\les  C_{T,p,\d} \(|\tilde X_s^{1,u^\lambda}-\tilde X_s^{0,u^\lambda}|^p +  | t_1 -t_0  |^p  +\dbE^{\cF_s^\lambda}\[\int_{s}^T     | \tilde X_{r}^{1,u^\lambda}- \tilde X_{r}^{0,u^\lambda}|^p\mathrm d  r\] \).
%			\les  C_{T,p,\d} \(|\tilde X_s^{1,u^\lambda}-\tilde X_s^{0,u^\lambda}|^p +  | t_1 -t_0  |^p  +\dbE^{\cF_s^\lambda}\[\int_{s}^T     | \tilde X_{r}^{1,u^\lambda}- \tilde X_{r}^{0,u^\lambda}|^p\mathrm d  r\] \\
%
% \ns\ds\qq\qq\qq\qq\qq\qq\qq+ \(|\tilde X_s^{1,u^\lambda}-\tilde X_s^{0,u^\lambda}|^2 +  | t_1 -t_0  |^2 \)^\frac p2{\bf I}_{\{1\les p<2 \}}\]\) \\
 %
% \ns\ds  \les  C_{T,p,\d} \(|\tilde X_s^{1,u^\lambda}-\tilde X_s^{0,u^\lambda}|^p +  | t_1 -t_0  |^p  +\dbE^{\cF_s^\lambda}\[\int_{s}^T     | \tilde X_{r}^{1,u^\lambda}- \tilde X_{r}^{0,u^\lambda}|^p\mathrm d  r\] \). 
	\ea
$$
 Using Gronwall's inequality again, \eqref{X-p>2}  also   holds for $1\les  p<  2$. 

Finally, from \eqref{b-1}-\eqref{gamma-2} and using \eqref{X-p>2} with $p\ges 1$,
   \begin{equation*} 
		\begin{array}{ll}
			\ns\ds\!\!\! \dbE^{\cF_s^\lambda}\[ \sup\limits_{\t\in[s,T]}  |\tilde X_\t^{1,u^\lambda}-\tilde X_\t^{0,u^\lambda}|^p\]\\
			\ns\ds\!\!\!  \les  C_{T,p,\d} \(|\tilde X_s^{1,u^\lambda}-\tilde X_s^{0,u^\lambda}|^p +  | t_1 -t_0  |^p  +\dbE^{\cF_s^\lambda}\[ \displaystyle \int_{s}^T     | \tilde X_{r}^{1,u^\lambda}- \tilde X_{r}^{0,u^\lambda}|^p\mathrm d  r \] \) 
			\\
			\ns\ds\!\!\!\hskip 0.425cm\displaystyle 
			+  C_{T,p,\d}  \(  \dbE^{\cF_s^\lambda}\[\int_s^T | \tilde X_{r}^{1,u^\lambda}- \tilde X_{r}^{0,u^\lambda}|^2  \mathrm d  r \]  \)^\frac p2 {\bf I}_{\{1\les p<2 \}}\\
  \ns\ds\!\!\! \les   C_{T,p,\d} \(|\tilde X_s^{1,u^\lambda}-\tilde X_s^{0,u^\lambda}|^p +  | t_1 -t_0  |^p \), 
	\ea\end{equation*} 
 holds for $p\ges 1$. The desired result is proved.
\end{proof}

%\ms 

Next, we denote by $\tilde\cX_\cd^{u^\lambda}$ the convex combination  of $\tilde X_\cd^{0,u^\lambda}$ and  $\tilde X_\cd^{1,u^\lambda}$, i.e.,  
  $\tilde\cX_s^{u^\lambda}:= \lambda \tilde X_s^{1,u^\lambda} +(1-\lambda)\tilde X_s^{0,u^\lambda} ,$ $s\in[t_\lambda,T]$.
The estimate of $|\tilde\cX _\cd^{u^\lambda} -   X_\cd^{\lambda,u^\lambda}|$ is given as follows.

 	\begin{lemma}\label{x-cx}\sl
		  Suppose {\rm \textbf{(H$_1$)}}-{\rm \textbf{(H$_4$)}} hold,  
%		{\color{red} or {\rm \textbf{(H$_1$)}}, {\rm \textbf{(H$_2$)}}, {\rm \textbf{(H$_4$)}} and $\ds\int_E \ell(e) \nu(\mathrm de)<\i$ hold,} 
 for all $    p\ges 1$ and $s\in[t_\lambda,T]$, we have, $\dbP$-a.s.,
 $$
\dbE^{\cF_{s}^\lambda}\[ \!\sup\limits_{\t\in[s,T]}\! | \tilde\cX _\t^{u^\lambda}-X_\t^{\lambda,u^\lambda}|^p  \]  
 \!\les\!  C_{T,p,\d} \Big(  |t_1-t_0 |^p  + | \tilde\cX _s^{u^\lambda} \! -  \! X_s^{\lambda,u^\lambda}|^p  \Big)  
 +  C_{T,p,\d}\lambda^p(1-\lambda)^p \(|t_1-t_0|^{2p} + | \tilde X_s^{1,u^\lambda}  \!-\!   \tilde X_s^{0,u^\lambda}  |^{2p}\).
$$
%	Moreover, let $p\ges  2$ and {\rm \textbf{(H$_6$)$_p$}} also holds, then the estimate is likewise valid.
%Moreover, if {\rm \textbf{(H$_5$)}} also holds,  the above estimate also holds  for all $p\les k$.
%Morever, for given $p \ges 2$, if {\rm\textbf{(H$_5$)}} also holds, then the above estimate holds.
		%
	\end{lemma}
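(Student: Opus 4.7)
The plan is to derive the dynamics of $D_s := \tilde\cX^{u^\lambda}_s - X^{\lambda, u^\lambda}_s$ and then, coefficient by coefficient, split each driver into a part linear in $D_s$ plus a residual already carrying a factor $\lambda(1-\lambda)$; a Burkholder--Davis--Gundy plus Gronwall argument applied to $|D|^p$ will then deliver the bound, with the $\int_s^T|\tilde X^{1,u^\lambda}_r - \tilde X^{0,u^\lambda}_r|^{2p}\,\mathrm dr$ terms handled via Lemma \ref{CV-Le-X-1} at exponent $2p$.

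The core of the splitting relies on the barycentric identities from Lemma \ref{tau_pro}: $\lambda/\dot\tau_1^\lambda + (1-\lambda)/\dot\tau_0^\lambda = 1$ for the drift, $\lambda+(1-\lambda)=1$ for the jump kernel, and $\lambda/\sqrt{\dot\tau_1^\lambda}+(1-\lambda)/\sqrt{\dot\tau_0^\lambda}$ differing from $1$ by $O(\lambda(1-\lambda)|t_1-t_0|^2/\d^2)$ for the diffusion. These allow me to rewrite each coefficient of the SDE for $D_s$ as a weighted sum of increments of $b,\sigma,\gamma$ around the base point $(s,X_s^{\lambda,u^\lambda})$, plus an $O(\lambda(1-\lambda)|t_1-t_0|^2)$ extra drift from the $\sigma$-correction and an explicit $\frac{\lambda(1-\lambda)}{T-t_\lambda}(t_1-t_0)$-prefactored contribution from the $(1-1/\dot\tau_i^\lambda)\int_E\gamma\,\nu(\mathrm de)$ compensator terms appearing in \eqref{tilde_SDEP_i}.

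Next I would Taylor-expand each of those increments to first order in $t$ and $x$, using the Lipschitz-differentiability of $b,\sigma,\gamma$ from \textbf{(H$_4$)}-(iii). The two barycentric identities $\lambda\varrho_1^\lambda(s)+(1-\lambda)\varrho_0^\lambda(s)=s$ and $\lambda\tilde X^{1,u^\lambda}_s+(1-\lambda)\tilde X^{0,u^\lambda}_s = \tilde\cX^{u^\lambda}_s$ make the principal parts of the expansions collapse: the time-derivative contributions absorb the compensator corrections and collectively yield a term of size $C_\d\lambda(1-\lambda)|t_1-t_0|^2$, while the state-derivative contributions reduce to $b_x\cd D_s$ (and analogously for $\sigma,\gamma$), which is linear in $D_s$ and thus amenable to Gronwall. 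The quadratic Taylor remainders, combined with $|\varrho_1^\lambda(s)-s|\les(1-\lambda)C_\d|t_1-t_0|$, $|\varrho_0^\lambda(s)-s|\les\lambda C_\d|t_1-t_0|$ and the decompositions $\tilde X^{1,u^\lambda}_s-X^{\lambda,u^\lambda}_s=(1-\lambda)(\tilde X^{1,u^\lambda}_s-\tilde X^{0,u^\lambda}_s)+D_s$, $\tilde X^{0,u^\lambda}_s-X^{\lambda,u^\lambda}_s=-\lambda(\tilde X^{1,u^\lambda}_s-\tilde X^{0,u^\lambda}_s)+D_s$, are bounded by
\begin{equation*}
	C_\d\lambda(1-\lambda)\bigl(|t_1-t_0|^2+|\tilde X^{1,u^\lambda}_s-\tilde X^{0,u^\lambda}_s|^2\bigr) + C|D_s|^2;
\end{equation*}
this quadratic-in-$|\tilde X^1_s-\tilde X^0_s|$ shape is precisely what produces the $|\cdot|^{2p}$ (rather than $|\cdot|^p$) on the right-hand side of the claim after raising to the $p$-th power and applying Lemma \ref{CV-Le-X-1}.

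The final step is then routine: apply It\^o to $|D|^{p}$ (or handle $p=2$ via $|D|^2$ first and bootstrap to $p\ges 2$, then interpolate down to $1\les p<2$ as in the proof of Lemma \ref{CV-Le-X-1}), take $\mathbb E^{\cF_s^\lambda}$, dominate the martingale parts by Burkholder--Davis--Gundy, plug in the residual bounds above, and close with Gronwall. The main obstacle is the bookkeeping in the splitting and Taylor steps: every residual that is not linear in $D_s$ must genuinely carry a factor $\lambda(1-\lambda)$, and this symmetry is delicate --- without the \emph{exact} barycentric identities of Lemma \ref{tau_pro} one would only obtain a residual of order $|t_1-t_0|$, which would degrade the final bound and kill the semi-concavity estimate.
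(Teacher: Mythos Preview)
Your overall strategy matches the paper's: write the SDE for $D_s=\tilde\cX_s^{u^\lambda}-X_s^{\lambda,u^\lambda}$, split each coefficient into a linear-in-$D$ part plus a residual carrying a $\lambda(1-\lambda)$ factor, then close with BDG plus Gronwall and invoke Lemma~\ref{CV-Le-X-1} at exponent $2p$. The barycentric identities you highlight are exactly the right tools.

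There is, however, one genuine difficulty in your execution: your choice of Taylor base point. You expand each $b(\varrho_i^\lambda(s),\tilde X_s^{i,u^\lambda},u)$ around $(s,X_s^{\lambda,u^\lambda})$, so the quadratic remainder involves $|\tilde X_s^{i,u^\lambda}-X_s^{\lambda,u^\lambda}|^2$. Since $\tilde X_s^{1,u^\lambda}-X_s^{\lambda,u^\lambda}=(1-\lambda)(\tilde X_s^{1,u^\lambda}-\tilde X_s^{0,u^\lambda})+D_s$, this remainder genuinely contains $C|D_s|^2$ with no $\lambda(1-\lambda)$ prefactor, as you note. But this term obstructs the Gronwall step: after raising to the $p$-th power and integrating you face $\int_s^T\dbE^{\cF_s^\lambda}[|D_r|^{2p}]\,\mathrm dr$, which does not close against $\dbE^{\cF_s^\lambda}[\sup_\tau|D_\tau|^p]$. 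A bootstrap via a prior Lipschitz-only estimate would only deliver $C|D_s|^{2p}$ on the right-hand side, not the $C|D_s|^p$ required by the statement.

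The paper sidesteps this by expanding around $(s,\tilde\cX_s^{u^\lambda})$ instead. With that base point, $\tilde X_s^{i,u^\lambda}-\tilde\cX_s^{u^\lambda}$ equals $(1-\lambda)(\tilde X_s^{1,u^\lambda}-\tilde X_s^{0,u^\lambda})$ or $-\lambda(\tilde X_s^{1,u^\lambda}-\tilde X_s^{0,u^\lambda})$ with no $D_s$ at all, so the quadratic remainder is purely $C\lambda(1-\lambda)\bigl(|t_1-t_0|^2+|\tilde X_s^{1,u^\lambda}-\tilde X_s^{0,u^\lambda}|^2\bigr)$. The linear-in-$D$ contribution then comes from the separate Lipschitz estimate $|b(s,\tilde\cX_s^{u^\lambda},u)-b(s,X_s^{\lambda,u^\lambda},u)|\les C|D_s|$, and Gronwall is indeed routine. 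Your plan becomes correct once you shift the base point this way; everything else you wrote stands.
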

	
\begin{proof} First, $\tilde\cX_\cd  ^{u^\lambda} -   X_\cd ^{\lambda,u^\lambda} $ satisfies the following equation
	\begin{equation*} 
		\begin{array}{ll}
			\ns\ds\!\!\!   \tilde\cX _s^{u^\lambda}-X_s^{\lambda,u^\lambda}  =\lambda \tilde X^{1,u^\lambda}_s+(1-\lambda)\tilde X^{0,u^\lambda} _s-X_s^{\lambda,u^\lambda}  \\
\ns\ds\!\!\! =\int_{t_\lambda}^s\(J_b(r)+b(r,\tilde\cX_r^{ u^\lambda} ,u_r^\lambda)-b(r,X_r^{\lambda,u^\lambda},u_r^\lambda)\)\mathrm dr 
 +\int_{t_\lambda}^s  \(J_\si(r)+\si(r,\tilde\cX_r^{u^\lambda} ,u_r^\lambda)-\sigma (r,X_r^{\lambda,u^\lambda},u_r^\lambda)\)\mathrm dB_r^\lambda \\
%%
%\ns\ds\   +\int_{t_\lambda}^s  \(J_\si(r)+\si(r,\tilde\cX_r^{u^\lambda} ,u_r^\lambda)-\sigma (r,X_r^{\lambda,u^\lambda},u_r^\lambda)\)\mathrm dB_r^\lambda\\%
%
\ns\ds\!\!\! \q 	    
			+\int_{t_\lambda}^s\!\int_{E}\(J_\g(r,e) + \gamma(r,\tilde\cX_{r-}^{u^\lambda},u_r^\lambda,e)-\gamma(r,X_{r-}^{\lambda,u^\lambda},u_r^\lambda,e)\)\tilde N ^\lambda(\mathrm dr,\mathrm de)\\
\ns\ds\!\!\!\q   +\int_{t_\lambda}^s\!\int_{E}\(\lambda(1-\frac{1}{\dot \tau_1^\lambda})\gamma \big( \varrho_1^\lambda(r) , \tilde X_{r}^{1,u^\lambda}, u_r^\lambda, e \big)
			 +(1-\lambda)(1-\frac{1}{\dot \tau_0^\lambda})\gamma \big( \varrho_0^\lambda(r) , \tilde X_{r}^{0,u^\lambda}, u_r^\lambda, e \big)
			\)\nu(\mathrm de)\mathrm dr ,
		\end{array}
	\end{equation*}
 where $ s\in [t_\lambda,T]$ and
   $$\ba{ll}
 \ns\ds J_b(r):=\frac{\lambda }{\dot \tau_1^\lambda} b \big( \varrho_1^\lambda(r), \tilde X_{r}^{1,u^\lambda}, u_r^\lambda \big)+   \frac{1-\lambda}{\dot \tau_0^\lambda} b \big( \varrho_0^\lambda(r), \tilde X_{r}^{0,u^\lambda}, u_r^\lambda \big)- b(r,\tilde\cX_r^{u^\lambda} ,u_r^\lambda),\\
  \ns\ds J_\si(r):= \frac{\lambda}{ \sqrt{\dot \tau_1^\lambda}}\si\big( \varrho_1^\lambda(r), \tilde X_{r}^{1,u^\lambda}, u_r^\lambda \big)+   \frac{1-\lambda}  { \sqrt{\dot \tau_0^\lambda}}  \si \big( \varrho_0^\lambda(r), \tilde X_{r}^{0,u^\lambda}, u_r^\lambda \big)-  \si(r,\tilde\cX_r^{u^\lambda} ,u_r^\lambda),\\
 \ns\ds 
 J_\g(r,e):=\lambda\gamma \big( \varrho_1^\lambda(r) , \tilde X_{r-}^{1,u^\lambda}, u_r^\lambda, e \big)+(1-\lambda)\gamma \big( \varrho_0^\lambda(r) , \tilde X_{r-}^0, u_r^\lambda, e \big) -\gamma(r,\tilde \cX_{r-}^{\lambda,u^\lambda},u_r^\lambda,e),\q r\in [t_\lambda,T],\  e\in E.\ea$$

From Lemma \ref{tau_pro}, for any $X_1,X_0\in\dbR^n$ and $u\in U$, we have  (set $X_\lambda:=\lambda X_1+(1-\lambda)X_0$)
 %条件b_s b_x 关于tx lip sigama_s sigma_x 关于tx lip
$$\ba{ll}
 \ns\ds\!\!\! \Big| \frac{\lambda }{\dot \tau_1^\lambda}b\big( \varrho_1^\lambda(r), X_1, u   \big)+   \frac{1-\lambda}{\dot \tau_0^\lambda}b \big( \varrho_0^\lambda(r),X_0, u  \big)- b(r, X_\lambda  ,u )\Big|\\
  \ns\ds\!\!\!  \les \Big| {\lambda }  b \big( \varrho_1^\lambda(r),X_1, u  \big)+  (1-\lambda) b \big( \varrho_0^\lambda(r),X_0, u  \big)-   b(r,X_\lambda   ,u )\Big| \\
  
  \ns\ds\!\!\! \q +\Big| (\frac{1}{\dot \tau_1^\lambda} -1)\lambda b \big( \varrho_1^\lambda(r), X_1, u\big)+   ( \frac{1}{\dot \tau_0^\lambda}-1)(1-\lambda) b \big( \varrho_0^\lambda(r), X_0, u\big)\Big|\\
  %
 %  \ns\ds\les \Big| {\lambda }  b \big( \varrho_1^\lambda(r), X_1, u \big)+(1-\lambda)b \big( \varrho_0^\lambda(r), X_1, u \big) - b \big(  r, X_1, u \big) |\\
%    %
%    \ns\ds\q+|\lambda   b \big( \varrho_0^\lambda(r), X_1, u \big) +  (1-\lambda) b \big( \varrho_0^\lambda(r), X_0, u \big)-   b(\varrho_0^\lambda(r),X_\lambda  , u)\Big| \\
%    %
%    \ns\ds \q  {\color{red}+|b \big(  r,X_1, u \big) - b \big( \varrho_0^\lambda(r), X_1, u \big) +b(\varrho_0^\lambda(r),X_\lambda  , u)-   b(r,X_\lambda  , u)|}  \\
%  %
%  \ns\ds\q +\Big| (\frac{1}{\dot \tau_1^\lambda} -1)\lambda b \big( \varrho_1^\lambda(r), X_1, u\big)+   ( \frac{1}{\dot \tau_0^\lambda}-1)(1-\lambda)  b \big( \varrho_0^\lambda(r), X_0, u \big)\Big|\\
  %
  \ns\ds\!\!\!  \les C\lambda(1-\lambda)   |  \varrho_1^\lambda(r)- \varrho_0^\lambda(r)     |   \cd \int_0^1    |b_s\big(r+\eta(1-\lambda)(\varrho_1^\lambda(r)- \varrho_0^\lambda(r) ),  X_\lambda  +\eta(1-\lambda)(   X_1-X_0), u\big)\\
  \ns\ds\!\!\! \hskip 5.15 cm-b_s\big(r-\eta\lambda(\varrho_1^\lambda(r)- \varrho_0^\lambda(r) ), X_\lambda  -\eta \lambda ( X_1-X_0), u \big) \big|\mathrm d\eta      \\
    \ea$$ 
$$\ba{ll}
   \ns\ds\!\!\!  \q+ C\lambda(1-\lambda)|  X_1-X_0| \cd \int_0^1   |b_x\big(r+\eta(1-\lambda)(\varrho_1^\lambda(r)- \varrho_0^\lambda(r) ),  X_\lambda  +\eta(1-\lambda)(   X_1-X_0), u\big)\\
  \ns\ds\!\!\! \hskip 4.75cm-b_x\big(r-\eta\lambda(\varrho_1^\lambda(r)- \varrho_0^\lambda(r) ), X_\lambda  -\eta \lambda ( X_1-X_0), u \big) \big|\mathrm d\eta      \\
  \ns\ds\!\!\! \q + \frac{\lambda(1-\lambda)}{T-t_\lambda}|t_1-t_0|  \big(  |\varrho_1^\lambda(r)- \varrho_0^\lambda(r)| +|  X_1-X_0| \big)\\
  %
      %
    %   
%  \ns\ds\!\!\!  \les C\lambda(1-\lambda)|  \varrho_1^\lambda(r)- \varrho_0^\lambda(r)|\( |  \varrho_1^\lambda(r)- \varrho_0^\lambda(r)|+ |  X_1-X_0|\) 
%  + \frac{\lambda(1-\lambda)}{T-t_\lambda}|t_1-t_0|  \(  |\varrho_1^\lambda(r)- \varrho_0^\lambda(r)| +|  X_1-X_0| \) \\ 
%  %
%   \ns\ds\!\!\!  \q+ C\lambda(1-\lambda)|  X_1-X_0|\( |  \varrho_1^\lambda(r)- \varrho_0^\lambda(r)|+ |  X_1-X_0|\)    \\
%   %
%%  \ns\ds\!\!\! \q + \frac{\lambda(1-\lambda)}{T-t_\lambda}|t_1-t_0|  \(  |\varrho_1^\lambda(r)- \varrho_0^\lambda(r)| +|  X_1-X_0| \)\\
%  %
%     \ns\ds\!\!\!  \les C\lambda(1-\lambda) \( |  \varrho_1^\lambda(r)- \varrho_0^\lambda(r)|+ |  X_1-X_0|\)  
%  \( |  \varrho_1^\lambda(r)- \varrho_0^\lambda(r)|+ |t_1-t_0| + |  X_1-X_0|  \)
%  \\ 
      %
  \ns\ds \!\!\!\les C\lambda(1-\lambda)  \big(|t_1-t_0|^2+   |  X_1-X_0|^2 \big) .\ea$$ 
Therefore, \begin{equation}\label{Jb} 
  |J_b(r)| \les C\lambda(1-\lambda)  \(|t_1-t_0|^2+   | \tilde X_{r}^{1,u^\lambda}-\tilde X_{r}^{0,u^\lambda}|^2 \),\q r\in[t_\lambda,T] .   
           \end{equation}
Similarly, using Lemma \ref{tau_pro}-(i), (ii) and (iii), we also obtain
\begin{equation}\label{J-si-g} \ba{ll}
 \ns\ds |J_\si(r)| \les C\lambda(1-\lambda)  \Big(|t_1-t_0|^2+   | \tilde X_{r}^{1,u^\lambda}-\tilde X_{r}^{0,u^\lambda}|^2 \Big) ,\\
 \ns\ds |J_\g(r,e)| \les C \bar \ell(e)\lambda(1-\lambda)  \Big(|t_1-t_0|^2+   | \tilde X_{r}^{1,u^\lambda}-\tilde X_{r}^{0,u^\lambda}|^2 \Big),\q r\in[t_\lambda,T],\ e\in E.
  \ea    \end{equation}

 Based on the above estimates,  using the methods used in \eqref{b-1}-\eqref{gamma-2},  for $p\ges 1$, we get
 \begin{equation*} 
		\begin{array}{ll}
			\ns\ds  \dbE^{\cF_{s}^\lambda}\[\sup\limits_{\t\in [s,T]}\Big|\int_{s}^\t\(J_b(r)+b(r,\tilde\cX_r^{u^\lambda} ,u_r^\lambda)-b(r,X_r^{\lambda,u^\lambda},u_r^\lambda)\)\mathrm dr  \Big|^p\]  \\
%
%\ns\ds \les  C_T\dbE^{\cF_{t}}\[ \int_{t}^T\Big|J_b(r)+b(r,\tilde\cX_r^{u^\lambda} ,u_r^\lambda)-b(r,X_r^{\lambda,u^\lambda},u_r^\lambda)  \Big|^p\mathrm dr \]  \\
%%
\ns\ds \les  C_{T,p,\d}\dbE^{\cF_{s}^\lambda}\[ \int_{s}^T\(|J_b(r)|^p+| \tilde\cX_r^{u^\lambda} -X_r^{\lambda,u^\lambda}   |^p\)\mathrm dr \] ,
		\end{array}
	\end{equation*}
and 
\begin{equation*} 
		\begin{array}{ll}
		\ns\ds 	 \dbE^{\cF_{s}^\lambda}\[\sup\limits_{\t\in [s,T]}\Big|\int_{s}^\t\(J_\si(r)+\si(r,\tilde\cX_r^{u^\lambda} ,u_r^\lambda)-\sigma (r,X_r^{\lambda,u^\lambda},u_r^\lambda)\)\mathrm dB_r^\lambda\Big|^p\]\\%
%
%\ns\ds\les  C_p\dbE^{\cF_{t}}\[\Big(\int_{t}^T|J_\si(r)+\si(r,\tilde\cX_r^{u^\lambda} ,u_r^\lambda)-\sigma (r,X_r^{\lambda,u^\lambda},u_r^\lambda)|^2\mathrm d r\Big)^\frac p2\]\\ 
%% %
\ns\ds\les \left\{ \ba{ll} \ns\ds \!\!\! C_{T,p,\d}\Big(\dbE^ {\cF_{s}^\lambda}\[\int_{s}^T \(|J_\si(r)|^2+|\tilde\cX_r^{u^\lambda}   -X_r^{\lambda,u^\lambda} |^2\)\mathrm d r\]\Big)^\frac p2,\q 1\les p<2  ,\\
\ns\ds \!\!\!  C_{T,p,\d}\dbE^ {\cF_{s}^\lambda}\[\int_{s}^T \(|J_\si(r)|^p+|\tilde\cX_r^{u^\lambda}   -X_r^{\lambda,u^\lambda} |^p\)\mathrm d r\], \qq\q   p\ges  2  , \ea\right.
		\end{array}
	\end{equation*}
and 
\begin{equation*} 
		\begin{array}{ll}
			\ns\ds \dbE^{\cF_{s}^\lambda}\[\sup\limits_{\t\in [s,T]}\Big|\int_{s}^\t\int_{E}\(J_\g(r,e) + \gamma(r,\tilde\cX_{r-}^{u^\lambda},u_r^\lambda,e)-\gamma(r,X_{r-}^{\lambda,u^\lambda},u_r^\lambda,e)\)\tilde N ^\lambda(\mathrm dr,\mathrm de)\Big|^p\]\\
%%
%\ns\ds \les\left\{ \ba{ll} \ns\ds \!\!\!  \dbE^{\cF_{t}}\[ \( \int_{t}^s\int_{E}|J_\g(r,e) + \gamma(r,\tilde\cX_{r-}^{u^\lambda},u_r^\lambda,e)-\gamma(r,X_{r-}^{\lambda,u^\lambda},u_r^\lambda,e)|^2 \n(\mathrm de)\mathrm dr\)^\frac p2\],\q 1\les p<2  ,\\
%%
% \ns\ds\!\!\!  \ba{ll} \ns\ds\dbE^{\cF_{t}}\[ \( \int_{t}^s\int_{E}|J_\g(r,e) + \gamma(r,\tilde\cX_{r-}^{u^\lambda},u_r^\lambda,e)-\gamma(r,X_{r-}^{\lambda,u^\lambda},u_r^\lambda,e)|^2 \n(\mathrm de)\mathrm dr\)^\frac p2\\
% %
% \ns\ds\qq+  \int_{t}^s\int_{E}|J_\g(r,e) + \gamma(r,\tilde\cX_{r-}^{u^\lambda},u_r^\lambda,e)-\gamma(r,X_{r-}^{\lambda,u^\lambda},u_r^\lambda,e)|^p \n(\mathrm de)\mathrm dr  \],\ea\q p\ges 2  ,\ea\right.  \\
% %
 \ns\ds \les\left\{ \ba{ll} \ns\ds \!\!\!   C_{T,p,\d}\( \dbE^{\cF_{s}^\lambda}\[ \int_{s}^T\int_{E}\(|J_\g(r,e)|^2 +\ell^2(e)|  \tilde\cX_{r }^{u^\lambda}-X_{r }^{\lambda,u^\lambda} |^2\)  \n(\mathrm de)\mathrm dr\]\)^\frac p2,\q 1\les p<2  ,\\
 \ns\ds\!\!\!  \ba{ll} \ns\ds\!\!\! C_{T,p,\d}\dbE^{\cF_{s}^\lambda}\[ \( \int_{s}^T\int_{E} \( |J_\g(r,e)|^2 
 +\ell^2(e)|  \tilde\cX_{r }^{u^\lambda}-X_{r }^{\lambda,u^\lambda} |^2 \) \n(\mathrm de)\mathrm dr \)^\frac p2     \\
 \ns\ds\!\!\! \hskip1.7cm +  \int_{s}^T\int_{E}\(|J_\g(r,e)|^p +\ell^p(e)|  \tilde\cX_{r }^{u^\lambda}-X_{r }^{\lambda,u^\lambda} |^p\)  \n(\mathrm de)\mathrm dr  \],\ea\hskip0.65cm  p\ges 2  ,\ea\right.  \\
		\end{array}
	\end{equation*}
and 
%条件 gamma有界 tx lip 
%系数le一阶可积
%或者 nuE小于无穷 le二阶可积
\begin{equation*} 
		\begin{array}{ll}
			\ns\ds\dbE^{\cF_{s}^\lambda}\[\sup\limits_{\t\in [s,T]}\Big|\int_{s}^\t\int_{E}\(\lambda(1-\frac{1}{\dot \tau_1^\lambda})\gamma \big( \varrho_1^\lambda(r) , \tilde X_{r}^{1,u^\lambda}, u_r^\lambda, e \big)
			 +(1-\lambda)(1-\frac{1}{\dot \tau_0^\lambda})\gamma \big( \varrho_0^\lambda(r) , \tilde X_{r}^{0,u^\lambda}, u_r^\lambda, e \big)
			\)\nu(\mathrm de)\mathrm dr\Big|^p\]\\
%
%\ns\ds\les  \dbE^{\cF_{s}^\lambda}\[\Big|\int_{t}^s\int_{E} \lambda(1-\frac{1}{\dot \tau_1^\lambda})\(\gamma \big( \varrho_1^\lambda(r) , \tilde X_{r-}^{1,u^\lambda}, u_r^\lambda, e \big)
%			 -\gamma \big( \varrho_0^\lambda(r) , \tilde X_{r-}^{0,u^\lambda}, u_r^\lambda, e \big)
%			 \)\nu(\mathrm de)\mathrm dr\Big|^p\]\\
%
%\ns\ds\les  \dbE^{\cF_{s}^\lambda}\[\Big|\int_{t}^s\int_{E} \lambda(1-\frac{1}{\dot \tau_1^\lambda})\ell(e)\( | \varrho_1^\lambda(r) -\varrho_0^\lambda(r)|+ | \tilde X_{r-}^{1,u^\lambda}- \tilde X_{r-}^{0,u^\lambda}| \)
%			 \nu(\mathrm de)\mathrm dr\Big|^p\]\\
%%
%%
%\ns\ds\les  \dbE^{\cF_{s}^\lambda}\[\Big|\int_{t}^s \lambda(1-\frac{1}{\dot \tau_1^\lambda})\( | \varrho_1^\lambda(r) -\varrho_0^\lambda(r)|+ | \tilde X_{r-}^{1,u^\lambda}- \tilde X_{r-}^{0,u^\lambda}| \)\mathrm dr\Big|^p
%		\Big|\int_{E}	\ell(e) \nu(\mathrm de)\Big|^p\]\\
%用le一阶可积的条件或者用le二阶可积的条件加nuE小于无穷
%
\ns\ds   \les C_{T,p,\d}\lambda^p(1-\lambda)^p \dbE^{\cF_{s}^\lambda}\[\Big|\int_{t}^s\int_E  \ell(e)\big( | t_1 -t_0  | + | \tilde X_{r-}^{1,u^\lambda}- \tilde X_{r-}^{0,u^\lambda}|  \big) \nu(\mathrm de)\mathrm dr\Big|^p\] \\
 \ns\ds \les  C_{T,p,\d} \lambda^p(1-\lambda)^p\dbE^{\cF_{s}^\lambda}\[ \int_{t}^s  \big( | t_1 -t_0  |^p + | \tilde X_{r}^{1,u^\lambda}- \tilde X_{r}^{0,u^\lambda}|^p  \big)\mathrm dr  \]. 
		\end{array}
	\end{equation*}
%

%用到了上一个引理
Similar to the  proof of Lemma \ref{CV-Le-X-1}, for $p\ges 1$, we obtain
%Thus, using \eqref{Jb}, \eqref{J-si-g} and Lemma \ref{CV-Le-X-1}, 
 % 
  \begin{equation*} 
		\begin{array}{ll}
			\ns\ds\!\!\! \dbE^{\cF_{s}^\lambda}\[ \sup\limits_{\t\in[s,T]}| \tilde\cX _\t^{u^\lambda}-X_\t^{\lambda,u^\lambda}|^p\]   \\
\ns\ds\!\!\!    \les C_p | \tilde\cX _s^{u^\lambda}-X_s^{\lambda,u^\lambda}|^p +C_{T,p,\d}\dbE^{\cF_{s}^\lambda}\[ \int_{s}^T\(|J_b(r)|^p+|J_\si(r)|^p+\int_{E}\ |J_\g(r,e)|^p\n(\mathrm de) \)\mathrm dr \] \\
\ns\ds\!\!\!\q  +  C_{T,p,\d}\Big(\dbE^ {\cF_{s}^\lambda}\[\int_{s}^T \(|J_\si(r)|^2+\int_E |J_\g(r,e)|^2 \nu(\mathrm de)\)\mathrm d r\]\Big)^\frac p2\\
\ns\ds\!\!\!\q  + C_{T,p,\d} \lambda^p(1-\lambda)^p\dbE^{\cF_{s}^\lambda}\[ \int_{t}^s  \big( | t_1 -t_0  |^p + | \tilde X_{r}^{1,u^\lambda}- \tilde X_{r}^{0,u^\lambda}|^p  \big)\mathrm dr  \]\\
\ns\ds\!\!\!   \les  C_{T,p,\d} \big(| t_1 -t_0  |^p+ |\tilde\cX _s^{u^\lambda}-X_s^{\lambda,u^\lambda}|^p\big)  +  C_{T,p,\d}\lambda^p(1-\lambda)^p \(|t_1-t_0|^{2p} + | \tilde X_s^{1,u^\lambda} -   \tilde X_s^{0,u^\lambda}  |^{2p}\),
		\end{array}
	\end{equation*}
 where we have used \eqref{Jb}, \eqref{J-si-g} and Lemma \ref{CV-Le-X-1}. 
\end{proof}	
	 
	For convenience, we denote 
	 \begin{equation}\label{D}
	  \D_s:=C_\d\sup\limits_{r\in [t_\lambda,s]}\big(|t_1-t_0|  +|\tilde X_r^{1,u^\lambda}-\tilde X_r^{0,u^\lambda}|\big).
	 \end{equation} 
	   Without loss of generality, we also assume $t_0 \les t_1 $. Otherwise, the following study is made for $\tilde  Y_\cd^{n,1,u^\lambda}$.

\bl\label{Le-tilde-Y<bar}\sl Suppose {\rm \textbf{(H$_1$)}}, {\rm \textbf{(H$_2$)}}, {\rm \textbf{(H$_4$)}-(i), (ii)} and {\rm \textbf{(C)}}   hold, for any $s\in[t_\lambda,T]$, we have  
$$\tilde  Y_s^{n,0,u^\lambda}\les \overline   Y_s^{n,0,u^\lambda},\q \dbP\mbox{-a.s.},$$
where 
\begin{equation}\label{bar-Y-n0}
		\left\{
		\begin{array}{ll}
			\ns\ds\!\!\!  \mathrm d \overline   Y_s^{n,0,u^\lambda} \! =  \!  - \Big[ \!     \frac{1}{\dot \tau_1^\lambda }   f\Big(  \varrho_1^\lambda(s), \tilde X_s^{1,u^\lambda}, \overline   Y_s^{n,0,u^\lambda}-\D_s,  \sqrt{\dot \tau_1^\lambda} \overline   Z_s^{n,0,u^\lambda},  \int _E l(e)  \overline  V_s^{n,0,u^\lambda} (e) \nu(\mathrm de),u_s^\lambda  \Big) 
			\\
			\ns\ds\!\!\!   \hskip 1.98cm    -    \frac{n}{\dot \tau_1^\lambda }  \Big(  h( \varrho_1^\lambda(s), \tilde X_s^{1,u^\lambda} )+\D_s-\overline   Y_s^{n,0,u^\lambda}  \Big)^ -  
			-  (1-\frac{1}{\dot \tau_1^\lambda})  \int_E   \overline   V_s^{n,0,u^\lambda} (e)  \nu(\mathrm de) \\
			\ns\ds\!\!\!   \hskip 1.98cm    + C_\delta |t_1-t_0| \cd \Big( \Big|  \overline   Z_s^{n,0,u^\lambda} \Big| + \Big| \int_E  \overline   V_s^{n,0,u^\lambda}(e)\nu(\mathrm de) \Big|  \Big)   +  \D_s    \Big]  \mathrm ds\\
			\ns\ds\!\!\!   \hskip 1.65cm     +  \overline   Z_s^{n,0,u^\lambda}   \mathrm dB_s^\lambda  + \int_E   \overline   V_s^{n,0,u^\lambda} (e)   \tilde  N^\lambda(\mathrm ds,\mathrm de)  , \quad s \in [t_\lambda,T],\\
			\ns\ds\!\!\!  \overline Y_T^{n,0,u^\lambda}=\Phi(\tilde X_T^{1,u^\lambda})+  \D_T.
		\end{array}
		\right.
	\end{equation} 
 
\el

%条件 f关于txyz lip； h关于tx lip ； Phi关于 x lip
\begin{proof}	For all $s\in[t_\lambda,T]$, $(y,z,v(\cd))\in \dbR\times \dbR^d\times \cL_\n^2(E;\dbR)$, we have 
 \begin{equation*} 
		\begin{array}{ll}
\ns\ds  \frac{1}{\dot \tau_0^\lambda}      f\Big(  \varrho_0^\lambda(s), \tilde X_s^{0,u^\lambda}, y,  \sqrt{\dot \tau_0^\lambda}z,  \int _E l( e)  v (e) \nu(\mathrm de),u_s^\lambda  \Big) - (1-\frac{1}{\dot \tau_0^\lambda}) \int_E  v(e)   \nu(\mathrm de)  
		   \\
%
%\ns\ds \les   \frac{1}{\dot \tau_1^\lambda }       f\big(  \varrho_1^\lambda(s), \tilde X_s^{1,u^\lambda},  y,  \sqrt{\dot \tau_0^\lambda}z,  \int _E l( e) v (e) \nu(\mathrm de),u_s^\lambda  \big)    - (1-\frac{1}{\dot \tau_1^\lambda})  \int_E  v(e)   \nu(\mathrm de) \\
%%
%\ns\ds\q+C_\delta |t_1-t_0| \cd \big( |z|+ \Big|\int_E v(e)\n(\mathrm de) \Big|\big)+ C \D_s   \\
%%
%
\ns\ds \les   \frac{1}{\dot \tau_1^\lambda }       f\Big(  \varrho_1^\lambda(s), \tilde X_s^{1,u^\lambda}, y- \D _s,  \sqrt{\dot \tau_1^\lambda} z,  \int _E l(  e)  v (e) \nu(\mathrm de),u_s^\lambda  \Big)    - (1-\frac{1}{\dot \tau_1^\lambda})  \int_E  v (e)   \nu(\mathrm de) \\
\ns\ds\q +   C_\delta |t_1-t_0| \cd \Big(  |z|+  \Big|\int_Ev(e)\n(\mathrm de)  \Big|\Big) 
+   \D_s ,
		\end{array}
	\end{equation*} 
where we have used the Lipschitz continuity of $f$ and Lemma \ref{tau_pro}. Furthermore, 
  $$\ba{ll} \ns\ds    \frac{1}{\dot \tau_0^\lambda} \big(  h( \varrho_0^\lambda(s), \tilde X_s^{0,u^\lambda} )-y \big)- \frac{1}{\dot \tau_1^\lambda} \big(  h( \varrho_1^\lambda(s), \tilde X_s^{1,u^\lambda} )-y \big)\\
   \ns\ds= \( \frac{1}{\dot \tau_0^\lambda}- \frac{1}{\dot \tau_1^\lambda}\) \big(  h( \varrho_0^\lambda(s), \tilde X_s^{0,u^\lambda} )-y \big)+ \frac{1}{\dot \tau_1^\lambda} \big(  h( \varrho_0^\lambda(s), \tilde X_s^{0,u^\lambda} )-  h( \varrho_1^\lambda(s), \tilde X_s^{1,u^\lambda} ) \big)
   \les \D_s (1+|y|).\\
   %
%      \ns\ds\les C_\d|t_0-t_1|(1+|y|) +C \big(| \varrho_0^\lambda(s)-\varrho_1^\lambda(s)| + | \tilde X_s^{0,u^\lambda}  -  \tilde X_s^{1,u^\lambda}| \big).
% \ns\ds\les \D_s (1+|y|).
 \ea$$ 
 Note the boundedness of $  \tilde  Y_s^{n,0,u^\lambda} $ obtained in Lemma \ref{Y-esti}, we know
  $$\ba{ll} \ns\ds   \frac{n}{\dot \tau_1^\lambda} \big(   h( \varrho_1^\lambda(s), \tilde X_s^{1,u^\lambda} )+  \D_s-  \tilde  Y_s^{n,0,u^\lambda}  \big) ^- 
  \les  \frac{n}{\dot \tau_0^\lambda} \big( h( \varrho_0^\lambda(s), \tilde X_s^{0,u^\lambda} )  -  \tilde  Y_s^{n,0,u^\lambda}  \big)^- .
  \ea$$ 
 Moreover, 
 $\Phi(\tilde X_T^{0,u^\lambda}) \les \Phi(\tilde X_T^{1,u^\lambda})+C|\tilde X_T^{1,u^\lambda}-\tilde X_T^{0,u^\lambda}|\les  \Phi(\tilde X_T^{1,u^\lambda})+ \D_T.$

 Finally,   $t_0\les t_1$ guarantees us  to apply the comparison theorem of BSDE with jumps to \eqref{bar-Y-n0} and  \eqref{tilde_BSDEP_in} with $i=0$, we get 
 $ \tilde  Y_s^{n,0,u^\lambda}\les \overline   Y_s^{n,0,u^\lambda},\q \forall s\in [t_\lambda,T],\ \dbP\mbox{-a.s.}$ 
 \end{proof}%%

	\begin{remark}
 \sl  
		The process $\{\D_s \}_{s \in [t_0,T] }$ defined in \eqref{D} is an $\mathbb F^\lambda$-adapted, continuous increasing process, and from  
		 Lemma {\rm \ref{CV-Le-X-1}},  for all $p \geqslant 1$,
		 %用到了引理4.4，需要p阶估计，H6条件
		\begin{equation}\label{A-esti}
			\mathbb E ^{\cF_s^\lambda}\Big[ \D_T^p-\D_s^p   \Big] \leqslant C_{p,\delta } \big( |t_0-t_1|^p + |\tilde X_s^{1,u^\lambda} - \tilde X_s^{0,u^\lambda}|^p \big),\quad \dbP\mbox{-a.s.}
		\end{equation}
	\end{remark}
	\begin{lemma}\label{esti-hatY1n}\sl
		Let {\rm \textbf{(H$_1$)}-\textbf{(H$_3$)}}, {\rm \textbf{(H$_4$)}-(i), (ii)} and {\rm \textbf{(C)}}  hold. Then 
		\begin{equation*}
%			\begin{array}{ll}
%				%
%				\ns\ds\!\!\!	{\rm(\romannumeral1)}\
				   -C  \leqslant \tilde  Y_s^{n,0,u^\lambda}\les \overline   Y_s^{n,0,u^\lambda} \leqslant C_\d+C_\d \D_s, \quad  s\in[t_\lambda,T], \  n\geqslant 1, \  \dbP\mbox{-a.s.} \\
				%
%				\ns\ds\!\!\! 	{\rm(\romannumeral2)} \  \mathbb E^{\mathcal F^\lambda_s} \Big[  \int_s^T  \big(|\overline Z_r^{n,0,u^\lambda}|^2  
%				+     \|\overline V_r^{n,0,u^\lambda}(\cd)\|_{\nu,2}^2    \big) \mathrm dr 
%				  \Big] \leqslant C_{ \delta}(1+ \D_s^2
%),\quad  s\in[t_\lambda,T],\ n\geqslant 1, \  \dbP\mbox{-a.s.}\\
				%
%				%\ns\ds\!\!\! \\
%			\end{array}
		\end{equation*}
		Moreover, 
%		for given $p>2$, assume that {\rm \textbf{(H$_6$)$_p$}} also hold,
		$$
%			{\rm(\romannumeral2)} \ 
		 \mathbb E^{\mathcal F^\lambda_s} \Big[  \int_s^T  \big(|\overline Z_r^{n,0,u^\lambda}|^2  
						+     \|\overline V_r^{n,0,u^\lambda}(\cd)\|_{\nu,2}^2    \big) \mathrm dr 
						  \Big] \leqslant C_{ \delta}(1+ \D_s^2
		),\quad  s\in[t_\lambda,T],\ n\geqslant 1, \  \dbP\mbox{-a.s.} 
		$$
	\end{lemma}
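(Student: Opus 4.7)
The plan is to establish the three pointwise inequalities in order and then obtain the $(Z,V)$-integral estimate by a standard energy argument. The lower bound $-C\les \tilde Y_s^{n,0,u^\lambda}$ is immediate from Lemma \ref{Y-esti}, since $\tilde Y_s^{n,0,u^\lambda}=Y_{\varrho_0^\lambda(s)}^{n,0,u^0}$ and the deterministic time change $\varrho_0^\lambda$ preserves pointwise bounds. The middle inequality $\tilde Y_s^{n,0,u^\lambda}\les \overline Y_s^{n,0,u^\lambda}$ is Lemma \ref{Le-tilde-Y<bar}. So the substantive part is (a) the upper bound $\overline Y_s^{n,0,u^\lambda}\les C_\delta(1+\Delta_s)$ and (b) the moment estimate on $(\overline Z^{n,0,u^\lambda},\overline V^{n,0,u^\lambda})$.

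For (a), I would introduce $\psi_s:=\overline Y_s^{n,0,u^\lambda}-C_\delta(1+\Delta_s)$ with $C_\delta$ chosen larger than $\|h\|_\infty+\|\Phi\|_\infty+1$ (all finite by \textbf{(H$_4$)-(ii)}). Then $\psi_T\les 0$, hence $\psi_T^+=0$; and on $\{\psi>0\}$ one has $h+\Delta\les C_\delta(1+\Delta)<\overline Y^{n,0,u^\lambda}$, so $(h+\Delta-\overline Y)^-=\overline Y-h-\Delta\ges \psi^+$. Consequently the penalty $-\frac{n}{\dot\tau_1^\lambda}(h+\Delta-\overline Y)^-$ in the driver contributes $\les -\frac{n}{\dot\tau_1^\lambda}\psi^+$, a restoring sign on $\{\psi>0\}$. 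Apply the It\^o--Tanaka formula to $e^{\alpha s+\beta\Delta_s}(\psi_s^+)^2$ with $\alpha,\beta>0$ to be chosen large, and take $\cF_s^\lambda$-conditional expectation. The quadratic variation supplies $-\dbE\int e^{\alpha r+\beta\Delta_r}\mathbf{1}_{\{\psi>0\}}(|\overline Z|^2+\|\overline V\|_{\n,2}^2)\mathrm dr$ (up to a nonnegative jump residual that is discarded). The drift $2\psi^+g$ is then estimated term by term: the bounded $f$-piece by Young; the residual $(1-1/\dot\tau_1^\lambda)\int\overline V\n$ by Lemma \ref{tau_pro} together with Cauchy--Schwarz, using $\n(E)<\infty$ from \textbf{(H$_3$)} to convert $L^1_\n$-type expressions into $L^2_\n$-type ones; the explicit Lipschitz pieces $C_\delta|t_1-t_0|(|\overline Z|+|\int\overline V\n|)$ by a $|t_1-t_0|$-calibrated Young inequality $2\psi^+c\,a\les 4c^2(\psi^+)^2+\tfrac14 a^2$, whose $|\overline Z|^2,\|\overline V\|_{\n,2}^2$ contributions (restricted to $\{\psi>0\}$) are absorbed into the quadratic variation; and $2\psi^+\Delta_s$ by $2ab\les a^2+b^2$. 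The Stieltjes contribution $-2C_\delta\int_s^T e^{\alpha r+\beta\Delta_r}\psi^+\,\mathrm d\Delta_r$ coming from $\mathrm d\psi=\mathrm d\overline Y-C_\delta\mathrm d\Delta$ combines with the weight-derivative term $\beta\int e^{\alpha r+\beta\Delta_r}(\psi^+)^2\mathrm d\Delta_r$ via the pointwise bound $\beta(\psi^+)^2-2C_\delta\psi^+\ges -C_\delta^2/\beta$. Choosing $\alpha,\beta$ sufficiently large, applying a conditional Gronwall argument against the combined measure $\mathrm dr+\mathrm d\Delta_r$, and invoking the a priori moment estimate \eqref{A-esti} and Lemma \ref{CV-Le-X-1}, combined with $\psi_T^+=0$, forces $\psi_s^+=0$ $\dbP$-a.s., which is the desired upper bound.

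Part (b) follows by applying It\^o's formula to $|\overline Y_s^{n,0,u^\lambda}|^2$ on $[s,T]$, using the two-sided bound on $\overline Y^{n,0,u^\lambda}$ just established, the boundedness of $\Phi,h,f$, the nonpositive contribution of the penalty multiplied by $2\overline Y$ on $\{\overline Y>h+\Delta\}$, Young's inequality for the Lipschitz-in-$(Z,V)$ terms (absorbed into the quadratic variation), and the moment bound \eqref{A-esti} for $\Delta_T^2-\Delta_s^2$. The main obstacle throughout is the bookkeeping in the It\^o step for (a): one must simultaneously exploit the favorable sign of the penalty, absorb the Lipschitz-in-$(Z,V)$ prefactor $C_\delta|t_1-t_0|$ (which is bounded but \emph{not} small), and dispose of the càdlàg Stieltjes integrator $\mathrm d\Delta_s$ via an exponential-in-$\Delta$ weight. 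The hypothesis \textbf{(H$_3$)}, $\n(E)<\infty$, is used crucially both in the $L^1_\n$-to-$L^2_\n$ conversion for $\overline V^{n,0,u^\lambda}$ and in ensuring exponential moments of $\Delta_T$ are finite.
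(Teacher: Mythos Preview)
Your sketch for part (a) contains a genuine gap: the conclusion ``forces $\psi_s^+=0$'' does not follow from the argument you outline. After applying It\^o--Tanaka to $e^{\alpha s+\beta\Delta_s}(\psi_s^+)^2$ and performing the Young absorptions you describe, the residuals from the \emph{bounded} part of $f$ and from the additive $+\Delta_s$ term in the driver of $\overline Y^{n,0,u^\lambda}$ leave strictly positive source terms of the form $\int_s^T e^{\alpha r+\beta\Delta_r}(C_f^2+\Delta_r^2)\,\mathrm dr$ on the right-hand side. A Gronwall argument against such an inequality yields a \emph{bound} on $(\psi_s^+)^2$, never zero; the terminal condition $\psi_T^+=0$ does not help because the source terms are not absorbed. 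One could try to salvage the approach by aiming only for $(\psi_s^+)^2\les C_\delta(1+\Delta_s^2)$, but then the exponential weight $e^{\beta\Delta_s}$ creates new difficulties: after dividing through, the right-hand side involves $\dbE^{\cF_s^\lambda}[e^{\beta(\Delta_T-\Delta_s)}(C+\Delta_T^2)]$, and controlling this conditionally (uniformly in $s$ and $\omega$) in terms of $1+\Delta_s^2$ is not immediate.

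The paper avoids this entirely by a two-step argument: first use the comparison theorem for BSDEs with jumps to remove the penalty, comparing $\overline Y^{n,0,u^\lambda}$ to the solution $\sY^{0,u^\lambda}$ of an auxiliary BSDE with driver $C_1+C_\delta|t_1-t_0|(|\sZ|+|\int_E\sV\,\nu|)+\Delta_s$ and terminal $\Phi(\tilde X_T^{1,u^\lambda})+C_1\Delta_T$ (the penalty term being nonpositive makes the comparison go the right way); then a standard $e^{\beta s}|\sY_s|^2$ energy estimate on this \emph{unconstrained} BSDE gives $|\sY_s|\les C_\delta(1+\Delta_s)$ directly. This is both simpler and avoids It\^o--Tanaka for jump processes.

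For part (b) your sketch also needs adjustment: the penalty contribution $2\overline Y\cdot(-\tfrac{n}{\dot\tau_1^\lambda}(h+\Delta-\overline Y)^-)$ is \emph{not} nonpositive on $\{\overline Y>h+\Delta\}$ unless $\overline Y\ges 0$ there, which is not guaranteed (you only have $\overline Y\ges -C$). The paper applies It\^o to $(\overline Y_s^{n,0,u^\lambda}+C_h-\Delta_s)^2$ with $C_h=\|h\|_\infty$, so that on the relevant set the prefactor $\overline Y+C_h-\Delta>h+C_h\ges 0$, restoring the sign. The resulting $\int_s^T(\overline Y_r+C_h-\Delta_r)\,\mathrm d\Delta_r$ term is then handled not by \eqref{A-esti} alone but by a Doob martingale inequality trick (splitting via H\"older with exponents $1<p<2$, $q>2$, and bounding the supremum of the associated martingale).
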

	\begin{proof}
		% 
%		{\rm(\romannumeral1)}   
		By  Lemmas  \ref{Y-esti} and \ref{Le-tilde-Y<bar}, for all $s\in[t_\lambda,T],$ $n\geqslant 1$, $\dbP$-a.s., we have 
$$ -C \leqslant\tilde  Y_s^{n,0,u^\lambda}\les \overline   Y_s^{n,0,u^\lambda} .$$
 Now, let us consider another  BSDE with jumps as follows,
$$  \left\{
		\begin{array}{ll}
			\ns\ds\!\!\!  \mathrm d \sY_s^{0,u^\lambda} 
			= -\[C_1   + C_\delta |t_1-t_0| \cd  \Big( \Big|\sZ_s^{0,u^\lambda}\Big|+ \Big|\int_E \sV_s^{0,u^\lambda}(e) \nu(\mathrm de) \Big| \Big ) +\D_s  \] \mathrm d s
			+\sZ_s^{0,u^\lambda}\mathrm d B_s^\lambda
			 \\
\ns\ds\!\!\!\hskip1.6cm    +\int_E \sV_s^{0,u^\lambda}(e)   \tilde N^\lambda(\mathrm d  s,\mathrm de),\q  s\in [t_\lambda,T],\\
\ns\ds  \!\!\!  \sY_T^{0,u^\lambda}= \Phi(\tilde X_T^{1,u^\lambda})+C_1\D_T.
\ea\right.$$ 
From the   boundedness of $f$ as well as  Lemma \ref{tau_pro},
		  we have,  $\dbP$-a.s., 
%	f有界
$$\ba{ll}
\ns\ds\!\!\!
  \frac{1}{\dot \tau_1^\lambda }   f\big(  \varrho_1^\lambda(s), \tilde X_s^{1,u^\lambda}, \overline   Y_s^{n,0,u^\lambda}\!\!-\! \D_s,  \sqrt{\dot \tau_1^\lambda} \overline   Z_s^{n,0,u^\lambda},  \int _E l(e)  \overline  V_s^{n,0,u^\lambda} (e) \nu(\mathrm de),u_s^\lambda  \big) 
-  (1-\frac{1}{\dot \tau_1^\lambda})  \int_E   \overline   V_s^{n,0,u^\lambda} (e)  \nu(\mathrm de) \\
\ns\ds\!\!\!       -    \frac{n}{\dot \tau_1^\lambda }  \big(  h( \varrho_1^\lambda(s), \tilde X_s^{1,u^\lambda} )+\D_s-\overline   Y_s^{n,0,u^\lambda}  \big)^ -  
 + C_\delta |t_1-t_0|\cd  \Big(   \Big|  \overline   Z_s^{n,0,u^\lambda} \Big| + \Big| \int_E  \overline   V_s^{n,0,u^\lambda}(e)\nu(\mathrm de) \Big|  \Big) +\D_s
\\
%
%\ns\ds\!\!\!      + \D_s \( 1+  |  \overline   Z_s^{n,0,u^\lambda}| + | \int_E  \overline   V_s^{n,0,u^\lambda}(e)\nu(\mathrm de) |  \) \\
%
%
\ns\ds\!\!\! \les  C_1   + C_\delta |t_1-t_0| \cd \Big(   \Big|  \overline   Z_s^{n,0,u^\lambda}\Big| + \Big| \int_E  \overline   V_s^{n,0,u^\lambda}(e)\nu(\mathrm de) \Big|  \Big) +\D_s .
\ea$$
Therefore, using 
the comparison theorem, we get $$\overline   Y_s^{n,0,u^\lambda}\les \sY_s^{ 0,u^\lambda}, \q s\in[t_\lambda,T], \  n\geqslant 1, \  \dbP\mbox{-a.s.}$$

Next, we want to prove $\sY_s^{0,u^\lambda} \leqslant C_\d+C_\d \D_s, \ s\in[t_\lambda,T], \  n\geqslant 1, \  \dbP\mbox{-a.s.}$ 
 %we set $\textsc{Y}_s^{n,0,u^\lambda} := \sY_s^{n,0,u^\lambda}-C_1\D_s $, which satisfies    
% %
% $$  \left\{
%		\begin{array}{ll}
%			%
%			\ns\ds\!\!\!  \mathrm d \textsc{Y}_s^{n,0,u^\lambda} = -\[C_1   + C_\delta |t_1-t_0| (1+|\textsc Z_s^{n,0,u^\lambda}|+| \textsc V_s^{n,0,u^\lambda}(\cd)|_\n) +C (1 + |\textsc V_s^{n,0,u^\lambda}(\cd)|_\n ) |\tilde X_s^{1,u^\lambda}-\tilde X_s^{0,u^\lambda}|\] \mathrm d s\\
%%
%\ns\ds\qq -C_1\mathrm d \D_s +\textsc Z_s^{n,0,u^\lambda}\mathrm d B_s^\lambda+\int_E \textsc V_s^{n,0,u^\lambda}(e)\tilde \m(\mathrm d  s,\mathrm de),\q s\in [t_\lambda,T],\\
%%
%\ns\ds  \!\!\!  \textsc Y_T^{n,0,u^\lambda}= \Phi(\tilde X_T^1) .
%\ea\right.$$ 
% %条件 nu E<无穷
For some constant $\b$, by applying the It\^{o}'s formula to $e^{\beta s} |\sY_s^{ 0,u^\lambda} |^2$, we obtain, for all $s\in[t_\lambda,T]$, $\dbP$-a.s.,
		\begin{equation*}
			\begin{array}{ll}
				\ns\ds\!\!\!   e^{\beta s} |\sY_s^{0,u^\lambda} |^2
				+\mathbb E ^{\mathcal F^\lambda_s}\Big[ \int_s^T  e^{\beta r}  \big(  \beta |\sY_r^{0,u^\lambda}|^2 + | \sZ_r^{0,u^\lambda}|^2   
				+ \|\sV_r^{0,u^\lambda}(\cd) \|^2_{\nu,2}    \big)  \mathrm dr   \Big]\\
				%		 		 	+\mathbb E \Big[ \int_s^T  \int_E  e^{\beta r} |\bar V_r^{n,1}(e)|^2   \lambda (\mathrm de) \mathrm dr \mid\mathcal F_s^0 \Big]
				%		 		 	+\mathbb E \Big[ \int_s^T   r e^{\beta r} |\bar Y_r^{2,n}+C_{T,p,\delta}|^2    \mathrm dr \mid\mathcal F_s^0 \Big]\\
				%
				\ns\ds\!\!\!   =\mathbb E^{\mathcal F^\lambda_s} \Big[  e^{\beta T} |\Phi(\tilde X_T^{1,u^\lambda})+C_1\D_T|^2\Big]
				+ 2 C_1    \mathbb E^{\mathcal F^\lambda_s} \Big[ \int_s^T    e^{\beta r}  \sY_r^{0,u^\lambda}        \mathrm dr   \Big] 
				+ 2    \mathbb E^{\mathcal F^\lambda_s} \Big[ \int_s^T    e^{\beta r}  \sY_r^{0,u^\lambda}  \D_r       \mathrm dr   \Big] 
				\\
				 \ns\ds\!\!\!\q 
				+ 2   C_\delta |t_1-t_0|  \cd  \mathbb E^{\mathcal F^\lambda_s} \Big[ \int_s^T    e^{\beta r}  \sY_r^{0,u^\lambda}   \Big( |\sZ_s^{0,u^\lambda}|+\Big|\int_E \sV_s^{0,u^\lambda}(e) \nu(\mathrm de)\Big| \Big )  \mathrm dr   \Big]. \\
				\ns\ds\!\!\!   \les   C_{ \delta}\big( 1+ \D_s^2  \big)
				+ C \mathbb E^{\mathcal F^\lambda_s} \Big[  \int_s^T    e^{\beta r} |\sY_r^{0,u^\lambda}|^2\mathrm dr   \Big]
				+\frac 12 \mathbb E \Big[ \int_s^T   e^{\beta r} \big( | \sZ_r^{0,u^\lambda}|^2   
				+ \|\sV_r^{0,u^\lambda}(\cd) \|^2_{\nu,2}  \big)  \mathrm dr  \Big]   . \\	    
%				\quad s \in [t_\lambda,T]. \\
				%		 		 -\mathbb E \Big[ \int_s^T  2   e^{\beta r} (\bar Y_r^{2,n}+C_{T,p,\delta}) (C_{T,p,\delta}A_r+A_T) \mathrm dr \mid\mathcal F_s^0 \Big]\\
				%		 		 \ns\ds\!\!\!  = -\mathbb E \Big[ \int_s^T  2 C_{T,p,\delta} e^{\beta r} (\bar Y_r^{2,n}+C_{T,p,\delta})  \big( 1+ | \bar Z_r^{n,1}| +\int_E | \bar V_r^{n,1}|\lambda (\mathrm de) \big) \mathrm dr \mid\mathcal F_s^0 \Big] 
				%		 		 -\mathbb E \Big[ \int_s^T  2   e^{\beta r} (\bar Y_r^{2,n}+C_{T,p,\delta}) (C_{T,p,\delta}A_r+A_T) \mathrm dr \mid\mathcal F_s^0 \Big]\\
				%
			\end{array}
		\end{equation*}
		%
%		
%		
%		Then,  from the Cauchy inequality and \eqref{A-esti}, for all $ s \in [t_\lambda,T]$, we get 
%		%
%		\begin{equation*}
%			\begin{array}{ll}
%				\ns\ds\!\!\!   e^{\beta s} | \sY_s^{0,u^\lambda} |^2
%				+\mathbb E^{\mathcal F^\lambda_s} \Big[ \int_s^T  e^{\beta r}  \big(\beta |\sY_r^{0,u^\lambda}|^2 + | \sZ_r^{0,u^\lambda}|^2   
%				+ \|\sV_r^{0,u^\lambda}(\cd) \|^2_{\nu,2}  \big)  \mathrm dr   \Big]\\
%				%
%				\ns\ds\!\!\!   \les   C_{ \delta}\big( 1+ \D_s^2  \big)
%				  + (   2C_{ \delta}^2+1  ) \mathbb E^{\mathcal F^\lambda_s} \Big[  \int_s^T    e^{\beta r} |\sY_r^{0,u^\lambda}|^2\mathrm dr   \Big]
%				  +\frac 12 \mathbb E \Big[ \int_s^T   e^{\beta r} \big( | \sZ_r^{0,u^\lambda}|^2   
%				  + \|\sV_r^{0,u^\lambda}(\cd) \|^2_{\nu,2}  \big)  \mathrm dr  \Big]   . \\	
%			\end{array}
%		\end{equation*}
		By choosing a sufficiently large  $\beta$, we obtain
		\begin{equation*}
			\label{estimate-Y2n}
			\begin{array}{ll}
				\ns\ds\!\!\!     | \sY_s^{0,u^\lambda} |^2
				+\mathbb E ^{\mathcal F^\lambda_s}\Big[ \int_s^T   \big( |\sY_r^{0,u^\lambda}|^2 + | \sZ_r^{0,u^\lambda}|^2   
				+ \|\sV_r^{0,u^\lambda}(\cd) \|^2_{\nu,2}   \big)  \mathrm dr \Big]
				\les C_{ \delta}(1+ \D_s^2 )
				,\q  \dbP\mbox{-a.s.}\\
				%
%				\ns\ds\!\!\!   \les C_{T,p,\delta}(1+ \D_s^2 )
%				 ,\  \dbP\mbox{-a.s.}
%				\\
			\end{array}
		\end{equation*}
		In particular, 
		$ 
		|\sY_s^{0,u^\lambda} | \les  C_\delta(1+\D_s),\  s\in[t_\lambda,T],\ \dbP\mbox{-a.s.}
		$ 
Therefore,  
		$$ -C\les \overline   Y_s^{n,0,u^\lambda}\les \sY_s^{0,u^\lambda} \les C_\delta(1+\D_s)
		 , \q  \dbP\mbox{-a.s.}
		$$ 
		 %
	 
%		{\rm(\romannumeral2)}
		Now, we proceed to establish the second result.
		 Let $C_h$ be the bound of $h$, applying the It\^{o}'s formula to $ (\overline   Y_s^{n,0,u^\lambda} + C_h - \D_s  )^2$, we obtain
%		 Apply the It\^{o}'s formula to $ (\overline   Y_s^{n,0,u^\lambda} + C_h - \D_s  )^2$, here $C_h$ is the bound of $h$, we obtain
		\begin{equation*}
			\begin{array}{ll}
				\ns\ds\!\!\!  \big(\overline   Y_s^{n,0,u^\lambda} + C_h - \D_s  \big)^2 
				+\mathbb E ^{\mathcal F^\lambda_s}\Big[ \int_s^T    \big(|\overline   Z_r^{n,0,u^\lambda}|^2+ \|\overline   V_r^{n,0,u^\lambda}(\cd)\|^2_{\nu,2}  \big) \mathrm dr \Big]\\
				\ns\ds\!\!\!   = \mathbb E^{\mathcal F^\lambda_s} \Big[  (\Phi( \tilde X_T^{1,u^\lambda})  + C_h  )^2     \Big]
				- \frac{2n}{\dot \tau_1^\lambda }\mathbb E \Big[ \int_s^T  (\overline   Y_r^{n,0,u^\lambda} +C_h - \D_r  )  
				\big(    h( \varrho_1^\lambda(r), \tilde X_r^{1,u^\lambda} ) +\D_r -\overline  Y_r^{n,0,u^\lambda}    \big)^ -  \mathrm dr \Big]\\
				\ns\ds\!\!\!    \q +  \frac{2}{\dot \tau_1^\lambda }\mathbb E^{\mathcal F^\lambda_s} \Big[ 
				\int_s^T  
				(\overline   Y_r^{n,0,u^\lambda}  \!\!\!+C_h \!-\! \D_r )  \   f\Big(  \varrho_1^\lambda(r), \tilde X_ r^{1,u^\lambda}, \overline   Y_r^{n,0,u^\lambda}\!\!\!-\D_r,  \sqrt{\dot \tau_1^\lambda} \overline   Z_r^{n,0,u^\lambda},  \int _E  l(  e) \overline V_r^{n,0,u^\lambda}\!\! (e)  \nu(\mathrm de),u_r^\lambda  \Big)  \mathrm dr  \Big]\\
				\ns\ds\!\!\!   \q + 2 C_\delta |t_1-t_0|\cd  \mathbb E^{\mathcal F^\lambda_s} \Big[ \int_s^T   (\overline   Y_r^{n,0,u^\lambda}\!\!  +C_h -\D_r )   \Big(  |  \overline   Z_r^{n,0,u^\lambda}|  + \Big|\int_E   \overline   V_r^{n,0,u^\lambda}(e)  \nu(\mathrm de)\Big|  \Big)  \mathrm dr  \Big]\\
				\ns\ds\!\!\!   \q + 2  \mathbb E^{\mathcal F^\lambda_s} \Big[ \int_s^T    (\overline   Y_r^{n,0,u^\lambda}\!\!  +C_h -\D_r )  \D_r  \mathrm dr  \Big]
				+2\mathbb E^{\mathcal F^\lambda_s} \Big[ \int_s^T  (\overline   Y_r^{n,0,u^\lambda} \!\!  + C_h - \D_r )   \mathrm d \D_r  \Big]\\
				\ns\ds\!\!\!   \q  + 2 (1-\frac{1}{\dot \tau_1^\lambda}) \mathbb E^{\mathcal F^\lambda_s} \Big[ \int_s^T \int_E  (\overline   Y_r^{n,0,u^\lambda} \!\!  + C_h - \D_r )    \overline   V_r^{n,0,u^\lambda} (e)  \nu(\mathrm de)      \mathrm dr  \Big] 
				.  \ 
			\end{array}
		\end{equation*}

			Further, according to the results of {\rm(\romannumeral1)}, \eqref{A-esti}, Cauchy inequality and the fact
		$$
		(\overline   Y_s^{n,0,u^\lambda} + C_h - \D_s  )  \big( h( \varrho_1^\lambda(s), \tilde X_s^{1,u^\lambda} )+\D_s -  \overline   Y_s^{n,0,u^\lambda}    \big)^ -
		\ges 0,\quad n\ges 1,\ s\in[t_\lambda,T],\ \dbP\mbox{-a.s.},
		$$
		%		the following inequality holds,
		we have, for all $ s\in[t_\lambda,T],\ \dbP\mbox{-a.s.}$,
		\begin{equation*}
			\begin{array}{ll}
				\ns\ds\!\!\!  \big(\overline   Y_s^{n,0,u^\lambda} + C_h - \D_s  \big)^2 
				+\mathbb E ^{\mathcal F^\lambda_s}\Big[ \int_s^T    \big(|\overline   Z_r^{n,0,u^\lambda}|^2+ \|\overline   V_r^{n,0,u^\lambda}(\cd)\|^2_{\nu,2}  \big) \mathrm dr \Big] \\
				\ns\ds\!\!\!  \les C_{ \delta} (1+\D_s^2) 
				+2\mathbb E^{\mathcal F^\lambda_s} \Big[ \int_s^T  (\overline   Y_r^{n,0,u^\lambda} \!\!  + C_h - \D_r )   \mathrm d \D_r  \Big]
				+ \frac 12  \mathbb E ^{\mathcal F^\lambda_s}\Big[ \int_s^T    \big(|\overline   Z_r^{n,0,u^\lambda}|^2+ \|\overline   V_r^{n,0,u^\lambda}(\cd)\|^2_{\nu,2}  \big) \mathrm dr \Big].  \\
			\end{array}
		\end{equation*}
		Thus, %we get what we wanted.
		\begin{equation}\label{est8}
		\begin{array}{ll}
			\ns\ds\!\!\! \big(\overline   Y_s^{n,0,u^\lambda} + C_h - \D_s  \big)^2 
			+\mathbb E ^{\mathcal F^\lambda_s}\Big[ \int_s^T    \big(|\overline   Z_r^{n,0,u^\lambda}|^2+ \|\overline   V_r^{n,0,u^\lambda}(\cd)\|^2_{\nu,2}  \big) \mathrm dr \Big] \\
			\ns\ds\!\!\!  \les C_{ \delta} (1+\D_s^2) 
			+2\mathbb E^{\mathcal F^\lambda_s} \Big[ \int_s^T  (\overline   Y_r^{n,0,u^\lambda} \!\!  + C_h - \D_r )   \mathrm d \D_r  \Big],\q  s\in[t_\lambda,T],\ \dbP\mbox{-a.s.}
		\end{array}
		\end{equation}

%		For the last term on the right-hand side of the inequality, we estimate it as follows. 
		Notice that, for any $1<p <2$ and $ q>2$ satisfying $\displaystyle \frac 1 p + \frac 1 q =1 $,  
		\begin{equation}\label{est7}
			\begin{array}{ll}
				\ns\ds\!\!\!	 \mathbb E^{\mathcal F^\lambda_s} \Big[ \int_s^T  (\overline   Y_r^{n,0,u^\lambda} \!\!  + C_h - \D_r )   \mathrm d \D_r  \Big]
					\les  \mathbb E^{\mathcal F^\lambda_s} \Big[  \sup_{r \in[s,T]}   |\overline   Y_r^{n,0,u^\lambda} \!\!  + C_h - \D_r |  (\D_T-\D_s) 	\Big]	\\
				%
%				\ns\ds\!\!\!	\les  \mathbb E^{\mathcal F^\lambda_s} \Big[  \sup_{r \in[s,T]}   |\overline   Y_r^{n,0,u^\lambda} \!\!  + C_h - \D_r |  A_T 	\Big] \\
				% 用到了注4.1 需要h6条件
				\ns\ds\!\!\!	\les  \(\mathbb E^{\mathcal F^\lambda_s} \Big[  \sup_{r \in[s,T]}   |\overline   Y_r^{n,0,u^\lambda} \!\!  + C_h - \D_r |^p   	\Big]  \)^{\frac 1 p} 
				\Big( \mathbb E^{\mathcal F^\lambda_s}  \Big[ \D_T ^q     \Big] \Big)^{\frac 1 q}	\\
				\ns\ds\!\!\!		\les \varepsilon \(\mathbb E^{\mathcal F^\lambda_s} \Big[  \sup_{r \in[s,T]}   |\overline   Y_r^{n,0,u^\lambda} \!\!  + C_h - \D_r |^p   	\Big]  \)^{\frac 2 p} 
				+C_ \varepsilon   \D_s ^2     	\\
				\ns\ds\!\!\!	\les \varepsilon M_{s,t}^{\frac 2 p} 
			   +C_\varepsilon    \D_s ^2.	\\ 
			\end{array}
		\end{equation}
%		
%	the parameter
		where  $\varepsilon$ will be given in detail later, and     $\displaystyle M_{ s,t} := \mathbb E^{\mathcal F^\lambda_s} \Big[  \sup_{r \in[t,T]}   |\overline   Y_r^{n,0,u^\lambda} \!\!  + C_h - \D_r |^p   	\Big],\     t_\lambda \les t\les s \les  T   $.
		We remark that  $(M_{s,t})_{s\ges t}$ is an $\mathbb{F}^\lambda$-martingale,
		and $\displaystyle\frac 2 p >1$. So from Doob's martingale inequality,
		\begin{equation}\label{estimate-M}
			\begin{array}{ll}
				\ns\ds\!\!\! \mathbb E^{\mathcal F^\lambda_t}  \Big[  \sup_{s\in[t,T]}  M_{s,t}^{\frac 2 p}   \Big] 
				\les  \Big({ \frac 2 {2-p}} \Big) ^{\frac 2 p}   \Big( \mathbb E^{\mathcal F^\lambda_t}  \Big[    M_{T,t}^{\frac 2 {2-p}}   \Big]  \Big)^{\frac  {2-p} p}
				\les  \Big({ \frac 2 {2-p}} \Big) ^{\frac 2 p}     \mathbb E ^{\mathcal F^\lambda_t} \Big[    M_{ T,t}^{\frac 2 p}  \Big] \\
				%
				%		  		\ns\ds\!\!\! 
				%		  		\les  \Big({ \frac 2 {2-p}} \Big) ^{\frac 2 p}     \mathbb E \Big[    M_{T,t}^{\frac 2 p} \mid\mathcal F_t^0 \Big] \\
				%
				\ns\ds\!\!\! \les   \Big({ \frac 2 {2-p}} \Big) ^{\frac 2 p}        \mathbb E ^{\mathcal F^\lambda_t} \Big[    \sup_{s \in[t,T]}   |\overline   Y_s^{n,0,u^\lambda} \!\!  + C_h - \D_s |^2     	\Big].    \\
			\end{array}
		\end{equation}
%		That is,
		%		Then, according to the definition of $M_{T,t}$ and Jensen's inequality, 
%		\begin{equation}\label{estimate-M}
%			\begin{array}{ll}
%				%
%				\ns\ds\!\!\!
%				\mathbb E^{\mathcal F^\lambda_t}  \Big[  \sup_{s\in[t,T]}  M_{s,t}^{\frac 2 p}   \Big] 
%%				\les  \Big({ \frac 2 {2-p}} \Big) ^{\frac 2 p}   \Big( \mathbb E^{\mathcal F^\lambda_s}  \Big[    M_{T,t}^{\frac 2 {2-p}}   \Big]  \Big)^{\frac  {2-p} p}
%				\les  \Big({ \frac 2 {2-p}} \Big) ^{\frac 2 p}     \mathbb E ^{\mathcal F^\lambda_t} \Big[    M_{ T,t}^{\frac 2 p}  \Big] \\
%				%
%				\ns\ds\!\!\! \les   \Big({ \frac 2 {2-p}} \Big) ^{\frac 2 p}        \mathbb E ^{\mathcal F^\lambda_t} \Big[    \sup_{s \in[t,T]}   |\overline   Y_s^{n,0,u^\lambda} \!\!  + C_h - \D_s |^2     	\Big]    \\
%				%
%				%
%				%		  		\ns\ds\!\!\!
%				%		  		 \!=\! \Big({ \frac 2 {2-p}} \Big) ^{\frac 2 p}     \mathbb E \Big[    \sup_{r \in[t,T]}  |\bar Y_r^{2,n}+C_{p,\delta}|^2   \mid\mathcal F_t^0 \Big]
%			\end{array}
% 		\end{equation}
		%

%		In order to harmonize with the later notation, for the last inequality we use $s$ instead of $r$. 
		Now, we go back to the inequality \eqref{est8}. Combined with \eqref{est7} and \eqref{estimate-M}, we obtain
		$$
		\begin{array}{ll}
			\ns\ds\!\!\! \mathbb E^{\mathcal F^\lambda_t}  \Big[  \sup_{s\in[t ,T]} \big(\overline   Y_s^{n,0,u^\lambda} + C_h - \D_s  \big)^2 \]
%			 \les C_{ \delta} (1+\D_t^2) 
%			+2  \varepsilon \mathbb E^{\mathcal F^\lambda_t }  \Big[  \sup_{s\in[t ,T]}  M_{s,t}^{\frac 2 p} \] 
			 \les C_{ \delta} (1+\D_t^2) 
			+2  \varepsilon  \Big({ \frac 2 {2-p}} \Big) ^{\frac 2 p}        \mathbb E ^{\mathcal F^\lambda_t} \Big[    \sup_{s \in[t,T]}   |\overline   Y_s^{n,0,u^\lambda} \!\!  + C_h - \D_s |^2     	\Big].\\
			%
%			\ns\ds\!\!\!  \les C_{ \delta} (1+\D_t^2) 
%			+2  \varepsilon \mathbb E^{\mathcal F^\lambda_t }  \Big[  \sup_{s\in[t ,T]}  M_{s,s}^{\frac 2 p} \] 
%			 \\
			%
%			\ns\ds\!\!\!  \les C_{ \delta} (1+\D_t^2) 
%			+2  \varepsilon  \Big({ \frac 2 {2-p}} \Big) ^{\frac 2 p}        \mathbb E ^{\mathcal F^\lambda_t} \Big[    \sup_{s \in[t,T]}   |\overline   Y_s^{n,0,u^\lambda} \!\!  + C_h - \D_s |^2     	\Big].
		 
		\end{array}
		$$

		\no By choosing a suitable $\varepsilon>0$ such that $\displaystyle  2  \varepsilon  \Big({ \frac 2 {2-p}} \Big) ^{\frac 2 p}  <1 $, we get  
		$$
		\mathbb E^{\mathcal F^\lambda_t}  \Big[  \sup_{s\in[t ,T]} \big(\overline   Y_s^{n,0,u^\lambda} + C_h - \D_s  \big)^2 \]
		\les C_{ \delta} (1+\D_t^2).
		$$
		Thereby,
		$$
			\begin{array}{ll}
				\ns\ds\!\!\! \mathbb E ^{\mathcal F^\lambda_s}\Big[ \int_s^T    \big(|\overline   Z_r^{n,0,u^\lambda}|^2+ \|\overline   V_r^{n,0,u^\lambda}(\cd)\|^2_{\nu,2}  \big) \mathrm dr \Big] \\
				\ns\ds\!\!\!  \les C_{ \delta} (1+\D_s^2) 
				+2\mathbb E^{\mathcal F^\lambda_s} \Big[  \sup_{r \in[s,T]}  |\overline   Y_r^{n,0,u^\lambda} \!\!  + C_h - \D_r |^2        \Big]
				 \les C_{ \delta} (1+\D_s^2)
				,\q  s\in[t_\lambda,T],\  \dbP\mbox{-a.s.}
			\end{array}
		$$

		\end{proof}

		For all $s \in [t_{\lambda}, T ]$ and $n \ges 1$,
		we define
		$ 
	\cY_{s}^{n,0,u^\lambda}:=\overline Y_s^{n,0,u^\lambda}\! -\D_{s}.
		$ 
		\noindent Then, from \eqref{bar-Y-n0}, we get 
		\begin{equation}\label{Y2n}
			\left\{
			\begin{array}{ll}
				\ns\ds\!\!\!  \mathrm d \cY_{s}^{n,0,u^\lambda} \! =  \!  - \Big[ \!     \frac{1}{\dot \tau_1^\lambda }   f\Big(  \varrho_1^\lambda(s), \tilde X_s^{1,u^\lambda}, \cY_{s}^{n,0,u^\lambda} ,  \sqrt{\dot \tau_1^\lambda} \overline   Z_s^{n,0,u^\lambda},  \int _E l(e)  \overline  V_s^{n,0,u^\lambda} (e) \nu(\mathrm de),u_s^\lambda  \Big) 
				\\
				\ns\ds\!\!\!   \hskip 1.98cm    -    \frac{n}{\dot \tau_1^\lambda }  \big(  h( \varrho_1^\lambda(s), \tilde X_s^{1,u^\lambda} ) - \cY_{s}^{n,0,u^\lambda} \big)^ -  
				-  (1-\frac{1}{\dot \tau_1^\lambda})  \int_E   \overline   V_s^{n,0,u^\lambda} (e)  \nu(\mathrm de) \\
				\ns\ds\!\!\!   \hskip 1.98cm    + C_\delta |t_1-t_0| \cd \Big(   |  \overline   Z_s^{n,0,u^\lambda}| + | \int_E  \overline   V_s^{n,0,u^\lambda}(e)\nu(\mathrm de) |  \Big)    +\D_s    \Big]  \mathrm ds
				-\mathrm d\D_s\\
				\ns\ds\!\!\!   \hskip 1.65cm     +  \overline   Z_s^{n,0,u^\lambda}   \mathrm dB_s^\lambda  + \int_E   \overline   V_s^{n,0,u^\lambda} (e)   \tilde  N^\lambda(\mathrm ds,\mathrm de)  , \quad s \in [t_\lambda,T],\\
				\ns\ds\!\!\!  \cY_{T}^{n,0,u^\lambda}=\Phi(\tilde X_T^{1,u^\lambda}).
			\end{array}
			\right.
		\end{equation}

		\no Notice that the penalization term of the above BSDE \eqref{Y2n} is the same as BSDE \eqref{tilde_BSDEP_in} with $i=1$, and they are driven by the same Brownian motion and Poisson random measure in $[t_\lambda,T]$. 
%%		Then, we obtain the following results.
%%	 At this point, we have overcome both of these difficulties. 
		Thereafter, according to the standard estimates and Doob's martingale inequality, we give an estimate between them as follows.
		\begin{lemma}\label{esti-Y0n-Y2n}\sl
				Assume that {\rm \textbf{(H$_1$)}-\textbf{(H$_3$)}}, {\rm \textbf{(H$_4$)}-(i), (ii)} hold.
%				Assume that  {\rm \textbf{(H$_1$)}-\textbf{(H$_3$)}}, {\rm \textbf{(H$_4$)}-(i),(ii)} and {\rm \textbf{(H$_6$)$_p$}} with given $p>2$  hold.
				 Then, there exists some constant  $C_{ \delta}$, such that, for all $n\ges 1$ and $s\in[t_\lambda,T]$,  $\dbP$-a.s.,
				$$
				\mathbb E^{\mathcal F_s^\lambda} \Big[ \sup_{r \in[s,T]} | \cY_{r}^{n,0,u^\lambda} -\tilde Y_r^{n,1,u^\lambda} |^2 + \int_s^T \( |\overline   Z_r^{n,0,u^\lambda} - \tilde Z_r^{n,1,u^\lambda}  |^2
				+ \| \overline   V_r^{n,0,u^\lambda} (\cd) -  \tilde V_r^{n,1,u^\lambda}(\cd) \|^2_{\nu,2} \)\mathrm dr  \Big] \les C_{ \delta} \D_s^2. 
				$$
				%半凹性部分没用到，要考虑lipschitz有没有用到
%and
%%
%	$$
%				\mathbb E^{\mathcal F_s^\lambda} \Big[ \( \int_s^T  |  \overline   Z_r^{n,0,u^\lambda}- \tilde   Z_r^{n,1,u^\lambda}  |^2 \mathrm dr \)^p
%				+\(\int_s^T \| \overline    V_r^{n,0,u^\lambda}(\cd)-\tilde V_r^{n,1,u^\lambda} (\cd) \|^2_{\nu,2}  \mathrm dr\)^p   \Big] \les C_{T,p,\delta} \D_s^{2p}.   
%				$$
		\end{lemma}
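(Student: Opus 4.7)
The plan is to derive an energy estimate for the difference
$(\hat Y,\hat Z,\hat V):=(\cY^{n,0,u^\lambda}-\tilde Y^{n,1,u^\lambda},\ \overline Z^{n,0,u^\lambda}-\tilde Z^{n,1,u^\lambda},\ \overline V^{n,0,u^\lambda}-\tilde V^{n,1,u^\lambda})$.
Since both \eqref{Y2n} and \eqref{tilde_BSDEP_in} (with $i=1$) carry the same terminal value $\Phi(\tilde X_T^{1,u^\lambda})$ and are driven by the same $B^\lambda,\tilde N^\lambda$, the triple $(\hat Y,\hat Z,\hat V)$ solves a BSDE on $[t_\lambda,T]$ with $\hat Y_T=0$. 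Subtracting, the drift of $\hat Y$ splits into four pieces: (i) the Lipschitz piece $-\frac{1}{\dot\tau_1^\lambda}[f(\cdot,\cY,\sqrt{\dot\tau_1^\lambda}\overline Z,\int l\overline V\nu,\cdot)-f(\cdot,\tilde Y,\sqrt{\dot\tau_1^\lambda}\tilde Z,\int l\tilde V\nu,\cdot)]$; (ii) the penalty difference $\frac{n}{\dot\tau_1^\lambda}[(h-\cY)^--(h-\tilde Y)^-]$, which satisfies $[(h-\cY)^--(h-\tilde Y)^-]\hat Y\ge 0$ because $y\mapsto(y-h)^+$ is nondecreasing; (iii) the drift $(1-\frac{1}{\dot\tau_1^\lambda})\int\hat V\,\nu(\mathrm de)$, whose prefactor is $\le C_\delta|t_1-t_0|$ by Lemma \ref{tau_pro} and whose size in $L^2$ is $\le C_\delta|t_1-t_0|\|\hat V\|_{\nu,2}$ thanks to $\nu(E)<\infty$; (iv) the residual $-C_\delta|t_1-t_0|(|\overline Z|+|\int\overline V\nu|)-\D_r$; together with the finite-variation term $-\mathrm d\D_r$.

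I would apply It\^o's formula to $e^{\beta r}|\hat Y_r|^2$ with $\beta$ large, take $\mathbb E^{\mathcal F_s^\lambda}[\cdot]$, drop the contribution of (ii) (it provides a non-positive term on the right-hand side), and absorb the Lipschitz cross-terms from (i) and the $|\hat V|$ term of (iii) via Young's inequality into $\beta\int e^{\beta r}|\hat Y|^2\mathrm dr+\tfrac12\int e^{\beta r}(|\hat Z|^2+\|\hat V\|^2_{\nu,2})\mathrm dr$ on the left. What survives on the right is
$$
\mathrm{(A)}\ C_\delta|t_1-t_0|^2\,\mathbb E^{\mathcal F_s^\lambda}\!\!\int_s^T(|\overline Z|^2+\|\overline V\|^2_{\nu,2})\mathrm dr + \mathrm{(B)}\ \mathbb E^{\mathcal F_s^\lambda}\!\!\int_s^T |\hat Y_r|\D_r\,\mathrm dr + \mathrm{(C)}\ \mathbb E^{\mathcal F_s^\lambda}\!\!\int_s^T |\hat Y_r|\,\mathrm d\D_r.
$$
Term (A) is controlled by Lemma \ref{esti-hatY1n} and $|t_1-t_0|\le C_\delta\D_s$ to give $C_\delta|t_1-t_0|^2(1+\D_s^2)\le C_\delta\D_s^2$. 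Term (B) is handled by Cauchy--Schwarz together with the moment bound $\mathbb E^{\mathcal F_s^\lambda}[\D_T^2]\le C_\delta\D_s^2$, which follows from \eqref{A-esti} combined with the elementary lower bound $\D_s\ge C_\delta(|t_1-t_0|+|\tilde X_s^{1,u^\lambda}-\tilde X_s^{0,u^\lambda}|)$ built into the definition of $\D$.

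The main obstacle is term (C), the Stieltjes integral against the nondecreasing (and possibly jumping) process $\D$: a naive Cauchy--Schwarz loses a power because $\D_T-\D_s$ is not sharply controlled in $L^2$-norm by $\D_s$. The remedy is to mirror the H\"older--Doob trick already used in the proof of Lemma \ref{esti-hatY1n}: pick $1<p<2$ with conjugate $q=p/(p-1)>2$, write $\mathbb E^{\mathcal F_s^\lambda}\!\int_s^T|\hat Y_r|\,\mathrm d\D_r\le(\mathbb E^{\mathcal F_s^\lambda}[\sup_{r\in[s,T]}|\hat Y_r|^p])^{1/p}(\mathbb E^{\mathcal F_s^\lambda}[(\D_T-\D_s)^q])^{1/q}$, use $(\D_T-\D_s)^q\le \D_T^q-\D_s^q$ and \eqref{A-esti} to obtain $(\mathbb E^{\mathcal F_s^\lambda}[(\D_T-\D_s)^q])^{1/q}\le C_{q,\delta}\D_s$, and then transfer from the $p$-moment to the $2$-moment of $\sup|\hat Y|$ via a conditional Doob maximal inequality applied to the martingale $r\mapsto \mathbb E^{\mathcal F_r^\lambda}[\sup_{u\in[r,T]}|\hat Y_u|^p]$, yielding $\mathrm{(C)}\le \varepsilon\,\mathbb E^{\mathcal F_s^\lambda}[\sup_{r\in[s,T]}|\hat Y_r|^2]+C_{\varepsilon,\delta}\D_s^2$. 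Finally, a Burkholder--Davis--Gundy estimate applied to the martingale part of the BSDE for $\hat Y$ upgrades the pointwise bound on $|\hat Y_s|^2$ to the $\sup$ bound $\mathbb E^{\mathcal F_s^\lambda}[\sup_{r\in[s,T]}|\hat Y_r|^2]\le C_\delta\D_s^2$ after choosing $\varepsilon$ small enough to absorb the $\varepsilon$-term; the bound on $\int_s^T(|\hat Z|^2+\|\hat V\|^2_{\nu,2})\mathrm dr$ comes as a by-product of the same It\^o identity, completing the proof.
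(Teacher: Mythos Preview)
Your proposal is correct and follows essentially the same route as the paper: write the difference BSDE (same terminal value, same noises), apply It\^o to $e^{\beta r}|\hat Y_r|^2$, drop the penalty cross-term by monotonicity of $(y-h)^+$, control the $|t_1-t_0|^2(|\overline Z|^2+\|\overline V\|_{\nu,2}^2)$ contribution via Lemma~\ref{esti-hatY1n}, and handle the Stieltjes term $\int \hat Y\,\mathrm d\D$ with the H\"older--Doob device of \eqref{est7}--\eqref{estimate-M}. One small correction: the process $r\mapsto \mathbb E^{\mathcal F_r^\lambda}\big[\sup_{u\in[r,T]}|\hat Y_u|^p\big]$ is \emph{not} a martingale (the index set of the supremum moves with $r$); the correct object is $M_{s,t}:=\mathbb E^{\mathcal F_s^\lambda}\big[\sup_{r\in[t,T]}|\hat Y_r|^p\big]$ for \emph{fixed} $t$, which is a martingale in $s\ge t$ and to which Doob's inequality applies exactly as in the paper.
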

  
\begin{proof} 
 
Setting $ \big( \D Y_\cd,\D Z_\cd,\D V_\cd(\cd) \big):= \big(  \cY_{\cd}^{n,0,u^\lambda}-\tilde Y_\cd^{n,1,u^\lambda},\overline   Z_\cd^{n,0,u^\lambda}-   \tilde Z_\cd^{n,1,u^1} ,  \overline   V_\cd^{n,0,u^\lambda} (\cd)-\tilde V_\cd^{n,1 ,u^1} (\cd)\big)$, 
  by \eqref{tilde_BSDEP_in} and \eqref{Y2n}, we have
\begin{equation}\label{}
		\left\{
		\begin{array}{ll}

			\ns\ds\!\!\!  \mathrm d \D Y_s \! =  \!  - \Big[ \!     \frac{1}{\dot \tau_1^\lambda }   f\Big(  \varrho_1^\lambda(s), \tilde X_s^{1,u^\lambda}, \cY_{s}^{n,0,u^\lambda} ,  \sqrt{\dot \tau_1^\lambda} \overline   Z_s^{n,0,u^\lambda},  \int _E l(e)  \overline  V_s^{n,0,u^\lambda} (e) \nu(\mathrm de),u_s^\lambda  \Big) 
			\\
			\ns\ds\!\!\!  \hskip 1.6cm 
			-\frac{1}{\dot\tau_1^\lambda}      f\Big(  \varrho_1^\lambda(s), \tilde X_s^{1,u^\lambda}, \tilde Y_s^{n,1,u^\lambda},  \sqrt{\dot \tau_1^\lambda} \tilde Z_s^{n,1 ,u^\lambda},  \int _E l(e) \tilde V_s^{n,1,u^\lambda} (e) \nu(\mathrm de),u_s^\lambda  \Big)         
			   \  \\
			\ns\ds\!\!\!   \hskip 1.6cm   +  \frac{n}{\dot \tau_i^\lambda}     \big( h(  \varrho_1^\lambda(s) ,\tilde X_s^{1,u^\lambda} ) - \tilde Y_s^{n,1,u^\lambda} \big)^ - 
			-    \frac{n}{\dot \tau_1^\lambda }  \big(  h( \varrho_1^\lambda(s), \tilde X_s^{1,u^\lambda} ) - \cY_{s}^{n,0,u^\lambda} \big)^ -  
			 \\
			\ns\ds\!\!\!   \hskip 1.6cm    + C_\delta |t_1-t_0| \cd \Big(    |  \overline   Z_s^{n,0,u^\lambda}| + | \int_E  \overline   V_s^{n,0,u^\lambda}(e)\nu(\mathrm de) |  \Big)+\D_s
			- (1-\frac{1}{\dot\tau_1^\lambda}) \int_E   \D  V_s  (e)      \nu(\mathrm de)  \Big]  \mathrm ds
			\\
			\ns\ds\!\!\!   \hskip 1.25 cm   -\mathrm d\D_s  +  \D Z_s   \mathrm dB_s^\lambda  + \int_E   \D V_s(e)  \tilde  N^\lambda(\mathrm ds,\mathrm de),\quad s \in [t_\lambda,T],  \\
			\ns\ds\!\!\!  \D Y_T =0.
		\end{array}
		\right.
	\end{equation}

For some constant $\a>0$, applying It\^o's formula to $e^{\a s}|\D Y_s|^2$, we get 
%\begin{equation}\label{} 
%		\begin{array}{ll}
%			%
%			\ns\ds\!\!\!  \mathrm d( e^{\a s}|\D Y_s |^2) =  \!   \a e^{\a s}|\D Y_s |^2 \mathrm ds+2 e^{\a s} \D Y_s \mathrm d \D Y_s +  e^{\a s}   (\mathrm d \D Y_s )^2\\
%%
%\ns\ds =  \!   \a e^{\a s}|\D Y_s |^2 \mathrm ds+2 e^{\a s} \D Y_s\Big\{ -\frac{1}{\dot \tau_1^\lambda}  \Big[ f\big(  \varrho_1^\lambda(s), \tilde X_s^{1,u^\lambda}, \cY_{s}^{n,0,u^\lambda} ,  \sqrt{\dot \tau_1^\lambda} \overline   Z_s^{n,0,u^\lambda},  \int _E\overline   V_s^{n,0,u^\lambda} (e)l(e) \nu(\mathrm de),u_s^\lambda  \big) \\
%%
%\ns\ds\qq  -f\big(  \varrho_1^\lambda(s), \tilde X_s^{1,u^\lambda}, \tilde Y_r^{n,1,u^\lambda},  \sqrt{\dot \tau_1^\lambda} \tilde Z_s^{n,1,u^\lambda},  \int _E \tilde V_s^{n,1,u^\lambda} (e)l(e) \nu(\mathrm de),u_s^\lambda  \big) \\
%% 
%%
%\ns\ds\qq - n\big(  \cY_{s}^{n,0,u^\lambda} - h( \varrho_1^\lambda(s), \tilde X_s^{1,u^\lambda} ) \big)^ ++ n\big(  \tilde Y_r^{n,1,u^\lambda} - h( \varrho_1^\lambda(s), \tilde X_s^{1,u^\lambda} ) \big)^ +  \] \\
%%
%\ns\ds\qq + \int_E  (1-  \frac1{ \dot \tau_1^\lambda  })  \D V_s  (e)    \nu(\mathrm de)+ C_\delta |t_1-t_0|  \cd|\tilde Z_s^{n,0,u^\lambda}|  +C_\d(1+|\tilde V_s^{n,0,u^\lambda}(\cd)|_\n)  \D _s\Big\} \mathrm ds -2 e^{\a s} \D Y_s\mathrm d\D_s \\
%%
%\ns\ds\q  +2 e^{\a s} \D Y_s\Big\{ \D Z_s    \mathrm dB_s^\lambda  + \int_E   \D V_s  (e)     \tilde  \mu^\lambda(\mathrm ds,\mathrm de) \Big\}  +  e^{\a s}   | \D Z_s |^2\mathrm d s+    \int_E  | \D V_s  (e) |^2      \mu^\lambda(\mathrm ds,\mathrm de)  .
%		\end{array} 
%	\end{equation}
% 条件 v（E）小于无穷； f txyzv lip; 上一个引理； 
\begin{equation*} 
		\begin{array}{ll}
			\ns\ds\!\!\!     e^{\a s}|\D Y_s |^2   +\dbE^{\cF_s^\lambda} \[\int_s^T \!  \( \a e^{\a r}|\D Y_r |^2 +   e^{\a r}   | \D Z_r |^2\)\mathrm dr \]
			+ \dbE^{\cF_s^\lambda} \[\int_ s^T   \int_E  e^{\a r}  | \D V_r  (e) |^2      N^\lambda(\mathrm dr,\mathrm de)\]  \\
\ns\ds\!\!\!  =   \frac{ 2}{\dot \tau_1^\lambda }  \dbE^{\cF_s^\lambda} \[ \int_s^T  e^{\a r} \D Y_r
\(    f\big(  \varrho_1^\lambda(r), \tilde X_r^{1,u^\lambda}, \cY_{r}^{n,0,u^\lambda} ,  \sqrt{\dot \tau_1^\lambda} \overline   Z_r^{n,0,u^\lambda},  \int _E l(e)  \overline  V_r^{n,0,u^\lambda} (e) \nu(\mathrm de),u_r^\lambda  \big)
\\
\ns\ds\!\!\!  \hskip 3.67cm 
-       f\big(  \varrho_1^\lambda(r), \tilde X_r^{1,u^\lambda}, \tilde Y_r^{n,1,u^\lambda},  \sqrt{\dot \tau_1^\lambda} \tilde Z_r^{n,1 ,u^\lambda},  \int _E l(e) \tilde V_r^{n,1,u^\lambda} (e) \nu(\mathrm de),u_r^\lambda  \big)   \) \mathrm dr \]        
   \\
\ns\ds\!\!\!    \q + \frac{2n}{\dot \tau_1^\lambda } \dbE^{\cF_s^\lambda} \[     \int_s^T  e^{\a r} \D Y_r   \( \big( h(  \varrho_1^\lambda(r) ,\tilde X_r^{1,u^\lambda} ) - \tilde Y_r^{n,1,u^\lambda} \big)^ - 
-      \big(  h( \varrho_1^\lambda(r), \tilde X_r^{1,u^\lambda} ) - \cY_{r}^{n,0,u^\lambda} \big)^ -  \) \mathrm dr\]
\\
\ns\ds\!\!\!   \q    + 2 C_\delta |t_1-t_0|   \cd   \dbE^{\cF_s^\lambda} \[   \int_s^T  e^{\a r} \D Y_r   \Big(   |  \overline   Z_r^{n,0,u^\lambda}| + | \int_E  \overline   V_r^{n,0,u^\lambda}(e)\nu(\mathrm de) |  \Big)     \mathrm dr \]
 + 2    \dbE^{\cF_s^\lambda} \[   \int_s^T  e^{\a r} \D Y_r    \D_r    \mathrm dr \]\\
\ns\ds\!\!\!\q 
- 2(1-\frac{1}{\dot\tau_1^\lambda}) \dbE^{\cF_s^\lambda} \[  \int_s^T    \int_E  e^{\a r} \D Y_r  \D  V_r  (e)      \nu(\mathrm de)   \mathrm dr \] 
+2\dbE^{\cF_s^\lambda} \[ \int_s^T  e^{\a r} \D Y_r\mathrm d\D_r\] \\
		\end{array} 
	\end{equation*}
\begin{equation*} 
		\begin{array}{ll}
\ns\ds\!\!\!\!\!\! \!\!\! \!\!\! \!\!\! \!\!\!   \les  \big( 11C^2_{ \d}+2 \big)  \dbE^{\cF_s^\lambda} \[ \int_s^T  e^{\a r} |\D Y_r|^2\mathrm dr\]
+ \frac12  \dbE^{\cF_s^\lambda} \[  \int_s^T  e^{\a r} \big(  |  \D Z_r|^2 +\|   \D V_r(\cd)  \|_{\nu,2} ^2    \big) \mathrm dr \]        
\\
\ns\ds\!\!\! \!\!\! \!\!\! \!\!\! \!\!\!   \q    +  C_{ \d}|t_1-t_0|^2    \dbE^{\cF_s^\lambda} \[   \int_s^T    \big(    |  \overline   Z_r^{n,0,u^\lambda}|^2 + \| \ \overline   V_r^{n,0,u^\lambda}(\cd)\|_{\nu,2}^2  \big)     \mathrm dr \]
+2\dbE^{\cF_s^\lambda} \[ \int_s^T  e^{\a r} \D Y_r\mathrm d\D_r\],\\
%
% 
%
%\ns\ds\!\!\!\q +2\dbE^{\cF_s^\lambda} \[ \int_s^T  e^{\a r} \D Y_r\mathrm d\D_r\].
%-2\int_s^T  e^{\a r} \D Y_r  \D Z_r    \mathrm dB_r^\lambda
%  -2\int_s^T  \int_E  e^{\a r} \D Y_r   \D V_r  (e)     \tilde  N^\lambda(\mathrm dr,\mathrm de) .
		\end{array} 
	\end{equation*}
	where we have used  $\D Y_r   \( \big( h(  \varrho_1^\lambda(r) ,\tilde X_r^{1,u^\lambda} ) - \tilde Y_r^{n,1,u^\lambda} \big)^ -
	 -      \big(  h( \varrho_1^\lambda(r), \tilde X_r^{1,u^\lambda} ) - \cY_{r}^{n,0,u^\lambda} \big)^ -  \) \les 0,\   r\in[t_\lambda,T],\ \dbP$-a.s.

For $\a$ large enough, using Lemma \ref{esti-hatY1n} and Lemma 3.1 in \cite{LW-2014-lp}, we get
\begin{equation*} 
		\begin{array}{ll}
			\ns\ds   \dbE^{\cF_s^\lambda}\[ |\D Y_s |^2   +\int_s^T \!  \(  |\D Y_r |^2  +     |  \D Z_s |^2+    \|   \D V_r(\cd)  \|_{\nu,2} ^2    \)\mathrm dr\ ]
			\les  C_{ \d}   \D_s^2 
			+2\dbE^{\cF_s^\lambda} \[ \int_s^T  e^{\a r} \D Y_r\mathrm d\D_r\]   .
		\end{array} 
	\end{equation*}
	The term $\ds\dbE^{\cF_s^\lambda} \[ \int_s^T  e^{\a r} \D Y_r\mathrm d\D_r\] $ can be dealt with by employing the technique in the proof of  Lemma \ref{esti-hatY1n}-(ii) (refer to \eqref{est7}, \eqref{estimate-M}). Then  the desired result can be proved.
%	Finally, we once again employ the technique applied in the Lemma \ref{esti-hatY1n},  and obtain  the desired result.
%	$$
%	\begin{array}{ll}
%		\ns\ds\!\!\! \mathbb E ^{\mathcal F^\lambda_s}\Big[  \sup_{r\in[s ,T]} |\D Y_r |^2+ \int_s^T    \big( |  \D Z_s |^2+    \|   \D V_r(\cd)  \|_{\nu,2} ^2 \big) \mathrm dr \Big] \\
%		%
%		\ns\ds\!\!\! 
%		\les C_{ \delta} (1+\D_s^2)
%		,\q  s\in[t_\lambda,T],  \dbP\mbox{-a.s.}.
%	\end{array}
%	$$
\end{proof}
		
	\ms 

%	 For convenience, for all $n\ges 1$, $s\in[t_\lambda,T]$, denote $\widehat X_s = \tilde X_s^{ 1,u^\lambda} -\tilde  X_s^{ 0,u^\lambda} $ and
%	 $\widehat \varphi_s = \tilde \varphi_s^{n,1,u^\lambda} -\tilde  \varphi_s^{n,0,u^\lambda} $ with $\varphi = Y, Z, V, A$.
 Based on the above preparations, we have the following estimates.
	 %	
%	 $\ds \big(\widehat  Y_s, \widehat  Z_s, \widehat  V_s(\cd) ,\widehat  A_s   \big)= \big(  \tilde Y_s^{n,1,u^\lambda} -\tilde  Y_s^{n,0,u^\lambda}, \tilde Z_s^{n,1,u^\lambda} -\tilde  Z_s^{n,0,u^\lambda},\tilde V_s^{n,1,u^\lambda}(\cd) -\tilde  V_s^{n,0,u^\lambda}(\cd) ,\tilde A_s^{n,1,u^\lambda} -\tilde  A_s^{n,0,u^\lambda}  \big)$.

	 \bl\label{CV-Le-Y-1}\sl
	 Assume the conditions of Lemma \ref{esti-hatY1n} hold ture, then
%	 Let $p>2$  be given, assume that {\rm \textbf{(H$_1$)}-\textbf{(H$_3$)}}, {\rm \textbf{(H$_4$)}-(i),(ii)} and {\rm \textbf{(H$_6$)$_p$}}  hold.
%	   Assume that  {\rm \textbf{(H$_1$)}-\textbf{(H$_3$)}} and {\rm \textbf{(H$_4$)}-(i),(ii)}  hold. Then, for all  $n\ges 1$ and $s \in [t_\lambda,T]$, 
	 $$| \tilde Y_s^{n,1,u^\lambda} - \tilde  Y_s^{n,0,u^\lambda} |\les C_\delta  \D_s ,\q \dbP\mbox{-a.s.}$$

\el
\begin{proof}
 For all $n\ges 1$ and $s\in[t_\lambda,T]$, based on Lemma \ref{esti-hatY1n} and Lemma \ref{esti-Y0n-Y2n}, we have
		\begin{equation*}     
%			-\widehat Y_s
			 \tilde  Y_s^{n,0,u^\lambda}-\tilde Y_s^{n,1,u^\lambda} 
			\les\overline   Y_s^{n,0,u^\lambda} -\tilde Y_s^{n,1,u^\lambda} 
			= \cY_s^{n,0,u^\lambda}   + \D_s-\tilde Y_s^{n,1,u^\lambda} 
			\les  C_{ \delta} \D_s, \quad n\ges 1,\ s\in[t_\lambda,T],\ \dbP\mbox{-a.s.} 
		\end{equation*}
		Moreover, due to the symmetry, by the same argument, we also get
		%上面的所有估计和方程的构造都走一遍 两个方程得解是有对称性 
		\begin{equation*}
			\begin{array}{ll}
				\ns\ds\!\!\!  
%				\widehat Y_s
				 \tilde  Y_s^{n,0,u^\lambda}-\tilde Y_s^{n,1,u^\lambda}  
				\ges - C_\delta  \D_s  ,\quad n\ges 1,\ s\in[t_\lambda,T],\ \dbP\mbox{-a.s.} 
			\end{array}
		\end{equation*}
		Thus, $|\tilde Y_s^{n,1,u^\lambda} - \tilde  Y_s^{n,0,u^\lambda} |\les C_\delta \D_s $, for  all $n\ges 1,\ s\in[t_\lambda,T],\ \dbP\mbox{-a.s.}$
\end{proof}

 \bl\label{CV-Le-ZK-1} \sl
% Under the assumptions of Lemma \ref{esti-Y0n-Y2n},
Assume {\rm \textbf{(H$_1$)}-\textbf{(H$_3$)}}, {\rm \textbf{(H$_4$)}-(i), (ii)} and {\rm \textbf{(C)}}.\\
\rm{(i)} Setting
	\begin{equation}\label{sd}
	\sD_s^n:= \int_{s}^T (  h(  \varrho_0^\lambda(r) ,\tilde X_r^{0,u^\lambda} ) - \tilde  Y_r^{n,0,u^\lambda} )^-\mathrm d \tilde  A_r^{n,1,u^\lambda }
	+\int_{s}^T ( h(  \varrho_1^\lambda(r) ,\tilde X_r^{1,u^\lambda} )- \tilde Y_r^{n,1,u^\lambda}  )^-\mathrm d \tilde  A_r^{n,0,u^\lambda }, 
	 \end{equation}
for any $ p\ges 2$,  we have, 
	$ \lim\limits_{n\to\i} \dbE^{\mathcal F_s^\lambda}  [ (\sD_s^n)^\frac p2  ]=0 ,$ $s\in [t_\lambda,T]$, $\dbP$-a.s. \\
(ii) For any $p\ges 2$ and $s\in[t_\lambda,T]$, $\dbP$-a.s., 
 $$
 \dbE^{\cF_s^\lambda}\[\(\int_s^T(|\tilde Z_r^{n,1,u^\lambda} -\tilde  Z_r^{n,0,u^\lambda} |^2+\|\tilde V_r^{n,1,u^\lambda}(\cd ) -\tilde  V_r^{n,0,u^\lambda}(\cd ) \|_{\nu,2}^2)\mathrm dr\)^{\frac p2}
 +|\widehat A_T-\widehat A_s |^{ p} \]
 \les C_\delta  \D_s^{ p}  +C  \dbE^{\cF_s^\lambda}[(\sD_s^n)^{\frac   p2}  ] .
 $$

\el
\begin{proof} 
%	Setting  
%%	
%	$\big(\widehat  Y_s, \widehat  Z_s, \widehat  V_s(\cd)   \big)= \big(  \tilde Y_s^{n,1,u^\lambda} -\tilde  Y_s^{n,0,u^\lambda}, \tilde Z_s^{n,1,u^\lambda} -\tilde  Z_s^{n,0,u^\lambda},\tilde V_s^{n,1,u^\lambda}(\cd) -\tilde  V_s^{n,0,u^\lambda}(\cd)   \big),\ \forall s \in [t_\lambda,T]$, $e\in E$, we have 
%
The first result can be directly proved by using Lemma  \ref{Le-h-Y-0}-(ii). Now we focus on the second one.
For all $n\ges 1$, $s\in[t_\lambda,T]$, setting $\widehat X_s := \tilde X_s^{ 1,u^\lambda} -\tilde  X_s^{ 0,u^\lambda} $ and
$\widehat \varphi_s := \tilde \varphi_s^{n,1,u^\lambda} -\tilde  \varphi_s^{n,0,u^\lambda} $ with $\varphi = Y, Z, V, A$.
From equation \eqref{tilde_BSDEP_in}, we have
\begin{equation}\label{equ-hat-Y}
		\left\{
		\begin{array}{ll}
			\ns\ds\!\!\!  \mathrm d \widehat Y_s   
			=  - \Big\{  \frac{1}{\dot \tau_1^\lambda}      f\big(  \varrho_1^\lambda(s), \tilde X_s^{1,u^\lambda}, \tilde Y_s^{n,1,u^\lambda},  \sqrt{\dot \tau_1^\lambda} \tilde Z_s^{n,1,u^\lambda},  \int _E  l(e) \tilde V_s^{n,1,u^\lambda} (e) \nu(\mathrm de),u_s^\lambda  \big) \\
			\ns\ds\!\!\! \hskip 1.425cm  -\frac{1}{\dot \tau_0^\lambda}        f\big(  \varrho_0^\lambda(s), \tilde X_s^{0,u^\lambda}, \tilde  Y_s^{n,0,u^\lambda},  \sqrt{\dot \tau_0^\lambda} \tilde Z_s^{n,0,u^\lambda},  \int _E  l(e) \tilde V_s^{n,0,u^\lambda} (e) \nu(\mathrm de),u_s^\lambda  \big) \\ 
			\ns\ds\!\!\! \hskip 1.425cm -   \int_E  \(\widehat  V_s  (e)  -\frac{1}{\dot \tau_1^\lambda} \tilde V_s^{n,1,u^\lambda} (e) +\frac{1}{\dot \tau_0^\lambda} \tilde V_s^{n,0,u^\lambda} (e)\)\nu(\mathrm de)   \Big\}\mathrm ds  +\mathrm d  \widehat  A_s \\
			\ns\ds\!\!\! \hskip 1.05cm   +   \widehat Z_s   \mathrm dB_s^\lambda  + \int_E   \widehat V_s  (e)   \tilde  N^\lambda(\mathrm ds,\mathrm de)  , \quad s \in [t_\lambda,T],\\
			\ns\ds\!\!\!  \widehat Y_T  =  \Phi(\tilde X_T^{1,u^\lambda})-\Phi(\tilde X_T^{0,u^\lambda}).
		\end{array}
		\right.
	\end{equation}
%where  $\ds  \widehat  A_s: =\frac{n}{\dot \tau_1^\lambda}  \int_{t_\lambda}^s   \big( h(  \varrho_1^\lambda(r) ,\tilde X_r^{1,u^\lambda} ) - \tilde Y_r^{n,1,u^\lambda} \big)^ - \mathrm dr-\frac{n}{\dot \tau_0^\lambda}  \int_{t_\lambda}^s   \big( h(  \varrho_0^\lambda(r) ,\tilde X_r^{0,u^\lambda} ) - \tilde Y_r^{n,0,u^\lambda} \big)^ - \mathrm dr,\ s\in[t_\lambda,T]$.

% For convenience, we denote 
%\begin{equation}\label{sd}\sD_s^n:= \int_{s}^T (  h(  \varrho_0^\lambda(r) ,\tilde X_r^{0,u^\lambda} ) - \tilde  Y_r^{n,0,u^\lambda} )^-\mathrm d \tilde  A_r^{n,1,u^\lambda }
%+\int_{s}^T ( h(  \varrho_1^\lambda(r) ,\tilde X_r^{1,u^\lambda} )- \tilde Y_r^{n,1,u^\lambda}  )^-\mathrm d \tilde  A_r^{n,0,u^\lambda }.  \end{equation}
% {\color{red}Then, from Lemma \ref{Le-h-Y-0}-(ii), for any $ p\ges 2$, we know 
%$ \lim\limits_{n\to\i} \dbE^{\mathcal F_s^\lambda}  [ (\sD_s^n)^\frac p2  ]=0 ,$ $s\in [t_\lambda,T]$, $\dbP$-a.s. }

\emph{Step 1.} We prove the stated result holds with $p=2$. For this,
 applying It\^o's formula to $|\widehat Y_s |^2$, we have  
%
%\begin{equation*} 
%	%
%		\begin{array}{ll}
%			%
%			\ns\ds\!\!\!  \mathrm d |\widehat Y_s |^2 =  -2\widehat Y_s  \Big\{  \frac{1}{\dot \tau_1^\lambda}      f\big(  \varrho_1^\lambda(s), \tilde X_s^{1,u^\lambda}, \tilde Y_s^{n,1,u^\lambda},  \sqrt{\dot \tau_1^\lambda} \tilde Z_s^{n,1,u^\lambda},  \int _E  l(e) \tilde V_s^{n,1,u^\lambda} (e) \nu(\mathrm de),u_s^\lambda  \big) \\
%			%
%			\ns\ds\!\!\! \hskip 2.45cm  -\frac{1}{\dot \tau_0^\lambda}        f\big(  \varrho_0^\lambda(s), \tilde X_s^{0,u^\lambda}, \tilde  Y_s^{n,0,u^\lambda},  \sqrt{\dot \tau_0^\lambda} \tilde Z_s^{n,0,u^\lambda},  \int _E  l(e) \tilde V_s^{n,0,u^\lambda} (e) \nu(\mathrm de),u_s^\lambda  \big) \\ 
%			%
%			\ns\ds\!\!\! \hskip 2.45cm -   \int_E  \(\widehat  V_s  (e)  -\frac{1}{\dot \tau_1^\lambda} \tilde V_s^{n,1,u^\lambda} (e) +\frac{1}{\dot \tau_0^\lambda} \tilde V_s^{n,0,u^\lambda} (e)\)\nu(\mathrm de)   \Big\}\mathrm ds 
%			  \\
%%
%			\ns\ds\!\!\! \hskip 1.4cm   +2\widehat Y_s  \mathrm d  \widehat  A_s 
%			+|Z_s|^2 \mathrm ds
%			+\int_E|\widehat V_s  (e)|^2   N^\lambda(\mathrm ds,\mathrm de) + 2\widehat Y_s   \widehat Z_s   \mathrm dB_s^\lambda  + \int_E   2\widehat Y_s \widehat V_s  (e)   \tilde  N^\lambda(\mathrm ds,\mathrm de)   
%		\end{array}
%	%
%	\end{equation*}
	\begin{equation}\label{est9} 
		\begin{array}{ll}
			\ns\ds\!\!\!        |\widehat Y_s|^2 
			+\int_s^T |\widehat Z_r|^2 \mathrm dr
			+\int_s^T  \int_E|\widehat V_r  (e)|^2   N^\lambda(\mathrm dr,\mathrm de) 
			\\
%			
%			\ns\ds\!\!\!   = |\Phi(\tilde X_T^{1,u^\lambda}) - \Phi(\tilde X_T^{0,u^\lambda})|^2 
%			 -2\int_s^T  \widehat Y_r  \mathrm d  \widehat  A_r 
%			-\int_s^T  \int_E   2\widehat Y_r \widehat V_r  (e)   \tilde  N^\lambda(\mathrm dr,\mathrm de)
%			- \int_s^T  2\widehat Y_r   \widehat Z_r   \mathrm dB_r^\lambda  \\
%			\ns\ds \!\!\! \q
%			
%			+ 2\int_s^T \widehat Y_r  \Big\{  \frac{1}{\dot \tau_1^\lambda}      f\big(  \varrho_1^\lambda(r), \tilde X_r^{1,u^\lambda}, \tilde Y_r^{n,1,u^\lambda},  \sqrt{\dot \tau_1^\lambda} \tilde Z_r^{n,1,u^\lambda},  \int _E  l(e) \tilde V_r^{n,1,u^\lambda} (e) \nu(\mathrm de),u_r^\lambda  \big) \\
%			%
%			\ns\ds\!\!\! \hskip 1.925cm  -\frac{1}{\dot \tau_0^\lambda}        f\big(  \varrho_0^\lambda(r), \tilde X_r^{0,u^\lambda}, \tilde  Y_r^{n,0,u^\lambda},  \sqrt{\dot \tau_0^\lambda} \tilde Z_r^{n,0,u^\lambda},  \int _E  l(e) \tilde V_r^{n,0,u^\lambda} (e) \nu(\mathrm de),u_r^\lambda  \big) \\ 
%			%
%			\ns\ds\!\!\! \hskip 1.925cm -   \int_E  \(\widehat  V_r  (e)  -\frac{1}{\dot \tau_1^\lambda} \tilde V_r^{n,1,u^\lambda} (e) +\frac{1}{\dot \tau_0^\lambda} \tilde V_r^{n,0,u^\lambda} (e)\)\nu(\mathrm de)   \Big\}\mathrm dr 
%			\\
%			
				\ns\ds\!\!\!   = |\Phi(\tilde X_T^{1,u^\lambda}) - \Phi(\tilde X_T^{0,u^\lambda})|^2 + 2\int_s^T \widehat Y_r   \big(I_r^1  - I_r^2\big)\mathrm dr 
			-2\int_s^T  \widehat Y_r  \mathrm d  \widehat  A_r - \int_s^T  2\widehat Y_r   \widehat Z_r   \mathrm dB_r^\lambda
			
			  \\
			\ns\ds \!\!\! \q
			-\int_s^T  \int_E   2\widehat Y_r \widehat V_r  (e)   \tilde  N^\lambda(\mathrm dr,\mathrm de),
%			+ 2\int_s^T \widehat Y_r   \big(I_r^1  - I_r^2\big)\mathrm dr 
			\\
		\end{array}
	\end{equation}
where
 $$
 \ba{ll}
\ns\ds I_r^1:= \frac{1}{\dot \tau_1^\lambda}      f\big(  \varrho_1^\lambda(r), \tilde X_r^{1,u^\lambda}, \tilde Y_r^{n,1,u^\lambda},  \sqrt{\dot \tau_1^\lambda} \tilde Z_r^{n,1,u^\lambda},  \int _E  l(e) \tilde V_r^{n,1,u^\lambda} (e) \nu(\mathrm de),u_r^\lambda  \big) \\
\ns\ds\!\!\! \hskip 1.1cm  -\frac{1}{\dot \tau_0^\lambda}        f\big(  \varrho_0^\lambda(r), \tilde X_r^{0,u^\lambda}, \tilde  Y_r^{n,0,u^\lambda},  \sqrt{\dot \tau_0^\lambda} \tilde Z_r^{n,0,u^\lambda},  \int _E  l(e) \tilde V_r^{n,0,u^\lambda} (e) \nu(\mathrm de),u_r^\lambda  \big)  ,\\
\ns\ds I_r^2:= \int_E      \(\widehat  V_r  (e)  -\frac{1}{\dot \tau_1^\lambda} \tilde V_r^{n,1,u^\lambda} (e) +\frac{1}{\dot \tau_0^\lambda} \tilde V_r^{n,0,u^\lambda} (e)\)\nu(\mathrm de)  .
\ea
$$

%用到了上一个引理需要vE小于无穷 
\no Then, according to Lemma \ref{CV-Le-Y-1}, we have 
 \begin{equation}\label{I1}
 	\ba{ll}
 	%\ns\ds\!\!\! |I_r^1| \\
 	%\les 
 	% \Big|  \(\frac{1}{\dot \tau_1^\lambda}-\frac{1}{\dot \tau_0^\lambda}\)      f\big(  \varrho_1^\lambda(r), \tilde X_r^{1,u^\lambda}, \tilde Y_r^{n,1,u^\lambda},  \sqrt{\dot \tau_1^\lambda} \tilde Z_r^{n,1,u^\lambda},  
 	%\int _E   l( e) \tilde  V_r^{n,1,u^\lambda} (e) \nu(\mathrm de),u_r^\lambda  \big) \\
 	%%
 	%\ns\ds\!\!\!\hskip1.2cm +\frac{1}{\dot \tau_0^\lambda}   \(   f\big(  \varrho_1^\lambda(r), \tilde X_r^{1,u^\lambda}, \tilde Y_r^{n,1,u^\lambda},  \sqrt{\dot \tau_1^\lambda} \tilde Z_r^{n,1,u^\lambda},  \int _E   l( e) \tilde V_r^{n,1,u^\lambda} (e) \nu(\mathrm de),u_r^\lambda  \big) \\
 	%%
 	%\ns\ds\!\!\!\hskip1.2cm  -     f\big(  \varrho_0^\lambda(r), \tilde X_r^{0,u^\lambda}, \tilde  Y_r^{n,0,u^\lambda},  \sqrt{\dot \tau_0^\lambda} \tilde Z_r^{n,0,u^\lambda},  \int _E   l( e)  \tilde V_r^{n,0,u^\lambda} (e)\nu(\mathrm de),u_r^\lambda  \big) \)\Big|\\
 	%
 	%\ns\ds\!\!\! \les C_\delta |\widehat Y_r  | \( |t_1-t_0|     +        |\varrho_1^\lambda(r)-\varrho_0^\lambda(r)| + |\tilde X_r^{1,u^\lambda}-\tilde X_r^{0,u^\lambda}|+| \widehat Y_r | + |\sqrt{\dot \tau_1^\lambda} \tilde Z_r^{n,1,u^\lambda}-\sqrt{\dot \tau_0^\lambda} \tilde Z_r^{n,0,u^\lambda}| + |\int _E   l( e)   \widehat V_r  (e)    \nu(\mathrm de)|  \) \\
 	%
 	\ns\ds\!\!\!  |I_r^1|  \les       |t_1-t_0|     +        |\varrho_1^\lambda(r)-\varrho_0^\lambda(r)| + |\widehat X_r |+| \widehat Y_r |
 	+ \Big|\sqrt{\dot \tau_1^\lambda}-\sqrt{\dot \tau_0^\lambda} \Big|\cd| \tilde Z_r^{n,1,u^\lambda}|+ \Big| \sqrt{\dot \tau_0^\lambda} \Big|\cd|\widehat  Z_r | 
 	%+     \| l( \cd) \|_{\nu,2}    \cd     \|\widehat V_r  (\cd)\|_{\nu,2} 
 	\\
 	\ns\ds\!\!\!\hskip1cm
 	+     \| l( \cd) \|_{\nu,2}    \cd     \|\widehat V_r  (\cd)\|_{\nu,2}      \\
 	%  \hskip0.65cm
 	\ns\ds\!\!\!    \les C       \(  \D_r  
 	+ |t_1-t_0|\cd | \tilde Z_r^{n,1,u^\lambda}|+  |\widehat  Z_r | +\|\widehat V_r  (\cd)\|_{\nu,2} \), \\
 	\ea
 \end{equation}
and 
 \begin{equation}\label{I2}	 
%\ns\ds\!\!\! |I_r^2|= \Big|\int_E  \widehat Y_r    \(\widehat  V_r  (e)  -\frac{1}{\dot \tau_1^\lambda} \tilde V_r^{n,1,u^\lambda} (e) +\frac{1}{\dot \tau_0^\lambda} \tilde V_r^{n,0,u^\lambda} (e)\)\nu(\mathrm de)\Big| \\
%
%\ns\ds\!\!\!  \les  |\widehat Y_r  |\cd  \Big|\int_E  \( \(1 -\frac{1}{\dot \tau_0^\lambda} \)   \widehat  V_r  (e) -\(\frac{1}{\dot \tau_1^\lambda}-\frac{1}{\dot \tau_0^\lambda} \)    \tilde V_r^{n,1,u^\lambda} (e) \)\nu(\mathrm de) \Big|\\
%
 |I_r^2|   \les  C    \int_E  \(  \Big|1 -\frac{1}{\dot \tau_0^\lambda} \Big| \cd |\widehat  V_r  (e)|+ |t_1-t_0| \cd|\tilde V_r^{n,1,u^\lambda} (e)| \)\nu(\mathrm de) . 
\end{equation}

 Moreover, 
$$\ba{ll} 
\ns\ds\!\!\!   -\int_s^T\widehat Y_r  \mathrm d  \widehat  A_r  
= -\int_s^T (\tilde Y_r^{n,1,u^\lambda}-\tilde  Y_r^{n,0,u^\lambda})  \mathrm d (\tilde A_r^{n,1,u^\lambda}-\tilde A_r^{n,0,u^\lambda})  \\
%
%\ns\ds=  - \int_s^T (\tilde Y_r^{n,1,u^\lambda}- h( \varrho_1^\lambda(r), \tilde X_r^{1,u^\lambda} )  + h( \varrho_1^\lambda(r), \tilde X_r^{1,u^\lambda} )  - h( \varrho_0^\lambda(r), \tilde X_r^{0,u^\lambda} )  + h( \varrho_0^\lambda(r), \tilde X_r^{0,u^\lambda} )  -\tilde  Y_r^{n,0,u^\lambda})  \mathrm d (\tilde A_r^{n,1,u^\lambda}-\tilde A_r^{n,0,u^\lambda})  \\
%
 \ns\ds\!\!\!\les    \int_s^T  \big(   h( \varrho_1^\lambda(r), \tilde X_r^{1,u^\lambda} )-\tilde Y_r^{n,1,u^\lambda}  \big)  ^-\mathrm d  \tilde A_r^{n,0,u^\lambda}  +\int_s^T (   h( \varrho_0^\lambda(r), \tilde X_r^{0,u^\lambda} ) -\tilde  Y_r^{n,0,u^\lambda}  )^-  \mathrm d \tilde A_r^{n,1,u^\lambda}  \\
\ns\ds\!\!\!   \q -\int_s^T \big(  h( \varrho_1^\lambda(r), \tilde X_r^{1,u^\lambda} )  - h( \varrho_0^\lambda(r), \tilde X_r^{0,u^\lambda} )   \big)  \mathrm d (\tilde A_r^{n,1,u^\lambda}-\tilde A_r^{n,0,u^\lambda})\\ 
\ns\ds\!\!\!    \les \sD_s^n  +C\D_T  |    \widehat A_T -\widehat A_s   |.
\ea$$

Based on the above estimates,  we return  to \eqref{est9} and obtain
	\begin{equation}\label{hat Y-2} 
	\begin{array}{ll}
		\ns\ds\!\!\!        |\widehat Y_s|^2 
		+\int_s^T |\widehat Z_r|^2 \mathrm dr
		+\int_s^T  \int_E|\widehat V_r  (e)|^2   N^\lambda(\mathrm dr,\mathrm de) 
		\\
		%			
%		\ns\ds\!\!\!   \les C|\widehat X_T |^2 +  \int_s^T     \big(|I_r^1|  +| I_r^2| \big)\mathrm dr 
%		+2\sD_s^n  +C\D_T \big|    \widehat A_T -\widehat A_s  \big|- \int_s^T  2\widehat Y_r   \widehat Z_r   \mathrm dB_r^\lambda
%		\\
%		\ns\ds \!\!\! \q
%		-\int_s^T  \int_E   2\widehat Y_r \widehat V_r  (e)   \tilde  N^\lambda(\mathrm dr,\mathrm de),
%		%			+ 2\int_s^T \widehat Y_r   \big(I_r^1  - I_r^2\big)\mathrm dr 
%		\\
		%
		\ns\ds\!\!\!   \les C|\widehat X_T |^2 +    C_\delta \int_s^T       \D_r    \(  \D_r  
		+ |t_1-t_0|\cd | \tilde Z_r^{n,1,u^\lambda}|+  |\widehat  Z_r | +\|\widehat V_r  (\cd)\|_{\nu,2}\)\mathrm dr\\
		\ns\ds\!\!\!\q
		+C_\delta   \int_s^T      \int_E  \D_r 
		  \(  |1 -\frac{1}{\dot \tau_0^\lambda} | \cd |\widehat  V_r  (e)|+ |t_1-t_0| \cd|\tilde V_r^{n,1,u^\lambda} (e)| \)\nu(\mathrm de) 
		    \mathrm dr
		     +C\D_T  |    \widehat A_T -\widehat A_s   |+2  \sD_s^n  \\
		  \ns\ds\!\!\! \q
		- \int_s^T  2\widehat Y_r   \widehat Z_r   \mathrm dB_r^\lambda
		-\int_s^T  \int_E   2\widehat Y_r \widehat V_r  (e)   \tilde  N^\lambda(\mathrm dr,\mathrm de) \\
		\ns\ds\!\!\!   \les C_{\delta,\varepsilon}  \D_T^2   +    C_\delta |t_1-t_0|^2  \int_s^T         \big(    
		| \tilde Z_r^{n,1,u^\lambda}|^2 +  \|\tilde V_r^{n,1,u^\lambda} (\cd)\|_{\nu,2}^2    \big)\mathrm dr
		+\varepsilon  \int_s^T   \big( |\widehat  Z_r |^2 + \|\widehat V_r  (\cd)\|_{\nu,2}^2 \big)      \mathrm dr   \\
		\ns\ds\!\!\!\q    
		+C\D_T  |    \widehat A_T -\widehat A_s   |+2\sD_s^n  
		- \int_s^T  2\widehat Y_r   \widehat Z_r   \mathrm dB_r^\lambda
		-\int_s^T  \int_E   2\widehat Y_r \widehat V_r  (e)   \tilde  N^\lambda(\mathrm dr,\mathrm de).\\
	\end{array}
\end{equation}

\no Then, choosing $\varepsilon \in(0,1)$, we have
\begin{equation*}
	\begin{array}{ll} 
		\ns\ds\!\!\!        |\widehat Y_s|^2 
		+\int_s^T |\widehat Z_r|^2 \mathrm dr
		
		+     \int_s^T     \|\widehat V_r  (\cd)\|_{\nu,2}^2    \mathrm dr 
		\\
%		%
%		\ns\ds\!\!\!        |\widehat Y_s|^2 
%		+\int_s^T |\widehat Z_r|^2 \mathrm dr
%		+\int_s^T  \int_E|\widehat V_r  (e)|^2     N^\lambda(\mathrm dr,\mathrm de)  
%		\\     
		\ns\ds\!\!\!   \les C_\delta  \D_T^2   +    C_\delta |t_1-t_0|^2  \int_s^T         \big(    
		| \tilde Z_r^{n,1,u^\lambda}|^2 +  \|\tilde V_r^{n,1,u^\lambda} (\cd)\|_{\nu,2}^2    \big)\mathrm dr
		+C\D_T  |    \widehat A_T -\widehat A_s   |+C\sD_s^n  
		  \\
		\ns\ds\!\!\!\q    
		- C\int_s^T   \widehat Y_r   \widehat Z_r   \mathrm dB_r^\lambda
		-C\int_s^T  \int_E    \widehat Y_r \widehat V_r  (e)   \tilde  N^\lambda(\mathrm dr,\mathrm de)
		-C\int_s^T  \int_E|\widehat V_r  (e)|^2   \tilde  N^\lambda(\mathrm dr,\mathrm de)  .\\
	\end{array}
\end{equation*}

\no Therefore, according to  Lemma \ref{Y-esti},  for any $\varepsilon >0$,
\begin{equation}\label{hat-Y-2-A}
	\begin{array}{ll} 
		\ns\ds\!\!\!        |\widehat Y_s|^2 
		+ \dbE^{\cF_s^\lambda}\[ \int_s^T\big(  |\widehat Z_r|^2  
		+    \|\widehat V_r  (\cd)\|_{\nu,2}^2  \big) \mathrm dr \]
		\les C_{ \d,\varepsilon} \D_s^2    + C  \dbE^{\cF_s^\lambda} [ \sD_s^n  ]
		 +\e  \dbE^{\cF_s^\lambda} [ |      \widehat A_T -\widehat A_s    |^2   ] , \q \dbP\mbox{-a.s.}
		%		%
		%		\ns\ds\!\!\!        |\widehat Y_s|^2 
		%		+\int_s^T |\widehat Z_r|^2 \mathrm dr
		%		+\int_s^T  \int_E|\widehat V_r  (e)|^2     N^\lambda(\mathrm dr,\mathrm de)  
		%		\\     
%		\ns\ds\!\!\!   \les C_\delta  \D_s^2  
%		+\frac1\e|\D_s |^2+\e   |      \widehat A_T -\widehat A_s    |^2   + 2  \dbE^{\cF_s^\lambda}\[ \sD_s^n \]. 
%		\\
		%
%		\ns\ds\!\!\!\q    
%		- \int_s^T  2\widehat Y_r   \widehat Z_r   \mathrm dB_r^\lambda
%		-\int_s^T  \int_E   2\widehat Y_r \widehat V_r  (e)   \tilde  N^\lambda(\mathrm dr,\mathrm de).\\
	\end{array}
\end{equation}
%where $\varepsilon >0 $  is a parameter in the Cauchy  inequality.

Now we deal with $ \dbE^{\cF_s^\lambda} [ |   \widehat A_T -\widehat A_s  |^2  ] $, from equation  \eqref{equ-hat-Y}, \eqref{I1} and \eqref{I2},
 \begin{equation}\label{AT-As} 
		\begin{array}{ll}
		\ns\ds\!\!\!  \dbE^{\cF_s^\lambda}\[ \big|      \widehat A_T -\widehat A_s   \big|^2 \]\\
		\ns\ds\!\!\! \les C| \widehat Y_s|^2 + C  \dbE^{\cF_s^\lambda}\[ | \Phi(\tilde X_T^{1,u^\lambda}) - \Phi(\tilde X_T^{0,u^\lambda})|^2\] 
		+\dbE^{\cF_s^\lambda}\[ \int_s^T\big(  |\widehat Z_r|^2  
		+    \|\widehat V_r  (\cd)\|_{\nu,2}^2  \big) \mathrm dr \] 
%		+C\dbE^{\cF_s^\lambda}\[ \Big(   \int_s^T  \big(I_r^1 -  I_r^2  \big) \mathrm d r  \Big)  ^2\] 
		\\
		\ns\ds\!\!\!\q 
		+C\dbE^{\cF_s^\lambda}\[ \Big(   \int_s^T  \big(I_r^1 -  I_r^2  \big) \mathrm d r  \Big)  ^2\]   \\
\ns\ds\!\!\! \les C \D_s ^2  +C \dbE^{\cF_s^\lambda}\[ \int_s^T\big(  |\widehat Z_r|^2  
+    \|\widehat V_r  (\cd)\|_{\nu,2}^2  \big) \mathrm dr \],\q \dbP\mbox{-a.s.}   \\
		\end{array}  
	\end{equation}
Combined with \eqref{hat-Y-2-A}, choosing $\e$ small enough,  
 \begin{equation*} 
		\begin{array}{ll}
			\ns\ds\!\!\!  \dbE^{\cF_s^\lambda}\[ \big|      \widehat A_T -\widehat A_s   \big|^2 \] 
			\les C_\delta  \D_s^2    ,\q \dbP\mbox{-a.s.}
		\end{array}  
	\end{equation*}
Thus, 
\begin{equation*}\label{hat-Y-2-A-1}
	\begin{array}{ll}
			\ns\ds\!\!\! |\widehat Y_s|^2 
			+ \dbE^{\cF_s^\lambda}\[ \int_s^T\big(  |\widehat Z_r|^2  
			+    \|\widehat V_r  (\cd)\|_{\nu,2}^2  \big) \mathrm dr \]
			 \les   C_\delta  \D_s^2    + C  \dbE^{\cF_s^\lambda} [ \sD_s^n  ],\q \dbP\mbox{-a.s.} 
		\end{array}
	\end{equation*}

  \emph{Step 2}. Now we focus on the case $p>2 $. 
From \eqref{hat Y-2}, for $p>2$,
\begin{equation*} 
	\begin{array}{ll}
			\ns\ds\!\!\!    \dbE^{\cF_s^\lambda}\[    |\widehat Y_s|^{2 }+  \(\int_s^T|  \widehat Z_r |^2  \mathrm d r \)^{ \frac p2} +\(\int_s^T\int_E|\widehat V_r  (e)|^2   N^\lambda(\mathrm dr,\mathrm de)  \)^{  \frac p2}\]\\
			\ns\ds\!\!\!   \les  \dbE^{\cF_s^\lambda}\[   \(  |\widehat Y_s|^{2}+  \int_s^T|  \widehat Z_r |^2  \mathrm d r  +\int_s^T\int_E|\widehat V_r  (e)|^2   N^\lambda(\mathrm dr,\mathrm de)  \)^\frac p2 \]\\
\ns\ds\!\!\!  \les   C_\delta  \D_s^{ p}+  C_\delta  |t_1-t_0|^{ p}  \cd  \dbE^{\cF_s^\lambda}\[  \( \int_s^T       \big(    
| \tilde Z_r^{n,1,u^\lambda}|^2 +  \|\tilde V_r^{n,1,u^\lambda} (\cd)\|_{\nu,2}^2    \big)\mathrm dr \)^{\frac p2} \]
+C\dbE^{\cF_s^\lambda}\[  \D_T^{ \frac p2}   |    \widehat A_T -\widehat A_s   |^{ \frac p2} \]  \\
\ns\ds\!\!\! \q  +C  \dbE^{\cF_s^\lambda}[|\sD_s^n|^{  \frac p2}  ] 
+ C\dbE^{\cF_s^\lambda}\[ \Big( \int_s^T  |\widehat Y_r |^2 |  \widehat Z_r |^2  \mathrm d r   \Big)^{\frac p4}\] 
+C \dbE^{\cF_s^\lambda}\[ \Big( \int_s^T \int_E    |\widehat Y_r |^2 |  \widehat V_r  (e)|^2     N^\lambda(\mathrm dr,\mathrm de)  \Big)^{\frac  p4}\]\\
\ns\ds\!\!\! \q  +   C \epsilon ^\frac p2  \dbE^{\cF_s^\lambda}\[   \( \int_s^T   \big( |\widehat  Z_r |^2 + \|\widehat V_r  (\cd)\|_{\nu,2}^2 \big)      \mathrm dr\)^\frac p2    \] \\  
\ns\ds\!\!\!  \les   C_\delta  \D_s^{ p}
%+  C_\delta  |t_1-t_0|^{ p}   \dbE^{\cF_s^\lambda}\[  \( \int_s^T       \big(    
%| \tilde Z_r^{n,1,u^\lambda}|^2 +  \|\tilde V_r^{n,1,u^\lambda} (\cd)\|_{\nu,2}^2    \big)\mathrm dr \)^{\frac p2} \]
+C\dbE^{\cF_s^\lambda}\[  \D_T^{ \frac p2}   |    \widehat A_T -\widehat A_s   |^{ \frac p2} \]
+   C \epsilon ^\frac p2  \dbE^{\cF_s^\lambda}\[   \( \int_s^T   \big( |\widehat  Z_r |^2 + \|\widehat V_r  (\cd)\|_{\nu,2}^2 \big)      \mathrm dr\)^\frac p2    \]  \\
\ns\ds\!\!\! \q  +C  \dbE^{\cF_s^\lambda}[|\sD_s^n|^{  \frac p2}  ] 
+ C\dbE^{\cF_s^\lambda}\[  \D_T  ^\frac p2 \Big( \int_s^T    |  \widehat Z_r |^2  \mathrm d r   \Big)^{\frac p4}\] 
+C \dbE^{\cF_s^\lambda}\[ \D_T  ^\frac p2 \Big( \int_s^T \int_E      |  \widehat V_r  (e)|^2     N^\lambda(\mathrm dr,\mathrm de)  \Big)^{\frac  p4}\]\\
\ns\ds\!\!\!  \les   C_\delta  \D_s^{ p} 
+C\dbE^{\cF_s^\lambda}\[  \D_T^{ \frac p2}   |    \widehat A_T -\widehat A_s   |^{ \frac p2} \] +C  \dbE^{\cF_s^\lambda}[|\sD_s^n|^{  \frac p2}  ] 
+ C\epsilon ^\frac p2  \dbE^{\cF_s^\lambda}\[    \Big( \int_s^T    |  \widehat Z_r |^2  \mathrm d r   \Big)^{\frac p2}\]  \\
\ns\ds\!\!\! \q  
+C \epsilon ^\frac p2 \dbE^{\cF_s^\lambda}\[   \Big( \int_s^T \int_E      |  \widehat V_r  (e)|^2     N^\lambda(\mathrm dr,\mathrm de)  \Big)^{\frac  p2}\],\\
		\end{array}
	\end{equation*}
	where we have used Lemma 3.1 in  \cite{LW-2014-lp}, Lemma  \ref{Y-esti}, Lemma \ref{CV-Le-Y-1} and \eqref{A-esti}.

Thus, by choosing $\epsilon$ small enough, we get
\begin{equation*} 
	\begin{array}{ll}
		\ns\ds\!\!\!    \dbE^{\cF_s^\lambda}\[    |\widehat Y_s|^{2 }+  \(\int_s^T|  \widehat Z_r |^2  \mathrm d r \)^{ \frac p2} +\(\int_s^T\int_E|\widehat V_r  (e)|^2   N^\lambda(\mathrm dr,\mathrm de)  \)^{  \frac p2}\]\\
		\ns\ds\!\!\!  \les   C_\delta  \D_s^{ p} 
		+C\dbE^{\cF_s^\lambda}\[  \D_T^{ \frac p2}   |    \widehat A_T -\widehat A_s   |^{ \frac p2} \] +C  \dbE^{\cF_s^\lambda}[|\sD_s^n|^{  \frac p2}  ]
		   \\
		\ns\ds\!\!\!   \les   C_\delta  \D_s^{ p} 
		+C\varepsilon\dbE^{\cF_s^\lambda}\[    |    \widehat A_T -\widehat A_s   |^{  p } \] +C  \dbE^{\cF_s^\lambda}[|\sD_s^n|^{  \frac p2}  ].
	 \\
	\end{array}
\end{equation*}

\no Further, similar to the proof of \eqref{AT-As}, we get 
 \begin{equation*}\label{ } 
		\begin{array}{ll}
			\ns\ds\!\!\! \dbE^{\cF_s^\lambda}\[  \(\int_s^T|  \widehat Z_r |^2  \mathrm d s \)^{ \frac p 2} +\(\int_s^T\int_E|\widehat V_r  (e)|^2   N^\lambda(\mathrm ds,\mathrm de)  \)^{  \frac p 2}+    \big| \widehat A_T -\widehat A_s   \big|^{ p} \]
			\les C_\delta  \D_s^{ p}  +C  \dbE^{\cF_s^\lambda}[|\sD_s^n|^{   \frac p 2}  ]  .   
		\end{array}  
	\end{equation*}
%which means, for $p> 1$, 
% \begin{equation*}\label{ } 
%		\begin{array}{ll}
%			%
%			\ns\ds\!\!\! \dbE^{\cF_s^\lambda}\[  \(\int_s^T|  \widehat Z_r |^2  \mathrm d s \)^{ p} +\(\int_s^T\int_E|\widehat V_r  (e)|^2   N^\lambda(\mathrm ds,\mathrm de)  \)^{ p}  +  \big|      \widehat A_T -\widehat A_s   \big|^{2p} \] 
%			\les C_\delta  \D_s^{2p}  +C  \dbE^{\cF_s^\lambda}[(\sD_s^n)^{ p}  ]  .
%			\\
%%
%			\ns\ds\!\!\!  \les C_\delta  \D_s^{2p}  +C  \dbE^{\cF_s^\lambda}[(\sD_s^n)^{ p}  ]  .
%		\end{array}  
%	\end{equation*}

\no  Finally, combined with Lemma 3.1 in \cite{LW-2014-lp}, the desired result is proved. 
\end{proof}

		 Recalling the notation  $ \tilde \cX_\cd^{ u^\lambda} = \lambda\tilde  X_\cd^{1,u^\lambda}  + (1-\lambda)\tilde X_\cd^{0,u^\lambda}$, we also put
%		 Moreover,  to simplify notations, for all $s\in [t_\lambda,T]$, $e\in E$, denote  
%		 $ \varphi_s^{n,u^\lambda} = \lambda \varphi_s^{n,1,u^\lambda} + (1-\lambda) \varphi_s^{n,0,u^\lambda},\ \mbox{for all}\ \varphi = \tilde \cY,\tilde \cZ,\tilde \cV $ 
		$$
		\big (\tilde\cY_\cd^{n,u^\lambda} ,\tilde\cZ_\cd^{n,u^\lambda},\tilde\cV_\cd^{n,u^\lambda}\!(\cd) \big)
		 = \big(\lambda \tilde Y_\cd^{n,1,u^\lambda} + (1-\lambda) \tilde  Y_\cd^{n,0,u^\lambda},   \lambda \tilde Z_\cd^{n,1,u^\lambda} + (1-\lambda) \tilde Z_\cd^{n,0,u^\lambda},   \lambda \tilde V_\cd^{n,1,u^\lambda}\!(\cd) + (1-\lambda) \tilde V_\cd^{n,0,u^\lambda}\!(\cd) \big).
		$$

	\begin{lemma}\label{comp-hat-tilde}\sl
		Suppose {\rm \textbf{(H$_1$)}-\textbf{(H$_5$)}} and {\rm \textbf{(C)}} hold, for all $ n\ges 1$, we have 
		$$
		\tilde\cY_s^{n,u^\lambda} \les \overline \cY_s^{n,u^\lambda}, \quad\ s\in[t_\lambda,T],\ \dbP\mbox{-a.s.},
		$$
		where	 
		\begin{equation*}\label{ }
			\left\{
			\begin{array}{ll}
				\ns\ds\!\!\!  \mathrm d \overline \cY_s^{n,u^\lambda}\!  = -   \Big[  f  \big (s,   X_s^{\lambda,u^\lambda}, \overline \cY_s^{n,u^\lambda}\! -\! \overline \D_s , \overline \cZ_s^{n,u^\lambda}, \int_E l( e)\overline\cV_s^{n,u^\lambda}(e)   \nu(\mathrm de),  u_s^\lambda \big )  
				\! - \! n \big(  h (s,  X_s^{\lambda,u^\lambda} )\! -\! \overline\cY_s^{n,u^\lambda}  +\overline \D_s \big)^- \\
				%
				%			\ns\ds \!\!\!\hskip 1.9cm -n \big(  h (s, \cX_s^{ u^\lambda} )-\tilde\cY_s^{n,u^\lambda}  +\overline \D_s \big)^-\\
				%
				\ns\ds \!\!\!\hskip 1.9cm   +  C\overline \D_s  +  C_\delta \lambda (1-\lambda)\( 
				|\widehat Z_s |^2+\|\widehat  V_s(\cd)\|^2_{\n,2}    +  |t_0-t_1|^2 |\tilde Z_s^{n,0,u^\lambda}|^2   \)
				\Big] \mathrm ds\\
				\ns\ds\!\!\!\hskip 1.5cm +   \overline\cZ_s^{n,u^\lambda}   \mathrm dB_s^\lambda  + \int_E   \overline\cV_s^{n,u^\lambda} (e)    \tilde  N^\lambda(\mathrm ds,\mathrm de)  ,\quad s \in [t_\lambda,T],\\
				\ns\ds   \overline\cY_T^{n,u^\lambda}  = \Phi(  X_T^{\lambda,u^\lambda})+\overline \D_T,
			\end{array}
			\right.
		\end{equation*}
		with $\overline \D_s= C\tilde\D_s + C_\delta \lambda (1-\lambda) \D_s^2$, $\tilde\D_s:= \sup\limits_{r\in[t_\lambda,s]}|   \tilde \cX_r^{u^\lambda}-X_r^{\lambda,u^\lambda}|$, $\forall s \in [t_\lambda, T]$.
%		 And we have the following result.
		%
	\end{lemma}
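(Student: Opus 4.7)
The plan is to derive the BSDE with jumps satisfied by the convex combination $(\tilde\cY^{n,u^\lambda},\tilde\cZ^{n,u^\lambda},\tilde\cV^{n,u^\lambda})$ and then compare its drift and terminal condition with those of $\overline\cY^{n,u^\lambda}$, so that the comparison theorem for BSDEs with jumps (Lemma \ref{com} specialized to the non-reflected case) yields the desired inequality. First, taking the $\lambda,(1-\lambda)$ linear combination of \eqref{tilde_BSDEP_in} for $i=1,0$ gives a BSDE with jumps on $[t_\lambda,T]$ with terminal value $\lambda\Phi(\tilde X_T^{1,u^\lambda})+(1-\lambda)\Phi(\tilde X_T^{0,u^\lambda})$, driven by $B^\lambda,\tilde N^\lambda$, whose generator decomposes into (a) a weighted generator $\frac{\lambda}{\dot\tau_1^\lambda}f(\varrho_1^\lambda(s),\tilde X^{1,u^\lambda},\tilde Y^{n,1,u^\lambda},\sqrt{\dot\tau_1^\lambda}\tilde Z^{n,1,u^\lambda},\int l\tilde V^{n,1,u^\lambda}d\nu,u^\lambda)+\frac{1-\lambda}{\dot\tau_0^\lambda}f(\varrho_0^\lambda(s),\ldots)$, (b) weighted penalization $-\frac{\lambda n}{\dot\tau_1^\lambda}(h(\varrho_1^\lambda(s),\tilde X^{1,u^\lambda})-\tilde Y^{n,1,u^\lambda})^{-}-\frac{(1-\lambda)n}{\dot\tau_0^\lambda}(h(\varrho_0^\lambda(s),\tilde X^{0,u^\lambda})-\tilde Y^{n,0,u^\lambda})^{-}$, and (c) compensator contributions $\lambda(1-\frac{1}{\dot\tau_1^\lambda})\int_E\tilde V^{n,1,u^\lambda}(e)\nu(\mathrm de)+(1-\lambda)(1-\frac{1}{\dot\tau_0^\lambda})\int_E\tilde V^{n,0,u^\lambda}(e)\nu(\mathrm de)$.

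\noindent Next I would bound this drift from above by the drift of $\overline\cY^{n,u^\lambda}$. For (a), I split $\frac{\lambda}{\dot\tau_1^\lambda}=\lambda+\lambda(\frac{1}{\dot\tau_1^\lambda}-1)$ and likewise for the other term; by Lemma \ref{tau_pro} the correction factors are $O(\lambda(1-\lambda)|t_1-t_0|)$. Then the semi-concavity of $f$ in $(t,x,y,z,v)$ from \textbf{(H$_5$)} gives $\lambda f(\cdot_1)+(1-\lambda)f(\cdot_0)\leq f(\lambda\cdot_1+(1-\lambda)\cdot_0)+C\lambda(1-\lambda)\bigl(|t_1-t_0|^2+|\tilde X^{1,u^\lambda}-\tilde X^{0,u^\lambda}|^2+|\widehat Y|^2+|\widehat Z|^2+\|\widehat V(\cdot)\|_{\nu,2}^2\bigr)$, where $\widehat\cdot$ denotes $\tilde\cdot^{n,1,u^\lambda}-\tilde\cdot^{n,0,u^\lambda}$. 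Lemma \ref{CV-Le-Y-1} reduces $|\widehat Y|^2$ to $C_\delta\D_s^2$ and together with Lemma \ref{x-cx} the state--time differences are absorbed into $\overline\D_s$. Then the Lipschitz continuity of $f$ allows me to replace the point $(s,\tilde\cX^{u^\lambda}_s,\tilde\cY^{n,u^\lambda}_s-\overline\D_s,\tilde\cZ^{n,u^\lambda}_s,\int l\tilde\cV^{n,u^\lambda}d\nu,u^\lambda)$ by $(s,X^{\lambda,u^\lambda}_s,\overline\cY^{n,u^\lambda}_s-\overline\D_s,\overline\cZ^{n,u^\lambda}_s,\int l\overline\cV^{n,u^\lambda}d\nu,u^\lambda)$ up to a residual absorbed by $C\overline\D_s$ in the drift of $\overline\cY^{n,u^\lambda}$. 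The leftover term $\lambda(1-\lambda)|t_0-t_1|^2|\tilde Z^{n,0,u^\lambda}|^2$ appears since the semi-concavity inequality produces $|\sqrt{\dot\tau_1^\lambda}\tilde Z^{n,1,u^\lambda}-\sqrt{\dot\tau_0^\lambda}\tilde Z^{n,0,u^\lambda}|^2$, which splits via Lemma \ref{tau_pro} into $|\widehat Z|^2$ plus an $O(\lambda(1-\lambda)|t_1-t_0|^2)|\tilde Z^{n,0,u^\lambda}|^2$ piece. For (b), convexity of $y\mapsto-(h-y)^-$ combined with Lipschitz continuity of $h$ and the bound $|\frac{1}{\dot\tau_i^\lambda}-1|\leq C\lambda(1-\lambda)|t_1-t_0|/(1-\lambda)$ (or its symmetric counterpart) dominates (b) by $-n(h(s,X^{\lambda,u^\lambda})-\tilde\cY^{n,u^\lambda}+\overline\D_s)^{-}+C\overline\D_s$; then monotonicity of $(h-\cdot)^-$ in its argument and the Lipschitz comparison argument replace $\tilde\cY^{n,u^\lambda}$ by $\overline\cY^{n,u^\lambda}$. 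For (c), Lemma \ref{tau_pro} gives $|1-\frac{1}{\dot\tau_i^\lambda}|=O(\lambda(1-\lambda)|t_1-t_0|)$ and the Cauchy--Schwarz inequality absorbs the term into $C\overline\D_s+C_\delta\lambda(1-\lambda)\|\widehat V(\cdot)\|_{\nu,2}^2$ after using \textbf{(H$_3$)}.

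\noindent The terminal condition is handled by semi-concavity and Lipschitz continuity of $\Phi$: $\lambda\Phi(\tilde X_T^{1,u^\lambda})+(1-\lambda)\Phi(\tilde X_T^{0,u^\lambda})\leq\Phi(\tilde\cX_T^{u^\lambda})+C\lambda(1-\lambda)|\tilde X_T^{1,u^\lambda}-\tilde X_T^{0,u^\lambda}|^2\leq\Phi(X_T^{\lambda,u^\lambda})+C|\tilde\cX_T^{u^\lambda}-X_T^{\lambda,u^\lambda}|+C\lambda(1-\lambda)\D_T^2\leq\Phi(X_T^{\lambda,u^\lambda})+\overline\D_T$, using the definition $\overline\D=C\tilde\D+C_\delta\lambda(1-\lambda)\D^2$. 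Observing that the residual quadratic-in-$(Z,V)$ terms $C_\delta\lambda(1-\lambda)(|\widehat Z|^2+\|\widehat V\|_{\nu,2}^2+|t_1-t_0|^2|\tilde Z^{n,0,u^\lambda}|^2)$ are purely exogenous (they depend only on the already-known processes $\tilde Z^{n,i,u^\lambda}, \tilde V^{n,i,u^\lambda}$ from \eqref{tilde_BSDEP_in}, not on the unknowns $\tilde\cY,\overline\cY$), the BSDE satisfied by $\overline\cY^{n,u^\lambda}$ still has a Lipschitz driver in the unknowns, and the standard comparison theorem for BSDEs with jumps (adapted from Lemma \ref{com} in the non-reflected setting with monotonicity of $f$ in $v$ from \textbf{(C)}) yields $\tilde\cY^{n,u^\lambda}_s\leq\overline\cY^{n,u^\lambda}_s$ on $[t_\lambda,T]$, $\dbP$-a.s.

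\noindent The main obstacle will be the detailed bookkeeping of how the semi-concavity residual (jointly quadratic in all variables) produced by \textbf{(H$_5$)} is paired against the time-stretching correction factors $(\frac{1}{\dot\tau_i^\lambda}-1)$ and $(\sqrt{\dot\tau_i^\lambda}-\sqrt{\dot\tau_{1-i}^\lambda})$ from Kulik's transformation, so that every leftover piece fits into either $C\overline\D_s$ or $C_\delta\lambda(1-\lambda)(|\widehat Z|^2+\|\widehat V\|_{\nu,2}^2+|t_0-t_1|^2|\tilde Z^{n,0,u^\lambda}|^2)$ without introducing any $n$-dependent constant; only then do the residual terms remain controllable in the $n\to\i$ limit that is required to conclude the proof of the main semi-concavity statement, Proposition \ref{Pro-convex-W-n}.
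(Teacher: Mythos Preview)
Your approach is the paper's approach: derive the BSDE with jumps satisfied by the convex combination $\tilde\cY^{n,u^\lambda}$, bound its driver term by term using the semi-concavity of $f,\Phi$ from \textbf{(H$_5$)}, the Lipschitz continuity of $f,h,\Phi$, the time-change identities of Lemma~\ref{tau_pro}, and the concavity of $-(\cdot)^{-}$, then conclude via the comparison theorem. Your identification of the exogenous residual $C_\delta\lambda(1-\lambda)\bigl(|\widehat Z_s|^2+\|\widehat V_s(\cdot)\|_{\nu,2}^2+|t_0-t_1|^2|\tilde Z_s^{n,0,u^\lambda}|^2\bigr)$ matches the paper exactly.

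One step is misstated and, if taken literally, would not give a valid argument. You write that Lipschitz continuity of $f$ lets you ``replace the point $(s,\tilde\cX^{u^\lambda}_s,\tilde\cY^{n,u^\lambda}_s-\overline\D_s,\tilde\cZ^{n,u^\lambda}_s,\ldots)$ by $(s,X^{\lambda,u^\lambda}_s,\overline\cY^{n,u^\lambda}_s-\overline\D_s,\overline\cZ^{n,u^\lambda}_s,\ldots)$ up to a residual absorbed by $C\overline\D_s$'', and similarly speak of ``replacing $\tilde\cY^{n,u^\lambda}$ by $\overline\cY^{n,u^\lambda}$'' in the penalization. No a priori bound on $|\tilde\cY-\overline\cY|$, $|\tilde\cZ-\overline\cZ|$, $\|\tilde\cV-\overline\cV\|$ is available; this is precisely what the lemma asserts. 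The correct formulation, which the paper uses, is to regard the driver of the $\overline\cY$-equation as a function $g_2(s,y,z,v)$ of generic unknowns and to prove the inequality $g_1(s)\leq g_2\bigl(s,\tilde\cY^{n,u^\lambda}_s,\tilde\cZ^{n,u^\lambda}_s,\tilde\cV^{n,u^\lambda}_s\bigr)$, where $g_1$ is the (process-valued) driver of $\tilde\cY^{n,u^\lambda}$. Only the state argument is moved from $\tilde\cX_s^{u^\lambda}$ to $X_s^{\lambda,u^\lambda}$ via Lipschitz continuity, at cost $C\tilde\D_s$; the $(y,z,v)$-slot stays at the $\tilde\cY$-solution. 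Since $g_2$ is Lipschitz in $(y,z,v)$ and non-decreasing in $v$ by \textbf{(C)}, the comparison theorem then yields $\tilde\cY^{n,u^\lambda}\leq\overline\cY^{n,u^\lambda}$ directly. With this rephrasing your proof is complete and coincides with the paper's.
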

	\begin{proof}
		Using the  semiconcavity and Lipschitz  continuity  of $f$, 
%		 
%f关于txyzv半凹；以及引理5.4
		\begin{equation*}
			\begin{array}{ll}
				\ns\ds\!\!\!   \lambda           f \big(  \varrho_1^\lambda(s), \tilde X_s^{1,u^\lambda}, \tilde Y_s^{n,1,u^\lambda},  \sqrt{\dot \tau_1^\lambda} \tilde Z_s^{n,1,u^\lambda},  \int _E  l( e)  \tilde V_s^{n,1,u^\lambda} (e) \nu(\mathrm de),u_s^\lambda   \big) \\
				\ns\ds\!\!\! 	  +(1-\lambda) f \big(  \varrho_0^\lambda (s), \tilde X_s^{0,u^\lambda}, \tilde  Y_s^{n,0,u^\lambda},  \sqrt{\dot \tau_0^\lambda} \tilde Z_s^{n,0,u^\lambda},  \int _E l( e)  \tilde V_s^{n,0,u^\lambda} (e) \nu(\mathrm de), u_s^\lambda   \big)\\
				\ns\ds\!\!\!    \les  f \big (s, \tilde \cX_s^{u^\lambda}, \tilde \cY_s^{n,u^\lambda}, 
				\lambda \sqrt{\dot \tau_1^\lambda} \tilde Z_s^{n,1,u^\lambda} +(1-\lambda) \sqrt{\dot \tau_0^\lambda} \tilde Z_s^{n,0,u^\lambda}, 
				 \int _E l(e)\tilde\cV_s^{n,u^\lambda}  (e)   \nu(\mathrm de) , u_s^\lambda  \big )\\
				\ns\ds\!\!\!   \q + C_\delta \lambda (1-\lambda)\Big( 
				 |  \varrho_1^\lambda(s) \!- \!  \varrho_0^\lambda (s) |^2  
				+ |\widehat X_s |^2  +  |   \widehat Y_s  |^2 +  \Big| \sqrt{\dot \tau_1^\lambda} \tilde Z_s^{n,1,u^\lambda} \!-\!\sqrt{\dot \tau_0^\lambda}  \tilde Z_s^{n,0,u^\lambda} \Big|^2 +\Big|\int_E l(e)   \widehat V_s   (e)    \nu(\mathrm de)\Big|^2\Big) 	\\

				\ns\ds\!\!\!  	  \les   f \big(s, X_s^{\lambda,u^\lambda}, \tilde\cY_s^{n,u^\lambda}, \tilde\cZ_s^{n,u^\lambda}, \int_E l( e)\tilde\cV_s^{n,u^\lambda}(e)   \nu(\mathrm de),  u_s^\lambda  \big)
				+ C\Big|\lambda \sqrt{\dot \tau_1^\lambda} \tilde Z_s^{n,1,u^\lambda} +(1-\lambda) \sqrt{\dot \tau_0^\lambda} \tilde Z_s^{n,0,u^\lambda} -\tilde\cZ_s^{n,u^\lambda}\Big|  \\
				\ns\ds\!\!\!   \q + C_\delta \lambda (1-\lambda)\Big( 
				|  \varrho_1^\lambda(s) \!- \!  \varrho_0^\lambda (s) |^2 
				%用到v（E） 小于无穷
				+ |\widehat X_s |^2  +  |   \widehat Y_s  |^2 +  \Big| \sqrt{\dot \tau_1^\lambda} \tilde Z_s^{n,1,u^\lambda} \!-\!\sqrt{\dot \tau_0^\lambda}  \tilde Z_s^{n,0,u^\lambda} \Big|^2 +\Big|\int_E l(e)   \widehat V_s   (e)    \nu(\mathrm de)\Big|^2\Big) 	\\
				\ns\ds\!\!\! \q
				+ C | \tilde \cX_s^{u^\lambda}-X_s^{\lambda,u^\lambda} | \\
%				\ns\ds\!\!\!    \hskip 0.75cm +C| \int _E l(e)\big  (\lambda  \tilde V_s^{n,1,u^\lambda} (e)  + (1-\lambda) \tilde V_s^{n,0,u^\lambda} (e)-\tilde\cV_s^{n,u^\lambda}(e)\big) \nu(\mathrm de) | \\
				% 
%				\ns\ds\!\!\!   \hskip 0.75cm + C_\delta \lambda (1-\lambda)\Big( 
%				|  \varrho_1^\lambda(s) \!- \!  \varrho_0^\lambda (s) |^2     + |\widehat X_s |^2  +  |   \widehat Y_s  |^2    +  | \big( \sqrt{\dot \tau_1^\lambda} - \sqrt{\dot \tau_0^\lambda}\big) \tilde Z_s^{n,0,u^\lambda} |^2
%				+ |\sqrt{\dot \tau_1^\lambda} \widehat Z_s   |  ^2+|\int_E l(e) \widehat V_s   (e)     \nu(\mathrm de)|^2 \Big) 	\\
				% 
				% 

				\ns\ds\!\!\!  	 \les   f (s,   X_s^{\lambda,u^\lambda}, \tilde\cY_s^{n,u^\lambda}, \tilde\cZ_s^{n,u^\lambda}, \int_E l( e)\tilde\cV_s^{n,u^\lambda}(e)   \nu(\mathrm de),  u_s^\lambda  )+ C  \overline \D_s
%				+  C_\delta \lambda (1-\lambda)\( 
%				\D_s  ^2 +|\widehat Z_s |^2+\|\widehat  V_s(\cd)\|^2_{\n,2}    +  |t_0-t_1|^2 |\tilde Z_s^{n,0,u^\lambda}|^2   \).
				\\
				\ns\ds\!\!\! \q +  C_\delta \lambda (1-\lambda)\( 
			      |\widehat Z_s |^2+\|\widehat  V_s(\cd)\|^2_{\n,2}    +  |t_0-t_1|^2 |\tilde Z_s^{n,0,u^\lambda}|^2   \), % 
			\end{array}
		\end{equation*}
			where $\big(\widehat X , \widehat Y , \widehat Z , \widehat V    \big)$ are the ones introduced in the proof of Lemma \ref{CV-Le-ZK-1}.
			
	%f lip 		
		 From the Lipschitz  continuity  of $f$, we obtain
		\begin{equation*}
			\begin{array}{ll}
				\ns\ds\!\!\!	  (\frac{1}{\dot \tau_1^\lambda}-1)\lambda    f \big(  \varrho_1^\lambda(s), \tilde X_s^{1,u^\lambda}, \tilde Y_s^{n,1,u^\lambda},  \sqrt{\dot \tau_1^\lambda} \tilde Z_s^{n,1,u^\lambda},  \int _E  l( e)  \tilde V_s^{n,1,u^\lambda} (e) \nu(\mathrm de),u_s^\lambda   \big)\\
				\ns\ds\!\!\! 	  +(\frac{1}{\dot \tau_0^\lambda}-1) (1-\lambda) f \big(  \varrho_0^\lambda (s), \tilde X_s^{0,u^\lambda}, \tilde  Y_s^{n,0,u^\lambda},  \sqrt{\dot \tau_0^\lambda} \tilde Z_s^{n,0,u^\lambda},  \int _E l( e)  \tilde V_s^{n,0,u^\lambda} (e) \nu(\mathrm de), u_s^\lambda   \big)\\	 
				\ns\ds\!\!\! \les C_\delta \lambda (1-\lambda) |t_0-t_1| \Big( | \varrho_1^\lambda(s) \!-\! \varrho_0^\lambda  (s) |
				+ | \widehat X_s  | +  | \widehat Y_s |
				+ |\sqrt{\dot \tau_1^\lambda} \tilde Z_s^{n,1,u^\lambda} \!-\! \sqrt{\dot \tau_0} \tilde Z_s^{n,0,u^\lambda} | 
				+   | \int _E     \widehat V_s    (e)  l(e)   \nu(\mathrm de)  |  \Big)
				\\
				\ns\ds\!\!\!	 	 \les  C_\delta \lambda (1-\lambda)\( 
				\D_s  ^2 +|\widehat Z_s |^2+\|\widehat  V_s(\cd)\|^2_{\n,2}    +  |t_0-t_1|^2 |\tilde Z_s^{n,0,u^\lambda}|^2   \),	\\

			\end{array}
		\end{equation*}

		Morover,%用到nuE小于无穷
		\begin{equation*}
			\begin{array}{ll}
				\ns\ds\!\!\!	 -\int_E \Big( \lambda (1-\frac{1}{\dot \tau_1^\lambda})   \tilde V_s^{n,1,u^\lambda} (e) + (1-\lambda)(1-\frac{1}{\dot \tau_0^\lambda})   \tilde V_s^{n,0,u^\lambda} (e) \Big) \nu(\mathrm de)
%				\les  C_\delta \lambda (1-\lambda)  \big( |t_0-t_1|^2+\|\widehat V_s(\cd)\|_{\n,2}^2 \big). 
				\les  C_\delta \lambda (1-\lambda)  \big( \Delta_s^2+\|\widehat V_s(\cd)\|_{\n,2}^2 \big).\\ 
				%
%				\ns\ds\!\!\!	\hskip 0.45cm \les  C_\delta \lambda (1-\lambda) |t_0-t_1| \int_E   |\tilde V_s^{n,1,u^\lambda} (e) - \tilde V_s^{n,0,u^\lambda} (e) |  \nu(\mathrm de) \\
				%
%				\ns\ds\!\!\!	\hskip 0.45cm \les  C_\delta \lambda (1-\lambda)  \big( |t_0-t_1|^2+\|\widehat V_s(\cd)\|_{\n,2}^2 \big)	.\\
			\end{array}
		\end{equation*}

		Thus, by adding the terms in the left sides of the above inequalities, we get
				\begin{equation}\label{coe-hat-tilde}
						\begin{array}{ll}
				\ns\ds\!\!\! 	 \frac{\lambda}{\dot \tau_1^\lambda}       f \big(  \varrho_1^\lambda(s), \tilde X_s^{1,u^\lambda}, \tilde Y_s^{n,1,u^\lambda},  \sqrt{\dot \tau_1^\lambda} \tilde Z_s^{n,1,u^\lambda},  \int _E  l( e)  \tilde V_s^{n,1,u^\lambda} (e) \nu(\mathrm de),u_s^\lambda   \big) 
								
								\\
				\ns\ds\!\!\!  + \frac{1-\lambda}{\dot \tau_0^\lambda}   f \big(  \varrho_0^\lambda (s), \tilde X_s^{0,u^\lambda}, \tilde  Y_s^{n,0,u^\lambda},  \sqrt{\dot \tau_0^\lambda} \tilde Z_s^{n,0,u^\lambda},  \int _E l( e)  \tilde V_s^{n,0,u^\lambda} (e) \nu(\mathrm de), u_s^\lambda   \big)\\
				\ns\ds\!\!\! 
				- \int_E  \Big( \lambda (1-\frac{1}{\dot \tau_1^\lambda})   \tilde V_s^{n,1,u^\lambda} (e)   + (1-\lambda)   (1-\frac{1}{\dot \tau_0^\lambda})   \tilde V_s^{n,0,u^\lambda} (e)   \Big) \nu(\mathrm de) \\
								%
%			    \ns\ds\!\!\! =I_1+I_2+I_3\\
								%
				\ns\ds\!\!\! \les  	    f (s,    X_s^{\lambda,u^\lambda}, \tilde\cY_s^{n,u^\lambda}, \tilde\cZ_s^{n,u^\lambda}, \int_E l( e)\tilde\cV_s^{n,u^\lambda}(e)   \nu(\mathrm de),  u_s^\lambda  )
				+  C\overline \D_s \\
				\ns\ds\!\!\! \q +  C_\delta \lambda (1-\lambda)\( 
				\D_s  ^2 +|\widehat Z_s |^2+\|\widehat  V_s(\cd)\|^2_{\n,2}    +  |t_0-t_1|^2 |\tilde Z_s^{n,0,u^\lambda}|^2   \)\\
				 \ns\ds\!\!\! \les  	    f (s, X_s^{\lambda,u^\lambda}, \tilde\cY_s^{n,u^\lambda}-\overline \D_s , \tilde\cZ_s^{n,u^\lambda}, \int_E l( e)\tilde\cV_s^{n,u^\lambda}(e)   \nu(\mathrm de),  u_s^\lambda  )  +  C\overline \D_s  \\
				 \ns\ds\!\!\! \q +  C_\delta \lambda (1-\lambda)\( 
				  |\widehat Z_s |^2+\|\widehat  V_s(\cd)\|^2_{\n,2}    +  |t_0-t_1|^2 |\tilde Z_s^{n,0,u^\lambda}|^2   \). 
							\end{array}
					\end{equation}

		Similarly, based on  the semiconcavity and Lipschitz  continuity of $h$ and $\Phi$, we obtain
		\begin{equation}\label{pen-hat-tilde}
			\begin{array}{ll}
				\ns\ds\!\!\!  -\frac{\lambda}{\dot \tau_1^\lambda} n \big(  h(\varrho_1^\lambda(s),\tilde X_s^{1,u^\lambda}) -\tilde Y_s^{n,1,u^\lambda}  \big)^-
				-\frac{1-\lambda}{\dot \tau_0^\lambda} n \big(h(\varrho_0^\lambda (s),\tilde X_s^{0,u^\lambda})- \tilde  Y_s^{n,0,u^\lambda} \big)^-	\\
				\ns\ds\!\!\!  \les -n \Big(\frac{\lambda}{\dot \tau_1^\lambda} \big(  h(\varrho_1^\lambda(s),\tilde X_s^{1,u^\lambda}) -\tilde Y_s^{n,1,u^\lambda}   \big)   
				+\frac{1-\lambda}{\dot \tau_0^\lambda}  \big( h(\varrho_0^\lambda (s),\tilde X_s^{0,u^\lambda})- \tilde  Y_s^{n,0,u^\lambda} \big)	\Big)^-	\\
				\ns\ds\!\!\!  =-n \Big( -  \tilde\cY_s^{n,u^\lambda} 
				-\lambda(1-\lambda) \frac{t_0-t_1}{T-t_\lambda}  \widehat Y_s  
				+\lambda(1-\lambda) \frac{t_0-t_1}{T-t_\lambda} \big( h(\varrho_1^\lambda(s),\tilde X_s^{1,u^\lambda})- h(\varrho_0^\lambda (s),\tilde X_s^{0,u^\lambda})   \big)
				\\
				\ns\ds\!\!\! \q  +\lambda h(\varrho_1^\lambda(s),\tilde X_s^{1,u^\lambda})+ (1-\lambda ) h(\varrho_0^\lambda (s),\tilde X_s^{0,u^\lambda})
				\Big)^-	\\
				%
%				\ns\ds\!\!\! 	\les -n \big(  h (s, \tilde \cX_s^{ u^\lambda} )-\tilde\cY_s^{n,u^\lambda}  +\overline \D_s \big)^-.\\
				\ns\ds\!\!\! 	\les -n \big(  h (s,  X_s^{\lambda, u^\lambda} )-\tilde\cY_s^{n,u^\lambda}  +\overline \D_s \big)^-,\\
			\end{array}
		\end{equation}
	and 
%	further, for the two terminal states,
		%
		\begin{equation}\label{ter-hat-tilde}
			\begin{array}{ll}
				\ns\ds\!\!\! 	\lambda \Phi(\tilde X_T^{1,u^\lambda}) + (1-\lambda) \Phi(\tilde X_T^{0,u^\lambda})
				\les \Phi( \tilde \cX_T^{u^\lambda})+C_\delta \lambda (1-\lambda) | \tilde X_T^{1,u^\lambda} - \tilde X_T^{0,u^\lambda}|^2
				\les \Phi(  X_T^{\lambda,u^\lambda}) +C\overline \D_T.\\
				% 				%
				% 				\ns\ds\!\!\! 	\les \Phi( X_T^\lambda)+C_\delta \lambda (1-\lambda) | \tilde X_T^1- \tilde X_T^2|^2\\
				% 				%
				% 				\ns\ds\!\!\! 	\les \Phi( X_T^\lambda) + C_\delta \lambda (1-\lambda)  A_T^2\\
			\end{array}
		\end{equation}
		\par Finally, based on the above inequalities \eqref{coe-hat-tilde}-\eqref{ter-hat-tilde}, we can  apply the comparison theorem to get the desired result.
		\end{proof}

		\ms
		The last auxiliary process is introduced as follows. 
		$$
		 \wt \sY^{n,u^\lambda} _s:= \overline \cY_s^{n,u^\lambda}-\overline \D_s,\quad s\in [t_\lambda,T],\ \dbP\mbox{-a.s.}
		$$
		Then, 
		\begin{equation}\label{bar-BSDEP}
			\left\{
			\begin{array}{ll}
				\ns\ds\!\!\!  \mathrm d  \widetilde \sY^{n,u^\lambda} _s\!  = -   \Big[  f  \big (s,  X_s^{\lambda,u^\lambda},  \widetilde \sY^{n,u^\lambda}  _s , \overline \cZ_s^{n,u^\lambda}, \int_E l( e)\overline\cV_s^{n,u^\lambda}(e)   \nu(\mathrm de),  u_s^\lambda \big )  
				\! - \! n \big(  h (s, X_s^{\lambda,u^\lambda})\! -\!  \widetilde\sY^{n,u^\lambda} _s   \big)^- \\
				%
				%			\ns\ds \!\!\!\hskip 1.9cm -n \big(  h (s, \cX_s^{ u^\lambda} )-\tilde\cY_s^{n,u^\lambda}  +\overline \D_s \big)^-\\
				%
				\ns\ds \!\!\!\hskip 1.9cm   +  C\overline \D_s  +  C_\delta \lambda (1-\lambda)\( 
				|\widehat Z_s |^2+\|\widehat  V_s(\cd)\|^2_{\n,2}    +  |t_0-t_1|^2 |\tilde Z_s^{n,0,u^\lambda}|^2   \)
				\Big] \mathrm ds
				-\mathrm d\overline \D_s\\
				\ns\ds\!\!\!\hskip 1.5cm +   \overline\cZ_s^{n,u^\lambda}   \mathrm dB_s^\lambda  + \int_E   \overline\cV_s^{n,u^\lambda} (e)    \tilde  N^\lambda(\mathrm ds,\mathrm de)  ,\quad s \in [t_\lambda,T],\\
				\ns\ds   \overline\cY_T^{n,u^\lambda}  = \Phi( X_T^{\lambda,u^\lambda}).
			\end{array}
			\right.
		\end{equation}
%		\begin{equation}\label{bar-BSDEP}
%			\left\{
%			\begin{array}{ll}
%				%
%				\ns\ds\!\!\!  \mathrm d \wt \cY_s^{n,u^\lambda} = -   \Big[    f\big(  s,  X_s^{\lambda,u^\lambda},  \wt \cY_s^{n,u^\lambda},    \overline\cZ_s^{n,u^\lambda}, 
%				\int _E \overline\cV_s^{n,u^\lambda} (e)l(X_s^{\lambda,u^\lambda},  e ) \nu(\mathrm de), u_s^\lambda  \big) + C\overline \D_s 
%				- n\big( \wt \cY_s^{n,u^\lambda} - h( s,   X_s^{\lambda,u^\lambda})   \big)^ +\\
%				%
%				\ns\ds\!\!\!  \hskip 1.5cm +C_\delta \lambda (1-\lambda) \big( |t_0-t_1|^2(1+|\tilde Z_s^{n,0,u^\lambda}|^2) +|\tilde Z_s^{n,1,u^\lambda}-\tilde Z_s^{n,0,u^\lambda}|^2 +\|\tilde V_s^{n,1,u^\lambda}-\tilde V_s^{n,0,u^\lambda}\|_\n^2\big) \Big] \mathrm ds\\
%				%
%				\ns\ds\!\!\!  \hskip 1.1cm  -\mathrm d\overline \D_s  +   \overline \cZ_s^n   \mathrm dB_s^\lambda  + \int_E \overline \cV_s^n (e)    \tilde  \mu^\lambda(\mathrm ds,\mathrm de)  ,\quad s \in [t_\lambda,T],\\
%				%
%				\ns\ds\!\!\! \wt \cY_T^{n,u^\lambda}  = \Phi(  X_T^\lambda).
%			\end{array}
%			\right.
%		\end{equation}
		% 
		Note that the penalization term of the above BSDE \eqref{bar-BSDEP} is the same as BSDE \eqref{t-lambda_BSDEP}, and they are driven by the same Brownian motion and Poisson random measure in $[t_\lambda,T]$. Similar to the proof of Lemmas \ref{esti-hatY1n} and \ref{esti-Y0n-Y2n}, we give an estimate between them as follows.

		\begin{lemma}\label{esti-barY-lambdaY}\sl
			Assume the conditions {\rm \textbf{(H$_1$)}-\textbf{(H$_4$)}} and {\rm \textbf{(C)}}  hold true. Then, there exists some constant  $C_\delta\ges 0$ such that, for all $n\ges 1$ and $s\in[t_\lambda,T]$,  $\dbP$-a.s.,
			$$
			\begin{array}{ll}
				\ns\ds\!\!\!  \mathbb E^{\mathcal F_s^\lambda} \Big[ \sup_{r \in[s,T]} |  \wt \sY^{n,u^\lambda} _r - Y_r^{n,\lambda,u^\lambda}  |^2 + \int_s^T   \Big( |   \overline\cZ_r^{n,u^\lambda} -   Z_r^{n,\lambda,u^\lambda}  |^2  
				+ \|  \overline\cV_r^{n,u^\lambda} (\cd)-   V_r^{n,\lambda,u^\lambda} (\cd) \|_{\nu,2}^2\Big)    \mathrm dr  \Big] \\
				\ns\ds\!\!\! 
				\les 
				C \overline \D_s^2
				+    \dbE^{\cF_ s^\lambda} \big[  |\sD_s^n| ^2 \big] ,  
%				+ \(  \dbE^{\cF_ s^\lambda}[  (\sD_s^n) ^q ]  \)^\frac{ 2}{q }, 
			\end{array}
			$$
			where $\sD^n$ is defined in \eqref{sd}.
		\end{lemma}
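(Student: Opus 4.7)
The plan is to mirror the BSDE-stability technique developed in Lemmas \ref{esti-hatY1n} and \ref{esti-Y0n-Y2n}, now applied to the difference $\D Y := \wt\sY^{n,u^\lambda} - Y^{n,\lambda,u^\lambda}$, $\D Z := \overline\cZ^{n,u^\lambda} - Z^{n,\lambda,u^\lambda}$, $\D V(\cd) := \overline\cV^{n,u^\lambda}(\cd) - V^{n,\lambda,u^\lambda}(\cd)$. Subtracting the two backward equations \eqref{bar-BSDEP} and \eqref{t-lambda_BSDEP}, which share the driving Brownian motion $B^\lambda$, Poisson random measure $N^\lambda$ and terminal $\Phi(X^{\lambda,u^\lambda}_T)$, produces an equation for $\D Y$ with zero terminal value, a Lipschitz driver-increment $\D_f$ in $(y,z,v)$, a penalization-increment whose product with $\D Y$ satisfies the monotonicity $\D Y \cd [n(h-\wt\sY)^- - n(h-Y^{n,\lambda})^-] \ges 0$, an additional residual drift $\mathcal R_s := C\overline\D_s + C_\d \lambda(1-\lambda)(|\widehat Z_s|^2 + \|\widehat V_s(\cd)\|_{\n,2}^2 + |t_0-t_1|^2 |\tilde Z^{n,0,u^\lambda}_s|^2)$, and the finite-variation piece $-\mathrm d \overline\D_s$.

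Applying It\^o's formula to $|\D Y_s|^2$ on $[s,T]$, dropping the non-positive penalization contribution by monotonicity, and absorbing the driver-increment via the Lipschitz bound $2\D Y \cd \D_f \les C_\e |\D Y|^2 + \e(|\D Z|^2 + \|\D V\|_{\n,2}^2)$ reduce matters to estimating
\begin{equation*}
2 \dbE^{\cF_s^\lambda} \Big[ \int_s^T \D Y_r \, \mathcal R_r \, \mathrm dr + \int_s^T \D Y_r \, \mathrm d\overline\D_r \Big].
\end{equation*}
The $\mathcal R_r$ contribution is handled through Cauchy--Schwarz combined with Lemma \ref{CV-Le-ZK-1}-(ii) applied at $p=4$, which yields $\lambda^2(1-\lambda)^2 \dbE^{\cF_s^\lambda}[(\int_s^T (|\widehat Z|^2 + \|\widehat V\|_{\n,2}^2) \mathrm dr)^2] \les C_\d \lambda^2(1-\lambda)^2 \D_s^4 + C \dbE^{\cF_s^\lambda}[(\sD^n_s)^2]$; the elementary identification $\lambda(1-\lambda)\D_s^2 \les C \overline\D_s$ (read off from $\overline\D_s = C\tilde\D_s + C_\d\lambda(1-\lambda)\D_s^2$) then converts the first term into $C\overline\D_s^2$, while the $|t_0-t_1|^2 |\tilde Z^{n,0,u^\lambda}|^2$ portion is absorbed via Lemma \ref{Y-esti} and the $\overline\D$-dependent increments through Lemmas \ref{CV-Le-X-1} and \ref{x-cx}.

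The main obstacle is the finite-variation term $2\int_s^T \D Y_r \, \mathrm d\overline\D_r$ together with the promotion of the left-hand side to $\sup_{r\in[s,T]}|\D Y_r|^2$, since a naive Cauchy--Schwarz leaves $\overline\D_T - \overline\D_s$ on the right, which is not majorised by $\overline\D_s^2$ on its own. To overcome this I shall import the H\"older conjugate-exponent and Doob martingale-inequality device used in the proof of Lemma \ref{esti-hatY1n}-(ii): pick $p \in (1,2)$ with conjugate $q = p/(p-1) > 2$, estimate $\dbE^{\cF_s^\lambda}[\sup_{r \in [s,T]} |\D Y_r| \cd (\overline\D_T - \overline\D_s)] \les (\dbE^{\cF_s^\lambda}[\sup_r |\D Y_r|^p])^{1/p} (\dbE^{\cF_s^\lambda}[(\overline\D_T - \overline\D_s)^q])^{1/q}$, and apply Doob's $L^{2/p}$ inequality to the $\dbF^\lambda$-martingale $M_{s,t} = \dbE^{\cF_s^\lambda}[\sup_{r \in [t,T]}|\D Y_r|^p]$ in order to trade the $L^p$-sup for its $L^2$-counterpart, which can then be reabsorbed into the left-hand side through a small-constant argument. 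Combining this with Burkholder--Davis--Gundy for the martingale pieces and Gronwall yields the claimed bound $C\overline\D_s^2 + \dbE^{\cF_s^\lambda}[|\sD^n_s|^2]$.
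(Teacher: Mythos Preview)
Your proposal is correct and follows essentially the same approach as the paper's proof: subtract the two backward equations, apply It\^o to the square of the difference, exploit the monotonicity of $y\mapsto(h-y)^-$ to drop the penalization cross-term, control the residual drift $\mathcal R$ via Lemmas \ref{Y-esti} and \ref{CV-Le-ZK-1} (the paper invokes these without spelling out the $p=4$ application you describe), and handle the finite-variation piece $\int \D Y\,\mathrm d\overline\D$ with the same H\"older/Doob device from \eqref{est7}--\eqref{estimate-M}. The only cosmetic difference is that the paper uses an exponential weight $e^{\alpha s}$ in place of your Gronwall step.
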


\begin{proof}
	For all $s\in[t_\lambda,T]$, denote $ \big(\sY^n_s ,\sZ^n_s ,\sV^n_s \big):=\big( \wt \sY^{n,u^\lambda} _s- Y_s^{n,\lambda,u^\lambda} ,\overline \cZ_s^{n,u^\lambda} -  Z_s^{n,\lambda,u^\lambda} , \overline\cV_s^{n,u^\lambda}  -  V_s^{n,\lambda,u^\lambda}   \big)$. Then,
	\begin{equation*} 
	\left\{
	\begin{array}{ll}
		\ns\ds\!\!\!  \mathrm  d\sY^n_s =   -   \Big[  f  \big (s, X_s^{\lambda,u^\lambda},  \widetilde \sY^{n,u^\lambda} _s  , \overline \cZ_s^{n,u^\lambda}, \int_E l( e)\overline\cV_s^{n,u^\lambda}(e)   \nu(\mathrm de),  u_s^\lambda \big )  
		\! - \! n \big(  h (s, \cX_s^{ u^\lambda} )\! -\! \wt \sY^{n,u^\lambda}_s   \big)^-
		+  C\overline \D_s+D^n_s \\
		\ns\ds\!\!\! \hskip1.6cm  - f\big(  s, X_s^{\lambda,u^\lambda}, Y_s^{n,\lambda,u^\lambda},  Z_s^{n,\lambda,u^\lambda},  \int _E  l( e) V_s^{n,\lambda,u^\lambda} (e)   \nu(\mathrm de),u_s^\lambda  \big)  
		+ n    \big(  h( s ,X_s^{\lambda,u^\lambda})\!-\!Y_s^{n,\lambda,u^\lambda}  \big)^ - 
		
		\Big] \mathrm ds \\
		%			\ns\ds \!\!\!\hskip 1.9cm -n \big(  h (s, \cX_s^{ u^\lambda} )-\tilde\cY_s^{n,u^\lambda}  +\overline \D_s \big)^-\\
		%
%		\ns\ds \!\!\!\hskip 1.6cm   +  C\overline \D_s  +  C_\delta \lambda (1-\lambda)\( 
%		|\widehat Z_s |^2+\|\widehat  V_s(\cd)\|^2_{\n,2}    +  |t_0-t_1|^2 |\tilde Z_s^{n,0,u^\lambda}|^2   \)
%		\Big] \mathrm ds
%		-\mathrm d\overline \D_s\\
%		\ns\ds \!\!\!\hskip 1.6cm   
%		-\mathrm d\overline \D_s\\
		%
		\ns\ds\!\!\!\hskip 1.3cm -\mathrm d\overline \D_s +   \sZ_s^{n}   \mathrm dB_s^\lambda  + \int_E    \sV_s  (e)    \tilde  N^\lambda(\mathrm ds,\mathrm de) ,\quad s \in [t_\lambda,T], \\
		\ns\ds\!\!\! \sY_T^n=0,
	\end{array}
	\right.
\end{equation*} 
where $D^n_s:=  C_\delta \lambda (1-\lambda)\( 
		|\widehat Z_s |^2+\|\widehat  V_s(\cd)\|^2_{\n,2}    +  |t_0-t_1|^2 |\tilde Z_s^{n,0,u^\lambda}|^2   \),\ s\in[t_\lambda,T] $.

 For some constant $\a>0$, we apply  It\^o's formula to $e^{\a s}|\sY^n_s|^2$, 
	\begin{equation*} 
		\begin{array}{ll}
			\ns\ds\!\!\! e^{\a  s}|\sY^n_ s|^2 + \int_s^T \a e^{\a r}|\sY^n_r|^2 \mathrm dr  
			+ \int_s^T   e^{\a r}  |\sZ^n_r|^2\mathrm dr
			+  \int_s^T \int_E e^{\a r} |\sV^n_r(e)|^2 N^\lambda(\mathrm ds,\mathrm de) \\
			\ns\ds\!\!\! =  
		 	 2 \int_s^T e^{\a r} \sY^n_r  \[ 
%		 	 f  \big (r,  X_r^{\lambda,u^\lambda}, \widetilde \cY_r^{n,u^\lambda}  , \overline \cZ_r^{n,u^\lambda}, \int_E l( e)\overline\cV_r^{n,u^\lambda}(e)   \nu(\mathrm de),  u_s^\lambda \big )  
            I_r^3 + n    \big(  h( r ,X_r^{\lambda,u^\lambda})-Y_r^{n,\lambda,u^\lambda}  \big)^ - 
			\! - \! n \big(  h (r, \cX_r^{ u^\lambda} )\! -\!\wt \sY^{n,u^\lambda}_r   \big)^- 
			+  C\overline \D_r+D^n_r \]\mathrm dr\\
			%
%			\ns\ds\!\!\! \q  - f\big(  r, X_r^{\lambda,u^\lambda}, Y_r^{n,\lambda,u^\lambda},  Z_r^{n,\lambda,u^\lambda},  \int _E  l( e) V_r^{n,\lambda,u^\lambda} (e)   \nu(\mathrm de),u_r^\lambda  \big)  
%			+ n    \big(  h( r ,X_r^{\lambda,u^\lambda})-Y_r^{n,\lambda,u^\lambda}  \big)^ - +  C\overline \D_r+D^n_r \]\mathrm dr \\
			%			\ns\ds \!\!\!\hskip 1.9cm -n \big(  h (s, \cX_s^{ u^\lambda} )-\tilde\cY_s^{n,u^\lambda}  +\overline \D_s \big)^-\\
			%
%			\ns\ds \!\!\!\hskip 1.3cm   +  C\overline \D_r  +  C_\delta \lambda (1-\lambda)\( 
%			|\widehat Z_r |^2+\|\widehat  V_r(\cd)\|^2_{\n,2}    +  |t_0-t_1|^2 |\tilde Z_r^{n,0,u^\lambda}|^2   \) \]\mathrm dr  \\
			%
			\ns\ds\!\!\!  \q  +2 \int_s^T e^{\a r} \sY^n_r\mathrm d\overline \D_r
			- 2 \int_s^T e^{\a r} \sY^n_r \sZ^n_r \mathrm dB_r^\lambda 
			- 2 \int_s^T   \int_E  e^{\a r} \sY^n_r  \sV^n_r (e) \tilde N^\lambda(\mathrm dr,\mathrm de), 
		\end{array}
	\end{equation*} 
	%
%
%
%Setting  
%  $$\ba{ll}
% \ns\ds D_{s}:=    C\overline \D_r  +  C_\delta \lambda (1-\lambda)\( 
% |\widehat Z_r |^2+\|\widehat  V_r(\cd)\|^2_{\n,2}    +  |t_0-t_1|^2 |\tilde Z_r^{n,0,u^\lambda}|^2 ,
%\ea$$
with $$\ds I_r^3\! :=\! f\big(   r\!,  X_r^{\lambda,u^\lambda}\!,      \wt \sY^{n,u^\lambda} _r\!,    \overline\cZ_r^{n,u^\lambda}\!, \!
\int _E \! l(  e )\overline\cV_r^{n,u^\lambda} (e) \nu(\mathrm de),\! u_r^\lambda   \big) 
-  f\big(   r \!, X_r^{\lambda,u^\lambda}\!, Y_r^{n,\lambda,u^\lambda}\!,  Z_r^{n,\lambda,u^\lambda}\!, \!  \int _E \!  l( e) 		V_r^{n,\lambda,u^\lambda} (e)   \nu(\mathrm de),\! u_r^\lambda   \big).    $$
\no Then, based on the continuity of $f$,
	\begin{equation*} 
		\begin{array}{ll}
			\ns\ds\!\!\!  e^{\a s}|\sY^n_s|^2+ 
			\dbE^{\cF_ s^\lambda}\[ \int_ s^T\(\a e^{\a r}|\sY^n_r|^2 +e^{\a r} |\sZ^n_r|^2 + \int_E e^{\a r} |\sV^n_r(e)|^2 \n(\mathrm de) \)\mathrm dr  \]\\
\ns\ds\!\!\!   \les  C \dbE^{\cF_ s^\lambda}\[\int_ s^T   e^{\a r} \sY^n_r  \(   
| \sY^n_r|+| \sZ^n_r|+ \Big|\int_E  l(e)  \sV^n_r(e)\n(\mathrm de) \Big|    \)\mathrm dr  \]
+  2 \dbE^{\cF_ s^\lambda}\[ \int_s^T e^{\a r} \sY^n_r  \big( C\overline \D_ r +D^n_r\big) \mathrm dr\] \\
\ns\ds\!\!\! \q
	+2\dbE^{\cF_ s^\lambda}\[ \int_s^T e^{\a r} \sY^n_r\mathrm d\overline \D_r\]\\
\ns\ds\!\!\!   \les  
C\dbE^{\cF_ s^\lambda} \[ \int_ s^T   e^{\a r}|\sY^n_r|^2  \mathrm dr  \]
%+\dbE^{\cF_ s^\lambda}\[   \sup\limits_{r\in[ s,T]}|\sY^n_r| \int_ s^T   e^{\a r}    D^n_r     \mathrm dr\]
+C\dbE^{\cF_s^\lambda}\big[| \overline \D_T|^2   \big]
+C\dbE^{\cF_s^\lambda}\big[  |\sD_s^n| ^2 \big]
+2\dbE^{\cF_ s^\lambda}\[ \int_s^T e^{\a r} \sY^n_r\mathrm d\overline \D_r\]
\\
\ns\ds\!\!\!\q 
+ \frac12 \dbE^{\cF_ s^\lambda}\[ \int_ s^T\( e^{\a r} |\sZ^n_r|^2 + \int_E e^{\a r} |\sV^n_r(e)|^2 \n(\mathrm de) \)\mathrm dr  \],
\\
%
%\ns\ds\qq  \\
		\end{array}
	\end{equation*} 
where we haved used Lemmas  \ref{Y-esti} and  \ref{CV-Le-ZK-1}.
	%
%with $$\ds I_r^3\! :=\! f\big(   r\!,  X_r^{\lambda,u^\lambda}\!,   \wt \cY_r^{n,u^\lambda}\!,    \overline\cZ_r^{n,u^\lambda}\!, \!
%				\int _E \! l(  e )\overline\cV_r^{n,u^\lambda} (e) \nu(\mathrm de),\! u_r^\lambda   \big) 
%				 -  f\big(   r \!, X_r^{\lambda,u^\lambda}\!, Y_r^{n,\lambda,u^\lambda}\!,  Z_r^{n,\lambda,u^\lambda}\!, \!  \int _E \!  l( e) 		V_r^{n,\lambda,u^\lambda} (e)   \nu(\mathrm de),\! u_r^\lambda   \big).    $$

  Taking $\a$   large enough, we get
\begin{equation*} 
	\begin{array}{ll}
		\ns\ds\!\!\!  |\sY^n_s|^2+ 
		\dbE^{\cF_ s^\lambda}\[ \int_ s^T\big(  |\sZ^n_r|^2 +     \|\sV^n_r(\cd)\|^2_{\nu,2}   \big)\mathrm dr  \]
		\les   
		 C\dbE^{\cF_ s^\lambda}\[ \int_s^T  \sY^n_r\mathrm d\overline \D_r\]
		+C\dbE^{\cF_s^\lambda}\big[| \overline \D_T|^2   \big]
		+C\dbE^{\cF_s^\lambda}\big[|  \sD^n_s|^2   \big].\\
		%
%		\ns\ds\!\!\!   \les   
%		 \dbE^{\cF_ s^\lambda}\[   \sup\limits_{r\in[ s,T]}|\sY^n_r| \int_ s^T     D^n_r    \mathrm dr\]
%		+\dbE^{\cF_s^\lambda}\[| \overline \D_T|^2   \].\\
%		%
		%
	\end{array}
\end{equation*}

Following thr techniques used in \eqref{est7} and \eqref{estimate-M}, we can tackle with $\ds\dbE^{\cF_ s^\lambda}\[ \int_s^T  \sY^n_r\mathrm d\overline \D_r\]$ similarty to get
\begin{equation*} 
	\begin{array}{ll}
			%
			%		\ns\ds\!\!\! \dbE^{\cF_ s^\lambda}\[ \sup_{s \in[t,T]}  |\sY^n_s|^2  \]\\ 
			%
			\ns\ds\!\!\!   \dbE^{\cF_ s^\lambda}\[ \sup_{s \in[t,T]}  |\sY^n_s|^2  \]
			\les   C \overline \D_s^2
			+C\dbE^{\cF_s^\lambda} \big[|  \sD^n_s|^2    \big], \\
		\end{array}
\end{equation*} 
and 
\begin{equation*} 
	\begin{array}{ll}
			\ns\ds\!\!\!   
			\dbE^{\cF_ s^\lambda}\[ \int_ s^T\big(  |\sZ^n_r|^2 +     \|\sV^n_r(\cd)\|^2_{\nu,2}   \big)\mathrm dr  \]
			\les   C \overline \D_s^2
			+    \dbE^{\cF_ s^\lambda} \big[  |\sD_s^n|^2 \big].\\
			%
	%		\ns\ds\!\!\!   \les   C \overline \D_s^2
	%		+    \dbE^{\cF_ s^\lambda}[  (\cD_s^n) ^2 ]   
	%		+ \(  \dbE^{\cF_ s^\lambda}[  (\cD_s^n) ^q ]  \)^\frac{ 2}{q }\\
			%
			%
		\end{array}
\end{equation*} 

\end{proof}

 Based on the above preparations, we give the proof of \eqref{est11}. 
%	{	\textbf {Proof of Proposition \ref{Pro-convex-W-n}.}}\  

 	For any $(t_0,x_0),\ (t_1,x_1) \in [0, T-\delta] \times \mathbb R$, and $\lambda\in[0,1]$,
		according to the Lemma \ref{comp-hat-tilde}, we have
		\begin{equation*}
%			\lambda Y_{t_1}^{n,1,u^1} + (1-\lambda) Y_{t_0}^{n,0,u^0} - Y_{t_\lambda}^{n,\lambda,u^\lambda} 
%			=
			 \lambda   \tilde Y_{t_\lambda}^{n,1,u^\lambda}  + (1-\lambda) \tilde Y_{t_\lambda}^{n,0,u^\lambda} - Y_{t_\lambda}^{n,\lambda,u^\lambda}
			=  \tilde \cY_{t_\lambda}^{n,u^\lambda} - Y_{t_\lambda}^{n,\lambda,u^\lambda}
			\les  \overline \cY_{t_\lambda}^{n,u^\lambda}  - Y_{t_\lambda}^{n,\lambda,u^\lambda}.  
		\end{equation*}

	\no	Furthermore, from the definition of $ \overline \cY ^{n,u^\lambda} _\cd $ and   Lemma \ref{esti-barY-lambdaY},
		$$
		 \overline \cY_{t_\lambda}^{n,u^\lambda}  - Y_{t_\lambda}^{n,\lambda,u^\lambda }
		 \les     \wt \sY ^{n,u^\lambda} _{t_\lambda} +\overline \D_{t_\lambda}   - Y_{t_\lambda}^{n,\lambda,u^\lambda }
		 \les 
		 C \overline \D_{t_\lambda} 
		 +   \( \dbE \big[  |\sD_{t_\lambda}^n| ^2 \big]\) ^\frac12,
		$$ 
		that is,
		\begin{equation}\label{est10}
			\lambda   \tilde Y_{t_\lambda}^{n,1,u^\lambda}  + (1-\lambda) \tilde Y_{t_\lambda}^{n,0,u^\lambda} - Y_{t_\lambda}^{n,\lambda,u^\lambda}  \les C_\d \lambda (1-\lambda)\big( |t_1-t_0|^2 + |x_1-x_0|^2  \big)
			+   \( \dbE \big[  |\sD_{t_\lambda}^n| ^2 \big] \) ^\frac12.
		\end{equation}
		Then the desired \eqref{est11} is proved.

		According to  \eqref{est1111} and \eqref{est2222}, Proposition \ref{Pro-convex-W-n} is proved. For Theorem \ref{semicon}, we only need to take the limit $n\to \i$ in Proposition \ref{Pro-convex-W-n}.

 \subsection{The Lipschitz continuity of $W(\cd,\cd)$ in $(t,x)$}
  
 In this part, we study the Lipschitz continuity of $W(\cd,\cd)$ with respect to $(t,x)$. For this, the transformation introduced in the previous part will be used again.

 Taking $\lambda=0,\ i=1$ in Subsection \ref{semi-concavity}, we get a transformation $\t_1^0(s)$ from \eqref{trans}, which is the only one used in the following. So we denote $\tau(\cd):= \tau_1^\lambda(\cd) $. Obviously,  $\tau$ maps $ [t_1,T]$ to $[t_0,T]$, and   $\dot{\tau} = \displaystyle \frac{\mathrm d}{\mathrm  ds}  \tau(s)  =  \frac{T-t_0}{T-t_1}$.
 The rest settings are similar to Subsection \ref{semi-concavity} (keeping in mind that $\lambda=0$). Then, the meanings of $ \dbB_s^1,\ \dbN_s^1,\ \tilde\dbN_s^1,\ g_\t,\ \mathbf F^1,\ \sF_s^1,\ \cU_{t_0,T},\ \cU_{t_1,T} $ are recognized.

 In this framework, some  results can degenerate into the following ones. The first one is from Lemma \ref{tau_pro}. We also refer to  \cite{BHL-2011, J-2013}.

	\begin{lemma}\sl
%		\label{tau_pro-1}
		 %For the above Kulik's transformation and time change, 
		 There exists the constant $C_{\delta}>0$ only depending on $\delta$, such that
		 $$
		 |\varrho(s) -s| + |\frac{1}{\dot \tau }-1| + |\frac{1}{\sqrt{\dot \tau  }}-1|  \les  C_\delta |t_0-t_1|,\quad s\in[t_0,T].
		 $$
	\end{lemma}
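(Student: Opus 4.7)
The plan is to derive this lemma as a direct specialization of Lemma \ref{tau_pro} together with explicit elementary computations using the closed-form expressions for $\tau$ and $\varrho$ in the degenerate case $\lambda=0$, $i=1$. In this setting $t_\lambda=t_0$, so $\tau(s)=t_0+\tfrac{T-t_0}{T-t_1}(s-t_1)$ maps $[t_1,T]\to[t_0,T]$, while its inverse is $\varrho(s)=t_1+\tfrac{T-t_1}{T-t_0}(s-t_0)$ mapping $[t_0,T]\to[t_1,T]$, and $\dot\tau=\tfrac{T-t_0}{T-t_1}$ is a constant.

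First, I would compute $\varrho(s)-s$ directly: a short rearrangement yields $\varrho(s)-s=(t_1-t_0)\cdot\tfrac{T-s}{T-t_0}$, from which $|\varrho(s)-s|\leq|t_0-t_1|$ for all $s\in[t_0,T]$ since $\tfrac{T-s}{T-t_0}\in[0,1]$. Next, I would bound
$$\Bigl|\tfrac{1}{\dot\tau}-1\Bigr|=\Bigl|\tfrac{T-t_1}{T-t_0}-1\Bigr|=\tfrac{|t_0-t_1|}{T-t_0}\leq\tfrac{|t_0-t_1|}{\delta},$$
using the standing assumption $t_0,t_1\in[0,T-\delta]$ which forces $T-t_0\geq\delta$. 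This is exactly the content of the first displayed estimate in Lemma \ref{tau_pro} specialized to $\lambda=0$.

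For the last term, I would use the elementary identity $|\sqrt{a}-1|=|a-1|/(\sqrt{a}+1)$ valid for $a>0$. Since $\tfrac{1}{\dot\tau}=\tfrac{T-t_1}{T-t_0}$ is bounded away from $0$ (again by $T-t_i\geq\delta$), the denominator $\sqrt{1/\dot\tau}+1$ is bounded below by a positive constant depending only on $\delta$ and $T$. Hence
$$\Bigl|\tfrac{1}{\sqrt{\dot\tau}}-1\Bigr|\leq C_\delta\Bigl|\tfrac{1}{\dot\tau}-1\Bigr|\leq C_\delta|t_0-t_1|,$$
where the constant $C_\delta$ absorbs the lower bound on $\sqrt{1/\dot\tau}+1$. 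Combining the three estimates yields the claimed inequality, with a constant $C_\delta$ depending only on $\delta$ (and implicitly on $T$, which is fixed throughout).

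There is no real obstacle here: the entire statement is a deterministic, elementary computation, and all the work has already been performed in Lemma \ref{tau_pro} in greater generality. The only point worth emphasizing is that the lower bound $T-t_i\geq\delta$ is essential — without it the factors $\tfrac{1}{T-t_0}$ and $\tfrac{1}{\sqrt{1/\dot\tau}+1}$ could blow up as $t_0,t_1\uparrow T$. This is precisely the reason the semi-concavity and joint Lipschitz results are stated on $[0,T-\delta]\times\mathbb{R}^n$ rather than on $[0,T]\times\mathbb{R}^n$.
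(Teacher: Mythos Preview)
Your proposal is correct and follows essentially the same approach as the paper: the paper does not give a proof at all, stating only that the lemma follows from Lemma~\ref{tau_pro} (and citing \cite{BHL-2011,J-2013}), while you carry out the explicit elementary specialization. One minor simplification: since $\sqrt{1/\dot\tau}+1\geq 1$ always, you actually get $|1/\sqrt{\dot\tau}-1|\leq|1/\dot\tau-1|$ directly, without needing a $\delta$-dependent lower bound on the denominator.
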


According to Lemma \ref{x-cx}, we have 

	\begin{lemma}\sl
%		\label{SDEP-estimate}
		  Suppose {\rm \textbf{(H$_1$)}}-{\rm \textbf{(H$_3$)}} and {\rm \textbf{(H$_4$)}-(i),(ii)} hold,  
%		{\color{red} or {\rm \textbf{(H$_1$)}}, {\rm \textbf{(H$_2$)}}, {\rm \textbf{(H$_4$)}-(i),(ii)} and $\ds\int_E \ell(e) \nu(\mathrm de)<\i$ hold,} 
 for all $    p\ges 1$ and $s\in[t_0,T]$, we have, $\dbP$-a.s.,
		$$
		\mathbb E ^{\cF_s^0}\Big[ \underset{ \t\in [s ,T] }{ \sup }  |X_\t^{0,u^0} - \tilde X_\t^{1,u^0}|^p  \Big] 
		\leqslant  C_{T,p,\delta} \big( |t_0-t_1|^p + |X_s^{0,u^0}-\tilde X_s^{1,u^0}|^p \big), \quad \dbP\mbox{-a.s.}
		$$
%		Moreover, let $p\ges  2$ and {\rm \textbf{(H$_6$)$_p$}} also holds, then the estimate is likewise valid.
	\end{lemma}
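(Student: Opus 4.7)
The plan is to recognize this estimate as a direct specialization of Lemma \ref{CV-Le-X-1} to the degenerate case $\lambda=0$ (after relabeling). When $\lambda=0$, the time change $\tau_0^\lambda$ becomes the identity on $[t_0,T]$, so $\dot\tau_0^0=1$ and $\varrho_0^0(s)=s$. Consequently, the auxiliary process $\tilde X^{0,u^0}_s=X^{0,u^0}_{\varrho_0^0(s)}$ coincides with $X^{0,u^0}_s$ itself, and the general transformed SDE \eqref{tilde_SDEP_i} reduces to \eqref{ti-SDEP} with $i=0$. Meanwhile $\tilde X^{1,u^0}$ solves exactly the transformed SDE on $[t_0,T]$ driven by $B^0$ and $\tilde N^0$, with the compensator correction proportional to $(1-1/\dot\tau)$. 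Substituting $\lambda=0$ into the inequality of Lemma \ref{CV-Le-X-1} and noting that $\cF^\lambda_s=\cF^0_s$ then yields the claimed bound.

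Alternatively, one could redo the computation from scratch by writing the SDE satisfied by the difference $\tilde X^{1,u^0}_\cdot - X^{0,u^0}_\cdot$:
$$
\mathrm d(\tilde X^{1,u^0}_s-X^{0,u^0}_s)=\Big(\tfrac{1}{\dot\tau}b(\varrho(s),\tilde X^{1,u^0}_s,u^0_s)-b(s,X^{0,u^0}_s,u^0_s)\Big)\mathrm ds+\Big(\tfrac{1}{\sqrt{\dot\tau}}\sigma(\varrho(s),\cdot)-\sigma(s,\cdot)\Big)\mathrm dB^0_s+(\cdots)\tilde N^0+\text{comp.},
$$
decompose each bracket as $(1/\dot\tau-1)\cdot(\text{bounded coefficient})+(1/\dot\tau)\cdot(\text{Lipschitz difference})$, and then bound the first kind of term by $C_\delta|t_1-t_0|$ (using the preceding unnumbered lemma on $|1/\dot\tau-1|$, $|1/\sqrt{\dot\tau}-1|$, $|\varrho(s)-s|$) and the second by $C|\tilde X^{1,u^0}_r-X^{0,u^0}_r|$ via the Lipschitz hypothesis in \textbf{(H$_1$)}, \textbf{(H$_4$)}-(i). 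The compensator correction term $\int_E\gamma(\varrho(s),\tilde X^{1,u^0}_{s-},u^0_s,e)(1-1/\dot\tau)\nu(\mathrm de)$ is handled identically, invoking \textbf{(H$_3$)} to ensure $\int_E\ell(e)\nu(\mathrm de)<\infty$. Burkholder--Davis--Gundy applied to the stochastic integrals and Gronwall's inequality in conditional form (as in the proof of Lemma \ref{CV-Le-X-1}) then close the estimate.

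Since this is a direct corollary, there is no genuine obstacle; the only point worth flagging is that we do \emph{not} have a smallness factor $\lambda(1-\lambda)$ in this regime, which is consistent with the conclusion, where only $|t_1-t_0|^p$ (not $|t_1-t_0|^{2p}$) appears. The claim should therefore follow by one or two sentences invoking Lemma \ref{CV-Le-X-1}, or by a direct proof of no greater length than that of Lemma \ref{CV-Le-X-1} itself.
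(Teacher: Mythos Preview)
Your proposal is correct and matches the paper's approach: the paper also derives this lemma in one line by citing an earlier estimate and specializing to $\lambda=0$. In fact your citation of Lemma~\ref{CV-Le-X-1} is more accurate than the paper's own reference (the paper attributes the result to Lemma~\ref{x-cx}, but at $\lambda=0$ that lemma degenerates to $0\le 0$, whereas Lemma~\ref{CV-Le-X-1} with $\tilde X^{0,u^0}\equiv X^{0,u^0}$ gives exactly the stated bound, and its hypotheses coincide with those assumed here).
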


	From Lemma \ref{CV-Le-Y-1}, we have

		 \bl\label{LP-Le-Y-1-Lip}\sl
		  Under the conditions {\rm \textbf{(H$_1$)}-\textbf{(H$_3$)}}, {\rm \textbf{(H$_4$)}-(i), (ii)} and {\rm\textbf{(C)}},   
		  $$
		 |\tilde Y_s^{n,1,u^0} -   Y_s^{n,0,u^0} |\les C_\delta \D_s,\q \forall s\in[t_0,T],\  \dbP \mbox{-a.s.}$$
 
\el

	\begin{proposition}\label{join-lip-n}\sl
		Assume that {\rm \textbf{(H$_1$)}-\textbf{(H$_3$)}} and {\rm \textbf{(H$_4$)}-(i), (ii)}   hold. Then, for all $\d>0$, there exists a constant $C_{T,p,\delta}>0$ only depending on  $\delta$, the bounds and Lipschitz constants of $\sigma$, $b$ and $\gamma$, 
  such that for any 
		$t_0, t_1 \in [0, T-\delta]  $, $x_0,x_1\in\dbR^n$,
		$$
		|W^n(t_0,x_0  )-W^n(t_1,x_1  )| \les C_{ \delta} |t_0-t_1|+ |x_0-x_1| .
		$$
%  $W^n(\cdot,\cd)$ is the one defined by  \eqref{W-n}.  
	\end{proposition}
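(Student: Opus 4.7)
The plan is to establish the joint Lipschitz bound by decoupling it into an $x$-variation at fixed time and a $t$-variation at fixed state, then combining them via the triangle inequality. The $x$-variation is a direct extension of the standard continuous-dependence argument already present in \eqref{esti-rbsde}-(ii), applied to the penalized BSDE \eqref{BSDEP-pen} and carried through the essinf in $u(\cd)$: for every admissible $u(\cd)\in\cU_{t_0,T}$ one obtains $|Y_{t_0}^{n,t_0,x_0;u} - Y_{t_0}^{n,t_0,x_1;u}|\les C|x_0-x_1|$ uniformly in $n$, whence $|W^n(t_0,x_0)-W^n(t_0,x_1)|\les C|x_0-x_1|$. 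The real work lies in bounding $|W^n(t_0,x_1)-W^n(t_1,x_1)|$ by $C_\delta|t_0-t_1|$, and for this I would recycle the Kulik time-stretching machinery of Section~\ref{semi-concavity} specialized to $\lambda=0$, $i=1$, as already reformulated in the present subsection.

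Assuming without loss of generality $t_0\les t_1$, the map $u^1(\cd)\mapsto u^0(\cd):=u^1(\tau(\cd))$ sets up a bijection between $\cU^1_{t_1,T}$ and $\cU^0_{t_0,T}$, which simply reinterprets admissibility through the time change $\tau:[t_1,T]\to[t_0,T]$. Under this correspondence, the inverse-time-changed process $\tilde Y^{n,1,u^0}$ on $[t_0,T]$, constructed from $Y^{n,1,u^1}$ as in \eqref{tilde_BSDEP_in} with $i=1$, satisfies at the left endpoint $\tilde Y_{t_0}^{n,1,u^0}=Y_{t_1}^{n,1,u^1}$ together with $\tilde X_{t_0}^{1,u^0}=x_1$. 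Consequently $W^n(t_1,x_1)=\essinf_{u^0\in\cU^0_{t_0,T}}\tilde Y_{t_0}^{n,1,u^0}$, so that the comparison with $W^n(t_0,x_0)=\essinf_{u^0\in\cU^0_{t_0,T}}Y_{t_0}^{n,0,u^0}$ now takes place on the same probability space using the same family of controls.

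With this in hand, Lemma~\ref{LP-Le-Y-1-Lip} provides the crucial pointwise bound $|\tilde Y_{t_0}^{n,1,u^0}-Y_{t_0}^{n,0,u^0}|\les C_\delta\D_{t_0}$, and by the definition of $\D$ in \eqref{D} together with $\tilde X_{t_0}^{1,u^0}=x_1$ and $X_{t_0}^{0,u^0}=x_0$, the right-hand side collapses to $C_\delta(|t_0-t_1|+|x_0-x_1|)$. Passing to the essinf over $u^0$ yields $W^n(t_1,x_1)\les W^n(t_0,x_0)+C_\delta(|t_0-t_1|+|x_0-x_1|)$. The reverse direction follows by applying exactly the same construction to a near-optimal control for $W^n(t_0,x_0)$ and invoking the symmetric analogue of Lemma~\ref{LP-Le-Y-1-Lip} obtained by interchanging the roles of the two initial data, whose proof is manifestly insensitive to the labelling; a final triangle inequality then produces the claimed joint bound.

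The main obstacle I foresee is not analytic but organizational: one must verify carefully that the Kulik construction interacts correctly with the admissibility of controls and with the filtration change $\dbF^0\leftrightarrow{\bf F}^1$, together with the associated change of measure $\dbP\leftrightarrow\dbQ_\tau$, so that the essinf is genuinely preserved under the bijection $u^0\leftrightarrow u^1$ and the pointwise inequality from Lemma~\ref{LP-Le-Y-1-Lip} survives the passage to the essinf. Once this bookkeeping is settled, the estimate is essentially a one-line consequence of Lemma~\ref{LP-Le-Y-1-Lip}, and the uniformity of the constants in $n$ is automatic since the constants in Lemmas~\ref{CV-Le-X-1} and~\ref{LP-Le-Y-1-Lip} themselves do not depend on $n$.
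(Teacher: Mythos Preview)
Your proposal is correct and follows essentially the same route as the paper: reparametrize both value functions over the common control set $\cU^0_{t_0,T}$ via the Kulik time change, invoke Lemma~\ref{LP-Le-Y-1-Lip} to get the pointwise bound $|\tilde Y_{t_0}^{n,1,u^0}-Y_{t_0}^{n,0,u^0}|\les C_\delta\D_{t_0}=C_\delta(|t_0-t_1|+|x_0-x_1|)$, pass through the essinf via an $\e$-optimal control, and conclude by symmetry. The only remark is that your opening decoupling into a separate $x$-step and $t$-step is superfluous: your own detailed argument (like the paper's) already handles the joint $(t,x)$-variation in one stroke through $\D_{t_0}$, so the preliminary $x$-Lipschitz estimate is not needed.
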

	%

% With such a framework, for any $u^1(\cd)\in \cU^{B^1,N^1}_{t_1,T}$,  we denote $u^0(s)=u^1(\t^{-1}(s))$, $s\in[t_0,T]$, which is $U$-valued process predictable with respect to the new filtration $\dbF ^{B^0,N^0}$.
% Denote $\cU_{t_0,T}^{B^0,N^0}$ by all such control processes.

\begin{proof} For all $n\ges 1$ and $(t_0,x_0),(t_1,x_1)\in[0,T-\d]\times\dbR^n$, we recall 
$$\ba{ll}
\ns\ds W^n(t_0,x_0 )
=  \underset{u^0(\cd)\in \mathcal{U}_{t_0, T}^{0}}{\operatorname{essinf}}J_{n}(t_0,x_0;u^0(\cd))
= \underset{u^0(\cd)\in \mathcal{U}_{t_0, T}  ^{0}}  {\operatorname{essinf}}  Y_{t_0}^{n,0,u^0 },\\  
\ns\ds	 W^n(t_1,x_1 )
=  \underset{u^1(\cd)\in \mathcal{U}_{t_1, T}^{1} }{\operatorname{essinf}}J_{n}(t_1,x_1;u^1(\cd))
%= \underset{u^1(\cd)\in \mathcal{U}_{t_1, T}^{1} }{\operatorname{essinf}}Y_{t_1}^{n,1,u^1}
 =\underset{u^0(\cd)\in \mathcal{U}^{0} _{t_0, T}}{\operatorname{essinf}}\wt Y_{t_0}^{n,1,u^0}.\ea $$
%where    $u^0(s)=u^1(\t^{-1}(s))  \in \cU^{0}_{t_0,T}$.  

\no Then, for any $\e>0$, there exists some $u^{1,\e}(\cd)   \in\mathcal{U}_{t_1, T}^{1} $ such that
$$  W^n(t_1,x_1 )> Y_{t_1}^{n,1,u_s^{1,\e}} -\e  =\wt Y_{t_0}^{n,1,u_s^{0,\e}}-\e  ,$$
where    $u^{0,\e}_ s =u^{1,\e}_{\varrho(s)}   \in \mathcal{U}_{t_0, T}^{0} $.

Therefore, using Lemma \ref{LP-Le-Y-1-Lip}, 
$$  W^n(t_0,x_0 )-W^n(t_1,x_1) \les Y_{t_0}^{n,0,u^{0,\e}}-\wt Y_{t_0}^{n,1,u^{0,\e}}+\e\les  C_\delta \D_{t_0}  +\e=C_\d(|t_0-t_1|+|x_0-x_1|)+\e.$$
 Due to the symmetry, for any $\e>0$,
$$  |W^n(t_0,x_0 )-W^n(t_1,x_1 ) |\les  C_\d(|t_0-t_1|+|x_0-x_1|)+\e.$$
By the arbitrariness of $\e$, the desired result holds true.
		\end{proof}

Letting  $n\rightarrow \infty $  in Proposition \ref{join-lip-n} and using Lemma \ref{Le-Vn}, we get the following result.  
	
 \begin{theorem}\label{join-lip}\sl
		Assume {\rm \textbf{(H$_1$)}-\textbf{(H$_3$)}}, {\rm \textbf{(H$_4$)}-(i), (ii)} and {\rm \textbf{(C)}}  hold true. Then, for all $\d>0$, there exists a constant $C_\d>0$, 
  such that for any 
		$t_0, t_1 \in [0, T-\delta]  $, $x_0,x_1\in\dbR^n$,
		$$
		|W (t_0,x_0 )-W (t_1,x_1 )| \les C_{ \delta} \big(  |t_0-t_1|+ |x_0-x_1| \big) .
		$$
That is to say,  the value function $W(\cdot,\cd)$ is  Lipschitz continuous  on 
		$[0, T-\delta]\times \dbR^n, $ for  all $\delta>0$.
	\end{theorem}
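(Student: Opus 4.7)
The plan is to deduce the joint Lipschitz continuity of $W(\cd,\cd)$ on $[0,T-\delta]\times\mathbb{R}^n$ as a limit of the corresponding property already established for the penalized value functions $W^n(\cd,\cd)$. The key observation is that Proposition \ref{join-lip-n} provides a uniform-in-$n$ Lipschitz bound with constant $C_\delta$ depending only on $\delta$, $T$, and the structural quantities (bounds and Lipschitz constants of $b,\sigma,\gamma$), and Lemma \ref{Le-Vn} yields pointwise (indeed decreasing) convergence $W^n(t,x) \downarrow W(t,x)$ for every $(t,x)\in[0,T]\times\mathbb{R}^n$.

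First, I would fix $\delta>0$ and any two points $(t_0,x_0),(t_1,x_1)\in[0,T-\delta]\times\mathbb{R}^n$. By Proposition \ref{join-lip-n}, for every $n\ges 1$,
\begin{equation*}
|W^n(t_0,x_0)-W^n(t_1,x_1)| \les C_\delta\big(|t_0-t_1|+|x_0-x_1|\big),
\end{equation*}
where $C_\delta$ is independent of $n$. Next, invoking Lemma \ref{Le-Vn} at the two points $(t_0,x_0)$ and $(t_1,x_1)$, we have $W^n(t_i,x_i)\to W(t_i,x_i)$ as $n\to\i$ for $i=0,1$. Passing to the limit in the preceding inequality then yields
\begin{equation*}
|W(t_0,x_0)-W(t_1,x_1)| \les C_\delta\big(|t_0-t_1|+|x_0-x_1|\big),
\end{equation*}
which is exactly the claimed joint Lipschitz continuity on $[0,T-\delta]\times\mathbb{R}^n$. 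Since $\delta>0$ is arbitrary, the result is obtained on each such compact-in-time strip.

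There is essentially no analytic obstacle at this stage: all the technical work, namely the time-stretching transformation, the comparison between $\tilde Y^{n,1,u^0}$ and $Y^{n,0,u^0}$ (Lemma \ref{LP-Le-Y-1-Lip}), and the subsequent $\varepsilon$-optimal control argument via the essential infimum representation of $W^n$, has already been absorbed into Proposition \ref{join-lip-n}. The only point that warrants a brief remark is the uniformity of the constant $C_\delta$ in $n$; this follows because the constants appearing in Lemma \ref{LP-Le-Y-1-Lip} and in the underlying state/cost estimates depend only on $\delta$, $T$ and the structural bounds and Lipschitz constants of $b,\sigma,\gamma,f,h,\Phi$, but not on the penalization index. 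Hence the limit procedure is legitimate and the theorem follows immediately.
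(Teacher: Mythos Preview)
Your proposal is correct and follows exactly the paper's approach: apply Proposition~\ref{join-lip-n} to get the uniform-in-$n$ Lipschitz bound for $W^n$, then pass to the limit using the pointwise convergence $W^n\to W$ from Lemma~\ref{Le-Vn}. The paper states this in a single line (``Letting $n\to\infty$ in Proposition~\ref{join-lip-n} and using Lemma~\ref{Le-Vn}''), and your write-up simply spells out that limit passage.
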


\section{Stochastic Verification Theorems}\label{SVT}
		In this section, we focus on the research of stochastic verification theorems of Problem (C)$_{t,x}$. The study will be carried out in two cases: classical solutions and viscosity solutions.
	\subsection{The classical solution case}
	We begin with the case when   PIDE  \eqref{HJB} admits the classical solution.
%	 that is, it has classical solution. 
	In our framework, we try to construct an optimal feedback control of   Problem (C)$_{t,x}$  from the classical solution of PIDE   \eqref{HJB}.
	To begin with, we introduce the following definition of admissible feedback control laws.
	\begin{definition}\sl
		  Let $t\in[0,T]$. A measurable mapping $\mathbbm{u}:[t,T]\times\dbR^n\to U$ is said to be an admissible feedback control law, if for all $  x \in \dbR^n$, the following equation 
		 	\begin{equation}\label{SDEP-u}
		 			\left\{
		 			\begin{array}{ll}
		 					\ns\ds\!\!\!  \mathrm dX_s^{t,x;\mathbbm{u}} \! = \! b\big( s, X_s^{t,x;\mathbbm{u}}, \mathbbm{u}(s,
		 					X^{t,x;\mathbbm{u}}_s) \big)\mathrm ds \!+\! \sigma \big(s, X_s^{t,x;\mathbbm{u}}, \mathbbm{u}(s,
		 					X^{t,x;\mathbbm{u}}_s) \big)\mathrm dB_s\\
		 									\ns\ds\!\!\!  \hskip 1.55cm     
		 					\!+\!\int_{E} \! \gamma \big( s,  X_{s-}^{t,x;\mathbbm{u}}, \mathbbm{u}(s,
		 					X^{t,x;\mathbbm{u}}_{s-}) ,e  \big) \tilde N (\mathrm ds,\mathrm de),\\
		 					\ns\ds\!\!\!  X_t^{t,x;\mathbbm{u}}=x, \quad  (t,x) \in [0,T] \times \mathbb{R}^n,
		 				\end{array}
		 			\right.
		 		\end{equation}
		 	%
%		 	and
%		 	%
%		 	\begin{equation}\label{RBSDEP-u}
%		 			\left\{
%		 			\begin{array}{ll}
%		 					\ns\ds \!\!\!\! 		{\rm(\romannumeral1)} \   (Y^{t,x;\mathbbm{u}},Z^{t,x;\mathbbm{u}},K^{t,x;\mathbbm{u}},V^{t,x;\mathbbm{u}}) \in \sS_\mathbb F ^2[t,T];\\
%		 					%
%		 					\ns\ds\!\!\!\! 			{\rm(\romannumeral2)} \     Y_s^{t,x;\mathbbm{u}} \! =   \Phi   ( X_T^{t,x;\mathbbm{u}} )  
%		 					+   \int_s^T   f \big(  r, X_r^{t,x;\mathbbm{u}}, Y_r^{t,x;\mathbbm{u}}, Z_r^{t,x;\mathbbm{u}}, \int _E l(e)V_r^{t,x;\mathbbm{u}}(e)  \nu(\mathrm de), \mathbbm{u}(s,
%		 					X^{t,x;\mathbbm{u}}_s) \big) \mathrm dr 
%		 					\\
%		 					%
%		 					\ns\ds\!\!\!\! \hskip 1.95cm -  ( K_T^{t,x;\mathbbm{u}}  - K_s^{t,x;\mathbbm{u}} ) -   \int_s^T Z_r^{t,x;\mathbbm{u}} \mathrm dB_r- \int_s^T   \int_E V_r^{t,x;\mathbbm{u}}(e) \tilde N(\mathrm dr,\mathrm de) ,\quad  s\in [t,T];\\
%		 					%
%		 					\ns\ds\!\!\!\! 			{\rm(\romannumeral3)} \   Y_s^{t,x;\mathbbm{u}}  \leqslant   h  ( s,X_s^{t,x;\mathbbm{u}} ), \mbox{ a.e. } s \in  [t,T];\\
%		 					%
%		 					\ns\ds\!\!\!\! 			{\rm(\romannumeral4)} \   \int_t^T \big(  h (s, X_s^{t,x;\mathbbm{u}} )-Y_s^{t,x;\mathbbm{u}} \big) \mathrm dK_s^{t,x;\mathbbm{u}}=0 ,
%		 					%
%		 				\end{array}
%		 			\right.
%		 		\end{equation} 
		 	%
		 %	admit  the unique $\dbF$-adapted  solutions $ X^{t,x;\mathbbm{u}} $ and $(Y^{t,x;\mathbbm{u}} ,Z^{t,x;\mathbbm{u}} ,K^{t,x;\mathbbm{u}},V^{t,x;\mathbbm{u}})$, respectively. Therefore, the process  $ \mathbbm{u}(\cdot,X^{t,x;\mathbbm{u}}_\cdot )\in \cU_{t,T}$.
		 admit   the unique strong  solution  $ X^{t,x;\mathbbm{u}}_\cd \in\cS_\dbF^2(t,T;\dbR^n) $. 
%		 and $(Y^{t,x;\mathbbm{u}}(\cd),Z^{t,x;\mathbbm{u}}(\cd),K^{t,x;\mathbbm{u}}(\cd)) \in \sS_\mathbb F ^2[t,T]$, respectively. 
		 The set of all such admissible feedback control laws on $[t,T] $ is denoted by $\sU_{t,T}$.
	\end{definition}

	 For convenience, 
%	 for any  $(r,x,y,p,P)\in [0,T]\times \dbR^n\times\dbR\times\dbR^n\times\dbS^n$,
	  we introduce the  mapping    $\psi:[0,T]\times \dbR^n\times\dbR\times\dbR^n\times\dbS^n\to U$  such that
	$$
	\ba{ll}
	\ns\ds \psi(r,x,y,p,P)\in\argmin\dbH(r,x,y,p,P,\cd)\equiv\Big\{\bar u\in U \mid \dbH(r,x,y,p,P,\bar u)=\min_{u\in U}\dbH(r,x,y,p,P,u)\Big\} .
	\ea
	$$

	Now we present the  first main result of this subsection.
	\begin{theorem}\sl
%		\label{SVT-class}
		Assume  {\rm \bf(H$_1$)}, {\rm \bf(H$_2$)} and {\rm \bf(C)}. Let $ \dbW ( \cdot , \cdot ) \in {C^{1,2}}([0,T] \times {\mathbb R^n})$ be the classical solution of PIDE \eqref{HJB}.
		Then
		
		{\rm(i)}
		for any $(t,x) \in [0,T] \times {\mathbb R^n}$ and $u(\cd) \in \cU_{t,T}$, we have
		$ \dbW(t,x) \les J(t,x;u(\cd) );$
		
		{\rm (ii)} for any $(t,x)\in[0,T]\times\mathbb R^n$, defining $\bar{\mathbbm{u}}:[t,T]\times\dbR^n\to U  $ as
		\begin{equation}\label{OC-C}
			\bar{\mathbbm{u}}(s,y)=\psi \big( s,y,( W ,W_{x} ,W_{xx} )(s,y) \big),\q (s,y)\in[t,T]\times\dbR^n,
		\end{equation} 
		%
		% \q
		%\equiv\Big\{\bar u\in U \mid \dbH(r,x,y,p,P,\bar u)=\min_{u\in U}\dbH(r,x,y,p,P,u)\Big\}
		%\ea \end{equation*} 
	%
	if $\bar{\mathbbm{u}} (\cd,\cd)\in\sU_{t,T}$, then $\bar{\mathbbm{u}} \big(\cd,\mathbb  X_\cd \big)$ is an optimal control of   Problem (C)$_{t,x}$, where $\mathbb  X_\cd=X_\cd^{t,x;\bar{\mathbbm{u}}} $ satisfies
	\eqref{SDEP-u} with $\bar {\mathbbm{u}}(\cd,\cd).$
	Moreover,  $\dbW(\cd,\cd)$ is in fact the value function $W(\cd,\cd)$, i.e.,
	$$  \mathbb{W } (t,x)=J\big(t,x;\bar{\mathbbm{u}}(\cd,\mathbb  X_\cd )\big)=W(t,x),\q (t,x)\in[0,T]\times\dbR^n.$$
	\end{theorem}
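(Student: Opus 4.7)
The plan is to apply the It\^o formula to $\dbW(s,X_s^{t,x;u})$ for an arbitrary $u(\cd)\in\cU_{t,T}$, read off a BSDE-type decomposition, and then compare it against the RBSDE \eqref{RBSDEP} characterising $Y^{t,x;u}$ via the HJB obstacle inequality. Writing $\bar Z_s:=\dbW_x(s,X_s^{t,x;u})\sigma(s,X_s^{t,x;u},u_s)$ and $\bar V_s(e):=\dbW\big(s,X_{s-}^{t,x;u}+\gamma(s,X_{s-}^{t,x;u},u_s,e)\big)-\dbW(s,X_{s-}^{t,x;u})$, the It\^o formula yields on $[t,T]$
\begin{equation*}
\dbW(t,x)=\Phi(X_T^{t,x;u})+\int_t^T\Big[-\frac{\pa}{\pa s}\dbW-\cL^{u_s}\dbW-\cB^{u_s}\dbW\Big](s,X_s^{t,x;u})\mathrm ds-\int_t^T\bar Z_s\mathrm dB_s-\int_t^T\!\!\int_E\bar V_s(e)\tilde N(\mathrm ds,\mathrm de),
\end{equation*}
together with the identity $\int_E l(e)\bar V_s(e)\nu(\mathrm de)=\cC^{u_s}\dbW(s,X_s^{t,x;u})$.

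For part (i), because $\dbW\in C^{1,2}$ is a classical solution of \eqref{HJB}, for every $u\in U$ we have both $\dbW(s,X_s^{t,x;u})\les h(s,X_s^{t,x;u})$ and the inequality $-\frac{\pa}{\pa s}\dbW-\cL^{u}\dbW-\cB^{u}\dbW\les f\big(s,X_s^{t,x;u},\dbW,\dbW_x\sigma(s,X_s^{t,x;u},u),\cC^u\dbW,u\big)$. Thus the quadruple $(\dbW(\cd,X_\cd^{t,x;u}),\bar Z,\bar V,0)$ is a sub-solution of the RBSDE \eqref{RBSDEP} solved by $(Y^{t,x;u},Z^{t,x;u},V^{t,x;u},A^{t,x;u})$. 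I would then obtain $\dbW(t,x)\les Y_t^{t,x;u}=J(t,x;u(\cd))$ by a linearisation of the Lipschitz driver: the difference $\dbW(\cd,X_\cd^{t,x;u})-Y^{t,x;u}$ satisfies a linear BSDE with jumps whose only additional contributions are a non-positive running term and $-\mathrm dA^{t,x;u}$, and the Skorokhod flat-off condition combined with $\dbW\les h$ forces $(\dbW(\cd,X_\cd^{t,x;u})-Y^{t,x;u})^+\,\mathrm dA^{t,x;u}\equiv 0$. An It\^o-Tanaka computation on $(\dbW(\cd,X_\cd^{t,x;u})-Y^{t,x;u})^+$ followed by a Gronwall argument then concludes; the monotonicity {\rm\textbf{(C)}} is what legitimises the jump-comparison step, precisely as in the proof of Lemma \ref{com}.

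For part (ii), the feedback $\bar{\mathbbm{u}}$ is designed so that $\dbH\big(s,\dbX_s,(\dbW,\dbW_x,\dbW_{xx}),\bar{\mathbbm{u}}(s,\dbX_s)\big)=\inf_{u\in U}\dbH$ along $\dbX=X^{t,x;\bar{\mathbbm{u}}}$. Defining $(\bar Z^{\bar{\mathbbm{u}}},\bar V^{\bar{\mathbbm{u}}})$ analogously along $\dbX$ and
\begin{equation*}
\dbA_s:=\int_t^s\Big[\frac{\pa}{\pa r}\dbW+\cL^{\bar{\mathbbm{u}}}\dbW+\cB^{\bar{\mathbbm{u}}}\dbW+f\big(r,\dbX_r,\dbW,\bar Z_r^{\bar{\mathbbm{u}}},\cC^{\bar{\mathbbm{u}}}\dbW,\bar{\mathbbm{u}}(r,\dbX_r)\big)\Big](r,\dbX_r)\mathrm dr,
\end{equation*}
the obstacle HJB equation forces the integrand to be non-negative (so $\dbA$ is non-decreasing) and to vanish wherever $\dbW(r,\dbX_r)<h(r,\dbX_r)$, which delivers the flat-off condition $\int_t^T(h(s,\dbX_{s-})-\dbW(s,\dbX_{s-}))\mathrm d\dbA_s=0$. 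Consequently $(\dbW(\cd,\dbX_\cd),\bar Z^{\bar{\mathbbm{u}}},\bar V^{\bar{\mathbbm{u}}},\dbA)$ solves \eqref{RBSDEP} with admissible control $\bar{\mathbbm{u}}(\cd,\dbX_\cd)\in\cU_{t,T}$, and the uniqueness in Lemma \ref{well} yields $\dbW(t,x)=Y_t^{t,x;\bar{\mathbbm{u}}(\cd,\dbX_\cd)}=J(t,x;\bar{\mathbbm{u}}(\cd,\dbX_\cd))$; combined with (i) this forces $\dbW=W$ and the optimality of $\bar{\mathbbm{u}}(\cd,\dbX_\cd)$. The main difficulty I anticipate lies in the comparison step of (i): the jumps forbid a naive pathwise argument, and the linearisation--Gronwall route succeeds only because {\rm\textbf{(C)}} yields a non-positive linearisation coefficient in the $v$-variable while the bound $0\les l(\cd)\les\k(1\wedge|e|)$ keeps the associated Dol\'eans-Dade exponential a true martingale.
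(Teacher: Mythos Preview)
Your proposal is correct, but it follows a genuinely different route from the paper in both parts. The paper works exclusively through penalisation: for (i) it rewrites the It\^o expansion of $\dbW(\cd,X_\cd^{t,x;u})$ as a BSDE with jumps, compares it termwise against the penalised BSDE \eqref{BSDEP-pen} via the ordinary BSDE-with-jumps comparison theorem (the obstacle inequality $\dbW\les h$ is used only to kill the penalisation term $-n(h-\dbW)^-$), and then sends $n\to\infty$ using Lemma~\ref{well-C}; for (ii) it again compares against the penalised BSDE driven by $\bar{\mathbbm u}$, obtains equality $\dbW(\cd,\dbX_\cd)={}^n\dbY$ for every $n$, and passes to the limit. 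Your argument bypasses penalisation entirely: in (i) you run an It\^o--Tanaka/linearisation comparison directly between $\dbW(\cd,X_\cd)$ and the RBSDE solution, exploiting $(\dbW-Y)^+\,\mathrm dA\equiv0$ from the Skorokhod condition, and in (ii) you construct the increasing process $\dbA$ explicitly and invoke uniqueness of the RBSDE.

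Each approach has its trade-off. The paper's penalisation route is uniform and reduces everything to the well-established BSDE comparison theorem, at the cost of an extra limit step. Your route is more direct and, in part (ii) especially, more transparent: identifying $\dbA$ pointwise and reading off the flat-off condition from the max-structure of the obstacle HJB is cleaner than the paper's case distinction. The one place your sketch demands care is the jump-comparison in (i): the It\^o--Tanaka argument on $(\dbW-Y)^+$ must control the jump term, and it is precisely {\rm\textbf{(C)}} together with $l(\cdot)\ges0$ that makes the linearised $v$-coefficient non-negative so that the associated exponential martingale stays positive; you have identified this correctly.
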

	\begin{proof}
	{\rm (i) }
	For any $(t,x) \in [0,T] \times {\mathbb R^n},$ $u(\cd)\in \cU_{t,T}$, denote $X_\cd= X_\cd^{t,x;u}$. Applying It\^o's formula to $\dbW(\cd,X_\cd)$, we have, for any $s\in [t,T]$, $\dbP$-a.s.,
	\begin{equation}\label{BSDEP-W}
		\begin{array}{ll}
			\ns\ds\!\!\! \dbW \big(s,X_s\big)= \Phi  (X_T ) 
			- \int_s^T \[  \frac{\partial }{{\partial r}} \dbW \big(r,X_r\big) + \mathbb H\big(r,X_r,\big(\dbW,\dbW_x,\dbW_{xx}\big)(r,X_r),u_r\big)\] \mathrm dr  \\ 
			\ns\ds\!\!\!  +  \int_s^T \! \!\[f\big(r,X_r,\dbW (r,X_r ), \dbW_x (r,X_r ).  \sigma (r,X_r,u_r),\cC^u \dbW(r,X_r), u_r \big) \] \mathrm dr\\
			%
%			\ns\ds \hskip1.1cm   \int_E  W(r,X_r+\gamma(r,X_r,u_r,e))-W(r,X_r) l( r,e) \nu(\mathrm de), u_r \big) \] dr\\
			%
			\ns\ds\!\!\! - \int_s^T  \dbW_x (r,X_r  ). \sigma\big (r,X_r,u_r\big)  \mathrm dB_r 
			-\int_s^T\int_E \big( \dbW (r,X_{r-}+\gamma(r,X_r,u_r,e)) - \dbW (r,X_{r-})\big) \tilde N(\mathrm dr,\mathrm de).\\%\hskip 0.15cm s\in [t,T]\\ 
		\end{array}
	\end{equation}

	On the other hand, for any  $ (t,x) \in [0,T] \times \mathbb R^n$, $n \in \mathbf \dbN $, we consider the following  penalized BSDEs,
	\begin{equation}\label{Yn}\ba{ll}
		\ns\ds  {}^nY_s = \Phi \big(X_T\big) + \int_s^T f\big(r, X_r,{}^nY_r,{}^nZ_r,
		\int_E l(e)  {}^n V_r(e) \nu(\mathrm de), u_r \big)\mathrm dr - n\int_s^T \big(   h(r,X_r )-{}^nY _r\big)^ - \mathrm dr\\
		\ns\ds \hskip1.05cm - \int_s^T \! {{}^nZ _r}  \mathrm dB_r
		-\int_s^T \!\! \int_E {}^n V _r(e) \tilde N(\mathrm dr,\mathrm de),
		\hskip0.2cm  s\in[t,T].
		\ea\end{equation}
	By the fact that $\dbW(\cd,\cd)$ being the classical solution  of \eqref{HJB}, we get the following two cases,
	
	{\rm Case (a).} at any point $(t,x)\in [0,T]\times\dbR^n$ such that $(\dbW -h)(t,x)= 0$,
	$$- \frac{\partial}{\partial t}\dbW(t,x) -\inf_{u\in U}  \mathbb H \big(
	t,x, ( \dbW  ,\dbW_x  , \dbW_{xx} ) (t,x),u \big) \les 0;$$
	
	{\rm Case (b).} at any point $(t,x)\in [0,T]\times\dbR^n$ such that $(\dbW -h)(t,x)< 0$,
	$$- \frac{\partial}{\partial t}\dbW(t,x) -\inf_{u\in U}  \mathbb H \big(
	t,x, ( \dbW  ,\dbW_x  , \dbW_{xx} ) (t,x),u \big) = 0.$$
	No matter (a)  or  (b),   for any $r\in[t,T]$ and $u(\cd)\in \cU_{t,T}$, $n \in \mathbf \dbN$, we have
	 \begin{equation}\label{compar-f}\ba{ll}
		\ns\ds\!\!\!
		f\big(r,X_r,\dbW\big(r,X_r\big),\dbW_x\big(r,X_r\big). \sigma (r,X_r,u_r), \cC^u \dbW (r,X_r)   
		  ,u_r\big)\\
		\ns\ds\!\!\!   
		  - \frac{\partial }{{\partial r}}\dbW\big(r,X_r\big)
		  -\mathbb H\big(r,X_r, (\dbW ,\dbW_x ,\dbW_{xx}) (r,X_r ),u_r\big)\\
		\ns\ds\!\!\!
		\les f\big(r,X_r,\dbW\big(r,X_r\big),\dbW_x\big(r,X_r\big). \sigma (r,X_r,u_r), \cC^u \dbW (r,X_r)   
		,u_r\big) 
		\\
		\ns\ds\!\!\!\q
		 - \frac{\partial }{{\partial r}}\dbW\big(r,X_r\big)-   \inf_{u\in U}\mathbb H\big(r,X_r, (\dbW ,\dbW_x ,\dbW_{xx}) (r,X_r ),u_r\big)\\
		%
		%
		%
		%\ns\ds \les f(r,X_r,W\big(r,X_r\big),W_x\big(r,X_r\big) \cdot \sigma (r,X_r,u_r),u_r)\\
		%
		\ns\ds\!\!\! \les f\big(r,X_r,\dbW\big(r,X_r\big),\dbW_x\big(r,X_r\big) .\sigma (r,X_r,u_r), \cC^u \dbW (r,X_r)   
		,u_r\big)
		-n\big(h (r,X_r)- \dbW (r,X_r )   \big)^ -. \\
		%
%		\ns\ds\q -n\big(h (r,X_r)- \dbW (r,X_r )   \big)^ - .
		\ea\end{equation} 
	Therefore, by using the comparison theorem to \eqref{BSDEP-W} and \eqref{Yn}, for  all $n \in \dbN$,  we get
	\begin{equation}\label{W-Yn} 
		\dbW \big(s,X _s\big) \les {}^n{Y_s },\quad  s \in [t,T],\ \dbP\mbox{-a.s.}
		\end{equation}

	Notice that Lemma \ref{well}, letting $n\to \i$ in \eqref{W-Yn},  
	we get 
%	as $n\to\i$,
%	%
%	$ {}^n{Y_s }\downarrow  Y_s,\  s \in [t,T],\ \dbP\mbox{-a.s.}$, where   $Y$ is the first component of the solution of reflected BSDEP \eqref{RBSDEP}.
	%
%	So, by letting $n\to \i$ in \eqref{W-Yn},
	$$\dbW  (s,X  _s ) \les Y _s^{t,x,u},\q s \in [t,T], \  \dbP\mbox{-a.s.}$$
	  Here $Y_\cd^{t,x,u}$ is the first component of the solution of RBSDE with jumps \eqref{RBSDEP}.

	Especially,  when $s=t$,
	\begin{equation}\label{Step1}
		\dbW (t,x) \les {Y_t^{t,x,u} } = J (t,x;u(\cd) ),\quad\mathrm{for\ any}\ u(\cd) \in \cU_{t,T}.\end{equation}

	{\rm(ii) } Let $\mathbb X $  
	 be the  solutions  of SDE with jumps \eqref{SDEP-u} with $\mathbbm{u}(\cd,\cd)$ replaced by $\bar{\mathbbm{u}}(\cd,\cd)\in\sU_{t,T}$ introduced in \eqref{OC-C}.
	% (noting Remark \ref{Re-H3}).
	%$(\mathbb Y, \mathbb Z,\mathbb V,\mathbb K ) $
	And the following RBSDE with jumps
		\begin{equation}\label{RBSDEP-u}
				\left\{
				 \begin{array}{ll}
					 	\ns\ds \!\!\!\! 		{\rm(\romannumeral1)} \ 
					 	(\dbY ,\dbZ ,\dbV,\dbA ) \in \sS_\mathbb F ^2[t,T];\\
%					 	  (Y^{t,x;\bar {\mathbbm{u}}},Z^{t,x;\bar {\mathbbm{u}}},K^{t,x;\bar {\mathbbm{u}}},V^{t,x;\bar {\mathbbm{u}}}) \in \sS_\mathbb F ^2[t,T];\\
					 					%
					 	\ns\ds\!\!\!\! 			{\rm(\romannumeral2)} \     \dbY_s   =   \Phi   ( \dbX_T  )  
					 	+   \int_s^T   f \big(  r, \dbX_r , \dbY_r , \dbZ_r , \int _E l(e) \dbV_r (e)  \nu(\mathrm de), \bar {\mathbbm{u}}(r,	\dbX _r \big) \mathrm dr  -  ( \dbA_T   - \dbA _s  ) 
					 	-   \int_s^T \dbZ_r  \mathrm dB_r
					 					\\
					 	\ns\ds\!\!\!\! \hskip 1.5cm- \int_s^T   \int_E \dbV_r(e) \tilde N(\mathrm dr,\mathrm de) ,\quad  s\in [t,T];\\
					 	\ns\ds\!\!\!\! 			{\rm(\romannumeral3)} \   \dbY_s   \leqslant   h  ( s,\dbX_s ), \mbox{ a.e. } s \in  [t,T];\\
					 	\ns\ds\!\!\!\! 			{\rm(\romannumeral4)} \   \int_t^T \big(  h (s, \dbX_s  )-\dbY_s \big) \mathrm d\dbA_s =0 ,
					 				\end{array}
				 			\right.
				 		\end{equation} 
		admits  the unique $\dbF$-adapted  solution  $ (\dbY ,\dbZ ,\dbV,\dbA )$.
%		$(Y^{t,x;\bar {\mathbbm{u}}} ,Z^{t,x;\bar {\mathbbm{u}}} ,K^{t,x;\bar {\mathbbm{u}}},V^{t,x;\bar {\mathbbm{u}}})$.
	Furthermore, based on Lemma \ref{well-C}, \eqref{RBSDEP-u} also has the penalized equation as follows.
	\begin{equation*} 
		\ba{ll}
		\ns\ds {}^n	\dbY_s = \Phi   ( \dbX_T  )  + \int_s^T   f \big(  r, \dbX_r , {}^n \dbY_r , {}^n\dbZ_r , \int _E l(e){}^n \dbV_r (e)  \nu(\mathrm de), \bar {\mathbbm{u}}(r,	\dbX _r) \big)  \mathrm dr
		-\int_s^T   n\big(   h  ( r,\dbX_r ) -{}^n \dbY_r \big)^- 
		\mathrm dr  \\
		\ns\ds\hskip1.1  cm  - \int_s^T  {}^n   \dbZ_r    \mathrm dB_r - \int_s^T   \int_E    {}^n \dbV_r  (e)   \tilde N(\mathrm dr,\mathrm de) ,\quad s \in [t,T],\ n\in\dbN.
		\ea\end{equation*} 
	%
%    which has the unique solution $({}^n	\dbY ,{}^n	\dbZ ,{}^n	\dbV )$.

	%
	In  Case (a), for  $(t,x)\in[0,T]\times\dbR^n$ such that    $\dbW(t,x)= h(t,x)$, combined with the obstacle condition in RBSDE with jumps \eqref{RBSDEP-u},
	we get
	$$\dbW(t,x)= h(t,x)\ges \mathbb Y_t=J\big(t,x;\bar{\mathbbm{u}}(\cd,\mathbb X_\cd)\big),\q (t,x)\in[0,T]\times\dbR^n.$$
	%
	% Combined with \eqref{Step1}, we get
	% %
	% $$W(t,x) =J\big(t,x;\bar{\mathbbm{u}}(\cd)\big)=\essinf_{u(\cd)\in\cU_{t,T}}J\big(t,x;u(\cd)\big).$$
	% %
	
	In  Case (b), applying It\^o's formula to $\dbW(s,\mathbb X_s )$ on $[t,T]$, we have
  $$\ba{ll}
	\ns\ds\!\!\!  \dbW\big(s,\mathbb X_s\big)   = \Phi\big ({{\mathbb X_T}}\big) 
	-  \int_s^T \(  \frac{\partial }{{\partial r}}\dbW\big(r,\mathbb X_r\big)+ \mathbb H\big(r,\mathbb X_r,
 (\dbW ,	\dbW_x ,	\dbW_{xx} ) (r,\mathbb X_r\big)
	,\bar{\mathbbm{u}}(r,\mathbb X_r)
	\big) \) \mathrm dr \\
	\ns\ds\!\!\! + \int_s^T \!\!  f\big(r,\mathbb X_r, \dbW\big(r,\mathbb X_r\big),
	\dbW_x\big(r,\mathbb X_r\big) . \sigma\big (r,\mathbb X_r,\bar{\mathbbm{u}}(r,\mathbb X_r)\big),
 	\cC^{ \bar{ \mathbbm u}} \dbW(r,\mathbb X_r),
%	\! \int_E \! \big( W\big(r,\mathbb X_r+\gamma (r,\mathbb X_r,\bar{\mathbbm{u}}(r,\mathbb X_r),e) \big)\! - \! W(r,\mathbb X_r)\big) l( e) \nu(\mathrm de),
	\bar{\mathbbm{u}}(r,\mathbb X_r)\big) \mathrm dr\\
	%
%	\ns\ds\!\!\!	\hskip 1.25cm \int_E  \big( W\big(r,\mathbb X_r+\gamma (r,\mathbb X_r,\bar{\mathbbm{u}}(r,\mathbb X_r),e) \big)-W(r,\mathbb X_r)\big) l( r,e) \nu(\mathrm de),
%	\bar{\mathbbm{u}}(r,\mathbb X_r)\big)\\
	%
	\ns\ds\!\!\! - \int_s^T \dbW_x\big(r,\mathbb X_r\big).   \sigma\big (r,\mathbb X_r, \bar{\mathbbm{u}}(r,\mathbb X_r)\big) \mathrm dB_r
	 -\int_s^T\int_E \big( \dbW\big(r,\mathbb X_{r-} +\gamma(r,\mathbb X_{r-},u_r,e)\big) -  \dbW(r,\mathbb X_{r-})\big) \tilde N(\mathrm dr,\mathrm de).\\
	\ea$$
	 \normalsize
	Note that \eqref{OC-C}  and  Case (b) make ``$\les$" in \eqref{compar-f}  become ``$=$". Following the procedures in (i) and the uniqueness of the solution of BSDE with jumps, for all $n\in\mathbb N $, we get
	$$ \dbW\big(s,\mathbb X_s\big) = {}^n \mathbb Y_s,\q s\in[t,T],\hskip0.2cm \dbP\mbox{-a.s.}$$
	Similarly to (i), % as $ n \to  \i$, we have
	%%
	% $ {}^n{\mathbb Y} _r\downarrow {\mathbb Y}_r,\q r\in[t,T], \mbox{ a.s.}
	%$
	%Therefore,
	letting $n\to\i$  and $s=t$, we have
	$ \dbW\big(t,x\big) = \mathbb Y_t $.

	Finally, combined with \eqref{Step1},  for any $(t,x)\in[0,T]\times\dbR^n$, we get
	$$ \dbW(t,x) =J\big(t,x;\bar{\mathbbm{u}}(\cd,\mathbb X_\cd) \big)=\essinf_{u(\cd)\in\cU_{t,T}}J\big(t,x;u(\cd)\big)=W(t,x).$$
	That is,  the classical solution $ \dbW(\cd,\cd)$ of PIDE  \eqref{HJB} is indeed the value function $W(\cd,\cd)$ of Problem (C)$_{t,x}$,
	and
	$\bar{\mathbbm{u}}(\cd,\mathbb X_\cd)$ is the optimal control of Problem (C)$_{t,x}$.
	\end{proof}

	\begin{remark}\sl
	  We make some explanations on  the collection $\sU_{t,T}$ of the admissible feedback control laws.  Let us first give a relatively direct condition
	 \begin{description}
		 	\item[{\bf(H$_7$)}] 
		 	  {\rm (i)} For every  $ (r,x,y,z,v) \in [0,T]\times\dbR^n\times \dbR \times \dbR^d \times \mathbb R $, $b(r,x,\cd)$, $\sigma(r,x,\cd)$ are Lipschitz continuous in $u\in U $. And, for any $u_1, u_2 \in U$,
		 	 $  
		 	 |\gamma (r,x,u_1,e)-\gamma (r,x,u_2,e)| \les \ell(e)|u_1-u_2|.\\
		 	 $ 
		 	  {\rm (ii)} $ {\mathbbm{u}}(\cd,\cd)\in\cL$, where
		 	 $\mathcal{L}$ is the class of measurable mappings $\mathbbm{u}:[0,T]\times\mathbb R^n \rightarrow U$ satisfying the two properties: {\rm (a)}  for every fixed  $ x  \in \dbR^n$, $u(\cd,x)$  is continuous in   $r\in[0,T]$; {\rm (b)} for every $ r\in[0,T],$ $  \mathbbm{u} (r, \cd )$ is Lipschitz continuous in $x\in\dbR^n$. 
		 \end{description} 
 If  {\rm \bf (H$_1$)}, {\rm \bf (H$_2$)}  and {\rm \bf (H$_7$)} hold   true, it is easy to check that $ \mathbbm{u}  (\cd,\cd)\in\sU_{t,T}$.   These are essentially the Lipschitz conditions on the coefficients, which may be hard for us to seek  such  
 $ {\mathbbm{u}}(\cd,\cd)\in\cL$.
However, as  the development of the theory of SDEs with discontinuous coefficients (such as the singular or the irregular coefficients),     there are many researches  devoted to seeking a strong solution of SDE with jumps under weak
assumptions, such as \cite{QZ-2008, XZ-2020, PSX-2021}.
Therefore, 
with the help of these studies,    some appropriately mild conditions may be posed to    ensure $\sU_{t,T}$ to be  non-empty.
 We just keep in mind that the constructed  $ {\mathbbm{u}}(\cd,\cd)$ must  lie in $\sU_{t,T}$ when  
  dealing with the specific (or computable) control
problems.
  
	\end{remark}

	\subsection{The viscosity solution case}
%	In this subsection, we will study the Stochastic Verification Theorem within the framework of viscosity solutions. Note that.
%	 we establish the relationship  between the value function and the viscosity solution of the HJB equation with obstacles by the dynamic programming principle in Sec.\ref{VIS}. 

%	 Note that Theorem \ref{SVT-class}  requires  the solution of HJB equation with obstacle to be smooth enough. 
	  We now look at the case when the solution of PIDE is not necessarily smooth, i.e., viscosity solution. 
%	  In our framework, we consider that the HJB equation with obstacle  \eqref{HJB} has a viscosity solution which mentioned in Sec.\ref{VIS}. 
%	  However, it is difficult for the above HJB equation with obstacle to own the classical solutions, thus, we consider a kind of weak solutions, i.e., the viscosity solutions, which mentioned in Sec.\ref{VIS}.
%	  In Section \ref{VIS}, we establish relationships between the value function and the HJB equation with obstacles by the dynamic programming principle.
%	 Actually, the Hamiltonian function in Sec.\ref{VIS} is not only related to the variables $t,x,\Psi,\Psi_x,\Psi_{xx}$ but also
%	 $\cB^{u} \Psi $ and  $\cC^{u} \Psi $.
	  In order to give a clear   statement  of the main result, we rewrite \eqref{HJB} as follows.
	  	\begin{equation*} 
	  	\left\{
	  	\ba{ll}
	  	\ns\ds\!\!\!\! \max\Big\{ \! W(t,x)-h(t,x),- \frac{\partial}{\partial t}W(t,x) \!-\!\inf_{u\in U}  \mathcal H \big(t,x, (W ,W_x ,W_{xx})(t,x),\cB^{u} W(t, x),\cC^{u} W(t, x),u\big) \Big\}=0,\\
	  	\ns\ds\!\!\! \hskip12.2cm 
	  	(t,x)\in[0,T]\times\dbR^n,\\
	  	\ns\ds\!\!\!\! W(T,x) = \Phi(x),\quad x\in \mathbb R^n.
	  	\ea
	  	\right.
	  \end{equation*}
	 First, it is necessary to  introduce the  definition of second-order right parabolic superdifferentials (refer  to \cite{ZYL}).

	\begin{definition}  Let $(t,x)\in [0,T]\times \dbR^n$ and $w\in C([0,T]\times\dbR^n ;\dbR)$, the \emph{second-order parabolic superdifferential} of $w$ at  $(t, x)$ is defined as
		%	%
		\begin{equation*}\label{super}\ba{ll}
			\ns\ds D^{1,2,+}_{t+,x}w(t,x):=  \Big\{(q,p,P)\in \dbR\times \dbR^n\times \dbS^n\Big|\limsup\limits_{s\to t^+, y\to x}\frac 1{|s- t|+|y- x|^2}\big[w(s,y)-w(t,x)\\
			%	%
			\ns\ds\hskip 6.65cm -q(s-t)-\lan p,y-x\ran-\frac 12(y-x)^\top P(y-x) \big]\les 0\Big\}.
			\ea\end{equation*}
		%
%		and the \emph{second-order parabolic subdifferential} of $w$ at $(t,x)$ is defined as
%		\begin{equation}\label{sub}\ba{ll}
%			\ns\ds D^{1,2,-}_{t,x}w(t,x):=  \Big\{(q,p,P)\in \dbR\times \dbR^n\times \dbS^n\Big|\liminf\limits_{s\to t, y\to x}\frac 1{|s-t|+|y-x|^2}\big[w(s,y)-w(t,x)\\
%			%	%
%			\ns\ds\hskip6.46cm -q(s-t)-\lan p,y-x\ran-\frac 12(y-x)^\top P(y-x) \big]\ges 0\Big\}.
%			%	%
%			\ea\end{equation}
		%
		%
	\end{definition}
	
The following is about the ‌characterization of $D^{1,2,+}_{t+,x}w(t,x)$.
	
%	\no The  second-order right parabolic superdifferential $D^{1,2,+}_{t+,x}w(t,x)$  is defined by modifying $s\to t$ to $s\to t^+$ in \eqref{super}.
%	 Further, the following results in \cite{YZ-1999,ZYL,GSZ1} link the second-order parabolic superdifferentials to the notion of viscosity solutions. That is, it provides us the smooth test functions to replace the viscosity solutions to compute the derivatives. 

	\begin{lemma}\label{L11}\sl Let  $w\in C([0,T]\times \mathbb{R}^n)$ and $(t_0,x_0)\in [0,T)\times \dbR^n$ be given. Then,
		$(q,p,P)\in D_{t+,x}^{1,2,+}w(t_0,x_0)$ if and only if there exists a function $\f\in  C^{1,2}([0,T]\times \mathbb{R}^n)$ such that, for any $ (t,x)\in [t_0,T]\times \dbR^n$, $(t,x)\neq (t_0,x_0)$,
		$\f(t,x)>w(t,x),$  and $$\big(\f(t_0,x_0),\f_t(t_0,x_0),\f_x(t_0,x_0),\f_{xx}(t_0,x_0)\big)=\big(w (t_0,x_0),q,p,P\big).$$
		%
		%
%		\noindent {\rm(ii).} $(q,p,P)\in D_{t+,x}^{1,2,-}w(t_0,x_0)$ if and only if there exists a function $\f\in  C^{1,2}([0,T]\times \mathbb{R}^n)$ such that, for any $ (t,x)\in [t_0,T]\times \dbR^n$, $(t,x)\neq (t_0,x_0)$,
%		$\f(t,x)<w(t,x),$   and $$\big(\f(t_0,x_0),\f_t(t_0,x_0),\f_x(t_0,x_0),\f_{xx}(t_0,x_0)\big)=\big(w(t_0,x_0), q,p,P\big).$$
%		%
		Moreover, if for some $k\ges 1$, $(t,x)\in [0,T]\times \dbR^n$,
		\begin{equation}\label{poly}
			|w(t,x)|\les C(1+|x|^k),
		\end{equation}
		then we can choose $\f$ such that $\f$, $\f_t$, $\f_x$, $\f_{xx}$  also satisfy \eqref{poly} with different constants $C$.
	\end{lemma}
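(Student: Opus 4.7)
The plan is to handle the two directions separately, with the sufficiency being immediate from Taylor expansion and the necessity requiring a careful construction of a global $C^{1,2}$ majorant from the infinitesimal superjet data.

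For the sufficiency direction, suppose a $\varphi \in C^{1,2}$ with the stated matching conditions exists. Then for $t \geq t_0$ and $x$ near $x_0$, $w(t,x) - w(t_0,x_0) \leq \varphi(t,x) - \varphi(t_0,x_0)$, and a second-order Taylor expansion of $\varphi$ at $(t_0,x_0)$ yields
$$w(t,x) - w(t_0,x_0) - q(t-t_0) - \langle p, x-x_0\rangle - \tfrac12 (x-x_0)^\top P(x-x_0) \leq o\bigl((t-t_0) + |x-x_0|^2\bigr),$$
which is exactly $(q,p,P)\in D^{1,2,+}_{t+,x}w(t_0,x_0)$.

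For the necessity direction, introduce the error function
$$g(t,x) := w(t,x) - w(t_0,x_0) - q(t-t_0) - \langle p, x-x_0\rangle - \tfrac12 (x-x_0)^\top P(x-x_0),$$
so that the superjet condition becomes $g(t,x)^+ = o\bigl((t-t_0) + |x-x_0|^2\bigr)$ as $t \to t_0^+$, $x \to x_0$. Define the modulus
$$\rho(r) := \sup\bigl\{ g(t,x)^+ : t_0 \leq t \leq (t_0+r)\wedge T,\ |x-x_0|^2 \leq r \bigr\},$$
which is nondecreasing, $\rho(0)=0$, and $\rho(r)/r \to 0$ as $r \to 0^+$. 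By a standard smoothing (for instance, $G(s) := \int_0^s \frac{1}{u}\int_0^u \bigl(\rho(v)+v^{3/2}\bigr)\,dv\,du + s^2$), one obtains a nondecreasing $G \in C^2([0,\infty))$ with $G(0)=G'(0)=G''(0)=0$, $G(s)/s \to 0$ as $s\to 0^+$, and $G(s) \geq \rho(s)$ on some $[0,r_0]$. Setting
$$\Psi(t,x) := G\bigl((t-t_0)^+ + |x-x_0|^2\bigr) + (t-t_0)^2 + |x-x_0|^4$$
yields a $C^{1,2}$ function near $(t_0,x_0)$ (the $(t-t_0)^+$ composed with the smooth $G$ that vanishes to second order is $C^{1,2}$), with the correct jet at $(t_0,x_0)$, and strictly exceeding $g$ on a punctured half-neighborhood $\{t \geq t_0,\ 0 < (t-t_0) + |x-x_0|^2 \leq r_0\}$.

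To globalize and respect the polynomial growth, pick a cutoff $\chi \in C_c^\infty([0,T]\times\mathbb{R}^n)$ with $\chi \equiv 1$ near $(t_0,x_0)$ and $\operatorname{supp}\chi$ contained in a neighborhood where $\Psi \geq g + \varepsilon$ (with $\varepsilon > 0$ on a shell around $(t_0,x_0)$). Take $\Theta(t,x) := M\bigl(1 + |x|^{k+2} + (t-t_0)^2\bigr)$ with $M$ large enough that $\Theta > g + 1$ on $\{\chi < 1\} \cap ([t_0,T]\times\mathbb{R}^n)$; this is possible under the polynomial bound $|w(t,x)| \leq C(1+|x|^k)$. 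Finally set
$$\varphi(t,x) := w(t_0,x_0) + q(t-t_0) + \langle p,x-x_0\rangle + \tfrac12 (x-x_0)^\top P(x-x_0) + \chi(t,x)\Psi(t,x) + \bigl(1-\chi(t,x)\bigr)\Theta(t,x).$$
By construction, $\varphi \in C^{1,2}([0,T]\times\mathbb{R}^n)$, $\varphi$ and its derivatives exhibit the same polynomial growth as $\Theta$ (i.e., of order $k+2$), the jet matching $(\varphi,\varphi_t,\varphi_x,\varphi_{xx})(t_0,x_0) = (w(t_0,x_0),q,p,P)$ holds since $\Psi$ carries only fourth- and higher-order contributions at $(t_0,x_0)$ and $\chi \equiv 1$ there, and $\varphi(t,x) > w(t,x)$ for every $(t,x) \in [t_0,T]\times\mathbb{R}^n$ with $(t,x) \neq (t_0,x_0)$ by the domination properties of $\Psi$ and $\Theta$ on their respective regions.

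The principal technical obstacle lies in building the $C^{1,2}$ majorant $\Psi$ that simultaneously (i) dominates the possibly irregular error $g^+$ in the one-sided half-neighborhood $t \geq t_0$, (ii) vanishes together with its first-order time derivative, full gradient, and Hessian at $(t_0,x_0)$, and (iii) matches $w$ with strict inequality off the base point. The one-sided character of the superdifferential forces the composition with $(t-t_0)^+$, which requires $G$ to vanish to second order; the quartic-in-$x$ and quadratic-in-$t$ corrections then guarantee strict majorization away from $(t_0,x_0)$ without affecting the prescribed jet. The polynomial-growth refinement is then automatic from the explicit polynomial form of $\Theta$.
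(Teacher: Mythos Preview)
The paper does not prove this lemma; it is stated as a standard characterization (of the kind found e.g.\ in Yong--Zhou, Chapter~4) and used without justification. Your strategy is the correct standard one, but two steps fail as written.

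First, $\Psi(t,x) = G\bigl((t-t_0)^+ + |x-x_0|^2\bigr) + (t-t_0)^2 + |x-x_0|^4$ is not $C^1$ in $t$ across $t=t_0$ when $x\neq x_0$: the right and left $t$-derivatives there are $G'(|x-x_0|^2)$ and $0$ respectively, and these disagree whenever $G'(|x-x_0|^2)\neq 0$, regardless of whether $G$ vanishes to second order at the origin. The fix is to build $\Psi$ on $[t_0,T]\times\mathbb{R}^n$ only (where $t-t_0\geq 0$ is smooth) and then extend $\varphi$ to $[0,t_0)\times\mathbb{R}^n$ by any $C^{1,2}$ extension; the strict inequality $\varphi>w$ is demanded only on $[t_0,T]\times\mathbb{R}^n$, so the extension is unconstrained. (Incidentally, your concrete $G$ has $G''(0)=2$ because of the $+s^2$ term, not $0$ as you claim; but $G''(0)=0$ is not actually needed for $\Psi_{xx}(t_0,x_0)=0$, since $\nabla_x r$ vanishes at the base point.)

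Second, with $\Theta(t,x)=M\bigl(1+|x|^{k+2}+(t-t_0)^2\bigr)$ your final $\varphi$ grows like $|x|^{k+2}$, whereas the lemma---and its application at \eqref{phi-lg}---requires $\varphi,\varphi_t,\varphi_x,\varphi_{xx}$ to satisfy \eqref{poly} with the \emph{same} exponent $k$. Take instead a tail of exact order $k$, e.g.\ $\Theta=M(1+|x|^2)^{k/2}$, and absorb the quadratic jet term $\tfrac12(x-x_0)^\top P(x-x_0)$ into the compactly supported piece $\chi\Psi$ (its $|x|^2$ growth would otherwise violate the bound when $k<2$); since only the germ at $(t_0,x_0)$ is prescribed, this truncation is harmless.
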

	%	
%	In fact, the above result will provide us the smooth test functions to replace the viscosity solutions to compute the derivatives in the proof of the verification theorem.
	%
	Next, similar to \cite{LLW-2025},
	we gather the functions $\varphi$ in Lemma \ref{L11} as follows.
%	For convenience, we denote by $ \mathfrak{I} (q,p,P;\Phi)(t,x)$ the set of the aforementioned smooth test function $\varphi(\cd,\cd)$. That is, for any $(q,p,P)\in D_{t+,x}^{1,2,+}\Phi (t,x)$, 
	$$
	\begin{array}{ll}
		\ns\ds\!\!\!
		 \mathfrak{I} (q,p,P;w)(t,x) = \Big\{  \f\in  C^{1,2}([0,T]\times \mathbb{R}^n) \mid   w-\f  \mbox{ attains a  strict maximum over } [0,T]\times \mathbb{R}^n, \\
		 \ns\ds\!\!\! \hskip 6.85cm
		 \mbox{and } \big(\f(t,x),\f_t(t,x),\f_x(t,x),\f_{xx}(t,x)\big)=\big(w(t,x),q,p,P\big) \Big\}.
	\end{array}
	$$

%	The following two results are borrowed from  \cite{YZ-1999,GSZ1}.
	%	
	%	
	%	
	%
%	\begin{lemma}\label{lemma1} \sl Suppose {\bf(H$_1$)}, let $(t,x)\in [0,T]\times \mathbb{R}^n$ be fixed and $ u (\cd) \in \cU_{t,T},$ $X  $  be the corresponding state process of \eqref{SDEP}. By defining the following processes
%	%
%	$$
%	%
%	\psi_1(r) =  b(r,X _r ,u_r  ),\q  \psi_2(r) =  \sigma\sigma^\top(r,X _r ,u_r  ),\q \psi_3(r) =  \gamma(r,X _r ,u_r,e  ),\q r\in[t,T],\ e\in E,
%	%
%	$$
%	%
%	we get  \begin{equation}\lim\limits_{h\to0^+} \frac 1h \int_s^{s+h}\dbE|\psi_i(r)-\psi_i(s) |\mathrm dr =0,\ \mbox{a.e.}, \ s\in[t,T],\ i=1,2.\end{equation}
%	
%	\end{lemma}
	
	The following result can be found in \cite{YZ-1999, GSZ1}.
	
	\begin{lemma}\label{le22}\sl
	Given $g\in C([0,T])$. Let's extend $g$ to $(-\i,+\i)$ by setting 
	$ g(t)=\left\{ \ba{ll} \ns\ds\!\!\! g(0),\ t<0\\
	\ns\ds\!\!\! g(t),\   t\in[0,T].\\
	\ns\ds\!\!\! g(T),\ t>T\ea\right.$	Suppose that for all $\d\in(0,T),$  there is a function  $\rho(\cd)\in L^1(0,T-\d;\mathbb{R})$ and some $h_2>0$, such that
	\begin{equation*}\frac{g(t+h)-g(t)}{h}\les \rho(t),\ \mbox{a.e.}, \ t\in [0,T-\d),\ h\les h_2,\end{equation*}
	then,
	$$g(\b)-g(\a)\les \int_\a^\b \limsup\limits_{h\to 0^+}\frac{g(t+h)-g(t)}{h}dt,\qq 0\les \a <\b\les T-\d. $$
	\end{lemma}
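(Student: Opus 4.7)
The plan is to prove the inequality by interpreting $g(\beta)-g(\alpha)$ as a limit of Riemann-type averages of the forward difference quotients of $g$, and then to pass the limit inside the integral by means of a reverse Fatou argument whose dominating function is exactly the prescribed $\rho(\cdot)\in L^1(0,T-\delta;\mathbb{R})$.

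First I would establish the basic identity
\begin{equation*}
\frac{1}{h}\int_\alpha^\beta\bigl(g(t+h)-g(t)\bigr)\,\mathrm dt=\frac{1}{h}\int_\beta^{\beta+h}g(s)\,\mathrm ds-\frac{1}{h}\int_\alpha^{\alpha+h}g(s)\,\mathrm ds,
\end{equation*}
obtained by a change of variables $s=t+h$ in the first piece. Since $g$ is continuous on $[0,T]$ (and extended continuously outside), each of the two averages on the right-hand side converges to $g(\beta)$ and $g(\alpha)$ respectively as $h\to 0^+$. Hence
\begin{equation*}
g(\beta)-g(\alpha)=\lim_{h\to 0^+}\frac{1}{h}\int_\alpha^\beta\bigl(g(t+h)-g(t)\bigr)\,\mathrm dt.
\end{equation*}

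Second, for $0<h\les h_2$ the hypothesis gives the pointwise-a.e.\ bound
\begin{equation*}
\rho(t)-\frac{g(t+h)-g(t)}{h}\ges 0,\qquad t\in[\alpha,\beta]\subset[0,T-\delta),
\end{equation*}
so Fatou's lemma applies to the non-negative integrand $\rho(t)-\frac{g(t+h)-g(t)}{h}$ with integrable majorant coming from $\rho\in L^1(0,T-\delta;\mathbb{R})$. Taking $\liminf_{h\to 0^+}$ and using $\liminf(-a_h)=-\limsup a_h$ yields the reverse-Fatou inequality
\begin{equation*}
\limsup_{h\to 0^+}\int_\alpha^\beta\frac{g(t+h)-g(t)}{h}\,\mathrm dt\les\int_\alpha^\beta\limsup_{h\to 0^+}\frac{g(t+h)-g(t)}{h}\,\mathrm dt.
\end{equation*}
Chaining this with the identity from the first step gives the claimed bound.

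The argument is essentially routine once the two ingredients are correctly assembled, so I do not foresee a genuine obstacle; the only subtle point is ensuring that the limsup on the right-hand side is measurable and that Fatou's lemma is applied in the correct (reverse) direction with a valid integrable dominant, which is precisely what the integrability hypothesis on $\rho$ provides. The role of $\delta>0$ is simply to guarantee that the forward shift $t+h$ stays inside $[0,T]$ and that the $L^1$-majorant is available on the whole subinterval $[\alpha,\beta]$.
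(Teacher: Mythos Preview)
Your proof is correct and is the standard argument for this lemma. The paper itself does not give a proof but simply refers to \cite{YZ-1999, GSZ1}, where essentially the same averaging identity combined with reverse Fatou is used; so your approach coincides with what the cited references do.
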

	%Ìõ¼þÖ±½ÓÓÃ°ë°¼ÐÔºÍlipschitzÁ¬ÐøÐÔ³ÉÁ¢µÄÌõ¼þ
 	%Before presenting the main result, we introduce the following two conditions.
	%
	 Now, we give the stochastic verification theorem within the framework of viscosity solutions.
	
	\begin{theorem}\label{SVT-VS}\sl Assume that the conditions {\rm \textbf{(H$_1$)-(H$_5$)}} and {\rm \bf(C)}  hold true.
		   Let $W(\cd,\cd)\in C_p([0,T]\times\dbR^n)$ be the viscosity solution of   PIDE     \eqref{HJB}. For  $(t,x)\in [0,T]\times \dbR^n$, let $(\bar u(\cd)  , X_\cd^{t,x,\bar u}  )$ be the admissible pair and  $(  Y_\cd^{t,x,\bar u}  ,  Z_\cd^{t,x,\bar u}  ,  V_\cd^{t,x,\bar u},  A_\cd^{t,x,\bar u}  )$ solve RBSDE with jumps \eqref{RBSDEP}  under the control process $\bar u(\cd) \in\cU_{t,T}$. Assume that  there exists  a triple of
		$
		\big(\bar q,\bar p, \bar P\big)\in \cM^2_\dbF(t,T;\dbR)\times \cM^2_\dbF(t,T;\dbR^n)\times \cM^2_\dbF(t,T;\dbS^n)
		$ such that
%		and corresponding function $\bar \varphi  \in C^{1,2}([0,T]\times \mathbb{R}^n)$ such that
		$$\left\{\ba{ll}
		\ns\ds\!\!\! {\rm(i)}\   \big(\bar q(s),\bar p(s) , \bar P(s) \big)\in D^{1,2,+}_{t+,x}W\big(s,  X _s ^{t,x,\bar u}\big) , \ {\rm a.e.}  \ s\in [t,T],\  \dbP\mbox{-a.s.};\\
		\ns\ds\!\!\! {\rm(ii)}\     \bar p(s).  \si\big(s,  X _s^{t,x,\bar u} ,\bar{u}_s\big)=  Z _s^{t,x,\bar u} ,\ {\rm a.e.} \ s\in [t,T],\   \dbP\mbox{-a.s.};\\
		%f没有单调性条件 大于改等于
%		\ns\ds\!\!\! {\rm(iii)}\
%		\cC^{\bar u} W(s, X_s^{t,x,\bar u}) :=\int_{E}  l( e) \big(W \big(s, X_s^{t,x,\bar u} +\gamma(s, X_s^{t,x,\bar u},\bar u_s, e)\big)-W(s, X_s^{t,x,\bar u})\big)  \nu(\mathrm d e)\\
%		\ns\ds\!\!\!
%		 \mathcal C^{\bar u } W(s, X_s^{t,x,\bar u}) \ges \int_E   l(e) V_s^{t,x,\bar u}(e) \nu(de), \mbox{ a.e.} \ s\in [t,T],\   \dbP\mbox{-a.s.} ;\\
		\ns\ds\!\!\!{  \rm(iii)}\ W \big(s, X_{s-}^{t,x,\bar u} +\gamma(s, X_{s-}^{t,x,\bar u},\bar u_s, e)\big)-W(s, X_{s-}^{t,x,\bar u})
		 =   V_s^{t,x,\bar u}(e),\ e\in E,\ {\rm a.e.},\ s\in[t,T],\ \dbP\mbox{-a.s.}; \\
		\ns\ds\!\!\! {\rm(iv)} \ \mbox{For some } \f(\cd,\cd) \in    \mathfrak{I} (\bar q,\bar p, \bar P; W)\big(s,  X _s ^{t,x,\bar u}  \big),  
		\\
		 \ns\ds\!\!\!  \hskip 0.74cm \dbE   \[   \int_t^T \!\!\! \Big( \bar q(s) \!+  \! \mathcal H \big(s,  X_s^{t,x,\bar u} ,  Y_s^{t,x,\bar u} ,\bar p(s) , \bar P(s), \cB^{\bar u } \varphi(s,  X_s^{t,x,\bar u} ),\cC^{\bar u } \varphi(s,  X_s^{t,x,\bar u} ),\bar u_s\big)\Big) \mathrm ds   \]  \! \les\!  0.
		\ea\right.$$
		Then, $\bar u (\cd) $ is an optimal control of Problem (C)$_{t,x}$.
	\end{theorem}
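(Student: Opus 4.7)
The plan is to show that $W(t,x)\ges Y_t^{t,x,\bar u}=J(t,x;\bar u(\cd))$; combined with Theorems \ref{vis-solution}--\ref{1geq2}, which identify $W$ with the value function and in particular yield $W(t,x)\les J(t,x;u(\cd))$ for every admissible $u(\cd)$, this forces $J(t,x;\bar u(\cd))=W(t,x)=\essinf_u J(t,x;u(\cd))$, i.e.\ the optimality of $\bar u(\cd)$. To this end, introduce
\[ g(s):=\dbE\big[W(s,X_s^{t,x,\bar u})-Y_s^{t,x,\bar u}\big],\qquad s\in[t,T], \]
so that $g(T)=0$ by $W(T,\cd)=\Phi$, and it suffices to establish $g(t)\ges 0$. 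This will follow from an infinitesimal upper bound on the one-sided difference quotient $[g(s+h)-g(s)]/h$ combined with Lemma \ref{le22}.

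For each $s\in[t,T)$ (and $\dbP$-a.e.\ $\o$), invoke Lemma \ref{L11} to select a test function $\varphi_s\in\mathfrak{I}(\bar q(s),\bar p(s),\bar P(s);W)(s,X_s^{t,x,\bar u})$; in particular $\varphi_s\ges W$ everywhere with equality at the base point, and $(\varphi_{s,t},\varphi_{s,x},\varphi_{s,xx})(s,X_s^{t,x,\bar u})=(\bar q(s),\bar p(s),\bar P(s))$. Applying It\^o's formula to $\varphi_s(r,X_r^{t,x,\bar u})$ on $[s,s+h]$, taking conditional expectations, exploiting $\varphi_s\ges W$, and then invoking the RBSDE dynamics for $Y$ together with the monotonicity of $A$ (so that the nonnegative contribution $\dbE[A_{s+h}-A_s]\ges 0$ may be dropped on the right-hand side), yields
\begin{equation*}
 g(s+h)-g(s)\les\dbE\!\int_s^{s+h}\!\Big[\varphi_{s,t}+\cL^{\bar u_r}\varphi_s+\cB^{\bar u_r}\varphi_s+f\big(r,X_r^{t,x,\bar u},Y_r,Z_r,\int_E l(e)V_r(e)\nu(\mathrm de),\bar u_r\big)\Big](r,X_r^{t,x,\bar u})\,\mathrm dr.
\end{equation*}

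Evaluating the integrand at the base point and letting $h\to 0^+$, condition (ii) identifies $Z_s=\bar p(s)\cd\sigma(s,X_s^{t,x,\bar u},\bar u_s)$; condition (iii) together with $\varphi_s\ges W$ and equality at the base point produces
\[ \cC^{\bar u_s}\varphi_s(s,X_s^{t,x,\bar u})\ges\int_E l(e)V_s(e)\nu(\mathrm de), \]
and the monotonicity assumption {\bf(C)} then furnishes
\[ f\big(s,X_s^{t,x,\bar u},Y_s,Z_s,\int_E l(e)V_s(e)\nu(\mathrm de),\bar u_s\big)\les f\big(s,X_s^{t,x,\bar u},Y_s,\bar p(s)\cd\sigma,\cC^{\bar u_s}\varphi_s(s,X_s^{t,x,\bar u}),\bar u_s\big). \]
Reassembling these identifications, the limsup of the preceding display is bounded by $\dbE[\bar q(s)+\cH(s,X_s^{t,x,\bar u},Y_s,\bar p(s),\bar P(s),\cB^{\bar u_s}\varphi_s(s,X_s^{t,x,\bar u}),\cC^{\bar u_s}\varphi_s(s,X_s^{t,x,\bar u}),\bar u_s)]$. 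Applying Lemma \ref{le22} to $g$ and integrating over $[t,T]$,
\[ -g(t)=g(T)-g(t)\les\dbE\!\int_t^T\!\big[\bar q(s)+\cH(\cds)\big]\mathrm ds\les 0 \]
by hypothesis (iv); hence $g(t)\ges 0$ and the claim follows.

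The main obstacle will be the rigorous execution of the second step: one needs the selection $s\mapsto\varphi_s$ to be measurable in $(s,\o)$ and to furnish uniform polynomial-growth control of $(\varphi_s,\varphi_{s,t},\varphi_{s,x},\varphi_{s,xx},\cB^{\bar u}\varphi_s,\cC^{\bar u}\varphi_s)$ along the trajectory so that the It\^o expansion, the outer expectation, and the $h\to 0^+$ limit may be interchanged; in parallel, the hypothesis of Lemma \ref{le22} requires an integrable majorant for the one-sided quotient $[g(s+h)-g(s)]/h$ on every $[t,T-\delta]$, which must be extracted from the polynomial controls above together with the moment estimates on $X$, $Y$, $Z$, $V$ and $A$ already secured in Sections \ref{Sec-Pre}--\ref{RSRCP}.
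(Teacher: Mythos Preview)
Your overall architecture is the same as the paper's: touch $W$ from above by a $C^{1,2}$ test function at each $(s,X_s^{t,x,\bar u})$, apply It\^o, invoke Lemma~\ref{le22} on $\dbE[W(s,X_s^{t,x,\bar u})]$ (or on your $g$), and close with hypothesis~(iv). Two of the ``obstacles'' you flag are, however, genuine gaps, and the remedies you suggest for them are not the right ones.

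\textbf{The majorant for Lemma~\ref{le22}.} You propose to extract the integrable upper bound for $\frac{1}{h}[g(s+h)-g(s)]$ from the polynomial growth of $\varphi_s$ supplied by Lemma~\ref{L11} together with the moment estimates of Section~\ref{RSRCP}. This does not work: the growth constants in Lemma~\ref{L11} are \emph{not} uniform in the base point $(s,X_s^{t,x,\bar u}(\omega))$, so they do not yield an $L^1$ majorant independent of $h$. The paper's route is different and is precisely why all of Section~\ref{more-pro} exists: the semi-concavity of $W$ (Theorem~\ref{semicon}) and its joint Lipschitz continuity (Theorem~\ref{join-lip}) give a \emph{global} inequality
\[
W(t_0+h,X_{t_0+h})-W(t_0,X_{t_0})\ \les\ C(1+|X_{t_0+h}|)h+\langle \bar p(t_0),X_{t_0+h}-X_{t_0}\rangle+C|X_{t_0+h}-X_{t_0}|^2,
\]
from which the conditional difference quotient is bounded by $C(1+|X_{t_0}^{t,x,\bar u}|)$, uniformly in $h$. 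Without invoking Theorems~\ref{semicon} and~\ref{join-lip} here, the hypotheses {\bf(H$_3$)}--{\bf(H$_5$)} in the statement would be idle.

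\textbf{Randomness of the test function.} You note that ``the selection $s\mapsto\varphi_s$ should be measurable in $(s,\omega)$''. The paper does not construct a measurable selection at all. Instead, for fixed $(t_0,\omega_0)$ it passes to the regular conditional probability $\dbP(\cdot\mid\cF_{t_0}^t)(\omega_0)$; on that space $X_{t_0}^{t,x,\bar u}=X_{t_0}^{t,x,\bar u}(\omega_0)$ is a constant, so $\bar\varphi(t_0,\omega_0,\cdot,\cdot)$ is an honest deterministic $C^{1,2}$ function to which It\^o's formula applies. The pointwise limsup inequality is then upgraded to an inequality in $\dbE$ via Fatou's lemma (licensed by the majorant above), and only then is Lemma~\ref{le22} invoked. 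Your outline applies It\^o to a random $\varphi_s$ and takes an outer expectation directly, which is not justified.

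A minor point: once you pass through (iv), the inequality $\cC^{\bar u_s}\varphi_s\ge\int_E l(e)V_s\nu(\mathrm de)$ and condition {\bf(C)} are indeed what bridges the $\cC^{\bar u}\varphi$ appearing in $\cH$ to the RBSDE driver; the paper does the same, only after (iv) and with $\cC^{\bar u}W$ as the intermediate object (equality by~(iii)).
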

		\begin{proof}
		Firstly, from the uniqueness of the viscosity solution of  \eqref{HJB} (Theorem \ref{1geq2}), we know, for any $(t,x)\in[0,T]\times\dbR^n$ and $u(\cd)\in\cU_{t,T}$
		\begin{equation}\label{ee1}
			W(t,x) \les J (t,x;u(\cd)  ).
		\end{equation}
		%
		
%		If we  fix some point $t_0\in [t,T]$ such that (i) holds true, and
%		%
%		\begin{equation}\label{equ-lmn}\ba{ll}
%			\ns\ds \lim\limits_{h\to 0^+} \frac 1h \int_{t_0}^{t_0+h} \dbE\big|b\big(r,\bar X_r ,\bar u_r \big)-b\big(t_0,\bar X _{t_0},\bar u_{t_0}\big)\big| \mathrm dr=0,  \\
%			%
%			\ns\ds \lim\limits_{h\to 0^+} \frac 1h \int_{t_0}^{t_0+h} \dbE\big|\si\si^\top\big(r,\bar X _r ,\bar u_r  \big)-\si\si^\top\big(t_0,\bar X  _{t_0},\bar u_{t_0}\big)\big| \mathrm dr=0,
%			\ea
%		\end{equation}
%		%
%		then, from condition (i) and Lemma \ref{lemma1}, we know the set of such points $t_0$ is of full measure in $[t,T]$.
		%
		
		\par For any fixed  $t_0\in[t,T]$ satisfied (i) and given $\omega_0=(\o'_0,p'_0)\in\O$, we introduce a new  probability space $\big(\O,\cF,\dbP(\cd\mid\cF_{t_0}^t)(\o_0)\big)$ equipped with a new filtration $\{\cF_s^{t_0}\}_{t_0\les s\les T}$.  
		Here,  for any $\varsigma ,  \varsigma' \in [t,T]$ satisfied $\varsigma \les \varsigma' $,  define $\mathcal F _{\varsigma'}^{\varsigma}$ as $\mathcal{F}_{\varsigma'}^{B,\varsigma}\otimes\mathcal{F}_{\varsigma'}^{\varsigma}$ augmented by all the $\mathbb{P}$-null sets in $\mathcal{F}$, where $$\mathcal{F}_{\varsigma'}^{B, \varsigma} :=  \sigma\{B_r: \varsigma \les r\les \varsigma' \} \vee\mathcal{N}_{\mathbb{P}_1},\q
		\displaystyle \mathcal F_{\varsigma' }^{\mu,\varsigma }  :=   ( \bigcap_{s>\varsigma' }   \sigma\{ N( (\tau,r]\times\Delta),\varsigma  \les\tau\leqslant r\les s, \Delta\in \mathcal{B}(E)\}\Big)\vee\mathcal{N}_{\mathbb{P}_{2}}.$$
		%	 	Further, for fixed $\o_2$, the regular conditional probability $\dbP(\cd\mid\cF_{t_0}^t)(\o_2)$  is probability measure, which means it is well defined.

		 \no In this new probability space, according to the definition of regular conditional probability, for fixed $\o_0$, the regular conditional probability $\dbP(\cd\mid\cF_{t_0}^t)(\o_0)$  is probability measure.
%		 , which means it is well defined.
% And, 
		The expectation $\dbE_{t,t_0}$ defined on the new space is related to the expectation $\dbE$ of the initial probability space as follows,
		$$
		\dbE_{t,t_0}[\cd]=\dbE\big[\cd\mid\cF_{t_0}^t (\o_0) \big](\o_0).  
		$$

		\no Furthermore, it is easy to check that
		$\{B_s\}_{s\ges t_{0}}$ is still a standard Brownian motion with $B_{t_0}=B_{t_0}(\omega'_{0})$ almost surely, and $\mu$ is a Poisson random measure restricted to $[t_0,T]$ with $N((t,t_0],\Delta)=N(p'_0,(t,t_0]\times\Delta)$, $\Delta\in\mathcal{B}(E).$
		And, the control process $\bar {u}(\cd)$ is adapted to the new filtration.
		For $\o_0$, the process $  X ^{t,x,\bar u} _\cd $ is a solution of \eqref{state} on $[t_0,T]$  in $\big(\O,\cF,\dbP(\cd\mid\cF^{t}_{t_0})(\o_0)\big)$ with the  initial condition $  X^{t,x,\bar u}  _{t_0}=  X^{t,x,\bar u}  _{t_0}(\o_0)$.

		From now on, we keep in mind that the above $(t_0,\o_0)$ is fixed. From  Lemma \ref{L11} and $\big(\bar q(t_0),\bar p(t_0) , \bar P(t_0) \big)\in D^{1,2,+}_{t+,x}W\big(t_0,  X _{t_0} ^{t,x,\bar u}\big)$, we know there exists a function $ \bar \f ( t_0, \o_0,\cd,\cd) \in    \mathfrak{I} (\bar q,\bar p, \bar P; W)\big(t_0,  X^{t,x,\bar u}  _{t_0} \big)$. 
		Furthermore, the linear growth of $W(\cd,\cd)$ in \eqref{val-lip}  implies us
		$ \bar \f,$ $ \bar \f_t,$ $  \bar\f_x,$ $ \bar\f_{xx}$ are also linear growth in $x$, i.e.,
		\begin{equation}\label{phi-lg}
			| \bar\f( t_0, \o_0,t,x)|+| \bar\f_t( t_0, \o_0,t,x)|+| \bar\f_x( t_0, \o_0,t,x)|+|\bar \f_{xx}( t_0, \o_0,t,x)|\les C(1+|x|),\q (t,x)\in[0,T]\times\dbR^n.
		\end{equation}
		Obviously,  $\bar \varphi$ is deterministic  on the space $\big(\O,\cF,\dbP(\cd\mid\cF_{t_0}^t)(\o_0)\big)$.
%		and
%		 we denote it by $\bar \varphi$ to  show the difference.
%		
%	
%		\par From (i) and   Lemma \ref{L11}-(i), we know there exists a function $\bar\f(\cd,\cd) \in  C^{1,2}([0,T]\times \mathbb{R}^n)$ such that, $W-\bar\f$ attains a strict maximum over $[t_0,T]\times\dbR^n$ at $ (t_0, \bar X _{t_0} )$, and
%		\begin{equation}\label{equ999}\ba{ll}
%			\ns\ds \(\bar\f\big(t_0, \bar X _{t_0} \big),\bar\f_t\big(t_0,\bar X _{t_0}\big),\bar\f_x\big(t_0,\bar X _{t_0}\big),\bar\f_{xx}\big(t_0, \bar X _{t_0}\big)\)\\
%			%
%			\ns\ds =\(W\big(t_0, \bar X _{t_0}( \o_2)\big),\bar q(t_0,\o_2), \bar p(t_0,\o_2),\bar P(t_0,\o_2)\).\ea
%		\end{equation}
%		%Note that function
%		% $\bar\f(\cd,\cd)$ depends on $t_0, \ \o_2$ indirectly from $ X^{t,x;\bar {u} }(t_0,\o_2),\bar q(t_0,\o_2), \bar p(t_0,\o_2),\bar P(t_0,\o_2)$.
%		The linear growth of $W(\cd,\cd)$ in \eqref{val-lip}  implies us
%		$\bar\f,$ $\bar\f_t,$ $ \bar\f_x,$ $\bar\f_{xx}$ are also linear growth in $x$, i.e.,
%		%
%		\begin{equation}\label{phi-lg}
%			|\bar\f(t,x)|+|\bar\f_t(t,x)|+|\bar\f_x(t,x)|+|\bar\f_{xx}(t,x)|\les C(1+|x|),\q (t,x)\in[0,T]\times\dbR^n.
%		\end{equation}
%		%
%		Note that, on the space $\big(\O,\cF,\dbP(\cd\mid\cF_{t_0}^t)(\o_2)\big)$, $\bar\f$ is the deterministic function when $(t_0,\o_2)$ is fixed.
%		%
%		
%		
		Then, for any $h>0$, applying It\^o's formula to $\bar\f\big(\cd,   X^{t,x,\bar u} _\cd \big)$ on $[t_0,t_0+h]$, we  obtain
		$$
		\ba{ll}
		\ns\ds\!\!\!    \bar \f\big(t_0+h, X ^{t,x,\bar u}_{t_0+h} \big)-\bar\f\big(t_0,  X^{t,x,\bar u} _{t_0}\big)\\
		\ns\ds\!\!\!    =\int_{t_0}^{t_0+h}\( \bar\f_t\big(r,  X_r^{t,x,\bar u} \big)+  \cL^{\bar u}\bar\f(r,  X_r^{t,x,\bar u})
%		\bar\f_x\big(r,  X_r^{t,x,\bar u} \big)  b\big(r,  X_r^{t,x,\bar u} , \bar{u}_r\big) 
%		+ \frac 12\tr\big(\sigma\sigma^\top  (r,  X_r^{t,x,\bar u} , \bar{u}_r ) \bar\f_{xx} (r,  X_r^{t,x,\bar u}  )\big)
		 +\mathcal B^{\bar u}\bar \f (r,X_r^{t,x,\bar u})
		\)\mathrm dr
		 +\int_{t_0}^{t_0+h}   \bar\f_x\big(r,X_r^{t,x,\bar u} \big).  \si\big(r,X_r^{t,x,\bar u} , \bar{u}_r\big)  \mathrm dB_r \\
		%
%		\ns\ds\!\!\!\hskip 1.55cm  +\int_E \bar\varphi \big( r,X_r
%		+\gamma (r,\bar X_r,\bar u _r,e ) \big) 
%		- \bar\varphi (r,\bar X_r )-  \bar\varphi_x (r,\bar X_r  )\gamma (r,\bar X_r,\bar u _r ,e)\nu(\mathrm de)\]\mathrm dr\\
		%
		\ns\ds\!\!\!\hskip 0.425cm
		+\int_{t_0}^{t_0+h}\int_E   \(\bar\varphi \big( r, X_{r-}^{t,x,\bar u}
		+\gamma (r,X_{r-}^{t,x,\bar u},\bar u _r,e ) \big) 
		- \bar\varphi (r,X_{r-}^{t,x,\bar u} ) \) \tilde N(\mathrm dr,\mathrm de).
		\ea
		$$
		% 
%		Note that the condition {\bf (H$_1$)}, the regularity properties \eqref{est888}  of $\bar X $, as well as the choice  of $\bar\f$ satisfying \eqref{phi-lg}, imply us that all the   integrals in the above equality make sense.
		%
	\par	Taking the expectation $\dbE_{t,t_0}[\cd]$, we get
		\begin{equation}\label{est123}
		\ba{ll}
		\ns\ds\!\!\!  \dbE_{t,t_0}\Big[W\big(t_0+h,  X^{t,x,\bar u}  _{t_0+h}\big)-W\big(t_0,   X ^{t,x,\bar u} _{t_0}\big) \Big]
		\les  \dbE_{t,t_0}\Big[\bar\f\big(t_0+h,    X^{t,x,\bar u}  _{t_0+h}\big)-\bar\f\big(t_0,   X^{t,x,\bar u}  _{t_0+h}\big) \Big]\\
		%
%		\ns\ds\!\!\!  <  \dbE_{t,t_0}\Big[\bar\f\big(t_0+h, \bar  X _{t_0+h}\big)-\bar\f\big(t_0,\bar  X _{t_0+h}\big) \Big]\\
		%
		\ns\ds\!\!\!  =  \dbE_{t,t_0}\Big[\int_{t_0}^{t_0+h}\( \bar\f_t\big(r,  X_r^{t,x,\bar u} \big)+  \cL^{\bar u}\bar\f(r,  X_r^{t,x,\bar u})
		+\mathcal B^{\bar u}\bar \f (r,X_r^{t,x,\bar u})
		\)\mathrm dr \Big]. \\
		%
%		\ns\ds\!\!\!  \hskip 2.45cm  +\int_E \bar\varphi \big( r,X_r
%		+\gamma (r,\bar X_r,\bar u _r,e ) \big) 
%		- \bar\varphi (r,\bar X_r )-  \bar\varphi_x (r,\bar X_r  )\gamma (r,\bar X_r,\bar u _r ,e)\nu(\mathrm de)\) \mathrm dr \Big].\\
		%
		\ea
	\end{equation}

		Next, we will estimate  $\displaystyle \limsup\limits_{h\to 0^+} \frac 1h \dbE_{t,t_0}\Big[W\big(t_0+h,\bar X _{t_0+h}\big)-W\big(t_0,\bar X _{t_0}\big) \Big]$. Before that, we make the following preparations. 
		 For all $h\in(0,T-t_0)$, according to \eqref{phi-lg} and Lemma \ref{Le-SDE},
		
		$$
		\begin{array}{ll}
			\ns\ds\!\!\! \int_{t_0}^{t_0+h}   | \mathbb E _{t,t_ 0 } [ \cL^{\bar u} \bar \varphi (r,  X_r^{t,x,\bar u}) ]| \mathrm dr\\
			\ns\ds\!\!\! \les \mathbb E _{t,t_ 0 }  \[  \int_{t_0}^{t_0+h}  \big| \bar\f_x\big(r, X_r^{t,x,\bar u} \big) . b\big(r,  X_r^{t,x,\bar u} , \bar{u}_r\big)   + \frac12 \tr\big(\sigma\sigma^\top  (r,  X_r^{t,x,\bar u} , \bar{u}_r ) \bar\f_{xx} (r,  X_r^{t,x,\bar u}  ) \Big| \mathrm dr  \] \\
			 %直接用有界性，现在用的线性增长
%			 \ns\ds\!\!\!	{\color{red}  \les  C  \mathbb E _{t,t_ 0 } \int_{t_0}^{t_0+h}   \big(1+ |  X_r^{t,x,\bar u}|^2 + |  X_r^{t,x,\bar u}|^3  \big)           \mathrm dr   < +\i,}	\\
			 %
			 \ns\ds\!\!\!	   \les  C  \mathbb E _{t,t_ 0 } \[ \int_{t_0}^{t_0+h}   \big(1+ |  X_r^{t,x,\bar u}|  + |  X_r^{t,x,\bar u}|^2  \big)           \mathrm dr \]  < +\i, 
		\end{array}
		$$
		   and  
		$$
		\begin{array}{ll}
%			\ns\ds\!\!\!	 \int_{t_0}^{t_0+h}        | \mathbb E _{t,t_ 0 } \[ \int_E \bar\varphi \big( r,X_r
%			+\gamma (r,\bar X_r,\bar u _r,e ) \big) 
%			- \bar\varphi (r,\bar X_r )-  \bar\varphi_x (r,\bar X_r  )\gamma (r,\bar X_r,\bar u _r ,e)\nu(\mathrm de)   \]   | \mathrm dr
%			\\
			%
			\ns\ds\!\!\!	 \int_{t_0}^{t_0+h}        \Big| \mathbb E _{t,t_ 0 }  [     \cB^{\bar u} \bar \f (r, X_r^{t,x,\bar u})       ]   \Big| \mathrm dr
			\\
			\ns\ds\!\!\!	  \les   \mathbb E _{t,t_ 0 } \[ \!\int_{t_0}^{t_0+h}\!  \Big |
			 \int_E \!\int_0^1 \! \big \langle  (1 \! -\! \theta)   \bar\varphi_{xx} \big(r, X_r^{t,x,\bar u}\! +\! \theta\gamma(r, X_r^{t,x,\bar u},\bar u_r,e  ) \big ) \gamma(r, X_r^{t,x,\bar u},\bar u_r,e  ), \gamma(r, X_r^{t,x,\bar u},\bar u_r,e  )     \big   \rangle  \nu(\mathrm de)
			      \Big| \mathrm dr \! \]	\\
			\ns\ds\!\!\!   \les C \mathbb E _{t,t_ 0 } \[\int_{t_0}^{t_0+h}   \Big| 
			\int_E        \big(1+ | X_r^{t,x,\bar u}|  + |\gamma(r, X_r^{t,x,\bar u},\bar u_r,e  ) |  \big) |\gamma(r, X_r^{t,x,\bar u},\bar u_r,e  )|^2   \nu(\mathrm de)
			   \Big| \mathrm dr\]	\\
			%
%			\ns\ds\!\!\!   \les C \int_{t_0}^{t_0+h}   | \mathbb E _{t,t_ 0 } \[ 
%			\int_E       \rho^2(e) (\rho(e)+1) (1+ | X_r^{t,x,\bar u}|    )  (1+ | X_r^{t,x,\bar u}|^2)   \nu(\mathrm de)
%			\]   | \mathrm dr	\\
			%
%			\ns\ds\!\!\!  {\color{red} \les C \int_{t_0}^{t_0+h}   \Big| \mathbb E _{t,t_ 0 } \[ 
%			\int_E       \ell^2(e) (\ell(e)+1) (1+ | X_r^{t,x,\bar u}|^2 + | X_r^{t,x,\bar u}|^3    )     \nu(\mathrm de)
%			\]   \Big| \mathrm dr	<+ \i.}\\
				\ns\ds\!\!\!   \les C \int_{t_0}^{t_0+h}   \Big| \mathbb E _{t,t_ 0 } \[ 
				\int_E       \ell^2(e)   (1+\ell(e)+ | X_r^{t,x,\bar u}|     )     \nu(\mathrm de)
				\]   \Big| \mathrm dr	<+ \i. \\
		\end{array}
		$$
		Then, from the Lebesgue differentiation theorem, we get
%		 and the arbitrariness of $t_0$, $ \mathbb E _{t,t_ 0 } [ \bar\f_x\big(r, X_r^{t,x,\bar u} \big)  b\big(r, X_r^{t,x,\bar u} , \bar{u}_r\big)  ] $,  $\mathbb E _{t,t_ 0 } [\tr\big(\sigma\sigma^\top  (\cd,  X^{t,x,\bar u} _\cd , \bar{u}_\cd ) \bar\f_{xx} (\cd, X_\cd^{t,x,\bar u}   )\big)  ]  $  and $ \mathbb E _{t,t_ 0 }  [     \cB^{\bar u} \bar \f (\cd,  X^{t,x,\bar u}_\cd)       ]$ have Lebesgue points almost everywhere in their domain $[t,T-\d]$.
%		 
%		 
%		
%		Therefore, by letting $h\to 0^+$ in \eqref{est123}, combine with the aforementioned result, we get
		%
			\begin{equation}\label{ineq-111}
			\ba{ll}
			\ns\ds\!\!\! \limsup\limits_{h\to 0^+} \frac 1h\dbE_{t,t_0}\Big[W\big(t_0+h,  X ^{t,x,\bar u}_{t_0+h}\big)-W\big(t_0,  X^{t,x,\bar u} _{t_0}\big) \Big]\\
			\ns\ds\!\!\! \les\limsup\limits_{h\to0^+}  \frac 1h  \dbE_{t,t_0}\Big[ \!  \int_{t_0}^{t_0+h}\! \(\bar\f_t\big(r, X_r^{t,x,\bar u} \big)+\cL^{\bar u}\bar\f (r, X_r^{t,x,\bar u}  )
%			+  \bar\f_x\big(r, X_r^{t,x,\bar u} \big)  b\big(r, X_r^{t,x,\bar u} , \bar{u}_r\big) 
%			+ \frac 12 \tr\big(\sigma\sigma^\top  (r, X_r^{t,x,\bar u} , \bar{u}_r ) \bar\f_{xx} (r, X_r^{t,x,\bar u}  )\big) 
			+\cB^{\bar u} \bar \f (r, X_r^{t,x,\bar u}) \) \mathrm dr \! \Big]\\
			%
			%
%			\ns\ds\!\!\!   =\lim\limits_{h\to0^+}  \frac 1h  \int_{t_0}^{t_0+h}\!   \dbE_{t,t_0}\Big[ \!   \(\bar\f_t\big(r, X_r^{t,x,\bar u} \big)+\cL^{\bar u}\bar\f (r, X_r^{t,x,\bar u}  )
%%			+  \bar\f_x\big(r, X_r^{t,x,\bar u} \big)  b\big(r, X_r^{t,x,\bar u} , \bar{u}_r\big) 
%%			+ \frac 12 \tr\big(\sigma\sigma^\top  (r, X_r^{t,x,\bar u} , \bar{u}_r ) \bar\f_{xx} (r, X_r^{t,x,\bar u}  )\big) 
%			+\cB^{\bar u} \bar \f (r, X_r^{t,x,\bar u}) \)  \! \Big]\mathrm dr \\
%%			
%%			
			\ns\ds\!\!\! =  
			\bar \f_t\big(t_0,  X^{t,x,\bar u}_{t_0}\big)+\cL^{\bar u}\bar\f (t_0, X_{t_0}^{t,x,\bar u}  )
%			  \bar\f_x\big(t_0,\bar X_{t_0}\big)  b\big(t_0,\bar X_{t_0}, \bar u_{t_0} \big) 
%			+\frac 12 \tr\big( \sigma\sigma^\top  (t_0,\bar X_{t_0} , \bar{u}_{t_0} ) \bar\f_{xx} (r,\bar X_{t_0}  )\big)
			+\cB^{\bar u} \bar \f (t_0, X_{t_0}^{t,x,\bar u})   \\
			\ns\ds\!\!\! = \bar q(t_0) +  \bar p(t_0).  b\big(t_0, X_{t_0}^{t,x,\bar u}, \bar u_{t_0} \big) 
			+\frac12 \tr\big( \sigma\sigma^\top  (t_0,  X^{t,x,\bar u}_{t_0} , \bar{u}_{t_0} ) \bar P(t_0) \big) +\cB^{\bar u} \bar \f (t_0,  X^{t,x,\bar u}_{t_0}).\\
			%
%			\ns\ds\!\!\!\hskip 0.4cm      +\int_E \bar\varphi \big(t_0,X_{t_0}
%			+\gamma (t_0,\bar X_{t_0},\bar u _{t_0},e ) \big) 
%			- \bar\varphi (t_0,\bar X_{t_0} )-  \bar p(t_0)\gamma (t_0,\bar X_{t_0},\bar u _{t_0} ,e)\nu(\mathrm de).  \\
			%
			\ea
		\end{equation}

		Next, we claim that, for all $ \d\in(0,T)$ and the previous   $t_0$ lying in $[t,T-\d)$,
		for any  $h>0 $ with  $t_0+h\les T-\d$,
		\begin{equation}\label{ineq-000}\ba{ll}
		\ns\ds {\rm(a)  }\   \frac 1h\dbE_{t,t_0}\[  W\big(t_0+h,  X^{t,x,\bar u} _{t_0+h}\big)-W\big(t_0,  X^{t,x,\bar u} _{t_0} \big)   \]  \les  C h(1+ |  X^{t,x,\bar u}_{t_0}|  ),  
%		\les   C(1 +|  X^{t,x,\bar u} _{t_0}|^2),
		\q \dbP\mbox{-a.s.};\\
		\ns\ds {\rm(b)  }\   \frac 1h \dbE\[   W\big(t_0+h,  X^{t,x,\bar u} _{t_0+h}\big)-W\big(t_0,  X ^{t,x,\bar u}_{t_0} \big)  \]    \les C(1 +|x| ) . 
%		\les C(1 +|x|^2) .
		\ea\end{equation} 
		In fact, from Theorems \ref{join-lip} and \ref{semicon}
		 as well as $\big(\bar q (t_0 ) ,\bar p(t_0) , \bar P(t_0) \big)\in D^{1,2,+}_{t+,x}W(t_0,X^{t,x; \bar u }_{t_0})$, we know, for all $\d\in(0,T)$,  for any $h\in (0,T-t_0-\d]$,
		\begin{equation*}\label{} \ds W\big(t_0+h,  X^{t,x,\bar u} _{t_0+h}\big)-W\big(t_0,  X ^{t,x,\bar u} _{t_0} \big)\les {\rm  I }+\rm{II}+\rm{ III},\end{equation*} 
		with
		$$\left\{\ba{ll}
		\ns\ds\!\!\! {\rm  I }:= C\big(1+| X^{t,x,\bar u} _{t_0+h}| \big)h,  \\
		\ns\ds \!\!\! {\rm II}:= \big\langle \bar p(t_0) ,    X^{t,x,\bar u} _{t_0+h}-  X ^{t,x,\bar u}_{t_0} \big\rangle ,\\
		\ns\ds\!\!\!  {\rm  III} := C |  X^{t,x,\bar u} _{t_0+h}-   X^{t,x,\bar u} _{t_0} |^2.\\
		\ea\right.$$

	Using  \eqref{phi-lg}, we have   $|\bar q(t_0) |+|\bar p(t_0) |+|\bar P(t_0) |\les C(1+|  X _{t_0}^{t,x,\bar u} | ).$
		Furthermore, combine with Lemma \ref{Le-SDE}, we get
		$$
		\ba{ll}
		\ns\ds \dbE _{t,t_0}\big[{\rm I }  \big]\les C  h+ C h\Big(\dbE _{t,t_0}  \Big[\sup\limits_{ r\in [t_0,t_0+h]}|  X_r^{t,x,\bar u}|^2 \Big]\Big)^{\frac{1}{2}}
%		\les  C h(1+ |  X^{t,x,\bar u}_{t_0}|^2 )^\frac12
		 \les  C h(1+ |  X^{t,x,\bar u}_{t_0}|  )  , \\
		%  \ns\ds \dbE _{t,t_0}\big[{\rm I }  \big]\les C_{1,\d} h\(1+\dbE _{t,t_0}  \Big[\sup\limits_{ r\in [t_0,t_0+h]}|X^{t,x; \bar u }(r)|^2 \Big]^{\frac{1}{2}} \)\les  C h(1+ |x|^2 )^\frac12, \\
		%
		\ns\ds  \dbE _{t,t_0}\big[{\rm II} \big]
		%  = \dbE_{t,t_0} \big[\big\langle \bar p(t_0) , X^{t,x; \bar u }_{t_0+h}-X^{t,x; \bar u }(t_0) \big\rangle  \big]\\
		%
		\les  \dbE_{t,t_0} \[ \big\langle \bar p(t_0),\int_{t_0}^{t_0+h}  \! b\big(r, X_r^{t,x,\bar u} ,\bar{u}_r\big)\mathrm dr\big\rangle  \]
		\les  \(\dbE_{t,t_0}\big[|\bar p(t_0)|^2  \big]\)^\frac12 \! \(\dbE_{t,t_0}\[\big(\int_{t_0}^{t_0+h}\! b\big(r, X_r^{t,x,\bar u} ,\bar{u}_r\big)\mathrm dr\big)^2   \] \)^\frac12\\
		%
%		\ns\ds\hskip1.3cm\les  \(\dbE_{t,t_0}\big[|\bar p(t_0)|^2  \big]\)^\frac12\(\dbE_{t,t_0}\[\big(\int_{t_0}^{t_0+h}b\big(r, X_r^{t,x,\bar u} ,\bar{u}_r\big)\mathrm dr\big)^2   \] \)^\frac12\\
		%
%		\ns\ds\hskip1.3cm  {\color{red} \les Ch\(1+\dbE_{t,t_0}\[\sup\limits_{ r\in [t_0,t_0+h]}|  X ^{t,x,\bar u}_r|^2  \]\) \les Ch(1+ | X^{t,x,\bar u}_{t_0}|^2) ,}\\
		\ns\ds\hskip1.3cm 
%		{\color{red}\les  C h(1+ |  X^{t,x,\bar u}_{t_0}|^2 )^\frac12
			\les  C h(1+ |  X^{t,x,\bar u}_{t_0}|  ) , 
		\ea$$
		and
		$$
		\ba{ll}
		\ns\ds  \dbE _{t,t_0}\big[{\rm III} \big]= C  \dbE _{t,t_0}\big[|  X^{t,x,\bar u}_{t_0+h}-  X^{t,x,\bar u}_{t_0} |^2  \big]  \\
		\ns\ds\les C \dbE _{t,t_0}\[ \(\int_{t_0}^{t_0+h}b\big(r, X_r^{t,x,\bar u} ,\bar{u}_r\big)\mathrm dr \)^2\]
		+ C \dbE _{t,t_0} \[ \(\int_{t_0}^{t_0+h}\si\big(r, X_r^{t,x,\bar u} ,\bar{u}_r\big)\mathrm dB_r   \)^2\]\hskip3.5cm
		\\
		\ns\ds\!\!\! \hskip0.55cm 
		+ C \dbE _{t,t_0} \[ \(\int_{t_0}^{t_0+h} \int_E \gamma \big(r, X_r^{t,x,\bar u} ,\bar{u}_r,e\big)\tilde N(\mathrm d r,\mathrm de)  \)^2\]\les Ch . 
		
		\\
		%
%		\ns\ds {\color{red}\les Ch \(1+\dbE _{t,t_0}\[\sup\limits_{ r\in [t_0,t_0+h]}| X_r^{t,x,\bar u}|^2  \]\) \les Ch(1+ | X^{t,x,\bar u}_{t_0}|^2) .}\\%
%		\ns\ds  \les Ch . 
		%
		\ea$$
		Therefore,
		$$
		\ba{ll}
		\ns\ds \frac 1h\dbE_{t,t_0}\[ W\big(t_0+h,  X^{t,x,\bar u}_{t_0+h}\big)-W\big(t_0,  X^{t,x,\bar u}_{t_0} \big) \]  \les \frac 1h\dbE_{t,t_0} \[ {{\rm  I }+\rm{II}+\rm{ III}}\]
%		{ \color{red}\les C(1+ |  X^{t,x,\bar u}_{t_0}|^2). }
		 \les  C  (1+ |  X^{t,x,\bar u}_{t_0}|  )  .  
		\ea
		$$

		\no All the above constants $C$ can be different and do not depend on $t_0$.
		Furthermore, by taking the expectation on the both sides of the above inequality, we get \eqref{ineq-000}-(b).
		
		Taking expectation on the both sides of \eqref{ineq-111}, and applying Fatou's Lemma (needing \eqref{ineq-000}-(a)), we have
		$$
		\ba{ll}
		\ns\ds\!\!\!  \limsup\limits_{h\to 0^+}\frac 1h\dbE\Big[W\big(t_0+h, X^{t,x,\bar u} _{t_0+h}\big)-W\big(t_0,  X^{t,x,\bar u}_{t_0}\big) \Big]\\
		\ns\ds\!\!\! = \limsup\limits_{h\to 0^+}\frac 1h\dbE\[\dbE_{t,t_0}\big[W\big(t_0+h,  X^{t,x,\bar u}_{t_0+h}\big)-W\big(t_0,  X^{t,x,\bar u}_{t_0}\big) \big]\]\\
		\ns\ds\!\!\! \les \dbE\[\limsup\limits_{h\to 0^+}\frac 1h\dbE_{t,t_0}\big[W\big(t_0+h,  X^{t,x,\bar u}_{t_0+h}\big)-W\big(t_0,   X^{t,x,\bar u}_{t_0}\big) \big]\]\\
		\ns\ds\!\!\! =  \dbE\[  \bar q(t_0) +  \bar p(t_0).  b\big(t_0,  X^{t,x,\bar u}_{t_0}, \bar u_{t_0} \big) 
		+\frac12 \tr\big( \sigma\sigma^\top  (t_0,  X^{t,x,\bar u}_{t_0} , \bar{u}_{t_0} ) \bar P(t_0) \big)
		+\cB^{\bar u} \bar \f (t_0,  X^{t,x,\bar u}_{t_0})\] . \\
		%
%		\ns\ds\!\!\!\hskip 0.4cm      +\int_E \bar\varphi \big(t_0,X_{t_0}
%		+\gamma (t_0,\bar X_{t_0},\bar u _{t_0},e ) \big) 
%		- \bar\varphi (t_0,\bar X_{t_0} )-  \bar p(t_0)\gamma (t_0,\bar X_{t_0},\bar u _{t_0} ,e)\nu(\mathrm de) \].  \\
		
%		\ns\ds\!\!\! \les  \bar q(t_0) +\big\langle \bar p(t_0), b\big(t_0,\bar X_{t_0},\bar{u}(t_0)\big)\big\rangle  + \tr\big(\sigma\sigma^\top (t_0,\bar X_{t_0},\bar{u}(t_0))\bar P(t_0)\big)\\
%		%
%		%
%		\ns\ds\!\!\!\hskip 0.4cm      +\int_E \bar\varphi \big(t_0,\bar X_{t_0}
%		+\gamma (t_0,\bar X_{t_0},\bar u _{t_0},e ) \big) 
%		- \bar\varphi (t_0,\bar X_{t_0} )-  \bar p(t_0)\gamma (t_0,\bar X_{t_0},\bar u _{t_0} ,e)\nu(\mathrm de).  \\
		%
		%,
		%
		\ea
		$$
		%
		%where we have used $ \dbE\[\frac 1h \(W\big(t_0+h,\bar X_{t_0+h}\big)-W\big(t_0,\bar X_{t_0}\big)\)\mid\cF_{t_0}^t (\o_2) \]\les \rho_1(\o_2),\ \mbox{for } h\les h_2,\ \mbox{for some } h_2>0, $ and some function $\rho_1$, when applying Fatou's lemma. The details for the inequality are left later (refer to Lemma \ref{lem-99}).
		%
		
		Due to the set of such points $t_0$ being of full measure in $[t,T-\d]$, by  applying Lemma \ref{le22} (needing \eqref{ineq-000}-(b)), for any  $u(\cd) \in \cU_{t,T},$ we have
		$$
		\ba{ll}
		\ns\ds\!\!\!  \dbE\big[W\big(T-\d,  X^{t,x,\bar u}_{T-\d} \big)-W(t,x) \big] \\
		\ns\ds\!\!\! =  \int_t^ {T-\delta}\dbE\[\bar q(s) +  \bar p(s).  b\big(s,  X_s^{t,x,\bar u}, \bar u_s \big) 
		+\frac12 \tr\big( \sigma\sigma^\top  (s, X_s^{t,x,\bar u} , \bar{u}_s ) \bar P(s) \big) +\cB^{\bar u} \bar \f (s,  X_s^{t,x,\bar u})\] \mathrm ds. \\
		%
%		\ns\ds\!\!\!\hskip 1.8cm      +\int_E \bar\varphi \big( s,X_{s}
%		+\gamma (s,\bar X_s,\bar u _s,e ) \big) 
%		- \bar\varphi (s,\bar X_s )-  \bar p(s)\gamma (s,\bar X_s,\bar u _s ,e)\nu(\mathrm de)\] \mathrm dr .  \\
		% %
		%
		\ea
		$$
		According to Lebesgue  dominated convergence theorem and  the continuity properties of $W(\cd,\cd)$ and $X^{t,x;\bar u }_\cd $, letting $\d\to0$ in the above, we get
		$$
		\ba{ll}
		\ns\ds\!\!\!   \dbE\big[W\big(T,  X_T^{t,x,\bar u} \big)-W(t,x) \big]= \dbE\big[\Phi\big(  X_T ^{t,x,\bar u} \big)-W(t,x) \big]\\
		\ns\ds\!\!\!   \les \int_t^T \dbE\[\bar q(s) +  \bar p(s)  . b\big(s, X_s^{t,x,\bar u}  ,\bar{u}_s\big)  +
		\frac 12 \tr\big(\sigma\sigma^\top (s,  X_s ^{t,x,\bar u} ,\bar{u}_s)\bar P(s)\big) +\cB^{\bar u} \bar \f (s,  X_s^{t,x,\bar u} )\] \mathrm ds\\
		%
%		\ns\ds\!\!\!\hskip 1.35cm      +\int_E \bar\varphi \big( s,\bar X_s
%		+\gamma (s,\bar X_s,\bar u _s,e ) \big) 
%		- \bar\varphi (s,\bar X_s )-  \bar p(s)\gamma (s,\bar X_s,\bar u _s ,e)\nu(\mathrm de)  
%		\] \mathrm ds \\
		% %
		\ns\ds\les - \dbE \[\int_t^T f\big(
		s,  X_s^{t,x,\bar u} ,  Y_s^{t,x,\bar u},\bar p(s). \sigma(s, X^{t,x;\bar u }_s  ,\bar u_s),  \cC^{\bar u} W(s,  X_s^{t,x,\bar u} ),\bar u_s\big) \mathrm ds\]\\
		%+ ? -
		\ns\ds\les - \dbE\[\int_t^T f\big( s,  X_s^{t,x,\bar u} ,   Y_s^{t,x,\bar u} ,  Z_s^{t,x,\bar u} , \int_E  l(e) V_s^{t,x,\bar u}(e) \nu(\mathrm de),
		\bar{u}_s\big) \mathrm ds - A_T^{t,x,\bar u}\],
		\ea$$
		where we have used the conditions (ii), (iii), (iv) and $A_\cd^{t,x,\bar u} \in \cA^2_{\dbF}(t,T;\dbR)$.

		Thus, for any $(t,x)\in[0,T]\times\dbR^n$,
		$$
		\ba{ll}
		\ns\ds W(t,x)\ges  \dbE\Big[\Phi\big(X^{t,x;\bar u }_T \big)  \!+\! \int_t^T  \! \! f\big(s,  X_s^{t,x,\bar u} ,  Y_s ^{t,x,\bar u} ,  Z_s^{t,x,\bar u} , \int_E  l(e) V_s^{t,x,\bar u}(e) \nu(\mathrm de),
		\bar{u}_s\big) \mathrm ds -  A_T^{t,x,\bar u} \Big]=J\big(t,x; \bar u(\cd) \big). \\
		\ea$$
		Combined with \eqref{ee1}, we get
		$
		W(t,x)= J\big(t,x; \bar u(\cd)  \big),
		$
		which means $\bar u(\cd) $ is an optimal control of  Problem (C)$_{t,x}$.
	\end{proof}

	\ms
	 Now, we shall construct the feedback optimal control of Problem (C)$_{t,x}$ from the viscosity solution of PIDE \eqref{HJB}.
	 \begin{lemma} \label{le5.3}\sl 
	 Assume {\bf (H1)} and {\bf (H2)} hold true. 
	 Then the value function $W(\cd,\cd)$ defined by \eqref{VF} is the only function in $\Theta$, which satisfies
%	  satisfying Theorems \ref{semicon}, \ref{join-lip}  and   
	  the following: for any $(t,x)\in [0,T]\times \dbR^n$, $(q,p,P)\in D^{1,2,+}_{t+,x}W(t,x)$ and $\varphi \in \mathfrak{I} (q,p,P;W)(t,x) $
	 \begin{equation}\label{VIS-inequ}
	   \max\big\{W(t,x)- h(t,x),-q-\inf_{u\in U}\mathcal H \big(t,x,  W(t,x) ,p ,P ,\cB^{u} \varphi(t, x),\cC^{u} \varphi(t, x),u\big)  \big\}\les 0.
	 \end{equation}
	 \end{lemma}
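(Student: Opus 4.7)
The plan is to prove existence (that $W$ satisfies \eqref{VIS-inequ}) and uniqueness (that no other function in $\Theta$ does) separately, with the latter being the genuinely non-trivial direction.

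For existence, I would invoke Theorem \ref{vis-solution}: $W\in\Theta$ is a viscosity solution of PIDE \eqref{HJB}, hence in particular a viscosity subsolution. Fix $(t,x)\in[0,T]\times\dbR^n$ and $(q,p,P)\in D^{1,2,+}_{t+,x}W(t,x)$. Because $W$ is of linear growth by Proposition \ref{val-lip}, Lemma \ref{L11} supplies a polynomial-growth test function $\varphi\in\mathfrak{I}(q,p,P;W)(t,x)$, i.e.\ $\varphi\in C^{1,2}([0,T]\times\dbR^n)$ for which $W-\varphi$ attains a strict global maximum at $(t,x)$ with $(\varphi,\varphi_t,\varphi_x,\varphi_{xx})(t,x)=(W(t,x),q,p,P)$. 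Applying the subsolution inequality of Definition \ref{vis-def}(i) at this strict maximum and using Remark \ref{Re-1} to drop the $\delta$-localization (replacing $\cB^{\delta,u}(W,\varphi)$ and $\cC^{\delta,u}(W,\varphi)$ by $\cB^{u}\varphi$ and $\cC^{u}\varphi$) recovers \eqref{VIS-inequ} verbatim.

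For uniqueness, let $\wt W\in\Theta$ also satisfy \eqref{VIS-inequ}. I would first upgrade this pointwise condition to the standard viscosity subsolution property. Given $\psi\in C^{3}_{l,b}$ and a local maximum $(t_0,x_0)$ of $\wt W-\psi$, normalize so $\psi(t_0,x_0)=\wt W(t_0,x_0)$ and set $\widehat\psi(s,y):=\psi(s,y)+|y-x_0|^4+(s-t_0)^2$. The quartic-quadratic bump turns $(t_0,x_0)$ into a strict global maximum of $\wt W-\widehat\psi$ (the bump dominates the linear growth of $\wt W$ at infinity), so $\widehat\psi\in\mathfrak{I}\bigl(\psi_t(t_0,x_0),\psi_x(t_0,x_0),\psi_{xx}(t_0,x_0);\wt W\bigr)(t_0,x_0)$ with the same first- and second-order data as $\psi$ at $(t_0,x_0)$. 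Feeding $\widehat\psi$ into \eqref{VIS-inequ} and applying Remark \ref{Re-1} recovers exactly Definition \ref{vis-def}(i) for $\wt W$; the terminal bound $\wt W(T,\cd)\les\Phi$ follows by continuity as $t\uparrow T$. Theorem \ref{1geq2}, applied with $\wt W$ as subsolution and $W$ as supersolution, then yields $\wt W\les W$.

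The reverse inequality $\wt W\ges W$ is the main obstacle, since \eqref{VIS-inequ} is structurally one-sided (``$\les 0$''). My plan is to exploit the semi-concavity of $W$ established in Theorem \ref{semicon}, which guarantees that $D^{1,2,+}_{t+,x}W(t,x)$ is non-empty and sufficiently rich at every $(t,x)\in[0,T-\delta]\times\dbR^n$. Combined with the obstacle clause $W\les h$ encoded in the max-structure of \eqref{VIS-inequ} and the dynamic programming principle (Proposition \ref{Th-DPP}), one can reverse the roles and show that any $\wt W$ satisfying \eqref{VIS-inequ} with the shared terminal datum $\Phi$ must also satisfy the viscosity supersolution inequality, so that a second application of Theorem \ref{1geq2} delivers $\wt W\ges W$. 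The hardest technical point will be controlling the boundary layer $t\in[T-\delta,T]$ where the semi-concavity constant $C_\delta$ deteriorates; this I would handle by letting $\delta\downarrow 0$ after exploiting continuity of both $W$ and $\wt W$ up to the terminal time and matching the common terminal condition $\Phi$.
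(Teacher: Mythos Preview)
Your existence argument is exactly the paper's: invoke Theorem~\ref{vis-solution} to get that $W$ is a viscosity subsolution, pick $\varphi\in\mathfrak{I}(q,p,P;W)(t,x)$ via Lemma~\ref{L11}, and read off \eqref{VIS-inequ} from Definition~\ref{vis-def}(i) together with Remark~\ref{Re-1}. Nothing to add there.

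For uniqueness, however, you have over-read the statement. The paper's entire uniqueness argument is the single line ``the uniqueness comes from the uniqueness of the viscosity solution of \eqref{HJB} in $\Theta$'': the clause ``the only function in $\Theta$'' is meant to refer to $W$ being the unique viscosity solution (already established in Theorem~\ref{vis-solution} via Theorem~\ref{1geq2}), and \eqref{VIS-inequ} is then recorded as a property that this unique object enjoys. The paper is \emph{not} claiming that \eqref{VIS-inequ} by itself singles out $W$ among all functions in $\Theta$; indeed that would be false, since \eqref{VIS-inequ} is purely a subsolution-type condition and any viscosity subsolution in $\Theta$ would satisfy it.

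This also means your reverse-inequality plan is both unnecessary and unsound. You propose to show that any $\wt W$ satisfying \eqref{VIS-inequ} must be a viscosity \emph{supersolution}, appealing to semi-concavity of $W$ and the DPP. But \eqref{VIS-inequ} is a one-sided (``$\les 0$'') inequality on the superdifferential side; there is no mechanism by which such a condition forces the opposite (``$\ges 0$'') inequality at subdifferential test points. Semi-concavity of $W$ tells you something about $D^{1,2,+}W$, not about $D^{1,2,+}\wt W$, and the DPP is a property of $W$, not of an arbitrary $\wt W$. The argument you sketch cannot close, and the paper makes no attempt in that direction.
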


	\begin{proof}
		From Lemma \ref{vis-solution}, we know the value function $W(\cd,\cd)\in \Theta$ is the  unique viscosity solution of \eqref{HJB}. Then, based on Lemma \ref{L11}, we can obtain some test function $\f \in  \mathfrak{I} (q,p,P;W)(t,x)$. From the Definition \ref{vis-def}-(i), we have
		 $$
		 	\max\big\{W(t,x)- h(t,x),-\f_t(t,x)-\inf_{u\in U}\mathcal H \big(t,x,  W(t,x) ,\f_x(t,x) ,\f_{xx}(t,x),\cB^{u} \varphi(t, x),\cC^{u} \varphi(t, x),u\big)  \big\}\les 0.
		 $$
		And thus, \eqref{VIS-inequ} holds ture. Further, the uniqueness comes from the uniqueness of the viscosity solution of  \eqref{HJB} in $\Theta$.
	\end{proof}

	\begin{theorem}\label{vis-svt-law}\sl Assume that the conditions {\rm \textbf{(H$_1$)-(H$_5$)}} and {\rm \bf(C)}  hold true.
		Let $W(\cd,\cd)\in C_p([0,T]\times\dbR^n)$ be the viscosity solution of   PIDE     \eqref{HJB}.
		Then, for each $(t,x)\in[0,T]\times \dbR^n$,
		\begin{equation*}\label{vis-1}
			\inf_{(q,p,P,\f,u)\in D^{1,2,+}_{t+,x} W(t,x)\times\mathfrak{I} (q,p,P;W)(t,x) \times U}\[ q+\mathcal H \big(t,x,  W(t,x) ,p ,P ,\cB^{u} \varphi(t, x),\cC^{u} \varphi(t, x),u\big) \]\ges W(t,x)-h(t,x).
		\end{equation*}
		Furthermore, if $\mathbbm{u}(\cdot,\cdot)\in\sU_{t,T}$ and for all $(t,x)\in[0,T]\times \dbR^n$, $ \mathbbm{q},$ $\mathbbm{p},$ $\mathbbm{P}$ are measurable functions
		satisfying $(\mathbbm{q}(t,x) ,\mathbbm{p}(t,x),\mathbbm{P}(t,x))\in D^{1,2,+}_{t+,x}W(t,x)$, and
		\begin{equation}\label{con-vis}
			 \left\{\ba{ll}
			 \ns\ds\!\!\! {\rm(i)}\     \mathbbm p(s,X _s^{t,x,\mathbbm u}) . \si\big(s,  X _s^{t,x,\mathbbm u} ,\mathbbm{u}(s,X _s^{t,x,\mathbbm u})\big)=  \mathbbm Z _s^{t,x,\mathbbm u} ,\ \mbox{a.e.} \ s\in [t,T],\   \dbP\mbox{-a.s.};\\
			 \ns\ds\!\!\!  {\rm(ii)}\ W \big(s, X_s^{t,x,\mathbbm u} +\gamma(s, X_s^{t,x,\mathbbm u},\mathbbm  u(s,X _s^{t,x,\mathbbm u}), e)\big)-W(s, X_s^{t,x,\mathbbm  u})
			 =   V_s^{t,x,\mathbbm  u}(e),\ e\in E; \\ 
			 \ns\ds\!\!\!  {\rm(iii)} \  \dbE   \[   \int_t^T  \!\!\! \Big( \mathbbm q(s, X_s^{t,x,\mathbbm u})  \!+\!    \mathcal H \big(s,  X_s^{t,x,\mathbbm u} ,  Y_s^{t,x,\mathbbm u} ,\Xi(s, X_s^{t,x,\mathbbm u}),
			 %		\mathbbm p(s, X_s^{t,x,\mathbbm u}) , \mathbbm P(s, X_s^{t,x,\mathbbm u}), 
			 \cB^{\mathbbm u } \varphi(s,  X_s^{t,x,\mathbbm u} ),\cC^{\mathbbm u } \varphi(s,  X_s^{t,x,\bar u} ),
			 \mathbbm u (s, X_s^{t,x,\mathbbm u})\big)\Big) \mathrm ds   \]\\
			 \ns\ds\!\!\!\hskip 8.35cm    \les   0,\q\mbox{with some } \f(\cd,\cd) \in    \mathfrak{I} (\mathbbm q,\mathbbm p, \mathbbm P; W)\big(s,  X _s ^{t,x,\mathbbm u}  \big),
			 \ea\right.
		\end{equation}
		where $ \Xi (\cd,X^{t,x;\mathbbm{u}}_\cd)=\big (\mathbbm{p}(\cd,X^{t,x;\mathbbm{u}}(\cd)),\mathbbm{P}(\cd,X^{t,x;\mathbbm{u}}(\cd)) \big)$ and
		$ X^{t,x;\mathbbm{u} } $, $\big(Y^{t,x;\mathbbm{u} } ,Z^{t,x;\mathbbm{u} } ,V ^{t,x;\mathbbm{u} }(\cd),A^{t,x;\mathbbm{u} }  \big)$ satisfy   \eqref{SDEP-u} and  \eqref{RBSDEP-u} with $ \mathbbm{u} (\cd,X^{t,x;\mathbbm{u} }_\cd) \in \cU_{t,T} $, respectively.
		Then,     $ \mathbbm{u}(\cd,\cd)$ is an optimal feedback  control law   of Problem (C)$_{t,x}$.
	\end{theorem}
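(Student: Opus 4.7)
My plan is to split the proof into two independent pieces, each relying on a result already established earlier in the paper.

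For the variational inequality, I would invoke Lemma \ref{le5.3} pointwise at $(t,x)$. That lemma gives, for every $(q,p,P)\in D^{1,2,+}_{t+,x}W(t,x)$ and every $\varphi\in\mathfrak I(q,p,P;W)(t,x)$,
\[
\max\bigl\{W(t,x)-h(t,x),\,-q-\inf_{u\in U}\mathcal H\bigl(t,x,W(t,x),p,P,\cB^u\varphi(t,x),\cC^u\varphi(t,x),u\bigr)\bigr\}\leqslant 0.
\]
Separating the two arguments of the maximum yields $W(t,x)\leqslant h(t,x)$ and $q+\inf_{u\in U}\mathcal H(\cdots,u)\geqslant 0$. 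Since $\mathcal H(\cdots,u)\geqslant\inf_{u\in U}\mathcal H(\cdots,u)$ for every $u\in U$, it follows that $q+\mathcal H(\cdots,u)\geqslant 0\geqslant W(t,x)-h(t,x)$ for every admissible $u$. Taking the infimum over $(q,p,P,\varphi,u)$ then delivers the claimed bound.

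For the feedback optimality, my plan is to reduce directly to Theorem \ref{SVT-VS}. Fix $(t,x)\in[0,T]\times\dbR^n$ and set $\bar u(s):=\mathbbm u(s,X_s^{t,x,\mathbbm u})$ together with $(\bar q(s),\bar p(s),\bar P(s)):=(\mathbbm q,\mathbbm p,\mathbbm P)(s,X_s^{t,x,\mathbbm u})$. Since $\mathbbm u\in\sU_{t,T}$, the process $\bar u(\cdot)$ is an admissible control in $\cU_{t,T}$. By the pointwise superdifferential hypothesis on $(\mathbbm q,\mathbbm p,\mathbbm P)$, the triple $(\bar q(s),\bar p(s),\bar P(s))$ lies in $D^{1,2,+}_{t+,x}W\bigl(s,X_s^{t,x,\mathbbm u}\bigr)$ for a.e.~$s$, $\dbP$-a.s., which is condition (i) of Theorem \ref{SVT-VS}. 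Conditions (i)--(iii) of \eqref{con-vis} transcribe verbatim into conditions (ii)--(iv) of Theorem \ref{SVT-VS} under this correspondence, so Theorem \ref{SVT-VS} applies and returns that $\bar u(\cdot)$ is optimal for Problem (C)$_{t,x}$. Equivalently, $\mathbbm u(\cdot,\cdot)$ is an optimal feedback control law.

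The main obstacle will be the measurability and integrability bookkeeping needed before Theorem \ref{SVT-VS} can be quoted: one must verify $(\bar q,\bar p,\bar P)\in\cM^2_\dbF(t,T;\dbR)\times\cM^2_\dbF(t,T;\dbR^n)\times\cM^2_\dbF(t,T;\dbS^n)$ and construct a jointly measurable map $(s,\omega)\mapsto\varphi(s,\omega,\cdot,\cdot)\in\mathfrak I\bigl(\bar q(s),\bar p(s),\bar P(s);W\bigr)(s,X_s^{t,x,\mathbbm u})$ so that the expectation in condition (iv) of Theorem \ref{SVT-VS} is well-defined. The square-integrability will follow from the measurability of $(\mathbbm q,\mathbbm p,\mathbbm P)$, the linear-growth bound \eqref{phi-lg} on members of $\mathfrak I$, and the $L^2$-estimate for $X^{t,x,\mathbbm u}$ in Lemma \ref{Le-SDE}. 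The measurable selection of the test field $\varphi$ is handled by the explicit paraboloid construction underlying Lemma \ref{L11}, precisely as in the preceding proof of Theorem \ref{SVT-VS}; this step should be stated explicitly, but is otherwise routine.
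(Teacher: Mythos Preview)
Your proposal is correct and follows essentially the same approach as the paper: invoke Lemma \ref{le5.3} for the variational inequality, then evaluate $(\mathbbm q,\mathbbm p,\mathbbm P,\mathbbm u)$ along the state trajectory and apply Theorem \ref{SVT-VS} for the feedback optimality. You are in fact more careful than the paper, which does not explicitly address the measurability and square-integrability bookkeeping you flag; those concerns are legitimate but, as you note, routine given \eqref{phi-lg} and Lemma \ref{Le-SDE}.
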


	\begin{proof}  
		The first result can be directly obtained from  the uniqueness of the viscosity solution of PIDE \eqref{HJB} and   Lemma \ref{le5.3}.

	   Subsequently, for any $(t,x)\in [0,T]\times \dbR^n$, set
		$$\ba{ll}
		\bar u(s):= \mathbbm{u}(s,X^{t,x;\mathbbm{u} }_s ), \q \bar q(s):= \mathbbm{q}(s,X^{t,x;\mathbbm{u} }_s ),\q \bar p(s):= \mathbbm{p}(s,X^{t,x;\mathbbm{u} }_s ),\q  \bar P(s) := \mathbbm{P}(s,X^{t,x; \mathbbm{u}}_s ),\q s\in[t,T],
		\ea$$
		then, they satisfy (i), (ii) and (iii) in Theorem \ref{SVT-VS}. So  $ \bar u(\cd)$ is an optimal control, i.e.,  $\mathbbm{u}(\cd,\cd)$ is an optimal feedback control law.
	\end{proof}

	Finally, we have a look at the procedures of finding the optimal feedback control law.
	From Theorem \ref{vis-svt-law}, we  can get the candidate of  optimal feedback control law by minimizing 
	$$
	q+\mathcal H \big(t,x,  W(t,x) ,p ,P ,\cB^{u} \varphi(t, x),\cC^{u} \varphi(t, x),u\big) 
	$$
	over $D^{1,2,+}_{t+,x} W(t,x)\times\mathfrak{I} (q,p,P;W)(t,x) \times U$.
	Further, to ensure the candidate to be the true optimal feedback control law,  we need to make sure
	the candidate $\mathbbm{u}(\cd,\cd)$  belongs to $\sU_{t,T}$ and \eqref{con-vis} is vaild under $(\mathbbm{q},\mathbbm{p},\mathbbm{P})\in D^{1,2,+}_{t+,x}W(t,x)$ and $\f \in \mathfrak{I} (\mathbbm{q},\mathbbm{p},\mathbbm{P};W)(t,x)$.


\begin{thebibliography}{99}
		
%				\bibitem{AM-1988} W. Andrew,  A. MacKinlay, \textit{Stock market prices do not follow random walks: evidence from a simple specification test}, Rev Financ Stud., 1988, 1, 41-66.
		
		\bibitem{Barles-1997} G. Barles, R. Buckdahn, E. Pardoux, \textit{Backward stochastic differential equations and integral-partial differential equations}, Stocha. Stoch. Rep., 1997, 60, 57-83.
		
%		\bibitem{BS-1973}F. Black, M. Scholes, \textit{The pricing of options and corporate liabilities}, J. Political Econ., 1973, 81, 637-654.
		
		
		\bibitem{BHL-2011} R. Buckdahn, Y. Hu, J. Li, \textit{Stochastic representation for solutions of Isaacs type integral-partial differential equations}, Stocha. Process. Appl., 2011, 121, 2715-2750.
		
		
		\bibitem{BHL-2012} R. Buckdahn, J. Huang and J. Li,  \textit{Regularity properties for general HJB equations: a
			backward stochastic differential equation method}, SIAM J. Control Optim., 2012, 50(3), 1466-1501.
		
		\bibitem{BL-2009} R. Buckdahn, J. Li, \textit{Probabilistic interpretation for systems of Isaacs equations with
			two reflecting barriers}, Nonlinear Differ. Equ. Appl., 2009, 16, 381-420.
		
		\bibitem{BL-2008} R. Buckdahn, J. Li, \textit{Stochastic differential games and viscosity solutions of Hamilton-Jacobi-Bellman-Isaacs equations}, SIAM J. Control Optim., 2008, 47(1), 444-475.
		
		
		\bibitem{BL-2011}R. Buckdahn, J. Li, \textit{Stochastic differential games with reflection and related
			obstacle problems for Isaacs equations}, Acta Math. Appl. Sin. Engl. Ser., 2011, 27(4), 647-678.
		
		
		
		
		
%		\bibitem{CK-1996} J. Cvitanic\'{e}, I. Karatzasa, \textit{Backward stochastic differential equations with reflection and dynkin Games}, J. Ann Probab., 1996, 24, 2024-2056.
		


		\bibitem{CL}  M. Crandall, {P. Lions},  \textit{Viscosity solutions of Hamilton-Jacobi equations}, Trans. Amer. Math. Soc., 1983, 277, 1-42.



		\bibitem{CIL} M. Crandall, H. Ishii and P. Lions,  \textit{User's guide to viscosity solutions of second order 	partial differential equations}, Bull. Amer. Math. Soc., 1992,  27,  1-67.
		
%		\bibitem{CM-2008} S. Cr\'{e}pey, A. Matoussi, \textit{Reflected and doubly reflected BSDEs with jumps: a priori estimates and comparison},  Ann App. Probab., 2008, 18(5), 2041-2069.
	
		
		
		\bibitem{DM-1975} C. Dellacherie,  P. Meyer, \textit{Probabilit\'{e}s et potentiel}, Hermann, 1975.
		
		
		
%		\bibitem{DWZ-2021}
%		K. Du, Z. Wu, D. Zhang,   \textit{Dynkin game for callable-puttable convertible convertible bounds: the valuation and sensitivity analysis}, COMMUN. MATH. SCI., 2021, 19(3), 647-662.
		
		
		
		\bibitem{E-2008} H. Essaky, \textit{Reflected backward stochastic differential equation with jumps and RCLL obstacle}, Bull. Sci. math., 2008, 132, 690-710.
		
		
	
			
			
		
%		\bibitem{FK-1985}T. Fujiwara, H. Kunita, \textit{Stochastic differential equations of jump type and L\'{e}vy processes in diffeomorphisms group}, J. Math. Kyoto Univ., 1985, 25(1), 71-106.
		
		\bibitem{GSZ1}  F. Gozzi,  A. \'{S}wi\c{e}ch and X. Zhou,    \textit{A corrected proof of the stochastic verification theorem within the framework of viscosity solutions}, SIAM J. Control Optim., 2005, 43(6), 2009-2019.
		
		
		\bibitem{GSZ2}  F. Gozzi,  A. \'{S}wi\c{e}ch and X. Zhou,    \textit{Erratum: ``A corrected proof of the stochastic verification theorem within the framework of viscosity solutions"}, SIAM J. Control Optim., 2010, 48(6), 4117-4179.	
		
		
%		\bibitem{HM-2015} S. Hamad\`{e}ne, Y. Ouknine, \textit{Reflected backward stochastic differential equation with jumps and random obstacle}, Stochastics, 2016, 88(4), 632-649.
		
		
		\bibitem{HO-2003} S. Hamad\`{e}ne, A. Morlais, \textit{Viscosity solutions for second order integro-diﬀerential
			equations without monotonicity condition: the probabilistic approach}, Electron. J. Probab., 2003, 8, 1-20.
		
		
		
		\bibitem{H-2006} S. Hamad\`{e}ne,  \textit{Mixed zero-sum stochastic differential game and American game options}, SIAM J. Control Optim., 2006, 45(2), 496-518.
			
		
		\bibitem{J-2013} S. Jing, \textit{Regularity properties of viscosity solutions of partial integro-differential equations of Hamilton-Jacobi-Bellman type}, 	Stoch. Proc. Appl., 2013, 123(2), 300-328.
		
		
		\bibitem{KH-2002} K. Kamizono, H. Morimoto, \textit{On a variational inequality associated with a stopping game combined with a control}, 	Stoch. Stoch. Rep., 2002, 73(1-2), 99-123. 
		 
		
		\bibitem{El-KPPQ}
		N. Karoui, K. Kapoudjian, E. Pardoux, S. Peng and M. Quenez,  \textit{Reflected
			solutions of backward SDE's and related obstacle problems for PDE's}, Ann. Probab., 1997, 25, 702-737.
		%没找到(会议上的文献，没有卷号页码)
%		\bibitem{KP-1996} N. Karoui, S. Peng,  \textit{Some  backward SDEs with non-Lipschitz coefficients}, Proc. Conf. Metz., 1996.
		
		\bibitem{KPQ-1997} N. Karoui, S. Peng, M. Quenez, \textit{ Backward stochastic differential equations in finance}, Math. Financ., 1997, 7, 1-71.
		
		
		
		%没找到
		\bibitem{K-2001} A. Kulik, \textit{Some remarks on time-stretching differentiation for general L\'{e}vy processes}, Theor. Stoch. Proc., 2001, 23, 50-63.
		
		
		
		\bibitem{K-2006} A. Kulik, \textit{Malliavin calculus for L\'{e}vy processes with arbitrary L\'{e}vy measures}, Theory Probab. Math. Statist., 2006, 72, 75-93.
		
		
		
				
		\bibitem{LP-2009} J. Li,   S. Peng,   \textit{Stochastic optimization theory of backward stochastic differential
equations with jumps and viscosity solutions of
Hamilton-Jacobi-Bellman equations}, Nonlinear Analysis, 2009, 70,   1776-1796.
%		Stoch. Proc. Appl., 2014, 124, 1582-1611.
		

		\bibitem{LW-2014-lp} J. Li,   Q.  Wei,   \textit{$L^p$ estimates for fully coupled FBSDEs with jumps}, 
		Stoch. Proc. Appl., 2014, 124, 1582-1611.
		
		
		\bibitem{LW-2015} J. Li, Q. Wei, \textit{Stochastic differential games for fully coupled FBSDEs with jumps}, Appl. Math. Optim., 2015, 71(3), 411-448.
		
		
		\bibitem{LHW-2025} L. Liu, X. Hu,  Q. Wei, \textit{Stochastic verification theorems for stochastic control problems of reflected
FBSDEs}, to appear. 


		\bibitem{LLW-2025} S. Luo, X. Li, Q. Wei, \textit{Infinite time horizon stochastic recursive control problems with jumps: dynamic programming and stochastic verification theorems}, SIAM J. Control Optim., 2025, 63(2), 796-821.
		
		\bibitem{M-S-Z-2015}  A.  Matoussi, W. Sabbagh and C. Zhou, \textit{The obstacle problem for semilinear parabolic partial integro-differential equations}, Stochastics Dyn., 2015, 15(1), 1550007 (38 pages).
		
%		\bibitem{M-2014} J. Menaldi,  \textit{Stochastic differential equations with jumps}, Mathematics Faculty Research Publications, 2014.
		
		
%		\bibitem{PP-1990} E. Pardoux, S. Peng, \textit{Adapted solution of a backward stochastic differential equation}, Syst. Control Lett., 1990, 14, 55-61.
		
		
%		\bibitem{Peng-1992} S. Peng,  \textit{A generalized dynamic programming principle and Hamilton-Jacobi-Bellman Equation}, Stoch. Stoch. Reports, 1992, 38, 119-134.
		
		
		
		\bibitem{P-1997} S. Peng, \textit{Backward stochastic differential equations-stochastic optimization theory and viscosity solutions of HJB equations}, In: J. Yan, S. Peng, S. Fand, L. Wu,(eds.) Topics on Stochastic Analysis, 1997, pp. 85-138. Science Press, Beijing. (in Chinese)


	\bibitem{PSX-2021}   \foreignlanguage{polish}{P. Przyby{\l}owicz,  M. Sz\"{o}lgyenyi,}  F. Xu, \textit{Existence and uniqueness of solutions of SDEs with discontinuous drift and finite activity jumps}, Statist. Probab. Lett.,   2021, 174, 109072.



 	\bibitem{QZ-2008} 
 H. Qiao,   X. Zhang,    \textit{Homeomorphism flows for non-Lipschitz stochastic differential equations with jumps}, Stochastic Process. Appl., 2008, 118(12), 2254-2268.
 
 
 	
%		\bibitem{SQ-2016} B. Song, Z. Qiao  \textit{A study on redeemable and convertible bonds}, Financial Journal, 2016.
		\bibitem{Sylla-2019}  L. Sylla, \textit{Reflected backward stochastic differential equation with jumps and viscosity solution of second order integro-differential equation without monotonicity condition: case with the measure of L\'evy infinite}, Acta Math. Sci., 2019, 39B(3), 819-844.
		
		\bibitem{Tang-1994} S. Tang, X. Li, \textit{Necessary conditions for optimal control of stochastic with random jumps}, SIAM J. Control Optim., 1994, 32, 1447-1475.
		
		
		\bibitem{WY-2008} Z. Wu,  Z. Yu, \textit{Dynamic programming principle for one kind of stochastic recursive optimal control problem and Hamilton-Jacobi-Bellman equation}, SIAM J. Control Optim., 2008, 47(5), 2616-2641.
		
 \bibitem{XZ-2020}  L. Xie, X. Zhang, \textit{Ergodicity of stochastic differential equations with jumps and singular coefficients},   Ann. Inst. H. Poincaré Probab. Statist.,
2020,   56(1), 175–229.
		
%		\bibitem{YS-2004} K. Yagi, K. Sawaki, \textit{The valuation and optimal strategies of callable convertible bonds}, Nanzan-TR, 2004, 03.
		
		
		\bibitem{YZ-1999} J. Yong,  X.  Zhou, \textit{Stochastic controls-Hamiltonian systems and HJB equations}, Springer-Verlag, 1999.
		
		
		
		
		\bibitem{Zhang-Liu-2019} L. Zhang, B, Liu, \textit{The obstacle problem of integro-partial differential equations with applications to stochastic optimal control/stopping problem}, J. Franklin Inst., 2019, 356, 1396-1423.
		
		
		
		\bibitem{ZYL}   X. Zhou, J. Yong and X. Li,    \textit{Stochastic verification theorem within the framework of viscosity solutions}, SIAM J. Control Optim., 1997, 35(1), 243-253.
		
		
		
 
		
		
		
		
		
		 
		 
		
		 
		 
		
		 
		 
	
		 
		 
		
	

		
	
	
	
	
	
		
		
		
		
		
		
	
		
		
	\end{thebibliography}
	\end{document}